\documentclass[11pt,a4paper,english]{smfart}

\usepackage[utf8]{inputenc}
\usepackage[T1]{fontenc}
\usepackage[english,french]{babel}

\usepackage{amscd}
\usepackage{amssymb,smfthm} 
\usepackage{mathtools}
\usepackage{bm}

\usepackage{newtxtext}
\usepackage{newtxmath}
\usepackage{textcomp}
\usepackage{euscript}
\usepackage{manfnt}
\usepackage{fix-cm}

\usepackage[dvipsnames]{xcolor} 
\usepackage{graphicx}
\usepackage{tikz-cd}

\usepackage[
textwidth=150mm,
textheight=245mm,
centering
]{geometry}
\usepackage{enumitem}
\usepackage{comment}
\usepackage{marginnote}
\usepackage{interval}
\usepackage{faktor}
\usepackage{thmtools}  

\usepackage[pagebackref]{hyperref}
\hypersetup{
	colorlinks=true,
	linkcolor=NavyBlue,
	citecolor=OliveGreen,
	urlcolor=BrickRed
}
\usepackage{cleveref}

\theoremstyle{plain}
\newtheorem{thmx}{Theorem}
\renewcommand{\thethmx}{\Alph{thmx}} 
\newtheorem{thm}{Theorem}[section]  
\newtheorem{lem}[thm]{Lemma}

\newtheorem{claim}[thm]{Claim} 
\newtheorem{proposition}[thm]{Proposition}
\newtheorem{cor}[thm]{Corollary} 
\newtheorem{conjecture}[thm]{Conjecture}
\newtheorem{question}[thm]{Question}

\newtheorem{corx}[thmx]{Corollary} 
\theoremstyle{definition}
\newtheorem{dfn}[thm]{Definition}
\newtheorem{deflem}[thm]{Definition-Lemma}

\theoremstyle{remark}
\newtheorem{rem}[thm]{Remark}

\theoremstyle{plain}
\newlist{thmlist}{enumerate}{1}
\setlist[thmlist]{wide = 0pt, labelwidth = 2em, labelsep*=0em, itemindent = 0pt, leftmargin = \dimexpr\labelwidth + \labelsep\relax, noitemsep,topsep = 1ex, font=\normalfont, label=(\roman*), ref=\thethm.(\roman{thmlisti})}

\addtotheorempostheadhook[thm]{\crefalias{thmlisti}{thm}}

\addtotheorempostheadhook[assumpsion]{\crefalias{thmlisti}{assumption}}

\addtotheorempostheadhook[question]{\crefalias{thmlisti}{question}}

\addtotheorempostheadhook[deflem]{\crefalias{thmlisti}{deflem}}
\addtotheorempostheadhook[cor]{\crefalias{thmlisti}{cor}}

\addtotheorempostheadhook[proposition]{\crefalias{thmlisti}{proposition}}

\addtotheorempostheadhook[dfn]{\crefalias{thmlisti}{dfn}}

\addtotheorempostheadhook[lem]{\crefalias{thmlisti}{lem}}
\addtotheorempostheadhook[main]{\crefalias{thmlisti}{main}}

\addtotheorempostheadhook[rem]{\crefalias{thmlisti}{rem}}

\newlist{thmenum}{enumerate}{1} 
\setlist[thmenum]{wide = 0pt, labelwidth = 2em, labelsep*=0em, itemindent = 0pt, leftmargin = \dimexpr\labelwidth + \labelsep\relax, noitemsep,topsep = 1ex, font=\normalfont, label=(\roman*), ref=\thethmx.(\roman{thmenumi})}
\crefalias{thmenumi}{thmx} 

\newlist{corlist}{enumerate}{1} 
\setlist[corlist]{wide = 0pt, labelwidth = 2em, labelsep*=0em, itemindent = 0pt, leftmargin = \dimexpr\labelwidth + \labelsep\relax, noitemsep,topsep = 1ex, font=\normalfont, label=(\roman*), ref=\thecorx.(\roman{corlisti})}
\crefalias{corlisti}{corx} 

\crefname{lem}{Lemma}{Lemmas} 
\crefname{conjecture}{Conjecture}{Conjectures}
\crefname{thm}{Theorem}{Theorems}
\crefname{proposition}{Proposition}{Propositions}
\crefname{dfn}{Definition}{Definitions}
\crefname{rem}{Remark}{Remarks}
\crefname{cor}{Corollary}{Corollaries}
\crefname{corx}{Corollary}{Corollaries}
\crefname{problem}{Problem}{Problems}
\crefname{thmx}{Theorem}{Theorems}
\crefname{claim}{Claim}{Claims}
\crefname{assumption}{Assumption}{Assumptions}
\crefname{main}{Main Theorem}{Main Theorems}
\crefname{setting}{Setting}{Settings}

\numberwithin{equation}{thm} 



\newcommand{\ep}{\varepsilon}

\renewcommand{\Im}{\operatorname{Im}}

\DeclareMathOperator{\Res}{Res}

\DeclareMathOperator{\rank}{rank}
\DeclareMathOperator{\obj}{Obj}    

\newcommand{\cT}{\mathcal{T}}

\makeatletter
\newcommand*{\rom}[1]{\expandafter\@slowromancap\romannumeral #1@}
\makeatother

\makeatletter     
\newcommand{\crefnames}[3]{%
	\@for\next:=#1\do{%
		\expandafter\crefname\expandafter{\next}{#2}{#3}%
	}%
}
\makeatother

\crefnames{part,chapter,section}{\S}{\S\S}

\newcommand{\sC}{\mathscr{C}}
\newcommand{\sD}{\mathscr{D}}

\newcommand{\sF}{\mathscr{F}}

\newcommand{\cH}{\mathcal{H}}

\newcommand{\sQ}{\mathscr{Q}}
\newcommand{\sR}{\mathscr{R}}

\newcommand{\cA}{\mathcal A}

\newcommand{\cE}{\mathcal E}
\newcommand{\cI}{\mathcal I}

\newcommand{\cO}{\mathcal O}
\newcommand{\hcO}{\widehat{\mathcal O}}

\newcommand{\cV}{\mathcal V}

\newcommand{\fV}{\mathbf V} 

\newcommand{\kn}{\mathfrak n} 
 
\newcommand{\km}{\mathfrak m} 

\newcommand{\bB}{\mathbb{B}}
\newcommand{\bC}{\mathbb{C}}
\newcommand{\bD}{\mathbb{D}}
\newcommand{\bE}{\mathbb{E}}

\newcommand{\bN}{\mathbb{N}}

\newcommand{\bZ}{\mathbb{Z}}

\newcommand{\hX}{{\widehat{X}}}
\newcommand{\hY}{{\widehat{Y}}}

\newcommand{\kg}{\mathfrak{g}}

\def\db{\bar{\partial}}

  \def\spec{\textrm{Spec}\,}
 \def\d{\partial} 
\def\hess{\nabla''\nabla'}

\def\sn{\sqrt{-1}}

\DeclareMathOperator{\End}{End}
\DeclareMathOperator{\Def}{Def}
\DeclareMathOperator{\Art}{Art}
\DeclareMathOperator{\gr}{Gr}
\DeclareMathOperator{\Grp}{Grp}

\def\bnabla{\boldsymbol{\nabla}}
\def\bdb{\bm{\bar{\partial}}}

\def\bd{\bm{\partial}}
\def\bhess{\boldsymbol{\nabla}''\boldsymbol{\nabla}'}

\DeclareMathOperator{\Hom}{Hom}

\DeclareMathOperator{\GL}{GL}

\begin{document} 
	\title[Two-step nilpotent monodromy]{Two-step nilpotent monodromy of local systems on special varieties}
	\author[J. Cao]{Junyan Cao}
	\address{Laboratoire de Mathématiques J.A. Dieudonné UMR 7351 CNRS, Université Côte d'Azur Parc Valrose 06108, Nice, France} 
	\email{junyan.cao@unice.fr}
	\urladdr{https://sites.google.com/site/junyancao} 
	
	\author[Y. Deng]{Ya Deng}

	\address{CNRS,  
		Institut de Math\'ematiques de Jussieu-Paris Rive Gauche,
		Sorbonne Universit\'e, Campus Pierre et Marie Curie,
		4 place Jussieu, 75252 Paris Cedex 05, France}
	
	\email{ya.deng@math.cnrs.fr, deng@imj-prg.fr}
	\urladdr{https://ydeng.perso.math.cnrs.fr}

	\author[C. Hacon]{Christopher D. Hacon}
	\address{Department of Mathematics, University of Utah, Salt Lake City, UT 84112,
		USA}
	\email{hacon@math.utah.edu}
	\urladdr{https://www.math.utah.edu/~hacon/}

	\author[M. Paun]{Mihai Paun}
	\address{Universit\"at Bayreuth, Mathematisches Institut, Lehrstuhl Mathematik VIII, Universit\"atsstrasse 30,
		D-95447, Bayreuth, Germany}
	\email{mihai.paun@uni-bayreuth.de}
	\urladdr{https://www.mathe8.uni-bayreuth.de/de/team/prof-paun/index.php}
	\begin{abstract} 
		Let $X$ be a smooth complex quasi-projective variety that is special in the sense of Campana. We prove that the monodromy group of any complex local system on $X$ is virtually nilpotent of class at most $2$. This result sharply refines a theorem of Cadorel, Yamanoi, and the second author.
		
		To establish this result, we develop a deformation theory for certain local systems on quasi-compact K\"ahler manifolds by constructing universal deformations for such local systems. As a byproduct of our argument, we also show that a general fiber of the quasi-Albanese map of $X$ is special, extending a result of Campana and Claudon from the projective to the quasi-projective setting.
	\end{abstract}
	
	\maketitle
	\tableofcontents
	\section{Introduction}
	\subsection{Main theorems}
	The main goal of this paper is to prove the following.
	\begin{thmx}[=\cref{thm:main}]\label{main}
		Let $X$ be a smooth complex quasi-projective variety. Suppose that $X$ is special in the sense of Campana (for example, if its logarithmic Kodaira dimension $\bar{\kappa}(X)=0$). Then for any linear representation $\varrho \colon \pi_1(X)\to \GL_N(\mathbb{C})$, the image $\varrho(\pi_1(X))$ is virtually nilpotent of class at most $2$.
	\end{thmx}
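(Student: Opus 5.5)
We start from the theorem of Cadorel, Yamanoi and the second author: for $X$ special and quasi-projective, $G:=\varrho(\pi_1(X))$ is virtually nilpotent. After passing to a finite étale cover (which is still special) we may therefore assume that $G$ is nilpotent and that its Zariski closure $\overline{G}$ is connected. A connected nilpotent algebraic group splits as $\overline{G}=T\times U$, with $T$ a torus and $U$ unipotent. The semisimple part only contributes the abelian factor $T$, so the problem reduces to the unipotent part. Concretely, it suffices to show that any unipotent representation $\pi_1(X)\to U$ with Zariski-dense image has image of nilpotency class at most $2$. Indeed, $G$ embeds into $T\times U$ with image projecting densely to $U$. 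Hence it is enough to show that the triple commutators $[[G,G],G]$ are trivial, equivalently that the Lie algebra $\mathfrak u$ satisfies $[[\mathfrak u,\mathfrak u],\mathfrak u]=0$.

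For unipotent representations, we use Hain's and Morgan's mixed Hodge theory on the Malcev completion of $\pi_1(X)$. The Malcev Lie algebra $\mathfrak g$ of $\pi_1(X)$ carries a mixed Hodge structure. Its lower central series quotients satisfy
\[
\mathfrak g/[\mathfrak g,\mathfrak g]\simeq H_1(X,\mathbb C),
\]
and the quotient $\Gamma^2/\Gamma^3$ is a quotient of $\wedge^2H_1$. Any unipotent representation with dense image factors through a quotient $\mathfrak g/\Gamma^{k}$. So we need to show that the map $\Gamma^2\mathfrak g/\Gamma^3\mathfrak g\otimes H_1\to \Gamma^3\mathfrak g/\Gamma^4\mathfrak g$ is zero. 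This can be phrased in terms of the quasi-Albanese map $\alpha\colon X\to A$, where $A$ is a semi-abelian variety. The key input is that for special $X$ the map $\alpha$ is dominant with connected fibres; this is Campana's result, and in the quasi-projective case it is covered by the paper's earlier results. The main new ingredient we would establish is that the general fibre $F$ of $\alpha$ is again special, and that the local system restricted to $F$ has finite monodromy once it is unipotent. To see the second part, note that $H_1(F)\to H_1(X)$ has image that is torsion, since the fibres are contracted by the Albanese. Then a nilpotent induction on the lower central series shows that unipotent monodromy along $F$ is trivial.

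Granting this, the unipotent representation essentially comes from $\pi_1(A)=\mathbb Z^{r}$, modulo the contribution of the extension $1\to \pi_1(F)\to\pi_1(X)\to\pi_1(A)\to 1$ (up to finite index, after removing multiple fibres via the orbifold base). The class of this extension in $H^2(A,\cdot)$ is what allows two-step nilpotent groups to occur, as with Heisenberg groups arising from elliptic fibrations over an abelian base. If the kernel's image is central, the image is a central extension of an abelian group, and hence of class at most $2$. Centrality should follow from the fibre-triviality established above: the image of $\pi_1(F)$ is finite and normal, so after a finite cover it is trivial. The remaining issue is that this argument only kills $\pi_1$ of a general fibre, not of special fibres.

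The main obstacle is therefore controlling the monodromy contributed by codimension-one loci of $A$ over which the fibres are non-reduced or degenerate, and boundary components at infinity. These loci can generate extra unipotent monodromy that is not visible on general fibres, and could a priori produce class $\ge 3$. Here we would use the deformation-theoretic approach announced in the abstract. We construct universal deformations of the unipotent local system, show that the deformation space is governed by a cup-product map on $H^1$ that factors through $\wedge^2H^1(A)$, and show that higher Massey products vanish on special varieties. For the vanishing we compare with $A$, which is formal in the relevant sense by purity of weights: for a semi-abelian variety the weight filtration forces the Malcev algebra to be quadratic-nilpotent of class $\le 2$ relative to the extension. Proving that the $\Gamma^3/\Gamma^4$ term vanishes for special $X$ — likely by showing any nonzero such term would produce, via Bogomolov–Sommese type vanishing or a fibration to a curve of general type, a non-special orbifold base — is the step we expect to be hardest.
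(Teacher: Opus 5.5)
Your reduction to a unipotent representation $\tau$ matches the paper's. The argument breaks where you claim that unipotent monodromy along a general fibre $F$ of the quasi-Albanese map $\alpha\colon X\to A$ is trivial. For non-compact $X$ this is false. Take $X=L^\times$, the complement of the zero section in a line bundle $L$ of degree $d\neq 0$ on an elliptic curve $E$:
\begin{itemize}
\item compactifying to $\mathbb{P}(\mathcal O\oplus L)$, with boundary the two disjoint sections, gives $K+D\sim 0$, so $\bar\kappa(X)=0$ and $X$ is special;
\item $H^1(X,\mathbb{Q})=H^1(E,\mathbb{Q})$, so $\alpha$ is the projection to $E$ with fibre $\bC^*$;
\item $\pi_1(X)$ is a Heisenberg group whose infinite cyclic centre is generated by the fibre loop, so its faithful unipotent representation into unitriangular $3\times 3$ matrices is non-trivial on $\pi_1(F)$.
\end{itemize}
Torsion image of $H_1(F)\to H_1(X)$ only places the image of $\pi_1(F)$ in the commutator subgroup up to finite index; no induction on the lower central series goes further. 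What you are implicitly using is the $1$-step phenomenon (\cref{thm:1-step}, \cref{lem:common}). That rests on formality and holds for compact K\"ahler manifolds, but not here.

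The proposal is also internally inconsistent. If $\tau(\pi_1(F))$ were finite, it would be trivial, since unipotent groups are torsion-free. Then $\pi_1$-exactness would force $\tau(\pi_1(X))$ to be abelian, contradicting the Heisenberg examples you mention yourself. Nor are special or multiple fibres the issue: \cref{thm:pi1} already gives exactness of $\pi_1(F)\to\pi_1(X)\to\pi_1(A)\to 1$ for a general fibre.

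Finally, the deformation-theoretic part leaves the decisive step, $\Gamma^3/\Gamma^4=0$, unproved, and the suggested mechanism cannot work as stated. The $X$ above is homotopy equivalent to a Heisenberg nilmanifold, which has a non-zero triple Massey product $\langle x,x,y\rangle$. Moreover, near the trivial representation the germ is not a quadratic cup-product cone. It is a quasi-homogeneous cone with generators of weights $1$ and $2$ (from $H^1(\overline X,E)$ and $H^0(D^{(1)},E)$) and relations of weights $2,3,4$ (\cref{main3}).

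The missing idea is to iterate the quasi-Albanese map. Write $Y$ for the general fibre of $\mathrm{alb}_X$ (your $F$). By \cref{main:special} it is special; you state this but never use it. So $\mathrm{alb}_Y$ is again $\pi_1$-exact, and we now write $F$ for its general fibre. The paper's key result is the $2$-step phenomenon (\cref{thm:final}): every representation in the irreducible component of $R(X,N)$ containing the trivial one is trivial on $\pi_1(F)$. By \cref{claim:samecon} this includes every unipotent representation. It is proved with the explicit universal logarithmic connection $\nabla+\sum_k\eta_k(z,w)$ of \cref{main1,thm:universal2}:
\begin{itemize}
\item $\eta_1$ vanishes on $Y$ because $Y$ is contracted by $\mathrm{alb}_X$, so $\eta_2|_Y$ is closed;
\item the $(1,0)$-part of $\eta_2|_Y$ is a logarithmic $1$-form on $Y$, hence vanishes on $F$;
\item its $(0,1)$-part vanishes by $\bnabla'$-exactness and the $\d\db$-lemma;
\item then \cref{item:pullback2step} kills all $\eta_k$ on $F$ inductively.
\end{itemize}
Thus $\tau(\pi_1(Y))$ is abelian, and so is $\tau(\pi_1(X))/\tau(\pi_1(Y))$.

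The paper concludes from these two facts. Note, however, that they only make $\tau(\pi_1(X))$ metabelian: $U_4(\bZ)$ is abelian-by-abelian of class $3$. So one must also check that $\tau(\pi_1(Y))$ is central, and this is where your Hodge-theoretic idea belongs. Apply the $2$-step phenomenon to the universal unipotent representations $\pi_1(X)\to\exp(\mathfrak g_X/\Gamma^k)$. Then the morphism $\mathfrak g_Y\to\mathfrak g_X$ of Hain's mixed Hodge structures factors through $H_1(Y)$, which has weights $-1,-2$. Since $H_1(Y,\mathbb{Q})\to H_1(X,\mathbb{Q})$ vanishes, it lands in $[\mathfrak g_X,\mathfrak g_X]\subset W_{-2}\mathfrak g_X$. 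By strictness its image $I$ is pure of weight $-2$. By $\pi_1$-exactness, $I$ is an ideal containing $[\mathfrak g_X,\mathfrak g_X]$. Since $[\mathfrak g_X,I]\subset I\cap W_{-3}\mathfrak g_X=0$, $I$ is central, which gives class $\le 2$.
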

	We recall the following definition of nilpotent group. 
	\begin{dfn}[Nilpotent group]
		A group $G$ is \emph{nilpotent} if it possesses a central series of finite length. This is defined as a series of normal subgroups$$\{1\} = G_0 \trianglelefteq G_1 \trianglelefteq \cdots \trianglelefteq G_n = G$$such that $G_{i+1}/G_i \leq Z(G/G_i)$ for all $0 \leq i < n$. For a nilpotent group $G$, the minimal integer $n$ for which a central series of length $n$ exists is called the \emph{nilpotency class} of $G$. Such a group is said to be \emph{nilpotent of class $n$}.   We say that $G$ is   \emph{$n$-step nilpotent}  if it is  nilpotent of class  less than or equal to $n$. In particular, 1-step nilpotent groups are abelian. The \emph{Hisrch rank}, denoted $h(G)$, is defined as the sum of the torsion-free ranks of these abelian quotients: 
		\begin{align}\label{eq:Hirsch}
			h(G):=\sum_{i=1}^{n}\rank_{\bZ}G_i/G_{i-1}.
		\end{align}
		By the Schreier's refinement Theorem, $h(G)$ is well-defined, and is independent of the choice of the  above central series.  
	\end{dfn}   
	Previously, in \cite{CDY25b}, Cadorel, Yamanoi, and the second author proved that for any $\varrho$ as in \cref{main}, the image $\varrho(\pi_1(X))$ is virtually nilpotent. Moreover, they constructed examples of special varieties whose fundamental groups are $2$-step nilpotent but not virtually abelian.
	Later, in \cite{DY23b}, Yamanoi and the second author proved that for any representation $\tau:\pi_1(X)\to \GL_{N}(K)$ where $K$ is a field of positive characteristic, the image $\tau(\pi_1(X))$ is virtually abelian. With these results in place, \cref{main} completes the picture for the monodromies of local systems of arbitrary characteristic over special quasi-projective varieties.

	One motivation for this paper is   the following fundamental question
	concerning fundamental groups of complex algebraic varieties.
	\begin{question}\label{Q1}
		Let $X$ be a smooth complex quasi-projective variety.
		Assume that its fundamental group $\pi_1(X)$ is nilpotent.
		Is $\pi_1(X)$ necessarily $2$-step nilpotent?
	\end{question}
	Since every finitely generated nilpotent group is linear, \cref{main} thus  yields a positive answer to \Cref{Q1} for the class of special varieties. 
	
	We remark that  \Cref{Q1} was originally asked by Campana \cite[Remarque 1.5]{Cam95}
	for compact K\"ahler manifolds, and only recently raised in its present form for quasi-projective varieties in his joint work with Aguilar \cite[Question~25]{AC25}.  Very recently, there have been some interesting works regarding \Cref{Q1} by Shimoji \cite{Taito25}, who provided a positive answer when the rank of $\pi_1(X)$ is at most seven. In \cite{Rogov25}, Rogov proposed a strategy to study \Cref{Q1} using o-minimality, combined with Hain's higher Albanese maps \cite{Hai87} and the unipotent variations of mixed Hodge structures of Hain–Zucker \cite{HZ87}.  
	
	In the present paper, our approach to proving \cref{main} is essentially self-contained and does not rely on any of these preceding strategies or tools. A key novelty of our work is the establishment of a \emph{$2$-step phenomenon in non-abelian Hodge theory}. We begin by clarifying this terminology with the following definition.
	
	\begin{dfn}[Quasi-compact K\"ahler]A complex manifold $X$ is called \emph{quasi-compact K\"ahler} if it is a Zariski open subset of a compact K\"ahler manifold $\overline{X}$, which we refer to as a \emph{compactification} of $X$. A holomorphic map between two quasi-compact K\"ahler manifolds is called \emph{compactifiable} if it extends to a meromorphic map between their compactifications.\end{dfn}
	The following theorem is a simplified version of the so-called \emph{2-step phenomenon}. For a more general statement, see \cref{thm:Step 2}.
	\begin{thmx}[=\cref{thm:final}]\label{main:2step} 
		Let $X$, $Y$, and $Z$ be quasi-compact K\"ahler manifolds. Let $f \colon Y \to X$ and $g \colon Z \to Y$ be compactifiable holomorphic maps. Assume that the images of these maps under the respective quasi-Albanese maps are points; that is,
		$   \operatorname{alb}_X(f(Y)) = \{\mathrm{pt}\} $  and $ \operatorname{alb}_Y(g(Z)) = \{\mathrm{pt}\}$, 
		where $\operatorname{alb}_X \colon X \to \operatorname{Alb}(X)$ and $\operatorname{alb}_Y \colon Y \to \operatorname{Alb}(Y)$ denote the quasi-Albanese maps of $X$ and $Y$, respectively.

		Then, for any $N \in \mathbb{Z}_{>0}$ and any representation $\tau \colon \pi_1(X,x) \to \operatorname{GL}_N(\mathbb{C})$ belonging to the irreducible component of the representation variety $\operatorname{Hom}(\pi_1(X,x), \operatorname{GL}_N(\mathbb{C}))$ that contains the trivial representation, the pullback representation $(f\circ g)^*\tau:\pi_1(Z)\to \GL_N(\bC)$ is trivial. In particular, for any unipotent representation $\tau$, the pullback $(f\circ g)^*\tau$ is trivial. 
	\end{thmx}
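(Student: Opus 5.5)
Throughout, I treat $\tau$ via deformation theory around the trivial representation, in the spirit of the universal deformations announced in the abstract. First I would show that on the component $C\subset \Hom(\pi_1(X,x),\GL_N(\bC))$ containing the trivial representation, the property ``$(f\circ g)^*\tau$ is trivial'' is Zariski closed. Since $C$ is irreducible, it then suffices to check the property on a Zariski-dense subset of $C$, or even on a neighbourhood of the trivial point in the analytic topology. Near the trivial representation, Goldman--Millson type formality (which holds for quasi-compact Kähler manifolds in the mixed Hodge form due to Morgan and Kapovich--Millson) describes the germ of $C$ at $1$ through the mixed Hodge structure on the Malcev Lie algebra of $\pi_1(X)$. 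The key input is that $H^1$ carries weights $1$ and $2$, and the relations live in weights $\le 4$. Concretely, the complete local ring of the representation variety at the trivial point is controlled by the quadratic/cubic data of $H^1(X)\to H^2(X)$ cup products together with the weight filtration.

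The core step is a two-level vanishing. Hypothesis $\operatorname{alb}_X(f(Y))=\mathrm{pt}$ means $f^*:H^1(X,\bC)\to H^1(Y,\bC)$ is zero, since $H^1$ of a quasi-compact Kähler manifold is detected by the quasi-Albanese (logarithmic one-forms and their conjugates, via the mixed Hodge structure). Hence $f^*\tau$ is trivial on $\Gamma$, and more precisely $f^*$ kills the first graded piece $\gr^1$ of the lower central series filtration of the Malcev completion. I then want to show that $f^*\tau$ lands in a unipotent-type family: the induced map on Malcev Lie algebras $\mathfrak{u}(Y)\to\mathfrak{u}(X)$ has image in $[\mathfrak u(X),\mathfrak u(X)]$. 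Dually, by strictness of morphisms of mixed Hodge structures, the image lands in weights $\le -2$ at the first step, the part coming from $\ker(\wedge^2H^1\to H^2)$.

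Next, $g^*$ kills $H^1(Y)$, so $g_*:\mathfrak u(Z)\to\mathfrak u(Y)$ lands in $[\mathfrak u(Y),\mathfrak u(Y)]$. Composing, $(f\circ g)_*$ lands in $f_*[\mathfrak u(Y),\mathfrak u(Y)]\subset[[\mathfrak u(X),\mathfrak u(X)],[\mathfrak u(X),\mathfrak u(X)]]\subset \Gamma^4\mathfrak u(X)$. By itself this is not enough for general $\tau\in C$, and this is where the non-abelian Hodge input enters. I would deform $\tau$ near $1$ using the universal deformation, for instance over the germ parametrised by harmonic/Higgs data. Along such a deformation, $\tau$ factors (to all orders) through a quotient of $\mathfrak u(X)$ determined by first and second order terms, by the formality/quadratic-presentation statement with weights. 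I would then argue that the relevant Lie algebra seen by $C$ near $1$ is effectively $2$-step, so that $\Gamma^3$ acts trivially, and conclude formally. Afterwards I would pass from the formal germ to the actual component by the closedness argument above.

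The main obstacle is the last point: showing that the pullback via a map killing $H^1$ of a whole family of local systems near the trivial one is trivial, rather than just unipotent ones. I expect to need genuine analytic input here, namely the universal deformation constructed later in the paper, probably via the $\partial\bar\partial$-lemma for log forms, to show that Maurer--Cartan solutions near $0$ can be chosen as polynomials in closed log $1$-forms. Such solutions pull back to zero under $f^*$ up to an exact correction, and the correction dies after the second pullback by $g^*$. The unipotent case at the end of the statement follows because unipotent representations lie in the component of the trivial representation via the scaling $\tau_t$ obtained by conjugating with $\mathrm{diag}(t^i)$.
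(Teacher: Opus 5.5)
Your reduction to a neighbourhood of the trivial point (the locus $\{\tau\in C:(f\circ g)^*\tau=1\}$ is Zariski closed) is fine and matches the paper, and so is your handling of unipotent $\tau$ via scaling. The gap is in the core step.

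\textbf{The lower central series argument does not close.} Killing $H^1$ twice only gives $(f\circ g)_*\mathfrak u(Z)\subset[\Gamma^2\mathfrak u(X),\Gamma^2\mathfrak u(X)]\subset\Gamma^4\mathfrak u(X)$. This alone does not force even a unipotent $\tau$ of class $\ge 4$ to pull back trivially.

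The statement you invoke to close the gap is false. You claim that the representations of $C$ near $1$ are ``effectively $2$-step'', so that $\Gamma^3$ acts trivially. But for $X=\mathbb{P}^1\setminus\{0,1,\infty\}$ one has $C=\GL_N(\bC)^2$. Conjugating a unipotent representation of any class by $\mathrm{diag}(t^{N-1},\dots,t,1)$ gives representations of the same class that tend to $1$ as $t\to 0$.

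The genuine $2$-step phenomenon is of a different nature. The germ of $C$ at $1$ is a quasi-homogeneous cone with generators of weights $1$ and $2$. So the universal family is \emph{determined} by its data of order $\le 2$, and only its truncation modulo $\km^3$ has $2$-step monodromy.

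Your last paragraph also speaks of ``a map killing $H^1$''. One such map never suffices here. Take the complement of the zero section of a line bundle of nonzero degree on an elliptic curve: the quasi-Albanese map contracts the $\bC^*$-fibres, yet they carry nontrivial Heisenberg monodromy. So the two hypotheses must be used at different orders.

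\textbf{Your analytic fallback does not supply the missing mechanism.} The Maurer--Cartan elements $\sum_k\eta_k$ of \cref{main1} are not polynomials in closed log forms. Also, $f^*\eta_2$ is not exact: it is a closed logarithmic $1$-form on $Y_0$, possibly with nonzero class.

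The paper argues as follows:
\begin{itemize}
\item $\eta_1$ is harmonic (for trivial $V$, such forms come from the Albanese), so $f^*\eta_1=0$ as a form.
\item Then $\bnabla\eta_2=-\iota_*\beta-\eta_1\wedge\eta_1$ gives $\nabla f^*\eta_2=0$. Hence the $(1,0)$-part of $f^*\eta_2$ is a log holomorphic $1$-form on $Y$, which $g^*$ kills by the second hypothesis.
\item What remains on $Z$ is closed and $\bnabla'$-exact, hence $0$ by \cref{lem:ddbar}.
\item \cref{item:pullback2step} propagates this to all orders: each $\eta_k$ is $\bnabla'$-exact and $\bnabla h^*\eta_k=-\sum_j h^*\eta_j\wedge h^*\eta_{k-j}$.
\item Universality (\cref{thm:universal2}) turns $h_0^*\nabla_{z,w}=h_0^*\nabla$ into constancy of $h_0^*$ near $1$.
\end{itemize}
Your proposal has neither the form-level vanishing at order $2$ nor this $\partial\bar\partial$-induction combined with universality.

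\textbf{A repair within your Malcev framework.} Use the weight grading instead of the lower central series. With Deligne's splitting:
\begin{itemize}
\item $\mathfrak u$ is generated by $\mathfrak u_{-1}\oplus\mathfrak u_{-2}$;
\item $\mathfrak u_{-1}\cong\gr^W_{-1}H_1$;
\item $\ker(\mathfrak u_{-2}\to\gr^W_{-2}H_1)=[\mathfrak u_{-1},\mathfrak u_{-1}]$.
\end{itemize}
So $g_*$ kills $\mathfrak u_{-1}(Z)$ and sends $\mathfrak u_{-2}(Z)$ into $[\mathfrak u_{-1}(Y),\mathfrak u_{-1}(Y)]$, which $f_*$ kills. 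Thus $(f\circ g)_*=0$. Since deformations of the trivial representation over $A\in\Art$ are $\Hom(\mathfrak u,\mathfrak{gl}_N\otimes\km_A)$, this yields the full statement. The price is a functorial mixed Hodge structure on Malcev Lie algebras of quasi-compact K\"ahler manifolds, which the paper deliberately avoids.
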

	A more conceptual formulation of \cref{main:2step} is given in \cref{main3}. We refer the reader to \cite{Fuj24} for the precise definition and properties of quasi-Albanese maps.
	
	To prove \cref{main:2step}, we develop a deformation theory for certain local systems on quasi-compact K\"ahler manifolds $X$.  More precisely, inspired by the celebrated work of Eyssidieux--Simpson \cite{ES11}, we  construct a \emph{canonical and  universal} connection  for extendable unitary local systems on   $X$.   Since the precise statements are somewhat technical, we only state a special case here and refer the reader to \cref{main1,thm:universal2} for the general formulations.
	
	\begin{thmx}[$\subset$\cref{main1,thm:universal2,thm:Step 2}]\label{main3}
		Let $\overline{X}$ be a compact K\"ahler manifold and let $D=\sum_{i=1}^{m}D_i$ be a simple normal crossing divisor on $\overline{X}$. Set $X:=\overline{X}\setminus D$ and fix a base point $x\in X$. Let $(V,\nabla)$ be a trivial flat bundle of rank $N$ on $\overline{X}$ and denote the associated endomorphism bundle by $(E,\nabla):=(\End(V),\nabla)$.
		
		Let $Z$ be the affine subvariety of the affine space underlying the complex vector space  $H^1(\overline{X},E)\times H^0(D^{(1)},E)$ defined by the following quasi-homogeneous equations for   $(\alpha,\beta)\in H^1(\overline{X},E)\times H^0(D^{(1)},E)$ of type $(1,2)$:
		\begin{align*}
			g(\beta) + \frac{1}{2}[\alpha,\alpha] &= 0 \in H^2(\overline{X},E), \\
			[\iota^* \alpha, \beta] &= 0 \in H^1(D^{(1)},E), \\
			\bigl[ \iota_{ij}^* (\beta|_{D_i}), \iota_{ji} ^* (\beta|_{D_j}) \bigr] &= 0 \in H^0(D_i \cap D_j, E) \quad \text{for any } 1 \leq i < j \leq m,
		\end{align*}
		where $D^{(1)}:=\bigsqcup_{i=1}^{m}D_i$, and $\iota \colon D^{(1)}\to \overline{X}$ is the natural morphism. Here, $g \colon H^0(D^{(1)},E)\to H^2(X,E)$ denotes the Gysin morphism (cf. \cref{def:Gysin}), and $\iota_{ij} \colon D_i\cap D_j\to D_i$ is the inclusion map.
		
		Then there exists an open ball $B \subset H^1(\overline{X},E)\times H^0(D^{(1)},E)$ centered at the origin $O$ such that, over the analytic open subset $\mathcal{T} := Z \cap B$, there exists a \emph{canonical} and \emph{analytic} family of logarithmic connections $\nabla_{(\alpha,\beta)}$ on $V$, parametrized by $(\alpha,\beta) \in \mathcal{T}$. These connections are flat over $X_0$ and satisfy the following property: 
		The natural map defined by the monodromy representations of the flat connections $(V|_X,\nabla_{(\alpha,\beta)})$
		\begin{align*}
			\mathcal{T} &\to \Hom(\pi_1(X,x), \GL_{N}(\mathbb{C})) \\
			(\alpha,\beta) &\mapsto \operatorname{Mon}(\nabla_{(\alpha,\beta)})
		\end{align*}
		is a biholomorphism onto an open neighborhood of the trivial representation in the representation variety
		$\Hom(\pi_1(X,x), \GL_{N}(\mathbb{C}))$. 
		
		Furthermore, we can regard $\nabla_{(\alpha,\beta)}$ as defining a representation
		$$
		\varrho_{\cT} \colon \pi_1(X) \to \GL_N(\cO_{\cT,O}),
		$$
		where $\cO_{\cT,O}$ is the local ring of $\cT$ at the origin $O$. Let $\cV$ be the local system defined by the composition
		$$
		\pi_1(X) \xrightarrow{\varrho_{\cT}} \GL_N(\cO_{\cT,O}) \to \GL_N(\cO_{\cT,O}/\km^3),
		$$
		where $\km$ denotes the maximal ideal of $\cO_{\cT,O}$. This local system has 2-step nilpotent monodromy. Moreover, it satisfies the following property: if $f \colon Y \to X$ is a compactifiable holomorphic map from a quasi-K\"ahler manifold $Y$ such that the pullback $f^*\cV$ is trivial, then the pullback representation $f^*\tau \colon \pi_1(Y) \to \GL_N(\mathbb{C})$ is also trivial for any representation $\tau \colon \pi_1(X,x) \to \GL_N(\mathbb{C})$ belonging to the irreducible component of the representation variety $\Hom(\pi_1(X,x), \GL_N(\mathbb{C}))$ that contains the trivial representation.
	\end{thmx}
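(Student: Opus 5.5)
We prove Theorem C in three stages: construct the family $\nabla_{(\alpha,\beta)}$, show it is universal (the biholomorphism onto a neighbourhood of the trivial representation), and deduce the $2$-step statement from the truncation modulo $\km^3$.

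\emph{Construction.} We follow the Goldman--Millson / Eyssidieux--Simpson approach, adapted to the logarithmic setting. Fix a Kähler metric on $\overline{X}$ and the trivial flat connection $d$ on $V$. We look for logarithmic connections of the form
\[
\nabla_{(\alpha,\beta)} = d + \eta(\alpha,\beta),
\]
with $\eta$ a logarithmic $E$-valued $1$-form. Its first-order part is
\[
\eta_1 = h(\alpha) + \sum_i \beta_i \,\tfrac{1}{2\pi\sqrt{-1}}\, d\log|s_i|^2\text{-type correction},
\]
where $h$ denotes harmonic representatives and the second term is chosen so that $\beta_i\in H^0(D_i,E)$ becomes the residue along $D_i$. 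The curvature of $\eta_1$ then splits into two pieces: a smooth part, whose class is $g(\beta)+\tfrac12[\alpha,\alpha]$, and boundary terms supported near the $D_i$.

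We then solve the Maurer--Cartan equation $d\eta+\tfrac12[\eta,\eta]=0$ on $X$ by a Kuranishi-type iteration,
\[
\eta = \eta_1 - d^*G\bigl(\tfrac12[\eta,\eta]\bigr).
\]
This iteration should be run in the logarithmic de Rham complex $\Omega^\bullet_{\overline{X}}(\log D)\otimes E$, which computes $H^\bullet(X,E)$ and carries Deligne's weight filtration. The $E_2$-degeneration of the weight spectral sequence (mixed Hodge theory with unitary coefficients) supplies a formality statement: the obstruction classes lie in $\gr^W$ pieces, which are
\[
H^2(\overline{X},E),\qquad H^1(D^{(1)},E),\qquad H^0(D_i\cap D_j,E).
\]
The three displayed equations are exactly the vanishing of the quadratic obstruction in these three pieces. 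Formality makes the higher obstructions vanish once the quadratic one does. Convergence on a small ball $B$ follows from Hölder or $L^2$ estimates, as in Kuranishi's method. Canonicity comes from the fixed gauge $d^*\eta=0$ together with harmonic normalisation.

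\emph{Universality.} We compare complete local rings. By Goldman--Millson, the germ of $\Hom(\pi_1(X,x),\GL_N)$ at the trivial representation pro-represents the deformation functor of the framed DGLA $A^\bullet(\log D,E)$. Mixed-Hodge formality identifies this DGLA with a quasi-homogeneous model whose Maurer--Cartan locus is the cone $Z$. The monodromy map $\mathcal{T}\to\Hom$ induces the identity on tangent spaces and on obstruction theories, so it is formally an isomorphism. By Artin approximation it is then a local biholomorphism.

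\emph{Two-step property.} The equations defining $Z$ are quasi-homogeneous of weights $(1,2)$. Hence modulo $\km^3$ the representation $\varrho_{\cT}$ has the form
\[
\varrho_{\cT} \equiv \exp\bigl(\text{weight-}1\ \alpha\text{-terms} + \text{weight-}2\ (\beta,\alpha\alpha)\text{-terms}\bigr),
\]
whose commutators land in $\km^2$. It follows that the monodromy group is $2$-step nilpotent.

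For the pullback statement, suppose $f^*\cV$ is trivial. Pull back the whole family; by functoriality of the log complexes under compactifiable maps, the comparison of weight-graded pieces shows that $f^*\varrho_{\cT}$ is trivial modulo $\km^3$. We then argue by weights. Using the $\bC^*$-action on $\mathcal{T}$ coming from quasi-homogeneity, the Taylor coefficients of $f^*\varrho_{\cT}$ in degree $\geq 3$ are iterated Massey-type products of classes that already vanish in degrees $1$ and $2$; equivalently, the induced morphism of formal mixed-Hodge models is trivial. So $f^*\varrho_{\cT}$ is trivial on the germ. By analyticity it is then trivial on the whole irreducible component through the trivial representation.

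The main obstacle is the formality step: showing that the Maurer--Cartan equation for log connections with prescribed residues is governed solely by the three quadratic equations. This requires a careful harmonic theory for $\Omega^\bullet(\log D)\otimes E$ compatible with residues. I would attempt it by first solving on $\overline{X}$ with residue constraints, then using Deligne's $\partial\bar\partial$-type lemma for the weight filtration.
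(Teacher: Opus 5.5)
\textbf{Main gap: the pullback assertion.} The decisive step, that triviality of $f^*\cV$ forces triviality of $f^*\varrho_{\cT}$, is not established.

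\begin{itemize}
\item Vanishing to second order does not propagate by itself: a map of germs can vanish modulo $\km^3$ without being constant.
\item ``Iterated Massey-type products of vanishing classes'' are defined only up to indeterminacy, so they say nothing about the specific higher coefficients of $f^*\varrho_{\cT}$.
\item The weight-$2$ input is not a cohomology class on $Y$. It is the logarithmic form carrying the residue $\beta$.
\end{itemize}

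A weight argument would need $f^*\colon R(X_0,N)\to R(Y_0,N)$ to respect the quasi-homogeneous structures of both germs. One way would be functorial mixed Hodge structures on the complete local rings, with the universal representation a morphism of such structures, followed by a Deligne-splitting argument. You neither invoke nor prove any such functoriality. The $\bC^*$-action on $\cT$ from your construction depends on metrics, harmonic projections and Green operators, and $f$ respects none of these.

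The missing idea is the paper's rigid gauge. Every $\eta_k$ with $k\ge 2$ is a $\bnabla'$-exact logarithmic form, which is a condition that survives holomorphic pullback, unlike harmonicity. For $k\ge3$ one also has $\Res_D\eta_k=0$ and $\bnabla\eta_k=-\sum_{i=1}^{k-1}\eta_i\wedge\eta_{k-i}$. The argument then runs as follows.
\begin{itemize}
\item Test $f^*\cV$ along the weighted curve through $(z_0,w_0)\in\cT$ and read the connection matrix modulo $t^3$. This gives $f^*\eta_1(z_0,w_0)=f^*\eta_2(z_0,w_0)=0$.
\item By induction, each $f^*\eta_k$ is then a $\bnabla$-closed, $\bnabla'$-exact $1$-form, hence zero by \cref{lem:ddbar}.
\item So $f^*\nabla_{z,w}=f^*\nabla$ on all of $\cT$ (\cref{item:pullback2step}), and universality plus algebraicity of $f^*$ finish the proof.
\end{itemize}

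\textbf{The construction.} As you acknowledge, this is left open exactly where the work lies. As written it is also inconsistent with the weights you use later. If the lift of $\beta$ sits in $\eta_1$, then on $X_0$ the form $\nabla\eta_1$ contains the harmonic representative of $g(\beta)$. That term can only be cancelled by $\tfrac12[\alpha,\alpha]$ if $\beta$ has weight $2$. In the paper, $f(\beta)$ enters $\eta_2$ together with $-\tfrac12\nabla''^*G[\eta_1,\eta_1]$. The first equation of $Z$ is exactly what yields $\bnabla\eta_2=-\iota_*\beta-\tfrac12[\eta_1,\eta_1]$.

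Since $G$ and $\bnabla''^*$ do not preserve $A^\bullet(X,D,E)$, two things must be checked at each step:
\begin{itemize}
\item $\gamma_k$ has vanishing residues, and vanishing residues of residues along $D_i\cap D_j$, so that it is $\bnabla$-closed and $\bnabla'$-exact as a current;
\item $\bnabla''^*G\gamma_k$ is again logarithmic, with uniform Sobolev bounds.
\end{itemize}
This is where the second and third equations of $Z$ enter (\cref{lem:residue}, \cref{prop:regularity}, \cref{effectiveest}, and Steps 1--4 of \cref{thm:construction}).

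Appealing to ``formality'' does not replace this. The log dgla is in general not formal. The $1$-quasi-isomorphism with the Gysin dgla $(C^\bullet,d)$ (\cref{thm:zigzag}) identifies the germ of $R(X_0,N)$ with that of $Z$. It does not produce a family of log connections in a gauge you can control under pullback.

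\textbf{Smaller points.}
\begin{itemize}
\item For universality you must also check that the zigzag sends your family to the tautological element $\sum z_i\alpha_i+\sum w_\gamma\beta_\gamma$ of $C^1$. This again uses $\bnabla'$-exactness and $\Res_D\eta_k=0$ for $k\ge3$.
\item Artin approximation is unnecessary, since the monodromy map is already analytic.
\item The $2$-step nilpotency of $\cV$ is fine. It is automatic, because $\ker(\GL_N(\cO_{\cT,O}/\km^3)\to\GL_N(\bC))$ is $2$-step nilpotent.
\end{itemize}
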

	We refer the reader to \cref{main1} for the precise formula for $\nabla_{(\alpha,\beta)}$. Notably, when $X$ is compact, the above quasi-homogeneous equations reduce to the homogeneous quadratic equation $[\alpha,\alpha]=0$. \cref{main3} provides an analytic viewpoint and a concrete construction of the equivalence of analytic germs between the representation variety $\Hom(\pi_1(X,x), \GL_{N})$ and the quadratic cone in $H^1(X,E)$ defined by $[\alpha,\alpha]=0$, a result originally established by Goldman and Millson \cite{GM88} and Eyssidieux and Simpson \cite{ES11}. The compact case of \cref{main3} is covered by the more general framework in \cref{thm:ES}. We refer the reader to \cref{main1} for the precise formula for $\nabla_{(\alpha,\beta)}$.  
	Subsequently, we establish the \emph{1-step phenomenon} for local systems on \emph{compact K\"ahler} manifolds in \cref{thm:1-step}. Together, these phenomena provide   a conceptually unifying framework that allows us to reprove several known results in a uniform way:
	\begin{enumerate}[label=(\arabic*)]
		\item the theorems of Katzarkov \cite{Kat95} and Leroy \cite{Ler} establishing the Shafarevich conjecture for compact K\"ahler manifolds with nilpotent fundamental groups (see \cref{thm:Kat});
		\item more generally, the theorems of Green--Griffiths--Katzarkov \cite{GGK24} and Aguilar--Campana \cite{AC25} on the holomorphic convexity of the universal covers of quasi-compact K\"ahler manifolds with nilpotent fundamental groups and \emph{proper} quasi-Albanese maps (see \cref{thm:GGK});
		\item Campana's theorem \cite{Cam04} on the virtual abelianity of the monodromy of local systems over special smooth projective varieties (see \cref{thm:Cam}); and
		\item a structure theorem for the linear Shafarevich morphism (see \cref{thm:structure}), established implicitly in the work of Eyssidieux, Katzarkov, Pantev, and Ramachandran on the linear Shafarevich conjecture \cite{EKPR12}, and later applied by Wang and the second author \cite{DW24b} in their proof of the linear Koll\'ar conjecture on the positivity of the holomorphic Euler characteristic for varieties with big fundamental groups.
	\end{enumerate}

	In the course of proving \cref{main3}, we derive several ancillary results, including the construction of explicit zigzags of \emph{$1$-quasi-isomorphisms} between the differential graded Lie algebras  (dglas) associated with $E$-valued logarithmic complexes and the cohomology complexes. We present this construction here due to its independent interest.
	\begin{thmx}[=\cref{thm:zigzag}]\label{main:zigzag}
		Let $\overline{X}$ be a compact K\"ahler $n$-fold and let $D=\sum_{i=1}^{m}D_i$ be a simple normal crossing divisor on $\overline{X}$. Let $(V,\nabla)$ be a unitary flat vector bundle   on $\overline{X}$ and denote the associated endomorphism bundle by $(E,\nabla):=(\End(V),\nabla)$. Then for the following maps
		\begin{equation*}
			\begin{tikzcd}
				A^0(\overline{X},D,E)\arrow[r,"\nabla"] & \cdots \arrow[r,"\nabla"]&  A^{2n}(\overline{X},D,E)  \\
				A^0(\overline{X},D,E)\cap \ker\nabla' \arrow[r,"\nabla''"]\arrow[d,"\psi"]\arrow[u,"\phi"]  & \cdots  \arrow[u,"\phi"]\arrow[d,"\psi"]\arrow[r,"\nabla''"] & A^{2n}(\overline{X},D,E)\cap\ker\nabla'\arrow[d,"\psi"]\arrow[u,"\phi"]    \\
				H^0(D^{(0)},E)\arrow[r,"d"] &\cdots \arrow[r,"d"] & \oplus_{p+q=2n}   H^{q}(D^{(p)},E)  
			\end{tikzcd}
		\end{equation*} 
		the vertical morphisms $\phi$ and $\psi$ are both $1$-equivalences (see \cref{def:1equiv}) connecting the three horizontal dgla complexes.
		Here:
		\begin{itemize}
			\item $\phi$ denotes the inclusion map;
			\item $D^{(k)}$ denotes the disjoint union of all $k$-fold intersections
			$$
			D_I := D_{i_1} \cap \cdots \cap D_{i_k},
			$$
			where $I=(i_1,\ldots,i_k)$ is a multi-index satisfying $1 \le i_1 < \cdots < i_k \le m$, with the convention $D^{(0)}:=\overline{X}$;
			\item the differential $d$ in the bottom chain complex is given by the Gysin morphism  with suitable signs  (see \eqref{eq:defdiff}), and the Lie brackets are defined in \eqref{eq:Lie};
			\item for any $\eta\in A^k(\overline{X},D,E)$, the map $\psi(\eta)$ is defined by
			$$
			\psi(\eta):=\left( (-1)^{|I|}\{\cH(\Res_{D_I}\eta)\}\right)_{I}\in H^{k-|I|}(D_I,E),
			$$
			where $I$ ranges over all ordered multi-indices contained in $\{1,\ldots,m\}$, and $\cH$ denotes the \emph{harmonic projection} of the residue  $\Res_{D_I}\eta\in A^{k-|I|}(D_I, \sum_{j\notin I}D_j\cap D_I, E)$.
		\end{itemize}
	\end{thmx}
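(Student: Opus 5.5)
Following Goldman--Millson and Deligne--Griffiths--Morgan--Sullivan, I will show that $\phi$ and $\psi$ are morphisms of dglas inducing isomorphisms on $H^0$ and $H^1$ and an injection on $H^2$ (the meaning of $1$-equivalence in \cref{def:1equiv}). The key input is the $\nabla'\nabla''$-lemma for logarithmic forms with values in the unitary flat bundle $E$, in the form: on $A^\bullet(\overline{X},D,E)$ the weight filtration splits compatibly with the pure Hodge structures on the residue pieces $H^{q}(D^{(p)},E)$. Since $(V,\nabla)$ is unitary, $\nabla=\nabla'+\nabla''$ with both parts of square zero, anticommuting, and the Kähler identities hold for harmonic theory on each stratum $D_I$.

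\textbf{Step 1 (dgla morphisms).} For $\phi$ this is immediate: $\ker\nabla'$ is closed under the bracket, because $\nabla'$ is a derivation of the graded Lie bracket, and on $\ker\nabla'$ we have $\nabla=\nabla''$. For $\psi$, I will check compatibility with differentials and brackets. I will use that residues intertwine $\nabla''$ with the Gysin-type differential $d$ up to the signs in \eqref{eq:defdiff}. The point is that for $\eta\in\ker\nabla'$ each residue $\Res_{D_I}\eta$ is $\nabla'$-closed on $D_I$. The $\nabla''$-exact part then has zero harmonic projection. The remaining boundary terms of $\nabla''\eta$ come from the poles along $D_j$, $j\notin I$, and they produce exactly the Gysin pushforwards $H^{q}(D_{I\cup j},E)\to H^{q+2}(D_I,E)$ of the harmonic classes. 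For brackets, $\Res_{D_I}[\eta,\xi]$ is a sum over decompositions $I=J\sqcup K$ of $[\Res_{D_J}\eta,\Res_{D_K}\xi]$ restricted to $D_I$, with signs. This matches the bracket \eqref{eq:Lie} as soon as harmonic projection is multiplicative on cohomology classes. That holds at the level of classes because products of $\nabla'$- and $\nabla''$-closed forms represent the cup product.

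\textbf{Step 2 (cohomology).} I filter all three complexes by the weight filtration $W_p$ (order of poles / number of residues). On graded pieces, $\mathrm{Gr}^W_p$ of the top complex is quasi-isomorphic via residue to $A^{\bullet-p}(D^{(p)},E)$, which is the Deligne weight spectral sequence. The middle complex restricted to $\ker\nabla'$ maps to the $\nabla'$-closed subcomplex of each stratum. On each compact stratum the standard $\nabla'\nabla''$-lemma (the DGMS argument) says that the inclusion of $\ker\nabla'$ and the projection to $\nabla'$-cohomology $\cong$ harmonic forms are quasi-isomorphisms. So $\phi$ and $\psi$ induce isomorphisms on $E_1$ of the weight spectral sequences. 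All three spectral sequences are bounded, so $\phi$ and $\psi$ are genuine quasi-isomorphisms, which is stronger than a $1$-equivalence.

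\textbf{Main obstacle.} The difficult point is comparing filtered $\ker\nabla'$-complexes. I must show that $\mathrm{Gr}^W$ of $A^\bullet(\overline{X},D,E)\cap\ker\nabla'$ is $\ker\nabla'$ on the residue complexes. That requires a logarithmic $\nabla'\nabla''$-lemma ensuring strictness, i.e.\ every $\nabla'$-closed residue class lifts to a $\nabla'$-closed log form. I would first try to prove this lift by induction on the depth $|I|$. At each stage I would correct a lift using $\nabla'\nabla''$-potentials on $\overline{X}$ twisted by $\log|s_j|^2$, which is where the purity of weights (degeneration at $E_2$) enters. If full strictness fails, it still suffices for a $1$-equivalence to establish the lifting only in total degrees $\le 2$, where only $D^{(0)},D^{(1)},D^{(2)}$ contribute. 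I would handle those degrees by direct computation.
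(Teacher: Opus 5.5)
\textbf{The gap is in Step~2, which carries all of the cohomological content.} Your comparison of weight spectral sequences needs $H^\bullet(\mathrm{Gr}^W_p L^\bullet)\cong H^{\bullet-p}(D^{(p)},E)$ for the subcomplex $L^\bullet=A^\bullet(\overline{X},D,E)\cap\ker\nabla'$. This is exactly the ``strictness'' you leave open, and it is not routine.

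First, for smooth log forms the kernel of the top residue on $W_p$ is strictly larger than $W_{p-1}$. For example, $\bar z_1\,dz_1/z_1$ has zero residue but is not smooth. So even for $T^\bullet$ the graded pieces are only quasi-isomorphic to forms on $D^{(p)}$. That is shown by the five-lemma argument of \cref{lem:fivelemma}, which relies on the sheaves being fine. The sheaf of $\nabla'$-closed forms is only a module over antiholomorphic functions, so the argument does not transfer.

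Second, lifting a $\nabla'$-closed form $\alpha$ on $D_1$ to a $\nabla'$-closed log form $\frac{dz_1}{z_1}\wedge a+b$ forces the tangential $\nabla'a$ to be divisible by $z_1$ in $C^\infty$, i.e.\ to all orders in $\bar z_1$. For a non-flat class, such as a harmonic $(0,1)$-form on $D_i$, this is an extension problem to the formal neighbourhood, and there is no evident reason it is unobstructed. The $\log|s_i|^2$-twist works in the paper (the map $f$ of \eqref{eq:f}) only because flat sections $\beta\in H^0(D^{(1)},E)$ extend as flat sections to a tubular neighbourhood.

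The ``form-level splitting of $W$ compatible with the Hodge structures'' that you take as key input is not an available theorem for the smooth log complex; Deligne's splitting lives on cohomology. Your conclusion that $\phi,\psi$ are genuine quasi-isomorphisms is therefore unsupported. The paper proves only injectivity on $H^2$, and surjectivity of $H^2(\psi)$ would already require $\nabla'$-closed log lifts of classes in $H^1(D^{(1)},E)$.

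Your fallback, ``direct computation in degrees $\le 2$'', is where the proof actually lives, and it needs ingredients missing from your plan:
\begin{itemize}
\item Deligne's $E_1$-degeneration for the logarithmic Hodge filtration. It gives $H^1(T^\bullet)=H^0(\Omega^1_{\overline{X}}(\log D)\otimes E)\oplus H^{0,1}(\overline{X},E)$ and the analogous splitting of $H^2$. Combined with type arguments and the current $\partial\bar\partial$-lemma (\cref{lem:ddbar}), this yields \cref{lem:1-equi}.
\item The canonical lift $f(\beta)$, with $\Res_D f(\beta)=-\beta$ and $\bnabla f(\beta)=-\iota_*\beta-\cH(g(\beta))$. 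It gives injectivity and surjectivity of $H^1(\psi)$, using $g(a^{1,0})=0$ for cocycles, and lets you subtract $\nabla''f(a^{1,0})$ in degree $2$.
\item \cref{prop:regularity}. It writes a $\bnabla$-closed, $\bnabla'$-exact log $(1,1)$-form with vanishing residue as $\nabla''\omega$, with $\omega$ a $\nabla'$-exact log $1$-form. This closes injectivity of $H^2(\psi)$ once the residues along $D^{(1)}$ and $D^{(2)}$ have been killed.
\end{itemize}

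Finally, in Step~1 the bracket must be preserved on all of $L^\bullet$, not only on cocycles, so ``products of $\nabla'$- and $\nabla''$-closed forms'' does not suffice. What is needed is $\eta_I=\cH(\eta_I)+\bnabla'\bnabla'^*G\eta_I$ for $\eta\in\ker\nabla'$, together with the argument of \cref{lem:exact0}: a bracket of a $\bnabla'$-closed and a $\bnabla'$-exact log form is $\bnabla'$-exact, so $\cH$ of a bracket sees only the harmonic parts.
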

	In particular, this refines previous results by Morgan \cite{Mor78}, Pridham \cite{Pri16}, and Lef\`evre \cite{Lef19},    while avoiding the use of either Sullivan’s minimal models or $L_\infty$-algebras. To the best of our knowledge, this provides the first explicit construction of zigzags connecting the dglas of $E$-valued logarithmic complexes to their  cohomology complexes, establishing a 1-quasi-equivalence.

	In a forthcoming paper \cite{CDHP26}, we will extend \cref{main3} to a more general setting, with $(V,\nabla)$ be replaced by a semisimple flat bundle defined over $X$. 
	
	In the course of the proof of \cref{main}, we need to establish a result in
	birational geometry within Campana's theory of special varieties and orbifolds. We state it here because of its independent interest; its
	proof is independent of the other results stated above.
	\begin{thmx}[=\cref{thm:special}]\label{main:special} 
		Let $X$ be a smooth quasi-projective variety that is special. Then a general fiber of its quasi-Albanese map (which is dominant with connected general fibers) is also special.
	\end{thmx}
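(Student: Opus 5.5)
We prove \cref{main:special} by contradiction, using Campana's orbifold base and the behaviour of specialness along the quasi-Albanese fibration. Let $a\colon X\to A:=\operatorname{Alb}(X)$ be the quasi-Albanese map. By the results recalled in \cite{Fuj24} together with Campana's theorem for special quasi-projective varieties, $a$ is dominant with connected general fibers. We fix a smooth projective compactification $\overline{X}$ with SNC boundary $D$. After further blow-ups, we take a compactification $\overline{a}\colon \overline{X}\to \overline{A}$ with $\overline{A}$ a smooth equivariant compactification of the semi-abelian variety $A$.

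Suppose the general fiber $F$ of $a$ is not special. The first step is to apply Campana's core construction in families. Using the relative version of the core (or of a Bogomolov sheaf), we obtain a rational fibration $c\colon X\dashrightarrow W$ over $A$. Its restriction to a general fiber $F$ is a fibration of general type in the orbifold sense, with $\dim W>\dim A$. Concretely, we get a factorization $X\dashrightarrow W\to A$ with $X\dashrightarrow W$ having connected fibers. The orbifold base $(W,\Delta_W)$, computed with the boundary coming from $D$, restricts over a general $t\in A$ to an orbifold of general type $(W_t,\Delta_{W_t})$. To make this rigorous, we pass to a neat birational model, in Campana's sense, on which the orbifold base is well defined and compatible with base change to general fibers of $W\to A$.

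The second step is to show that $(W,\Delta_W)$ is itself of log general type, or at least that some Bogomolov sheaf appears. We use the additivity of orbifold Kodaira dimension for fibrations over a base with trivial log canonical class. Here we rely on the orbifold Iitaka conjecture $C_{n,m}$ in the case where the base is a semi-abelian variety, or an equivariant compactification thereof. This case is known through Campana's and Fujino's work: weak positivity of $\overline{\pi}_*(mK_{W/\overline{A}}+m\Delta_W)$ combined with $K_{\overline{A}}+\partial\overline{A}\sim 0$. More precisely, we need $\kappa(W,K_W+\Delta_W)\ge \kappa(A)+\kappa(W_t,K_{W_t}+\Delta_{W_t})$. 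This would give $\kappa(K_W+\Delta_W)\ge \dim W-\dim A>0$. It does not yet give general type, so we argue differently. We take the Iitaka fibration of $(W,\Delta_W)$; its general fibers map onto $A$, since they dominate $A$ when the fibers $W_t$ are of general type. Alternatively, we replace $A$ by the quotient by the stabilizer of the fibers. Either way, we produce a fibration $X\dashrightarrow V$ with $\dim V>0$ whose orbifold base has positive dimension and is of general type after quotienting. This contradicts the specialness of $X$.

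The main obstacle is the second step: the orbifold additivity over a semi-abelian base, and the correct handling of the multiplicity divisor and the boundary $D$ in the quasi-projective setting. We would first attempt the route of Campana--Claudon in the projective case. That route uses the Kawamata/Viehweg style weak positivity for orbifold direct images, together with Ueno-type structure results for subvarieties of semi-abelian varieties (Kawamata's theorem on log Kodaira dimension of subvarieties of semi-abelian varieties). With these, one shows that if $\kappa(W,K_W+\Delta_W)<\dim W$, the Iitaka fibers are translates stable under a positive-dimensional subgroup of $A$. One then reduces to the quotient semi-abelian variety by induction on $\dim A$.
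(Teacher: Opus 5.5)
Your overall strategy is the Campana--Claudon route that the paper also adapts. Your first two steps match the paper: the relative core over $A$, the general type of the fibres of $(W,\Delta_W)\to A$, and $\kappa(K_W+\Delta_W)\ge \dim W-\dim A>0$ via orbifold additivity over a toroidal compactification of $A$. These correspond to \cref{t-core}, \cref{lem:6.3} and \cref{thm:Iitaka}. The gap is in the step that is supposed to produce the contradiction.

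First, your claim that the general Iitaka fibres of $(W,\Delta_W)$ map onto $A$ is false. Take $W=S\times E_2\to A=E_1\times E_2$, where $S$ is a surface of general type fibred over the elliptic curve $E_1$ with connected fibres. Then the Iitaka fibres are $\{s\}\times E_2$. What is true is \cref{T-7.1}: the Iitaka fibres are generically finite onto their images. Since they have log Kodaira dimension $0$, Kawamata's theorem shows these images are translates of one fixed semi-abelian subvariety $B^0\subset A^0$.

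Second, and more seriously, you only assert that the orbifold base of the Iitaka fibration $j$ is of general type ("of general type after quotienting"). This does not hold for Iitaka fibrations in general, because of the moduli part in the canonical bundle formula. A Kodaira-dimension-one elliptic surface over $\mathbb{P}^1$ without multiple fibres has orbifold base $(\mathbb{P}^1,0)$.

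The paper gets general type from the structure theorem \cref{l-triv}, which needs several ingredients you do not supply:
\begin{itemize}
\item An isogeny base change $A_1^0\to A^0$ (\cref{lem:isogeny}, \cref{l-triv2}) makes the Iitaka fibres birational, not merely generically finite, onto translates of $B^0$.
\item \cref{l-k0} shows that the horizontal part of the orbifold divisor is exactly the toroidal boundary.
\item Together these make $\bar j$ locally trivial over a dense open $J^0$, with log Calabi--Yau fibres $(B,D_B)$.
\item On a relative good minimal model the moduli part then vanishes and the discriminant equals $\Delta(\bar j,D_{\bar X})$, which gives $\kappa(K_J+D_J)=\dim J$.
\item The base change can destroy neatness of the core, so the new orbifold base might have Kodaira dimension larger than the canonical dimension. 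The paper controls this by choosing the isogeny that minimizes $\mathcal{K}(Y_0)$, which yields \eqref{eq:achievemininal3}.
\end{itemize}

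Finally, passing from a general-type fibration of $(W,\Delta_W)$ back to one of $X$ needs more. One must build a neat model of $j\circ c$ by flattening and check that $\Delta(\tilde j\circ\tilde c,D_{\tilde Y})=\Delta(\tilde j,D_{\tilde X})$. Otherwise $c$-exceptional divisors dominating divisors of $J$ can lower the multiplicities.

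Your fallback, "induction on $\dim A$" via the quotient $A/B$, is not a well-posed reduction. The map $X\to A\to A/B$ is not a quasi-Albanese map, and you do not say what inductive statement would apply to it.
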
 
	Note that Campana--Claudon \cite{CC16} proved \cref{main:special} in the case
	where $X$ is projective.

	Finally, we state a structure theorem for quasi-projective special varieties admitting a big local system, which is a consequence of the proof of \cref{main}.
	
	\begin{corx}[=\cref{cor:structure}]\label{corx} 
		Let $X$ be a special smooth quasi-projective variety admitting a \emph{big} representation $\varrho \colon \pi_1(X) \to \GL_N(\bC)$ (cf.\ \cref{def:bigrep}). 
		Then, after replacing $X$ by a suitable finite \'etale cover, the quasi-Albanese map $\mathrm{alb}_Y \colon Y \to \mathrm{Alb}(Y)$ of a very general fiber $Y$ of the quasi-Albanese map $\mathrm{alb}_X \colon X \to \mathrm{Alb}(X)$ is birational. 
	\end{corx}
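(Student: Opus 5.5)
Fix a big representation $\varrho$ and use \cref{main} to replace $X$ by a finite étale cover so that $G:=\varrho(\pi_1(X))$ is nilpotent of class at most $2$ and torsion free. Enlarging the cover further, assume the Zariski closure of $G$ is connected, hence a unipotent group times a torus. By \cref{main:special} a general fiber $Y$ of $\mathrm{alb}_X$ is special, and it is connected. The plan has three steps: show $\varrho|_{\pi_1(Y)}$ is still big, reduce it to a unipotent representation, and then use the 2-step phenomenon (\cref{main:2step}, \cref{main3}) to see that this representation is pulled back from $\mathrm{Alb}(Y)$. Bigness of a representation pulled back from $\mathrm{Alb}(Y)$ then forces $\mathrm{alb}_Y$ to be generically finite. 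Birationality follows since $Y$ is special: after a further étale cover, a generically finite quasi-Albanese map of a special variety has connected fibers (Campana). Hence $\mathrm{alb}_Y$ is birational.

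\textbf{Step 1: the restriction stays big.} For very general $Y$, the restriction $\varrho|_{\pi_1(Y)}$ is big. Indeed, for a positive-dimensional subvariety $W\subset Y$ through a very general point, $\varrho(\pi_1(W^{\rm norm}))$ is infinite by bigness of $\varrho$ on $X$. "Very general" is needed to avoid the countably many bad loci.

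\textbf{Step 2: reduction to the unipotent part.} The torus part of $\varrho$ factors through $H_1(X)$ modulo torsion. Up to finite index this comes from $\mathrm{Alb}(X)$, so it is trivial on $Y$ after a finite étale cover. Thus $\varrho|_{\pi_1(Y)}$ is, up to finite index, unipotent, and in particular it lies in the irreducible component of the trivial representation.

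\textbf{Step 3: factorization through $\mathrm{Alb}(Y)$.} This is the crux and the expected main obstacle. Let $g\colon Z\to Y$ be a compactifiable map from a general fiber of $\mathrm{alb}_Y$, and let $f\colon Y\to X$ be the inclusion. Then $\mathrm{alb}_X(f(Y))$ is a point and $\mathrm{alb}_Y(g(Z))$ is a point, so \cref{main:2step} gives that $(f\circ g)^*\varrho$ is trivial. So $\varrho$ has finite, indeed trivial, image on fibers of $\mathrm{alb}_Y$. By bigness from Step 1, the general fiber of $\mathrm{alb}_Y$ is $0$-dimensional, i.e.\ $\mathrm{alb}_Y$ is generically finite.

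The delicate points are these. First, \cref{main:2step} requires the representation on $X$ itself to lie in the trivial component, which Step 2 only gives on $Y$. I would handle this by applying the unipotent case directly, using the cover so that $\varrho$'s unipotent radical part on $X$ is handled separately from the torus part. Second, passing from "general fiber" to "very general fiber" and keeping track of compactifiability requires care.
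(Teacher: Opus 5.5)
Your argument is correct, but it takes a different route from the paper's.

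\textbf{The paper's route.} The proof is three lines. After Selberg's lemma, it uses the proof of \cref{thm:main} to see that $f^*\varrho(\pi_1(Y))$ is abelian and torsion free. It notes that $f^*\varrho$ stays big on a very general fiber $Y$. It then quotes \cite[Theorem~B.(iv)]{CDY25b} as a black box to conclude that $\mathrm{alb}_Y$ is birational.

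\textbf{Your route.} You unpack that black box in this situation.
\begin{enumerate}
\item The torus part is killed because it factors through $H_1(X,\bZ)/\mathrm{tor}\cong\pi_1(\mathrm{Alb}(X))$.
\item The unipotent part is killed by \cref{main:2step}, applied on $X$ itself to the inclusion $f$ and to a resolution of the $\mathrm{alb}_Y$-fiber component through a very general point of $Y$. Its "in particular" clause for unipotent $\tau$ removes your first worry. Surjectivity of $\pi_1$ from a resolution onto the normalization transfers triviality to $Z^{\rm norm}$.
\item Hence $f^*\varrho$ is trivial on these fibers, and bigness forces $\mathrm{alb}_Y$ to be generically finite.
\item Dominance and connectedness of general fibers from \cref{thm:pi1}, applied to $Y$ (special by \cref{main:special}), then give birationality.
\end{enumerate}

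\textbf{Comparison.} Your route is self-contained within the paper's results. For $\mathrm{alb}_Y$ it needs only the dominance and connected-fiber part of \cref{thm:pi1}, not its $\pi_1$-exactness; the paper's "abelian image" step relies on that exactness. It also never needs the abelian conclusion or Theorem~B.(iv). The paper's route is shorter and records the stronger intermediate fact that the entire restricted image is abelian.

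\textbf{Small corrections.}
\begin{itemize}
\item No "further \'etale cover" is needed at the end: \cref{thm:pi1} applied to $Y$ directly yields birationality. Taking a further cover of $X$ there would change $Y$.
\item Once the image is torsion free, the torus part is already trivial on $\pi_1(Y)$, with no extra cover.
\item At the start, \cref{thm:nilpotent} suffices; \cref{main} is not needed.
\end{itemize}
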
 
	
	As another consequence of the proof of \cref{main}, we obtain the following result.
	
	\begin{corx}[=\cref{cor:Hirsch}]\label{corx:Hirsch}
		Let $X$ be a special smooth quasi-projective variety. If there exists a faithful representation $\varrho \colon \pi_1(X) \to \GL_N(\bC)$, then $\pi_1(X)$ contains a finite index normal subgroup $\Gamma$ such that both its Hirsch rank $h(\Gamma)$ (defined in \eqref{eq:Hirsch}) and its minimal number of generators $d(\Gamma)$ are at most $2 \dim X$.   In particular, any maximal abelian subgroup of $\pi_1(X)$ has rank at most $2 \dim X$. 
	\end{corx}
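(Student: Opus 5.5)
The plan is to deduce \cref{corx:Hirsch} from \cref{main} together with the structure developed in its proof. First, apply \cref{main} to the faithful representation $\varrho$. The image $\varrho(\pi_1(X))\cong\pi_1(X)$ is then virtually $2$-step nilpotent. Pass to a finite index normal subgroup $\Gamma_0$ that is torsion-free and $2$-step nilpotent; this is possible because finitely generated nilpotent groups are virtually torsion-free. Replace $X$ by the finite \'etale cover $X'$ with $\pi_1(X')=\Gamma_0$. Being special is preserved under finite \'etale covers, so $X'$ is still special and $\varrho|_{\Gamma_0}$ is still faithful.

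For a torsion-free $2$-step nilpotent $\Gamma_0$ we have a central extension $1\to C\to\Gamma_0\to A\to 1$, where $C$ is the isolator of $[\Gamma_0,\Gamma_0]$ (central) and $A$ is free abelian. This gives $h(\Gamma_0)=\rank A+\rank C$. The abelian part $\rank A$ is controlled by $b_1(X')=\rank H_1(X',\bZ)/\mathrm{tors}$, which is the rank of the lattice defining the quasi-Albanese $\mathrm{Alb}(X')$. I will show that, for special $X'$, the quasi-Albanese map is dominant with connected fibres (this is recorded around \cref{main:special}). Writing $\mathrm{Alb}(X')$ as an extension of an abelian variety of dimension $a$ by $(\bC^*)^t$, we get $b_1=2a+t\le 2\dim \mathrm{Alb}(X')$.

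The central part $C$ should be accounted for by the fibres $Y$ of the quasi-Albanese map. The key input from the proof of \cref{main} is \cref{main:2step}: any unipotent representation of $\pi_1(X')$ becomes trivial after pulling back under $g\colon Z\to Y$ with $Y$ a general fibre of $\mathrm{alb}_{X'}$ and $Z$ a general fibre of $\mathrm{alb}_Y$. Since $\Gamma_0$ is torsion-free nilpotent, its Malcev completion embeds it into a unipotent group, so this applies to $\Gamma_0$ itself. It follows that the image of $\pi_1(Z)\to\pi_1(X')$ is trivial up to finite index. Therefore $\Gamma_0$ is, up to finite index, built from the image of $\pi_1(Y)$ modulo $\pi_1(Z)$, which is controlled by $H_1$ of $\mathrm{Alb}(Y)$, together with the quotient $\pi_1(\mathrm{Alb}(X'))$. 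The fibre $Y$ is special by \cref{main:special}, so the same rank count applies to it and gives contribution at most $2\dim \mathrm{Alb}(Y)$. Because $Z$ is a fibre of $\mathrm{alb}_Y$, we have $\dim \mathrm{Alb}(Y)\le \dim Y=\dim X'-\dim \mathrm{Alb}(X')$. Adding the two contributions yields $h(\Gamma_0)\le 2\dim \mathrm{Alb}(X')+2\dim \mathrm{Alb}(Y)\le 2\dim X$.

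For the generator bound, note that $d(\Gamma_0)$ equals the number of generators of the abelianization modulo Frattini. For nilpotent groups this is $d(\Gamma_0^{ab})$, and it is at most $\rank A$ plus torsion generators. To control the torsion, I will pass to a further finite index subgroup whose abelianization is torsion-free, or bound $d$ directly by $h$ using a Malcev basis, since $d(\Gamma)\le h(\Gamma)$ for torsion-free nilpotent groups. Finally, any maximal abelian subgroup $B$ has $B\cap\Gamma$ of finite index in $B$, so $\rank B\le h(\Gamma)\le 2\dim X$.

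The main obstacle is making the fibre comparison precise. This means identifying the kernel of $\pi_1(X')\to\pi_1(\mathrm{Alb}(X'))$ with the image of $\pi_1(Y)$ up to finite index, and handling multiple fibres. I expect to use the orbifold base together with specialness to rule out multiple fibres in codimension one, so that the homotopy sequence is exact up to finite index.
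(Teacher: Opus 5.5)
Your proposal is correct and follows essentially the paper's argument: the $\pi_1$-exact sequence of $\mathrm{alb}_{X'}$, the $2$-step phenomenon (\cref{main:2step}) to kill the image of $\pi_1$ of a general fibre of $\mathrm{alb}_Y$, so that the image $G$ of $\pi_1(Y)$ has rank at most $2\dim\mathrm{Alb}(Y)\le 2\dim Y$ because $Y$ is special (\cref{main:special}), and then additivity of the Hirsch rank. The step you flag as the main obstacle is not one, since $\pi_1$-exactness of the quasi-Albanese map of a special variety is exactly \cref{thm:pi1}, applied to both $X'$ and $Y$; and your bound $d(\Gamma_0)\le h(\Gamma_0)$ via a Malcev basis is a valid substitute for the paper's estimate $d(\Gamma)\le d(G)+d(\pi_1(\mathrm{Alb}(X)))$.
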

	
	This corollary is inspired by a conjecture of Gachet, Liu, and Moraga \cite[Conjecture 2]{GLM} (see \cref{conj:GLM}).
	
	%

	The logical dependencies among the main theorems are summarized in \Cref{fig:dependencies}.

	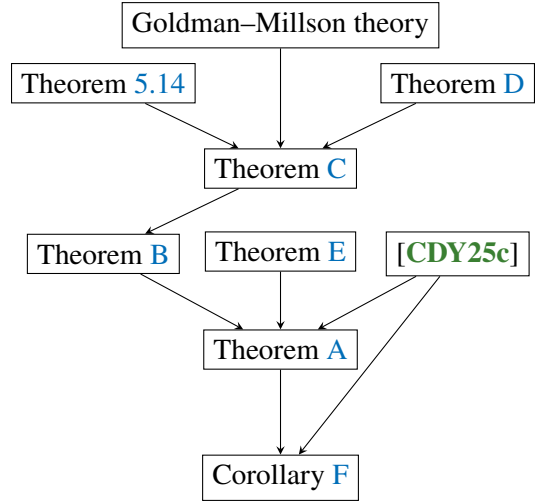
\begin{figure}[htbp]
		\centering
		\begin{tikzpicture}[
			>=stealth,
			node distance=1.6cm,
			every node/.style={rectangle, draw, align=center}
			]
			\node (C) {\cref{main3}};
			
			\node (D) [above left of=C, xshift=-1.2cm] {\cref{thm:construction}};
			\node (G)  [above of=C, yshift=0.3cm] {Goldman--Millson theory};
			\node (F) [above right of=C, xshift=1.2cm] {\cref{main:zigzag}};
			
			\node (B) [below left of=C, xshift=-1.2cm] {\cref{main:2step}};
			\node (E) [below right of=C, xshift=1.2cm] {\cite{CDY25b}};
			\node (H) [below of=C, yshift=0.5cm] {\cref{main:special}};
			\node (A) [below of=C, yshift=-0.8cm] {\cref{main}};
			\node (I) [below of=A, yshift=-0.1cm] {\cref{corx}};
			\node (J) [below left of=A, yshift=-0.55cm,xshift=-1.35cm] {\cref{corx:Hirsch}};
			\path[->] (D) edge (C);
			\path[->] (G) edge (C);
			\path[->] (F) edge (C);
			\path[->] (E) edge (I);
			\path[->] (A) edge (J);
			\path[->] (C) edge (B); 
			\path[->] (B) edge (A);
			\path[->] (E) edge (A);
			\path[->] (H) edge (A);
			\path[->] (A) edge (I);
		\end{tikzpicture}
		\caption{Logical dependencies between the main theorems}
		\label{fig:dependencies}
	\end{figure}

	\subsection{Idea of the proof of \cref{main}} 
	In this subsection, we outline the main ideas of the proof of \cref{main}, utilizing the deformation theory of local systems on quasi-compact Kähler manifolds presented above.
	
	By \cite{CDY25b}, after replacing $X$ by a finite \'etale cover, the Zariski closure of the image $\varrho(\pi_1(X))$ decomposes as a direct product $U\times T$, where $U$ is a unipotent algebraic group and $T$ is an algebraic torus (see \cref{thm:nilpotent}). Hence, it suffices to prove that the image of any unipotent representation $\sigma:\pi_1(X)\to \GL_N(\bC)$ is  2-step nilpotent.
	
	According to \cite{CDY25b} (see \cref{thm:pi1}), the quasi-Albanese map $a:X\to A$ is dominant with connected general fibers and is $\pi_1$-exact; that is, we have a short exact sequence
	\begin{align}\label{eq:pi_1first}
		\pi_1(Y)\to \pi_1(X)\to \pi_1(A)\to 0,
	\end{align}
	where $Y$ is a general fiber of $a$. By \cref{main:special}, $Y$ is also special, and by \cref{thm:pi1} again, the quasi-Albanese map $a_Y:Y\to A_Y$ of $Y$ is also $\pi_1$-exact. Let $F$ be a general fiber of $a_Y$.
	
	By \cref{main:2step}, we have     $$ \sigma\left([\pi_1(F)\to \pi_1(X)]\right)=0.$$ The $\pi_1$-exactness of $a_Y$ yields the short exact sequence
	$$
	\pi_1(F)\to \pi_1(Y)\to \pi_1(A_Y)\to 0.
	$$
	It follows that the representation  $$\iota^*\sigma:\pi_1(Y)\to \GL_N(\bC)$$ factors through $\pi_1(A_Y)$,   thus  has an abelian image. Here $\iota:Y\hookrightarrow X$ denotes the inclusion.   Finally, by the $\pi_1$-exactness of $a$ in \eqref{eq:pi_1first}, one can see  that $\sigma(\pi_1(X))$ is  2-step nilpotent.

	\section*{Notations and conventions}
	Throughout this paper, unless otherwise stated, we assume that $(X,\omega)$ is a compact K\"ahler manifold of dimension $n$ and let $D = \sum_{i=1}^m D_i$ be a simple normal crossing divisor on $X$. We set $X_0:= X \setminus D$.
	
	Throughout this paper, we adopt the convention that any multi-index $I = (i_1, \ldots, i_k)$ consists of \emph{distinct entries} in $\{1,\ldots,m\}$. 
	We say that $I$ is \emph{ordered} if $i_1 < \cdots < i_k $.  For any $z=(z_1,\ldots,z_n)$, denote by $z_I:=z_{i_1}\cdots z_{i_{k}}$.  For such an index, we define the intersection
	\[
	D_I := D_{i_1} \cap \cdots \cap D_{i_k}.
	\]
	For any integer $k \geq 1$, let $D^{(k)}$ denote the disjoint union of all intersections indexed by ordered multi-indices of length $k$:
	\[
	D^{(k)} := \bigsqcup_{\substack{|I|=k \\ I \text{ ordered}}} D_I.
	\]
	In particular, $D^{(1)} = \bigsqcup_{i=1}^m D_i$.
	\begin{itemize}
		\item  Let $(V,\nabla)$ be a flat bundle of rank $N$ on $X$. Unless otherwise mentioned, we denote the associated endomorphism bundle by $(E,\nabla):=(\End(V),\nabla)$, and we write $H^k(X,E)$ for the de Rham cohomology of $E$ with respect to $\nabla$.  Throughout this paper, except in \cref{sec:tour}, we assume that the flat bundle $(V,\nabla)$ is unitary.
		\item For any unitary flat bundle $(V,\nabla)$ on $X$, we write
		$
		\nabla = \nabla' + \nabla''
		$
		for the decomposition of $\nabla$ into its $(1,0)$- and $(0,1)$-parts.  We endow $E=\End(V)$ with the holomorphic vector bundle structure induced by the $(0,1)$-component $\nabla''$. 
		\item  A local system $\mathcal{L}$ on $X\backslash D$ is called \emph{extendable} if it extends to a local system on $ {X}$. 
		\item Let $\cA^\bullet(X,D,E)$ be the sheaf of $E$-valued smooth differential forms
		with at most logarithmic poles along $D$. We write $A^k(X,E)$ for the space of smooth
		$E$-valued $k$-forms, $A^k(X,D,E)$ for those with at most logarithmic poles along $D$,
		$\sD^k(X,E)$ for the space of $E$-valued currents of degree $k$,  
		$A^k_{\mathrm{cts}}(X,E)$ for the space of continuous $E$-valued differential forms of degree $k$, and 
		$\cH^k(X,E)$ for the space of $E$-valued harmonic $k$-forms.
		\item For any $\eta \in A^k(X,D,V)$, we denote by $\bnabla \eta$ (resp.\ $\bnabla' \eta$ and $\bnabla'' \eta$, noting the bold font) its covariant derivative in the sense of currents. In general, $\bnabla'' \eta$ may contain nontrivial residue terms supported on $D$, whereas   $\bnabla'\eta = \nabla'\eta$.
		\item Any element   $\eta\in A^k(X,D,E)$ is called $\bnabla'$-exact if $\eta=\bnabla'\sigma$ for some $\sigma\in \sD^{k-1}(X,E)$. 
		\item For any closed form $\eta \in \sD^k(X,E)$ with $\bnabla \eta = 0$, we let $\cH(\eta)$ denote its harmonic projection, which is the unique harmonic representative of the class $\{\eta\} \in H^k(X,E)$. By abuse of notation, if $\alpha \in H^k(X,E)$ is a cohomology class, we also write $\cH(\alpha)$ for its unique harmonic representative.
		\item Let $W_\bullet \cA^k(X,D,E)\subset \cA^k(X,D,E)$ be the increasing filtration induced by the number
		of pure logarithmic factors, and let $W_\bullet A^k(X,D,E)$ denote its space of global
		sections on $X$. 
		\item Let $\cA^\bullet_{X,D}(E)$ be the kernel of the residue morphism
		\[
		\Res_D \colon W_1 \cA^\bullet(X,D,E) \longrightarrow
		\cA^{\bullet-1}(D^{(1)}, E).
		\] 
		\item Fix a base point $x\in X_0$. We denote the topological fundamental group of $X_0$ by $\pi_1(X_0,x)$. By abuse of notation, we usually omit the base point when the context is clear. Furthermore, we let $\kg := E_x$ denote the fiber of $E$ at $x$, which carries a natural Lie algebra structure.  
		\item  Let $R(X_0,N) \coloneqq \operatorname{Hom}(\pi_1(X_0), \operatorname{GL}_N(\mathbb{C}))$ denote the variety of representations of the fundamental group of $X_0$ into $\operatorname{GL}_N(\mathbb{C})$.
		\item  A group $G$ is said to be virtually $P$ if it contains a subgroup of finite index
		that has property $P$. 
		\item For brevity, we write dgla (resp. dglas) for differential graded Lie algebra(s).
		\item We denote by $\Art$ the category of Artin local $\bC$-algebras, and $\Grp$  the category of groupoids. For any Artin local $\bC$-algebra $A$, denote by $\km$ its maximal ideal. 
		\item For any $\ep>0$, let  $\bB_\ep^{p+q}$  denote the ball of radius $\ep$ centered at the origin in  $\bC^{p+q}$.  
		\item An \emph{algebraic fiber space} $f \colon X \to Y$ between smooth projective varieties is a surjective morphism with connected fibers. For a divisor $E$ on $Y$, we let $f^*E$ denote the pullback divisor and $f^{-1}(E) := (f^*E)_{\mathrm{red}}$ denote its reduced support.
		\item For any quasi-compact K\"ahler manifold $Y$, we denote by $\mathrm{alb}_Y: Y \to \mathrm{Alb}(Y)$ its quasi-Albanese map.
		\item A birational morphism $f: X \to Y$ between smooth quasi-projective varieties is said to be \emph{proper over a big open subset} if there exists an open subset $U \subset Y$ with $\operatorname{codim}_Y(Y \setminus U) \ge 2$ such that the restriction $f: f^{-1}(U) \to U$ is a proper morphism.
	\end{itemize}
	\section{Technical Preliminaries}
	\subsection{Special varieties and orbifolds}
	Special varieties are introduced by Campana \cite{Cam04,Cam11} in his  remarkable program of classification of geometric orbifolds.  We collect some facts about orbifolds from \cite{Cam04,Cam11}. 
	
	Recall that a (geometric) orbifold $(X, D)$ is a pair consisting of a variety $X$ and a $\mathbb{Q}$-divisor $D = \sum c_i P_i$, where the $P_i$ are prime divisors and the coefficients $0 \leq c_i \leq 1$ are of the form $c_i = 1 - \frac{1}{m_i}$ for some $m_i \in \mathbb{Z}_{\geq 1} \cup \{+\infty\}$. We will say that $(X,D)$ is {\it log smooth} (or just {\it smooth}) if $X$ is non-singular and the support of $D$ has simple normal crossings. The multiplicities of $D$ along $P_i$ are $m_i=m_{P_i}(D)=\frac 1{1-c_i}$ so that
	\[D=\sum c_iP_i=\sum (1-\frac 1{m_i})P_i.\]
	Note that $m_i\in [1,+\infty]$ and $m_i=+\infty$ coincides with the case $c_i=1$ whilst $m_i=1$ when $c_i=0$.
	\begin{dfn}[Multiplicity and Orbifold base]
		\label{def:multiplicitydivisor}
		Let $f: X \to Y$ be a surjective morphism of normal varieties with connected fibers. Let $P$ be a prime divisor on $X$. We say $P$ is \emph{$f$-exceptional} if $\operatorname{codim}_Y f(P) \ge 2$. If $P$ dominates $Y$, it is called \emph{horizontal}; otherwise, it is \emph{vertical}. If $P$ is an $f$-vertical divisor that is not $f$-exceptional, then its image $f(P) = Q$ is a prime divisor on $Y$. Restricting to the regular loci $X^0 \subset X$ and $Y^0 \subset Y$, we can write the pullback of $Q$ as$$(f^0)^*Q = m(f,P)P + F$$where $F$ is an effective divisor whose support does not contain $P$. We define the integer $m(f,P)$ to be the \emph{multiplicity of $f$ along $P$}.
		
		Suppose now that $Y$ is smooth, and $(X,D)$ is a smooth orbifold, where the boundary divisor is written as $D = \sum (1 - \frac{1}{m_P(D)}) P$ with multiplicities $m_P(D) \in \mathbb{Z}_{\ge 1} \cup \{\infty\}$.   We define the \emph{orbifold base} (or \emph{orbifold divisor of $f$}) as the $\mathbb{Q}$-divisor on $Y$ given by (cf.\ \cite[D\'efinition A.3]{CC16}):$$ \Delta(f,D) = \sum_Q \left( 1 - \frac{1}{m_f(Q)} \right) Q$$where the sum runs over all prime divisors $Q \subset Y$, and the \emph{multiplicity} of $ \Delta(f,D)$ along $Q$ is defined as:
		$$m_Q\left( \Delta(f,D)\right)=m_f(Q):= \inf_{f(P)=Q} \big\{ m(f, P) \cdot m_P(D) \big\}.$$
	\end{dfn}
	\begin{dfn}[Orbifold morphism]\label{dfn:orbifold morphism}
		If $(X,D_X)$ and $(Y,D_Y)$ are log smooth orbifolds and $f:X\to Y$ is a morphism such that $m_im_{P_i}(D_X)\geq m_{Q}(D_Y)$ for every divisor $Q$ on $Y$ and  $f^*(Q)=\sum m_iP_i$ then we say that $f:(X,D_X)\to (Y,D_Y)$ is an {\it orbifold morphism}. If   $f$ is birational and $f_*D_X=D_Y$, then we say that $f$ is \emph{birational morphism} between orbifolds. A birational morphism that is also an orbifold morphism is called an {\it elementary birational orbifold morphism}.
	\end{dfn}
	Note that if $D_Y=\sum Q_i$ is a reduced divisor and $f:(X,D_X)\to (Y,D_Y)$ is an orbifold morphism, then $n_i=\infty$ and so $m_{P_j}=\infty$ for any $P_j$ in the support of $f^*(Q_i)$, i.e. $\lfloor D_X \rfloor\geq {\rm Supp}(f^* D_Y)$. 
	Note that it is not always the case that $f:(X,D)\to (Y,\Delta (f,D))$ is an orbifold morphism. 
	\begin{dfn}[{\cite[D\'efinition 4.10]{Cam11}}]
		Let $g:(Z , \Delta) \rightarrow Y$ be a fibration, with $(Z , \Delta)$ and $Y$ smooth. We say that $g$ is \emph{neat} (relative to $w$) if there exists a diagram:
		\[
		\begin{tikzcd}
			(Z , \Delta)  \arrow[d,"g"] \arrow[r,"w"] & (Z' , \Delta')\arrow[d,"g'"] \\ 
			Y \arrow[r,"\nu"] & Y'
		\end{tikzcd}\]
		in which:
		\begin{itemize}
			\item   
			$w$ is an orbifold morphism, $v$ and $w$ are bimeromorphic, $Z', Y, Y'$ are smooth, and $w_*(\Delta)=\Delta'$;
			\item every $g$-exceptional divisor of $Z$ is $w$-exceptional.
		\end{itemize}
		
		We say that $g$ is \emph{neat} if it is neat relative to a fibration $g'$ as above.
		
		We say that $g$ is \emph{neat and high} if it is neat, and if $g:(Z , \Delta) \rightarrow(Y , \Delta(g, \Delta))$ is an orbifold morphism.
		
		We say that $g$ is \emph{strictly neat} if it is neat, and if, moreover, its orbifold base is smooth.
	\end{dfn}

	\begin{dfn}[{\cite[D\'efinition 5.7]{Cam11}}]\label{def:5.7}
		We say that $f$ and $f'$ are  \emph{birationally  elementarily equivalent}  if there exists a commutative diagram:
		\begin{equation*}
			\begin{tikzcd}
				(Y', D') \arrow[r, "v"] \arrow[d, "f'"'] & (Y, D) \arrow[d, "f"] \\
				X' \arrow[r, "u"'] & X
			\end{tikzcd}
		\end{equation*}
		in which $u, v$ are bimeromorphic, $u, v$ are holomorphic, and $v$ is an an elementary birational orbifold morphism.
		
		More generally, $f$ and $f'$ are equivalent (we then denote $f \sim f'$) if they belong to the equivalence relation generated by such diagrams.
	\end{dfn}  
	
	\begin{proposition}[{\cite[Proposition 5.9]{Cam11}}]
		Consider the following commutative diagram
		\[
		\begin{tikzcd}
			(Y', D') \arrow[r, "v"] \arrow[d, "f'"] & (Y, D) \arrow[d, "f"] \\
			X' \arrow[r, "u"] & X
		\end{tikzcd}
		\]
		in which $u$ and $v$ are birational, and $v$ is an orbifold morphism such that $v_*(D') = D$. Then we have
		\begin{thmlist}
			\item $u_*\left(\Delta(f', D')\right) = \Delta(f, D)$;
			\item \label{prop:birationaldecrease} $\kappa\left(X, \Delta(f, D)\right) \geqslant \kappa\left(X', \Delta(f', D')\right)$.  \qed
		\end{thmlist}
	\end{proposition}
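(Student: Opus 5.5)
The plan is to prove (i) by comparing multiplicities prime divisor by prime divisor, and then to get (ii) from (i) by pushing pluricanonical sections forward along $u$. Throughout I read $\kappa(X,\Delta)$ as $\kappa(X,K_X+\Delta)$.

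\textbf{Setting up (i).} Both sides of (i) are $\mathbb{Q}$-divisors on $X$. A $u$-exceptional prime divisor $Q'\subset X'$ contributes nothing to $u_*\Delta(f',D')$. So it suffices to take a prime divisor $Q'\subset X'$ which is not $u$-exceptional, put $Q:=u(Q')$, and prove
\[
m_{f'}(Q')=m_f(Q).
\]
Here $u$ is an isomorphism near the generic point of $Q'$. The prime divisors $P'\subset Y'$ with $f'(P')=Q'$ are exactly those with $f(v(P'))=Q$. They split into two types: those that are not $v$-exceptional and those that are.

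\textbf{Non-exceptional divisors: the inequality $m_{f'}(Q')\le m_f(Q)$.} Every $P$ with $f(P)=Q$ has a strict transform $P'$ with $f'(P')=Q'$. Since $v$ is an isomorphism at the generic point of $P'$ and $u$ at the generic point of $Q$, we get $m(f',P')=m(f,P)$. The hypothesis $v_*D'=D$ gives $m_{P'}(D')=m_P(D)$. This bijection between the non-exceptional $P'$ over $Q'$ and the $P$ over $Q$ yields $m_{f'}(Q')\le m_f(Q)$, and equality if no $v$-exceptional divisor lowers the infimum.

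\textbf{Exceptional divisors: the reverse inequality.} Now let $P'$ be $v$-exceptional with $f'(P')=Q'$. I would show
\[
m(f',P')\,m_{P'}(D')\ge m_f(Q).
\]
Work over a neighbourhood of the generic point of $Q$, where $u$ is an isomorphism. Write $f^*Q=\sum_P m(f,P)P+(\text{other components})$. Then
\[
f'^*u^*Q=v^*f^*Q,
\]
and comparing coefficients along $P'$ gives
\[
m(f',P')\ \ge\ m(f,P_0)\cdot \operatorname{ord}_{P'}(v^*P_0)
\]
for some prime $P_0\supset v(P')$ in the support of $f^*Q$. Such a $P_0$ satisfies $f(P_0)\supseteq u(f'(P'))=Q$, so $f(P_0)=Q$ and $m(f,P_0)$ is defined. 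Set $t:=\operatorname{ord}_{P'}(v^*P_0)\ge 1$. Because $v$ is an orbifold morphism, $t\,m_{P'}(D')\ge m_{P_0}(D)$. Combining,
\[
m(f',P')\,m_{P'}(D')\ \ge\ m(f,P_0)\,m_{P_0}(D)\ \ge\ m_f(Q).
\]
Hence $v$-exceptional divisors never lower the infimum, and $m_{f'}(Q')=m_f(Q)$, which proves (i).

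\textbf{Proof of (ii).} Take $m$ divisible enough that $m\Delta(f',D')$ is integral. A nonzero section $s\in H^0\bigl(X',m(K_{X'}+\Delta(f',D'))\bigr)$ is a rational pluricanonical form with $\mathrm{div}(s)+m\Delta(f',D')\ge 0$. Since $u$ is birational, $s$ defines a rational form $u_*s$ on $X$, and $\mathrm{div}(u_*s)=u_*\mathrm{div}(s)$. By (i),
\[
\mathrm{div}(u_*s)+m\Delta(f,D)=u_*\bigl(\mathrm{div}(s)+m\Delta(f',D')\bigr)\ge 0.
\]
So $u_*$ gives an injective linear map
\[
H^0\bigl(X',m(K_{X'}+\Delta(f',D'))\bigr)\hookrightarrow H^0\bigl(X,m(K_X+\Delta(f,D))\bigr)
\]
for every such $m$, and the inequality of Kodaira dimensions follows.

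\textbf{Main obstacle.} The only delicate point is the exceptional-divisor estimate. Everything hinges on locating a non-exceptional $P_0\supseteq v(P')$ that maps onto $Q$, and on checking that the orbifold-morphism inequality applies to $P_0$ with $t\ge1$. Both facts rest on $u$ being an isomorphism over the generic point of $Q$.
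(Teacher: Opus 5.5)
Your proof is correct. For (i) you compare divisor by divisor: strict transforms give equal products because $v_*D'=D$, and a $v$-exceptional $P'$ over $Q'$ cannot lower the infimum, since every component of $f^*Q$ containing $v(P')$ dominates $Q$ and $v$ is an orbifold morphism. For (ii) you push pluricanonical forms forward along $u$. The paper gives no proof here, only the citation \cite[Proposition 5.9]{Cam11}, and your argument is the standard one behind that result.
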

	Note that the strict inequality in \cref{prop:birationaldecrease} can indeed occur; thus, the Kodaira dimension of the orbifold base of a fibration is not, in general, a birational invariant. We therefore introduce the following definition (cf. \cite[D\'efinitions 5.10 \& 5.16]{Cam11}):
	\begin{dfn}[Canonical dimension]\label{def:canonical dimension}
		Let $f: (Y, D) \to X$ be a fibration where both the orbifold pair $(Y, D)$ and the base $X$ are smooth. We define the \emph{canonical dimension} of the orbifold fibration $f$ as:
		$$\kappa(f, D) := \inf_{f' \sim f} \{\kappa(X', \Delta(f', D'))\}$$
		where $f': (Y', D') \to X'$ ranges over all models that are birationally equivalent to $f$, as in \cref{def:5.7}. By definition, $\kappa(f, D)$ is a birational invariant of the fibration. If $\kappa(f, D) = \dim(X)$, we say that $f$ is a \emph{fibration of general type}.
	\end{dfn}
	
	By \cref{prop:birationaldecrease}, we generally have the inequality:
	$$\kappa(f, D) \le \kappa(X, \Delta(f, D)).$$
	Equality is achieved when the fibration is neat:
	
	\begin{lem}[{\cite[Corollaire 5.11.(3)]{Cam11}}]\label{lem:neatcomputation}
		If the fibration $f$ is neat, then:
		$$\kappa(f, D) = \kappa(X, \Delta(f, D)).$$
	\end{lem}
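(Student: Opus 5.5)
Since $\kappa(f,D)\le \kappa(X,\Delta(f,D))$ always holds by definition and \cref{prop:birationaldecrease}, I only need the reverse inequality: for every model $f'\colon (Y',D')\to X'$ equivalent to $f$, $\kappa(X',\Delta(f',D'))\ge \kappa(X,\Delta(f,D))$. Because the equivalence is generated by elementary diagrams as in \cref{def:5.7}, I will induct on the length of a chain of such diagrams and show that neatness lets me pass to a common model dominating both ends, on which the base Kodaira dimension is unchanged.

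Fix the neat structure $w\colon (Y,D)\to (Y'',D'')$, $\nu\colon X\to X''$, with every $f$-exceptional divisor $w$-exceptional. First I treat one elementary diagram $v\colon (Y',D')\to (Y,D)$, $u\colon X'\to X$, with $f\circ v=u\circ f'$. Here \cref{prop:birationaldecrease} goes the wrong way, so I compare both $\Delta(f,D)$ and $\Delta(f',D')$ with $\Delta(f'',D'')$ on $X''$. The key claim is
\[
\Delta(f,D)\le \nu^*\Delta(f'',D'') + E_\nu,
\]
with $E_\nu$ effective and $\nu$-exceptional, together with $\nu_*\Delta(f,D)=\Delta(f'',D'')$. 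This gives $\kappa(X,\Delta(f,D))\le \kappa(X'',\Delta(f'',D''))$; the opposite inequality is \cref{prop:birationaldecrease} applied to the diagram $(w,\nu)$. So $\kappa(X,\Delta(f,D))=\kappa(X'',\Delta(f'',D''))$. Applying the same reasoning to $f'$ over $X''$, via the composed diagram $w\circ v$, $\nu\circ u$, and using \cref{prop:birationaldecrease}, yields $\kappa(X',\Delta(f',D'))\ge \kappa(X'',\Delta(f'',D''))$. Induction along chains, possibly after blowing up to dominate both sides and replacing $f''$ by a common neat target, then finishes.

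The main obstacle is the key comparison above. A prime $Q$ on $X$ that is $\nu$-exceptional contributes only to $E_\nu$, so the issue is the non-exceptional $Q$ and the components of $\nu^*Q''$. Neatness is exactly what rules out multiplicities being artificially increased by $f$-exceptional divisors. Every $P\subset Y$ with $\operatorname{codim} f(P)\ge2$ is contracted by $w$, so it does not appear in $D''$ and imposes no condition on $\Delta(f'',D'')$. Meanwhile, for a divisor $P$ mapping onto a non-exceptional $Q$, its image $w(P)$ is a divisor mapping onto $\nu(Q)$. Comparing $m(f,P)m_P(D)$ with $m(f'',w(P))\,m_{w(P)}(D'')$ requires $w$ to be an orbifold morphism with $w_*D=D''$, which is assumed. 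I expect this local multiplicity bookkeeping, done in codimension one over generic points of $Q$, to carry the weight. The remaining discrepancy is supported on $\nu$-exceptional divisors and is absorbed by $E_\nu$, since adding effective exceptional divisors does not change Kodaira dimension on the target.
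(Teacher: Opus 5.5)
Your proposal has a genuine gap. The inequality that carries the lemma is $\kappa(X',K_{X'}+\Delta(f',D'))\ge\kappa(X,K_X+\Delta(f,D))$ for models $f'$ dominating the neat $f$, and it is never established, because both of your appeals to \cref{prop:birationaldecrease} point the wrong way. That proposition always says the \emph{lower} model of an elementary diagram has the larger Kodaira dimension.

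In the neat diagram $(w,\nu)$, $f$ is the upper model. So the proposition gives $\kappa(X'',K_{X''}+\Delta(f'',D''))\ge\kappa(X,K_X+\Delta(f,D))$, the same inequality your key claim yields. The key claim itself is just part (i) of the proposition plus the trivial bound on $\nu$-exceptional primes, and it uses no neatness. Likewise, for $(w\circ v,\nu\circ u)$ the proposition yields $\kappa(X',K_{X'}+\Delta(f',D'))\le\kappa(X'',K_{X''}+\Delta(f'',D''))$, not $\ge$.

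In fact the intermediate equality $\kappa(X,K_X+\Delta(f,D))=\kappa(X'',K_{X''}+\Delta(f'',D''))$ is false in general, so no comparison on the base $X''$ can work. Take the following data:
\begin{itemize}
\item four general lines $L_1,\dots,L_4\subset\mathbb{P}^2$ and the point $x=L_1\cap L_2$;
\item a curve $C$ with a point $c$, and $Y''$ the blow-up of $\mathbb{P}^2\times C$ at $(x,c)$, with exceptional divisor $F$;
\item $D''$ the reduced strict transform of $\sum_i L_i\times C$, and $f''\colon Y''\to X'':=\mathbb{P}^2$ the projection.
\end{itemize}
Then $\Delta(f'',D'')=\sum_i L_i$, so $K_{X''}+\Delta(f'',D'')=\mathcal{O}(1)$ has Kodaira dimension $2$.

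Over $X:=\mathrm{Bl}_x\mathbb{P}^2$, let $Y$ be a log resolution of the main component of $Y''\times_{\mathbb{P}^2}X$, with $D:=w^{-1}_*D''+\mathrm{Ex}(w)$; this gives a neat $f$. The strict transform of $F$ dominates the exceptional curve $E$ with multiplicity one and is not in $D$. Hence $E$ has coefficient $0$ in $\Delta(f,D)$, and $K_X+\Delta(f,D)=\nu^*\mathcal{O}(1)-E$ has Kodaira dimension $1$.

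What is missing is a use of neatness on the total space, to control the coefficients of $\Delta(f',D')$ along the $u$-exceptional primes of $X'$. The $f$-exceptional divisors never enter $\Delta(f,D)$. The danger is that, over a modified base, divisors lying over exceptional loci dominate new primes with small multiplicity, as $F$ does over $E$ above.

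One route that works runs as follows. Let $s\in H^0(X,r(K_X+\Delta(f,D)))$ and $p=\dim X$.
\begin{enumerate}
\item The pullback $f^*s$ is a rational section of $\mathrm{Sym}^r$ of Campana's orbifold $p$-forms on $(Y,D)$. By the definition of the multiplicities $m(f,P)\,m_P(D)$, it is holomorphic at the generic point of every prime that is not $f$-exceptional.
\item By neatness the remaining primes are $w$-exceptional. So the form is holomorphic on $(Y'',D'')$ off codimension two, hence everywhere.
\item Pulling back along the orbifold morphisms $w$ and then $v$ shows that $f'^*(u^*s)=v^*f^*s$ is holomorphic on $(Y',D')$.
\item For a prime $E'\subset X'$, do the local computation at a prime of $Y'$ realizing the infimum that defines $m_{E'}(\Delta(f',D'))$. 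This bounds the pole of $u^*s$ along $E'$ by $r\bigl(1-1/m_{E'}(\Delta(f',D'))\bigr)$, so $u^*s\in H^0(X',r(K_{X'}+\Delta(f',D')))$.
\end{enumerate}

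It is cleaner to package this as the Kodaira dimension of the saturation of $f^*K_X$ in these orbifold symmetric differentials. That quantity is invariant under elementary diagrams, is at most $\kappa(X',K_{X'}+\Delta(f',D'))$ for every model, and equals $\kappa(X,K_X+\Delta(f,D))$ when $f$ is neat. This also removes the need for your chain induction, which is delicate in its own right: two models elementary over a common one need not admit a common model elementary over both, since an exceptional prime may carry different coefficients in them.

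The paper does not reprove this lemma; it only cites Campana.
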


	\begin{proposition}[{\cite[Proposition 4.10]{Cam11}}]\label{prop:neatconstruct}
		Let  $g':\left(Z' , \Delta'\right) \rightarrow Y'$ be a morphism from a projective orbifold to a projective normal variety. Then there exists a commutative diagram:
		\[
		\begin{tikzcd}
			(Z , \Delta)  \arrow[d,"g"] \arrow[r,"w"] & (Z' , \Delta')\arrow[d,"g'"] \\ 
			Y \arrow[r,"\nu"] & Y'
		\end{tikzcd}\]
		such that $w$ and $v$ are both birational, $  (Z , \Delta)$ and $Y$ are both smooth, and $g$ is strictly neat and high.\qed 
	\end{proposition}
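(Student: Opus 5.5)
The plan is to build the diagram in three stages: first flatten $g'$, then resolve, then put the boundary in normal crossing position. Flattening controls $g$-exceptional divisors (neatness). A suitable choice of boundary coefficients on the new exceptional divisors gives highness. An embedded resolution on the base gives smoothness of the orbifold base.

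\emph{Step 1 (flattening).} By the Raynaud--Gruson (or Hironaka) flattening theorem, there is a projective birational morphism $Y_1\to Y'$ such that the main component $Z_1$ of $Z'\times_{Y'}Y_1$ is flat over $Y_1$. Replacing $Y_1$ by a resolution, we may assume $Y_1$ is smooth; flatness is preserved by base change, so $Z_1\to Y_1$ stays flat. All fibres of $Z_1\to Y_1$ are then equidimensional of dimension $d=\dim Z'-\dim Y'$. Consequently the preimage in $Z_1$ of any subset of codimension $\ge 2$ in $Y_1$ has codimension $\ge 2$ in $Z_1$. In other words, $Z_1\to Y_1$ has no exceptional divisors.

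\emph{Step 2 (smoothing the base data).} Let $B\subset Y_1$ be the union of the discriminant of $Z_1\to Y_1$ and the divisorial parts of the images of the components of $\operatorname{Supp}\Delta'$ (strict transforms). Take an embedded resolution $Y\to Y_1$ making the total transform of $B$, together with the exceptional locus, a simple normal crossing divisor. Then take a resolution $Z$ of the main component of $Z_1\times_{Y_1}Y$ such that $g\colon Z\to Y$ is a morphism and the total transform of $\operatorname{Supp}\Delta'$, plus all exceptional divisors of $w\colon Z\to Z'$, plus $g^{-1}$ of the boundary on $Y$, is snc.

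\emph{Step 3 (boundary).} Define $\Delta$ on $Z$ as the strict transform of $\Delta'$ plus every $w$-exceptional prime divisor with coefficient $1$ (multiplicity $+\infty$). Then $w_*\Delta=\Delta'$, and $w$ is an orbifold morphism: all $w$-exceptional components carry infinite multiplicity, and the others keep their multiplicities.

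\emph{Verification of neatness.} A $g$-exceptional divisor $P\subset Z$ maps to a subset of codimension $\ge 2$ in $Y$. Its image in $Y_1$ therefore also has codimension $\ge2$, so by Step 1 its image in $Z_1$ is not a divisor, and $P$ is exceptional over $Z_1$. To conclude that $P$ is $w$-exceptional, i.e.\ exceptional over $Z'$, I expect to have to argue carefully. If necessary I will instead apply the definition of neatness with $(Z_1,\Delta_1)$ (after resolving) in the role of $(Z',\Delta')$, which the equivalence in \cref{def:5.7} permits. I regard this as the main technical obstacle: matching the model relative to which neatness holds with the original $g'$.

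\emph{Verification of highness.} Fix a prime divisor $Q$ on $Y$. For a non-exceptional $P$ over $Q$, the inequality $m(g,P)\,m_P(\Delta)\ge m_g(Q)$ is automatic, since $m_g(Q)$ is defined as the infimum of these products. The $g$-exceptional divisors lying over $Q$ are $w$-exceptional by neatness, hence have $m_P(\Delta)=\infty$. So the orbifold-morphism inequality holds for every component of $g^*Q$.

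\emph{Verification of strict neatness.} The support of $\Delta(g,\Delta)$ lies in the snc divisor arranged in Step 2, so the orbifold base is smooth. Any further blow-ups of $Y$ performed there only create divisors that are not $g$-exceptional, so neatness is not destroyed.
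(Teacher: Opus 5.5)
Your construction (flatten $g'$, log-resolve, give every $w$-exceptional divisor coefficient one) is the standard proof of the cited result. The paper does not reprove it, but it runs exactly this flattening argument for neatness in the proof of \cref{thm:special}. Your checks that $w$ is an orbifold morphism with $w_*\Delta=\Delta'$, and that $g$ is high, are correct. Two points need repair.

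\emph{The step you call the main obstacle is immediate, so drop the fallback.} Write $\rho\colon Y\to Y_1$, $g_1\colon Z_1\to Y_1$ and $\sigma\colon Z\to Z_1$. If $P$ is $g$-exceptional, then $\rho(g(P))$ has dimension at most $\dim Y_1-2$. By equidimensionality of the flat map $g_1$, the image $\sigma(P)$ has dimension at most $\dim Z_1-2$. Since $w(P)$ is the image of $\sigma(P)$ under $Z_1\to Z'$, we get $\dim w(P)\le \dim Z'-2$. The fallback could in fact fail: relative to a resolution $\widetilde Z_1\to Z_1$, a $g$-exceptional divisor may map onto an exceptional divisor of $\widetilde Z_1$ lying over $\operatorname{Sing} Z_1$. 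Your closing remark that blowing up $Y$ creates no $g$-exceptional divisors is also false, since re-resolving the fibre product does create them. It is harmless only because neatness is checked on the final model.

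\emph{Strict neatness is asserted, not proved, and as written the containment fails.} If ``the exceptional locus'' in Step 2 means that of $Y\to Y_1$, then $\operatorname{Supp}\Delta(g,\Delta)$ need not lie in your snc divisor. Divisors of $Z_1$ that are exceptional over $Z'$ receive coefficient one, and they can fill entire fibres over an exceptional divisor of $Y_1\to Y'$ along which $g_1$ is smooth.

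For example, let $g'\colon Z'\to Y'$ be the minimal resolution of a normal surface singularity with $\Delta'=0$. Take as flattening the composite of $g'$ with the blow-up $Y_1\to Z'$ at a point of the exceptional curve; the strict transform of $Z'$ is then $Y_1$ itself. Thus $Z=Z_1=Y=Y_1$, $g=\mathrm{id}$ and $B=\varnothing$, yet $\Delta(g,\Delta)$ is the new $(-1)$-curve with coefficient one.

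The fix is to put $\operatorname{Exc}(Y_1\to Y')$ into $B$, and then argue as follows. Let $Q\subset Y$ be a prime divisor outside the total transform of $B$ and outside $\operatorname{Exc}(\rho)$, and set $Q_1:=\rho(Q)$.
\begin{itemize}
\item Over a neighbourhood of the generic point of $Q_1$, the maps $Y\to Y_1\to Y'$ and $Z_1\to Z'$ are isomorphisms and $g_1$ is smooth.
\item Hence $g_1^{-1}(Q_1)$ is reduced, and none of its components is exceptional over $Z'$ or contained in the strict transform of $\operatorname{Supp}\Delta'$.
\item The strict transforms $P$ of these components satisfy $m(g,P)=m_P(\Delta)=1$, so $m_g(Q)=1$.
\end{itemize}
This holds regardless of which coefficient-one divisors the log resolution adds over $Q$, because $m_g(Q)$ is an infimum.

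This argument uses generic smoothness of $g_1$, that is, smoothness of $Z'$. The paper's definition of neatness also requires the reference $Z'$ and $Y'$ to be smooth. So when $Y'$ is only normal, first pass to a smooth model of $g'$ over a resolution of $Y'$, giving the new exceptional divisors coefficient one, and run the argument relative to that model.
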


	\begin{dfn}[Special variety]\label{def:special}
		Let $(X,D)$ be a smooth projective orbifold, then 
		$(X,D)$ is {\it special} if, for every elementary birational orbifold morphism $u: (X', D') \longrightarrow (X, D)$   and every neat fibration $f: (X', D') \longrightarrow Y$ with $\dim(Y) > 0$, the following inequality holds:
		\[
		\kappa\left(Y, K_Y + \Delta(f, D')\right) < \dim(Y).
		\] In other words, $(X, D)$ is special if it does not admit any fibration of general type in the sense of \cref{def:canonical dimension}.
	\end{dfn}
	
	We will need the following result of Campana \cite[Th\'eor\`eme 10.1]{Cam11}. \begin{thm}[Relative core map]\label{t-core}Let $(X,D)$ be a smooth projective orbifold, and let $f \colon X \to S$ be an algebraic fiber space over a smooth projective variety $S$. Then there exists an almost holomorphic fibration $c \colon X \dasharrow C$ over $S$ such that:\begin{thmlist}\item for the orbifold base $(C,D_C)$ induced by $c \colon (X,D) \dasharrow C$ (after possibly replacing $c$ by a birational model as in \cref{def:5.7}), the general fibers $(C_s,D_{C_s})$ of the induced map $(C,D_C) \to S$ are of general type; and\item the general fibers $(X_c,D_c)$ of $c \colon (X,D) \dasharrow C$ are special.\end{thmlist}The map $c \colon X \dasharrow C =: C(X,D/S)$ is called the relative core of $(X,D)$ over $S$. Such a relative core map is unique up to birational equivalence.\end{thm}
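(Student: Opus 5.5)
Since this is Campana's \cite[Th\'eor\`eme 10.1]{Cam11}, I would follow his construction of the core, carried out relative to $S$. Call a fibration $g\colon X\dasharrow Z$ over $S$ (so that $f$ factors through $Z\to S$) \emph{of general type over $S$} if, after passing to a neat model as in \cref{prop:neatconstruct}, the general fibers $(Z_s,\Delta(g,D)|_{Z_s})$ of the orbifold base $(Z,\Delta(g,D))\to S$ are of general type. By \cref{lem:neatcomputation}, this notion does not depend on the neat model chosen. Two kinds of fibration are trivially of general type over $S$: the identity $X\to X$, where the orbifold base is $(X,D)$ itself, and $f$ itself, whose fibers are points. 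I will use the second as the starting point. The core $c$ should then be the \emph{maximal} such fibration, namely one with $\dim Z$ as large as possible subject to being of general type over $S$. Note that $\mathrm{id}_X$ is excluded unless $(X,D)$ is of general type over $S$.

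\textbf{Step 1 (joins).} Let $g_1,g_2$ be two fibrations of general type over $S$. I will show that their join $g_1\vee g_2\colon X\dasharrow Z_{12}$ is again of general type over $S$, where $Z_{12}$ is a resolution of the closure of the image of $X$ in $Z_1\times_S Z_2$. Granting this, any two maximal fibrations are dominated by their join and hence coincide birationally. This gives uniqueness of $c$ up to birational equivalence, and property (i) holds by construction.

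\textbf{Step 2 (fibers are special).} Suppose the general fiber $(X_c,D_c)$ of $c$ admits a fibration of general type $h_c\colon X_c\dasharrow W_c$ with $\dim W_c>0$. By countability of the components of the relevant Chow or Douady spaces, I can choose $h_c$ uniformly in $c$ and spread it out to a fibration $h\colon X\dasharrow W$ over $C$. The general fibers of $(W,\Delta)\to C$ are then of general type. By the orbifold additivity theorem $C_{n,m}^{\mathrm{orb}}$ with base of general type, applied fiberwise over $S$ to the tower $W_s\to C_s$, the composite $X\dasharrow W$ is of general type over $S$. Since $\dim W>\dim C$, this contradicts the maximality of $c$, so the general fibers of $c$ are special.

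\textbf{Step 3 (almost holomorphy).} To see that $c$ is almost holomorphic, I would show that its general fiber is the closure of an equivalence class. A special subvariety passing through a general point maps to a point under every fibration of general type, since its image would otherwise carry an induced orbifold structure of general type. Hence the fibers of $c$ through general points are determined intrinsically and do not meet one another, which is the almost holomorphic condition.

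The main obstacle is the positivity input behind Steps 1 and 2: the orbifold version of Viehweg's weak positivity of $g_*\bigl(m(K_{X/Z}+D)\bigr)$. This is needed to show that the general-type property is preserved under joins and under composition of fibrations. It also requires passing to neat and high models, so that the orbifold bases are computed correctly, which is exactly why \cref{prop:neatconstruct} and \cref{lem:neatcomputation} are needed. I would treat that additivity statement as the essential black box from \cite{Cam04,Cam11}.
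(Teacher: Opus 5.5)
Your outline is the natural relativization of Campana's construction of the core, which is a different route from the paper's. As written, however, two load-bearing steps do not go through.

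Step 3 does not establish almost holomorphy. The principle it rests on is false as stated: a special subvariety through a general point need not be contracted by a fibration of general type over $S$ unless it lies in a fiber of $f$. For example, take $X=S\times F$ with $S$ an elliptic curve, $F$ a curve of genus at least $2$ and $D=0$. The relative core is the identity of $X$, yet the special curve $S\times\{\mathrm{pt}\}$ through a general point is not contracted. Even with the corrected principle, you only learn that the fiber of $c$ through a general point $x$ is the largest special subvariety of $X_{f(x)}$ through $x$. That says nothing about whether the closures of distinct general fibers meet at non-general points; they could all pass through a common indeterminacy point, as the lines of a pencil do. Ruling this out is exactly the content of almost holomorphy, and it needs its own argument.

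Step 1 is likewise only asserted. Stability of ``general type over $S$'' under joins needs more than $C_{n,m}^{\mathrm{orb}}$ over a base of general type. It also needs a restriction statement: the restriction of $g_2$ to the general fibers of $g_1$ is again of general type, in the spirit of \cite[Th\'eor\`eme 9.12]{Cam11}, applied fiberwise over $S$ with neat models. Moreover, the uniqueness claimed in the theorem is that any fibration satisfying (i) and (ii) is equivalent to $c$. This does not follow from maximality and joins alone; you again need the corrected contraction property to show that such a fibration is maximal. Two smaller points: the identity is not ``trivially'' of general type over $S$, as your next sentence concedes. And \cref{lem:neatcomputation} concerns the Kodaira dimension of the whole base, not of its fibers over $S$, so it does not by itself show your relative notion is independent of the neat model.

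The paper sidesteps all of this with one trick. Take $H$ sufficiently ample and general on $S$, and pass to the absolute orbifold $(X,D+f^*H)$.
\begin{itemize}
\item For this pair $f$ itself is a fibration of general type, since its orbifold base contains $H$ with coefficient $1$. So by \cite[Corollaire 9.13]{Cam11} the absolute core $c$ of $(X,D+f^*H)$ factors through $f$.
\item The general fibers of $c$ lie over $S\setminus H$. They therefore carry the same orbifold structure for $D$ and for $D+f^*H$, and are special by \cite[Th\'eor\`eme 10.1]{Cam11}; this gives (ii).
\item The divisor $\Delta(c,D+f^*H)-\Delta(c,D)$ is supported over $H$. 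Bigness of $K_C+\Delta(c,D+f^*H)$ restricts to bigness on general fibers $C_s$, which gives (i).
\item Almost holomorphy and uniqueness are inherited from the absolute theorem.
\end{itemize}
Your approach, if completed, would give an intrinsic description of $c$ as the maximal fibration of general type over $S$. But it amounts to redoing Campana's positivity and restriction theory relative to $S$, whereas the paper's reduction needs nothing beyond the absolute core.
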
\begin{proof}This follows easily from \cite[Th\'eor\`eme 10.1]{Cam11}, which establishes the absolute case where $\dim(S)=0$. We include the details for the reader.
		
		By \cref{prop:neatconstruct}, we may replace $f \colon X \to S$ by a neat model. Let $H$ be a sufficiently ample divisor on $S$ in general position. Then $f$ is clearly a fibration of general type for the pair $(X,D+f^*H)$. It follows that if $c \colon X \dasharrow C$ is the absolute core of $(X,D+f^*H)$, then $f$ factors through $c$ (see \cite[Corollaire 9.13]{Cam11}). Up to replacing $c$ with a birationally equivalent model as in \cref{def:5.7}, we may assume that it is a neat morphism. This yields the following commutative diagram:$$\begin{tikzcd} X \arrow[rd,"f"'] \arrow[r,"c"] & C \arrow[d,"g"] \\ & S \end{tikzcd}$$
		The general fibers of $c \colon (X,D) \to C$ agree with the general fibers of $c \colon (X,D+f^*H) \to C$. Since $c$ is the absolute core of $(X,D+f^*H)$, these general fibers are special by \cite[Th\'eor\`eme 10.1]{Cam11}. This proves Item (ii).
		
		For the orbifold base $\Delta(c,D+f^*H)$ of $c \colon (X,D+f^*H) \to C$, we note that the difference$$\Delta(c,D+f^*H) - \Delta(c,D)$$
		is an effective divisor entirely supported on $g^{-1}(H)$. Consequently, the general fibers of the induced fibrations $(C, \Delta(c,D+f^*H)) \to S$ and $(C, \Delta(c,D)) \to S$ coincide.
		
		Since $c$ is a neat morphism and is the absolute core of $(X,D+f^*H)$, its orbifold base $(C, \Delta(c,D+f^*H))$ is of general type. It is a standard property that the general fibers of an orbifold of general type over any base are again of general type. Hence, the general fibers of $(C, \Delta(c,D+f^*H)) \to S$, and thus the general fibers of $(C, \Delta(c,D)) \to S$, are of general type. This proves  Item (i).
		
		Finally, the uniqueness of $c$ up to birational equivalence follows directly from \cite[Corollaire 9.13]{Cam11}. The theorem is proved.\end{proof}
	In \cite{CDY25b}, the authors conjectured that smooth special quasi-projective varieties have virtually nilpotent fundamental groups, and prove the following:	
	\begin{thm}[{\cite[Theorem A]{CDY25b}}]\label{thm:nilpotent}
		Let $X$ be a smooth quasi-projective variety that is special and let $\varrho:\pi_1(X)\to \GL_N(\bC)$ be a representation. Then after replacing $X$ by a finite \'etale cover, the Zariski closure of the image $\varrho(\pi_1(X))$ decomposes as a direct product $U\times T$, where $U$ is a unipotent algebraic group, and $T$ is an algebraic torus. In particular, $\varrho(\pi_1(X))$ is a nilpotent group.   \qed
	\end{thm}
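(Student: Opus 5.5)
This statement is quoted from \cite[Theorem A]{CDY25b} and closed with \qed, so no proof is needed in this paper. Still, here is the route I would take to reprove it. The plan has three steps: Zariski density, reduction to solvable groups, and a splitting step.

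\textbf{Step 1: Zariski density and the connected component.} Let $G$ be the Zariski closure of $\varrho(\pi_1(X))$. After a finite \'etale cover, I may assume $G$ is connected, since specialness is preserved under finite \'etale covers by Campana. Write $G = R_u(G)\rtimes L$ with $L$ reductive.

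\textbf{Step 2: the reductive quotient.} The induced Zariski-dense representation $\pi_1(X)\to L$ should have virtually abelian image. The mechanism is the reductive Shafarevich morphism / factorization theory:
\begin{itemize}
\item If $L$ were not a torus, a semisimple quotient of $L$ would yield a Zariski-dense representation into an almost simple group.
\item Such a representation either factors through a variety of general type, or, for non-rigid representations, gives a spectral covering with holomorphic one-forms whose Stein factorization yields a fibration of general type with orbifold base.
\item Either outcome contradicts specialness, since a special variety admits no fibration of general type (\cref{def:special}).
\end{itemize}
Hence $L$ is a torus $T$ and $G$ is solvable.

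\textbf{Step 3: splitting $G=U\times T$.} I need to show that $T$ acts trivially on $\mathrm{Lie}(U)$.
\begin{itemize}
\item Grade $\mathrm{Lie}(U)$ by the characters of $T$.
\item A nontrivial weight $\chi$ gives a rank-one local system $L_\chi$ with a nonzero class in $H^1(X,L_\chi)$, coming from the extension.
\item For special $X$, every nontrivial character in the cohomology jump loci should have vanishing $H^1$. This uses Arapura-type structure of jump loci as torsion-translated subtori pulled back from fibrations onto curves or semiabelian varieties. Specialness forces those fibrations to be non-hyperbolic, so the relevant characters are torsion.
\item Killing the torsion by a further finite cover makes $T$ act trivially, giving $G=U\times T$. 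This is nilpotent, since unipotent groups are nilpotent.
\end{itemize}

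\textbf{Main obstacle.} I expect the hardest part to be Step 2 in the non-rigid quasi-projective case. There one needs harmonic maps to Bruhat–Tits buildings with log-type growth and a construction of the general-type orbifold base, relying on Campana--Claudon style results for quasi-projective orbifolds.
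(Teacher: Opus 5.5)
This statement is imported from \cite{CDY25b} without proof, so the paper contains no argument to compare your route with. Judged on its own merits, your proposal breaks down as follows. Step~1 is fine: finite \'etale covers of special varieties are special by \cite[Proposition 10.11]{Cam11}. Step~3 works after a repair. Step~2 has a genuine gap.

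\textbf{Step 3 needs to work on the abelianization.} You should grade $V:=U/[U,U]$, not all of $\mathrm{Lie}(U)$. A weight occurring only in $[\mathfrak{u},\mathfrak{u}]$ does not come from an extension class, so your sentence about nontrivial weights is unjustified as stated. The argument that does work is the following.
\begin{itemize}
\item On $G/[U,U]=V\rtimes T$, write the representation as $\gamma\mapsto(c(\gamma),t(\gamma))$, with $c=\sum_\chi c_\chi$ a cocycle.
\item Suppose $\chi\circ t\neq 1$ and $c_\chi(\gamma)=(\chi(t(\gamma))-1)v$ were a coboundary. Conjugating by $v\in V_\chi$ would move the image into the proper subgroup $(\bigoplus_{\chi'\neq\chi}V_{\chi'})\rtimes T$, contradicting Zariski density. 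Hence $H^1(X,L_{\chi\circ t})\neq 0$.
\item Since $T$ is connected and $t(\pi_1(X))$ is Zariski dense in $T$, if $\chi\circ t$ is torsion then already $\chi=1$. No further cover is needed.
\item Triviality of all weights on $V$ propagates to $\mathfrak{u}=\mathrm{Lie}(U)$. Indeed, $\mathfrak{u}$ is nilpotent, so it is generated as a Lie algebra by any $T$-stable complement of $[\mathfrak{u},\mathfrak{u}]$.
\end{itemize}
The jump-locus input must be the quasi-projective one. Isolated points of $V^1(X)$ are torsion (Budur--Wang). Positive-dimensional components come only from orbifold pencils $X\to C$ with $\chi^{\rm orb}(C)<0$ (Arapura; Artal-Bartolo--Cogolludo--Matei), not from semi-abelian varieties. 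Such a pencil is a fibration of general type in Campana's sense, because his multiplicities (infima, and $+\infty$ over the punctures of $C$) dominate the gcd-multiplicities defining $\chi^{\rm orb}$. Specialness excludes exactly this.

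\textbf{Step 2 is the gap.} It restates the hard theorem rather than proving it, and the rigid/non-rigid split you propose does not close.

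On the rigid side: a rigid representation is defined over a number field, but it is not known to be integral (Simpson's integrality conjecture is open in this generality). It may therefore be unbounded at some non-archimedean place, and must go through harmonic maps to Bruhat--Tits buildings just like a non-rigid one. The VHS argument runs as follows: every Galois conjugate underlies a VHS; specialness forces each period map to be constant, since its image is of log general type (Brunebarbe--Cadorel); so every conjugate is unitary. This yields finiteness of the image only once integrality, i.e.\ boundedness at every finite place, is known.

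On the non-archimedean side: the spectral one-forms are logarithmic and multivalued, and they only give a map from a spectral cover to a semi-abelian variety. By Kawamata, the image is a semi-abelian fibration over a log general type base, and that base may well be a point. Stein factorization does nothing to prevent this, and abelian representations show that it happens. What makes the base of the reduction of log general type is Zariski density in an almost simple group: this is Zuo's theorem in the compact case. Its quasi-projective counterpart rests on harmonic maps with logarithmic energy growth, and that is where the real work of \cite{CDY25b} lies. As written, Step~2 assumes precisely that conclusion.
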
 
	In this paper, we propose a refined version of the above nilpotency conjecture.
	\begin{conjecture}[2-step nilpotency conjecture]\label{conj:nilpotent}
		Let $X$ be a smooth special quasi-projective variety. Then $\pi_1(X)$ is virtually nilpotent  of class at most $2$. 
	\end{conjecture}
	We will need the following result on the \emph{$\pi_1$-exactness} of quasi-Albanese maps of special varieties in the proof of \cref{main}.
	\begin{proposition}[{\cite[Proposition 4.13]{CDY25b}}]\label{thm:pi1}
		Let $X$ be a smooth quasi-projective variety that is special. Then its quasi-Albanese map $a:X\to A$ is $\pi_1$-exact, i.e., $a$ is a dominant morphism with connected general fibers, such that we have the following short exact sequence
		$$\pi_1(F,x)\to \pi_1(X,x)\to \pi_1(A,a(x))\to 0, $$
		where  $F$ is a general smooth fiber, and $x\in F$ is a base point.   \qed
	\end{proposition}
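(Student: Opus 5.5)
The plan is to split the statement into two parts: first that $a$ is dominant with connected general fibres, and then the exactness of the fundamental group sequence. The second part will be proved via the orbifold base of $a$.

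\emph{Step 1: dominance and connected fibres.} Take the Stein factorization $X\to A'\to A$ of a compactifiable model of $a$, where $A'$ is normal and $A'\to \overline{a(X)}$ is finite. By the log version of Kawamata's theorem on subvarieties of semiabelian varieties, a log resolution of $A'$ has log Kodaira dimension $\geq 0$. Its log Iitaka fibration is induced by the quotient by the connected stabilizer subgroup $B$, and the base has log general type. Composing, $X$ would admit a fibration of general type in the sense of \cref{def:canonical dimension} unless that base is a point. Since $X$ is special (\cref{def:special}), this forces $A'\to A$ to be finite étale onto a translate of a subgroup. The universal property of the quasi-Albanese map then forces $A'=A$: $a(X)$ generates $A$, and a finite étale cover through which $a$ factors must be trivial. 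Hence $a$ is dominant with connected general fibres.

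\emph{Step 2: reduction to an orbifold fundamental group.} Choose a neat model of $a$ using \cref{prop:neatconstruct}, with $X$ compactified by an snc boundary $D_X$ of multiplicity $\infty$. Let $\Delta:=\Delta(a,D_X)$ be the orbifold base on a compactification $\overline{A}$. The standard Nori-type lemma for fibrations with connected general fibres gives an exact sequence
\[
\pi_1(F)\to\pi_1(X)\to\pi_1^{\rm orb}(\overline{A},\Delta)\to 1 .
\]
Here $\pi_1^{\rm orb}$ is the quotient of $\pi_1$ of the complement of $\operatorname{Supp}\Delta$ by the $m_Q$-th powers of meridians around the components $Q$. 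Components with $m_Q=\infty$ contribute no relation, and codimension $\geq 2$ loci can be ignored. It therefore suffices to show that $\Delta$ restricted to $A$ is trivial, so that $\pi_1^{\rm orb}=\pi_1(A)$. Equivalently, every prime divisor $Q$ of $A$ is hit by $a$ with some fibre component of multiplicity $1$; in particular $A\setminus a(X)$ has codimension $\geq 2$.

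\emph{Step 3: main obstacle, $\Delta|_A=0$.} Suppose $\Delta|_A\neq 0$. The orbifold $(\overline{A},\partial A+\Delta)$ has $K_{\overline A}+\partial A\sim 0$, so its Kodaira dimension is that of the effective divisor $\Delta|_A$. Again by Kawamata/Ueno-type structure for effective divisors on semiabelian varieties, after quotienting by the stabilizer $B$ of $\Delta$ we get $A\to A/B$ on which the image divisor is big. The composed fibration $X\to A/B$, with its orbifold base dominating this image, is then of general type. This contradicts specialness, via \cref{lem:neatcomputation} to compute canonical dimensions on the neat model.

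The delicate point is comparing $\Delta(a,\cdot)$ with the orbifold base of the composite $X\to A/B$. Multiplicities can only increase under the quotient for divisors pulled back from $A/B$, which is what makes the argument work. One must also check that bigness of the image of $\Delta$ survives the neat-model replacement. I expect this bookkeeping to be the main difficulty. Once $\Delta|_A=0$ is established, Step 2 gives the exact sequence of \cref{thm:pi1}.
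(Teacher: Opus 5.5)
The paper gives no proof of this statement (it is quoted from \cite{CDY25b}), so your proposal has to stand on its own. Steps 1 and 2 follow the standard route and are essentially fine, with two small corrections. In Step 1, Kawamata's structure theorem for a normal variety finite over a semi-abelian variety yields a base of log general type only after a finite \'etale cover, so you also need that specialness passes to finite \'etale covers. In Step 2, the Nori-type sequence really involves $\gcd$-multiplicities rather than Campana's $\inf$-multiplicities. Your reduction is nevertheless correct: suppose every prime divisor $Q\subset A$ has a reduced component $P$ of $a^*Q$ with $a(P)=Q$ and $P$ meeting $X$. Remove the (codimension $\ge 2$) closure of the images of the $a$-exceptional divisors. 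One then gets exactness over the complement, and hence for $X$.

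The genuine gap is in Step 3, at exactly the comparison you single out, and the inequality goes the wrong way. Put $Q=\pi^{-1}(Q')$. The multiplicity of $\Delta(\pi\circ a,D_X)$ along $Q'$ is an infimum over all prime divisors $P$ with $(\pi\circ a)(P)=Q'$. Those with $a(P)=Q$ give exactly $m_Q(\Delta)$, since $\pi^*Q'=Q$. But the set also contains every $a$-exceptional $P$ with $a(P)\subset Q$ of codimension $\ge 2$ in $A$ whose image still dominates $Q'$; when $\dim A/B=1$ this is every $a$-exceptional divisor over $Q$. If such a $P$ meets $X$ and is reduced in $a^*Q$, then $m_{Q'}(\Delta(\pi\circ a,D_X))=1$ although $m_Q(\Delta)\ge 2$. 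So multiplicities can only drop, and $\Delta|_A\neq 0$ does not make $X\to A/B$ a fibration of general type.

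Passing to a neat model of $a$ does not cure this. \cref{prop:neatconstruct} modifies the base, in general by blowing up centres inside $A$ (think of $a$ factoring through $\mathrm{Bl}_pA$ with $p\in Q$). The exceptional divisors of the new base lying over $Q$ may carry coefficient $0$ while mapping onto $Q'$. Hence Step 3 does not prove $\Delta|_A=0$.

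Note also that the configuration your argument fails to exclude is harmless for exactness. In that configuration, all components over $Q$ with $a(P)=Q$ are multiple, but there is a reduced exceptional component over a point of $Q$ lying on no other branch of the discriminant. A small loop around that component maps to the meridian of $Q$ and kills it. This suggests organizing the argument around killing the meridians of the divisors $Q$, taking exceptional components into account (for instance through the orbifold base of a neat model over the blown-up base and its orbifold fundamental group), rather than around the vanishing of $\Delta|_A$.

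By contrast, the bigness issue you worry about is harmless once the strict transforms of the components of $\Delta'$ carry coefficient $\ge 1/2$. On a blow-up of $\overline{A/B}$, exceptional divisors not over the boundary have discrepancy $\ge 1$, and those over the boundary get orbifold coefficient $1$. This gives $\kappa\ge\bar\kappa\bigl((A/B)\setminus\operatorname{Supp}\Delta'\bigr)=\dim A/B$.
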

	
	\subsection{Hodge decomposition for currents and $\d\db$-lemma}\label{sec:Hodge}
	Let $X$ be a compact Kähler $n$-fold, and let $(E,\nabla)$ be a semisimple flat bundle on $X$. By Simpson \cite{Sim92}, there exists a harmonic metric $h$ on $E$. This metric induces a decomposition $\nabla=\nabla'+\nabla''$ such that the following identities hold:$$\Delta=2\Delta'=2\Delta''.$$ Here, the total Laplacian is defined as $\Delta:=\nabla^* \nabla+\nabla\nabla^* $, and the partial Laplacians $\Delta'$ and $\Delta''$ are defined analogously using $\nabla'$ and $\nabla''$ respectively. Note that the splitting $\nabla=\nabla'+\nabla''$ generally differs from the decomposition of $\nabla$ into its standard $(1,0)$- and $(0,1)$-parts, unless $(E,\nabla)$ is unitary.

	\begin{dfn}
		We denote by $\sD^k(X,E)$ the space of $E$-valued currents of degree $k$ on $X$.  It is  the topological dual of the space of smooth global forms of degree $2n-k$ with values in the dual bundle $E^*$:
		$$
		\sD^k(X,E) := \big(  {A}^{2n-k}(X,E^*) \big)'.
		$$
		The space of test forms $ {A}^{2n-k}(X,E^*)$ is equipped with the natural Fr\'echet topology, i.e. the topology of uniform convergence of all derivatives.
	\end{dfn}
	Note that if $D$ is a simple normal crossing divisor on $X$, then we have the inclusion
	$$
	A^k(X,D,E)\subset \sD^k(X,E).
	$$
	Let $G$ denote the Green operator associated with $\Delta'$. We can extend the operators
	$\bnabla', \bnabla'', \Delta'$, and $G$ to act on the space of $E$-valued currents of degree $k$. Since $X$ is compact K\"ahler, we have the following identities:
	\begin{align}\label{eq:basicgreen}
		[\bnabla',G] = 0,\quad  & [\bnabla'',G] = 0, \\
		[\bnabla'',(\bnabla')^*] = 0,\quad  & [\bnabla',(\bnabla'')^*] = 0, \nonumber \\
		\alpha = \cH \alpha + \Delta' G\alpha, & \nonumber
	\end{align}
	where $\cH$ is the harmonic projection.
	\begin{thm}[De Rham-Kodaira decomposition for currents]\label{currentdecomp}
		Given a current $T\in \sD^k(X, E)$, we have the following Hodge decomposition
		\begin{equation}\label{end17} 
			T= \cH(T)+ \Delta'   G(T),
		\end{equation}
		where $G$ is the Green operator with respect to the Laplacian $\Delta' :=[\bnabla', \bnabla'^*]$.
	\end{thm}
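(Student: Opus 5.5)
The plan is to reduce the statement to the classical smooth Hodge theory for the Laplacian $\Delta'$ on $E$-valued forms, and then extend everything to currents by duality and continuity.

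\emph{Smooth case.} Since $X$ is compact and $\Delta'$ is a second-order elliptic, formally self-adjoint operator on $A^\bullet(X,E)$ (with respect to the harmonic metric $h$ and the K\"ahler metric $\omega$), standard elliptic theory gives the following. The kernel $\cH^k(X,E)=\ker\Delta'$ is finite-dimensional. There is an $L^2$-orthogonal decomposition $A^k(X,E)=\cH^k(X,E)\oplus\Delta'A^k(X,E)$. The Green operator $G$ is defined as the inverse of $\Delta'$ on $(\cH^k)^\perp$ and as $0$ on $\cH^k$. It is a pseudodifferential operator of order $-2$ and maps $A^k$ continuously to $A^k$ in the Fr\'echet topology. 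On smooth forms this yields $\alpha=\cH\alpha+\Delta'G\alpha$, and $G$ commutes with $\Delta'$.

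\emph{Transposition to currents.} Use the pairing $\langle T,\varphi\rangle$ between $\sD^k(X,E)$ and $A^{2n-k}(X,E^*)$. The dual bundle $E^*$ carries the dual flat structure and dual harmonic metric, so there is an analogous Laplacian $\Delta'_{E^*}$ with Green operator $G_{E^*}$ and harmonic projection $\cH_{E^*}$. It is cleaner to use the conjugate-linear Hodge star $\bar\ast\colon A^k(X,E)\to A^{2n-k}(X,E^*)$ induced by $h$, which intertwines $\Delta'$ with $\Delta'_{E^*}$ (up to the identity $\Delta'=\Delta''$ and conjugation). One then defines, for a current $T$,
\[
\langle \Delta' T,\varphi\rangle:=\langle T,\Delta'^{t}\varphi\rangle,\quad \langle G T,\varphi\rangle:=\langle T,G^{t}\varphi\rangle,\quad \langle \cH T,\varphi\rangle:=\langle T,\cH^{t}\varphi\rangle.
\]
Here $\Delta'^{t}$, $G^{t}$, $\cH^t$ are the formal transposes. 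These are again the Laplacian, Green operator and harmonic projector for $E^*$, hence continuous on test forms. The extensions therefore agree with the original operators on smooth forms, since these are embedded in $\sD^k$ via the $L^2$ pairing. Moreover, $\cH T$ is smooth: it is $\sum_j\langle T,\bar\ast h_j\rangle h_j$ for an orthonormal basis $(h_j)$ of $\cH^k$. Transposing the smooth identity $\varphi=\cH^t\varphi+G^t\Delta'^t\varphi$ (with $G^t\Delta'^t=\Delta'^tG^t$) and pairing with $T$ gives $T=\cH T+\Delta' G T$, which is \eqref{end17}. The commutation relations in \eqref{eq:basicgreen} extend in the same way by transposition.

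\emph{Main obstacle.} The only delicate point is to check that the transpose of $\Delta'$ on $E$-valued forms is really the corresponding Laplacian on $E^*$-valued forms, and so has the same Green-operator structure. This is needed because the splitting $\nabla=\nabla'+\nabla''$ depends on the harmonic metric and need not be of type $(1,0)+(0,1)$. I would handle it by Stokes' formula: $\int \langle\bnabla' T\wedge\varphi\rangle=\pm\int\langle T\wedge\nabla'_{E^*}\varphi\rangle$, where $\nabla'_{E^*}$ is the dual connection piece for the dual harmonic metric. Combined with the K\"ahler identities, this gives $\Delta'^t=\Delta'_{E^*}$. Once that is in place, the rest is formal.
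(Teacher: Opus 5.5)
Your argument is correct and is the classical de Rham--Kodaira duality argument: extend $\Delta'$, $G$ and $\cH$ to currents by transposition, then transpose the smooth identity $\mathrm{Id}=\cH+G\Delta'$. This is exactly what the paper relies on, since it gives no proof of its own and only refers to the literature.

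The step you flag as the main obstacle can be bypassed. Because $G$ and $\cH$ are self-adjoint for the $L^2$ inner product, their transposes for the bilinear pairing are simply $\bar\ast\, G\,\bar\ast^{-1}$ and $\bar\ast\,\cH\,\bar\ast^{-1}$. These are continuous on test forms, so there is no need to identify them with the Green operator and harmonic projection of $E^*$.
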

	
	\begin{proof}
		For the proof, we refer to \cite{Kod-deRham} or \cite[Section 3]{CP}. 
	\end{proof}
	
	\begin{lem}[$\d\db$-lemma]\label{lem:ddbar} Let $u\in \sD^k(X, E)$ such that $\bnabla u= 0$ as currents on $X$.
		If $u= \bnabla' v$, or $u=\bnabla''v$ on $X$ for some current $v\in \sD^{k-1}(X,E)$, then  we have
		$$
		u=\bnabla'\bnabla''\sigma
		$$
		for some $\sigma\in \sD^{k-2}(X,E)$. In particular, if $k=1$, then $u=0$.
	\end{lem}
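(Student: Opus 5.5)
The plan is to use the Hodge decomposition for currents (\cref{currentdecomp}) together with the commutation identities \eqref{eq:basicgreen}. By symmetry (swapping the roles of $\bnabla'$ and $\bnabla''$, which is allowed because $\Delta=2\Delta'=2\Delta''$, so the Green operators and harmonic projections for $\Delta'$ and $\Delta''$ coincide), it is enough to treat the case $u=\bnabla' v$.

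\emph{Step 1: $u$ has no harmonic part.} Suppose $u=\bnabla' v$. Then $\cH(u)=0$. Indeed, harmonic forms are $\bnabla'^*$-closed, so pairing with any harmonic $h$ gives $\langle \bnabla' v,h\rangle=\langle v,\bnabla'^*h\rangle=0$, interpreted in the sense of currents against smooth forms. \cref{currentdecomp} then yields
$$u=\Delta' G u=\bnabla'\bnabla'^*Gu+\bnabla'^*\bnabla' G u.$$

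\emph{Step 2: kill the second term.} Since $\bnabla u=0$ and $u$ is $\bnabla'$-exact, we have $\bnabla' u=\bnabla'^2 v=0$. Because $G$ commutes with $\bnabla'$, this gives $\bnabla' G u=G\bnabla' u=0$, and hence $u=\bnabla'\bnabla'^*Gu$. It follows that $\bnabla'' u=\bnabla u-\bnabla' u=0$.

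\emph{Step 3: insert $\bnabla''$ via the Kähler identity.} Set $w:=\bnabla'^*Gu$. Apply the Hodge decomposition to $w$, this time with respect to $\Delta''$ (equal to $\Delta'$ up to the factor; I use its Green operator $G''=2G$ up to normalization):
$$w=\cH w+\bnabla''\bnabla''^*G''w+\bnabla''^*\bnabla''G''w.$$
The terms behave as follows.
\begin{itemize}
\item Applying $\bnabla'$ to $\cH w$ gives $0$, since harmonic currents are smooth and $\bnabla'$-closed.
\item For the last term, $\bnabla'' w=\bnabla''\bnabla'^*Gu=-\bnabla'^*\bnabla''Gu=-\bnabla'^*G\bnabla''u=0$. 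This uses the anticommutation $[\bnabla'',\bnabla'^*]=0$ (graded) from \eqref{eq:basicgreen}, together with $\bnabla''u=0$ from Step 2.
\end{itemize}
Therefore
$$u=\bnabla' w=\bnabla'\bnabla''(\bnabla''^*G''w),$$
so one may take $\sigma:=\bnabla''^*G''\bnabla'^*Gu\in\sD^{k-2}(X,E)$. For $k=1$ we have $\sigma\in\sD^{-1}=0$, hence $u=0$.

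\emph{Main obstacle.} The step I expect to need care is justifying these identities for currents rather than smooth forms. Specifically, one must check:
\begin{itemize}
\item that $G$, $\cH$, and the adjoints extend continuously to $\sD^\bullet(X,E)$, by duality with smooth forms of $E^*$;
\item that the Kähler identities $[\bnabla'',\bnabla'^*]=0$ persist on currents;
\item that $\cH$ annihilates $\bnabla'$-exact currents.
\end{itemize}
All of these follow by transposition from the smooth case, using the Fréchet-dual definition of $\sD^k$ and the facts recorded in \eqref{eq:basicgreen} and \cref{currentdecomp}. I would state them explicitly as the first step of the proof.
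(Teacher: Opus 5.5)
Your proof is correct, but it takes a different route from the paper's.

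The paper applies the Hodge decomposition to the primitive $v$, writing $v=v_0+\bnabla v_1+\bnabla^*v_2$. This makes $u$ equal to $\bnabla'\bnabla''v_1$ plus a term that is both $\bnabla$-closed and $\bnabla^*$-exact. Elliptic regularity makes that term a smooth harmonic form, pairing it with itself shows it vanishes, and one gets $\sigma=v_1$.

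You instead decompose $u$ itself twice. First you use $\Delta'$, with $\cH u=0$ and $\bnabla'Gu=G\bnabla'u=0$, to get $u=\bnabla'\bnabla'^*Gu$. Then you decompose $w=\bnabla'^*Gu$ with respect to $\Delta''$, where the anticommutation $\bnabla''\bnabla'^*=-\bnabla'^*\bnabla''$ kills the coexact part. This is the classical Deligne--Griffiths--Morgan--Sullivan argument.

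What your version buys:
\begin{itemize}
\item It is purely formal once the identities \eqref{eq:basicgreen} are known on currents. You need no regularity of harmonic currents and no self-pairing of a current.
\item It yields a canonical $\sigma=\bnabla''^*G\bnabla'^*Gu$ that depends only on $u$, not on the chosen primitive $v$. This is exactly the Green-operator form the paper uses elsewhere, e.g.\ for $\gamma$ in the construction of \eqref{eq:f} and in \cref{prop:regularity}.
\item It avoids a subtle point in the paper's manipulation. Since $\bnabla'\bnabla^*+\bnabla^*\bnabla'=\Delta'$ rather than $0$, the paper's step trading $\bnabla'\bnabla^*v_2$ for $\bnabla^*\bnabla'v_2$ only goes through cleanly if $v$ is decomposed with respect to $\Delta''$. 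There $\bnabla'\bnabla''^*=-\bnabla''^*\bnabla'$ holds exactly. Your route never meets this issue.
\end{itemize}

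One harmless slip: because $\Delta'=\Delta''$ exactly, the Green operator of $\Delta''$ is $G$ itself, not $2G$. The factor $2$ only relates $\Delta$ to $\Delta'$, so this does not affect your argument.
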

	
	\begin{proof}
		We suppose that $u= \bnabla' v$ for some current $v$. By applying Theorem \ref{currentdecomp} to $v$, we have
		$$v= v_0 + \bnabla v_1 + \bnabla^* v_2 ,$$
		where $v_0$ is harmonic and $v_1, v_2 \in \sD^{k-2}(X,E)$.
		Then we have
		$$u= \bnabla' v = \bnabla' \bnabla v_1 + \bnabla' \bnabla^* v_2 .$$
		Then 
		$$\bnabla^* \bnabla' v_2 =u+ \bnabla\bnabla' v_1 \in \ker \bnabla.$$
		Therefore 
		$$\Delta (\bnabla^* \bnabla' v_2) = \bnabla^* \bnabla (\bnabla^* \bnabla' v_2) + \bnabla \bnabla^* (\bnabla^* \bnabla' v_2) =0.$$
		Then by the elliptic regulairty, $\bnabla^* \bnabla' v_2$ is smooth and harmonic. Then we have 
		$$\langle \bnabla^* \bnabla' v_2, \bnabla^* \bnabla' v_2 \rangle = 
		\langle  \bnabla' v_2,  \bnabla \bnabla^* \bnabla' v_2 \rangle =0 .$$
		Then $\bnabla^* \bnabla' v_2=0$ and 
		$$u=  \bnabla' \bnabla v_1 .$$
		The lemma is thus proved.
	\end{proof}
	
	\section{A quick tour of Goldman-Millson theory}\label{sec:GM}
	In this section, we briefly recall some essential ingredients of (Deligne–)Goldman–Millson theory used in the proof of \cref{main:2step,main3}.
	\subsection{Transformation groupoids}
	Recall that a (transformation) \emph{groupoid} is a small category in which all morphisms are isomorphisms. Let $({X}, {G})$ be a transformation groupoid and let $Y$ be a set upon which $G$ acts. Then the transformation groupoid $({X} \times {Y}, {G})$, where $G$ acts on ${X} \times {Y}$ by the diagonal action is a new groupoid which we denote by $({X}, {G}) \bowtie {Y}$. If $\varphi: ({X}', {G}' ) \to({X}, {G})$ is a morphism of transformation groupoids and Y is a $G$-set, then there is a corresponding morphism of transformation groupoids
	$$
	\varphi \bowtie {Y}: ({X}', {G}') \bowtie {Y} \to({X}, {G}) \bowtie {Y}
	$$
	where the ${G}'$-action on $Y$  is induced from the $G$-action on $Y$ by the homomorphism $\varphi: {G}' \to {G}$.
	\begin{lem}[{\cite[Lemma 3.8]{GM88}}]\label{lem:GMequiv}
		If $\varphi: ({X}', {G}' ) \to({X}, {G})$ is an equivalence of groupoids, then $\varphi \bowtie {Y}: ({X}', {G}' ) \bowtie {Y} \to({X}, {G}) \bowtie {Y}$ is also an equivalence of groupoids. \qed
	\end{lem}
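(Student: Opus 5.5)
The plan is to exhibit an explicit quasi-inverse to $\varphi\bowtie Y$ and check it directly. Recall that a morphism of transformation groupoids $\varphi:(X',G')\to(X,G)$ consists of a map $\varphi:X'\to X$ and a group homomorphism $\varphi:G'\to G$ compatible with the actions. That $\varphi$ is an equivalence means two things. First, $\varphi$ is \emph{essentially surjective}: every $x\in X$ is of the form $g\cdot\varphi(x')$ for some $g\in G$ and $x'\in X'$. Second, $\varphi$ is \emph{fully faithful}: for all $x'_1,x'_2\in X'$, the map $\{g'\in G':g'x'_1=x'_2\}\to\{g\in G:g\varphi(x'_1)=\varphi(x'_2)\}$ is a bijection.

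I will verify both properties for $\varphi\bowtie Y$, whose object map is $(x',y)\mapsto(\varphi(x'),y)$ and whose morphism map is $g'\mapsto\varphi(g')$.

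For \emph{essential surjectivity}, take an object $(x,y)\in X\times Y$. Choose $g\in G$ and $x'\in X'$ with $x=g\varphi(x')$. Then $g^{-1}$ is a morphism from $(x,y)$ to $(\varphi(x'),g^{-1}y)$, and the latter equals $(\varphi\bowtie Y)(x',g^{-1}y)$.

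For \emph{full faithfulness}, fix objects $(x'_1,y_1)$ and $(x'_2,y_2)$. The morphisms from $(x'_1,y_1)$ to $(x'_2,y_2)$ in $(X',G')\bowtie Y$ are
$$
\{g'\in G': g'x'_1=x'_2,\ \varphi(g')y_1=y_2\},
$$
since $G'$ acts on $Y$ through $\varphi$. The morphisms from $(\varphi(x'_1),y_1)$ to $(\varphi(x'_2),y_2)$ in $(X,G)\bowtie Y$ are
$$
\{g\in G: g\varphi(x'_1)=\varphi(x'_2),\ gy_1=y_2\}.
$$
By hypothesis, $g'\mapsto\varphi(g')$ is a bijection between $\{g'\in G':g'x'_1=x'_2\}$ and $\{g\in G:g\varphi(x'_1)=\varphi(x'_2)\}$. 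The extra condition on $Y$ matches under this bijection: $\varphi(g')y_1=y_2$ holds on one side exactly when $gy_1=y_2$ holds for $g=\varphi(g')$ on the other. So the bijection restricts to a bijection between the two subsets above. Since a fully faithful and essentially surjective functor is an equivalence, this completes the argument.

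There is no real obstacle here. The only point requiring care is that the $G'$-action on $Y$ is defined through $\varphi$, and this is exactly what makes the condition on $Y$ transport correctly in the full faithfulness step.
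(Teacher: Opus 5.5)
Your argument is correct. The paper gives no proof of its own and simply cites \cite[Lemma 3.8]{GM88}. Your direct check of surjectivity on isomorphism classes (orbits), fullness and faithfulness is exactly the verification of conditions (a), (b), (c) that this lemma amounts to. The one point that needs care, as you note, is that $G'$ acts on $Y$ through $\varphi$, so the $Y$-condition transports exactly under the bijection of Hom-sets. The only inconsistency is cosmetic: you open by promising an explicit quasi-inverse, which you never construct and do not need, since (a), (b), (c) is precisely the paper's definition of an equivalence.
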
  
	
	Let us recall the definition of equivalence of categories \cite[p. 52]{GM88}. If $\mathscr{A}$ and $\mathscr{B}$ are categories, then a functor $F: \mathscr{A} \rightarrow \mathscr{B}$ is an \emph{equivalence} if it satisfies three basic properties:
	\begin{enumerate}[label=(\alph*)]
		\item\label{a1}  $F$ is surjective on isomorphism classes, i.e. the induced map $F_*: \mathrm{Iso}\, \mathscr{A} \rightarrow \mathrm{Iso}\, \mathscr{B}$ is surjective;
		\item \label{a2} $F$ is full, i.e. for any two objects $x, y \in \mathrm{Obj}\, \mathscr{A}$, the map
		$$
		F(x, y): \operatorname{Hom}(x, y) \rightarrow \operatorname{Hom}(F(x),  {F}(y))
		$$
		is surjective;
		\item \label{a3} $F$ is faithful, i.e. for any two objects $x, y \in \mathrm{Obj}\, \mathscr{A}$, the map
		$$
		F(x, y): \operatorname{Hom}(x, y) \rightarrow \operatorname{Hom}(F(x), F(y))
		$$
		is injective.
	\end{enumerate}
	In particular, an equivalence of categories $F: \mathscr{A} \rightarrow \mathscr{B}$ induces an bijection of sets
	$$
	F_*: \text { Iso } \mathscr{A} \rightarrow \text { Iso } \mathscr{B}. 
	$$
	\subsection{DGLA}
	Let $\kg$ be a Lie algebra. A $\kg$-augmented differential graded Lie algebra is a triple $(L^\bullet, d, \varepsilon)$ where $(L^\bullet, d)$ is a differential graded Lie algebra (dgla for short) and $\ep: L^0 \rightarrow \kg$ is a homomorphism of Lie algebras. A homomorphism of $\kg$-augmented differential graded Lie algebras $(L^\bullet, d, \varepsilon) \rightarrow(\overline{L^\bullet}, \bar{d}, \bar{\varepsilon})$ is a differential graded Lie algebra homomorphism $\varphi:(L^\bullet, d) \rightarrow(\overline{L^\bullet}, \bar{d})$ such that $\varepsilon \circ \varphi=\bar{\varepsilon}$. If $(L^\bullet, d, \varepsilon)$  is a $\kg$-augmented differential graded Lie algebra, then the augmentation extends trivially to a differential graded Lie algebra homomorphism $\varepsilon: L^\bullet \rightarrow \mathrm{g}$ where $\kg$ is given the dgla structure with no nonzero elements of positive degree. 

	\begin{dfn}[1-equivalence]\label{def:1equiv}
		A morphism of dglas
		$f: (L^\bullet_0, d_0) \to (L^\bullet_1, d_1)$ is called \emph{1-equivalent} 
		if the induced map on cohomology  is an isomorphism in 
		degrees 0 and 1, and a monomorphism in degree 2.
	\end{dfn}
	
	\begin{dfn}[(1-)quasi-isomorphism]\label{def:1qis}
		Two $\kg$-augmented differential graded Lie algebras 
		$(L^\bullet, d, \varepsilon)$ and $(\overline{L^\bullet}, \bar{d}, \bar{\varepsilon})$ 
		are called \emph{quasi-isomorphic} (resp. \emph{1-quasi-isomorphic}) 
		if there exists a zigzag of $\kg$-augmented dgla homomorphisms
		\[
		L^\bullet = L_0 \longrightarrow L_1 \longleftarrow L_2 \longrightarrow \dots 
		\longleftarrow L_{m-1} \longrightarrow L_m = \overline{L^\bullet}
		\]
		such that each homomorphism induces an isomorphism in cohomology 
		(resp. is 1-equivalent).
	\end{dfn}
	
	A $\kg$-augmented differential graded Lie algebra is formal if it is quasi-isomorphic to its cohomology. Note that in this paper, the dgla constructed to study the deformation of local systems on quasi-compact Kähler manifolds is, in general, not formal. This makes the present work significantly more involved than  \cite{GM88,ES11,EKPR12}, particularly regarding the analytic aspects of Hodge theory; consequently, we must establish new techniques in Hodge theory to study these dglas.
	\subsection{DGLA and deformation functor}
	Given a  dgla $C^\bullet$, there is a functor  from  Artin local $\bC$-algebras to groupoids 
	\begin{align*}
		{\rm Def} (C^\bullet): \Art  \rightarrow  {\rm Grp} 
	\end{align*}
	as follows. Let $A$ be an Artin local $\bC$-algebra and $\km$ be its maximal ideal. 
	Then the objects of ${\rm Def} (C^\bullet,A)$ are defined by
	$$
	{\rm Obj }\ {\rm Def} (C^\bullet,A)=\sC(C^\bullet,A):=\{\alpha\in C^1\otimes\km\mid d\alpha+\frac{1}{2}[\alpha,\alpha]=0\},
	$$
	and  the morphisms are 
	$$
	\operatorname{Hom}(\alpha,\beta) =\exp(C^0\otimes \km):= \{\lambda\in C^0\otimes\km\mid \exp(\lambda)\alpha=\beta\}
	$$
	where $\exp(\lambda)\alpha$ denotes the gauge transformation $e^\lambda\circ \alpha\circ e^{-\lambda}-d(e^\lambda)\circ e^{-\lambda}$.
	We often write $[\sC(C^\bullet,A)/\exp(C^0\otimes \km)]$ for this transformation groupoids. 

	We shall encounter augmented ones.  Let $\mathfrak{g}$ be a Lie algebra over $\mathbb{C}$, viewed as a dgla in degree 0, and let $\ep: L^{\bullet} \rightarrow \mathfrak{g}$ be an augmentation of the dgla $L^{\bullet}$. Let  $(A, \km)$ be an Artin local $\mathbb{C}$-algebra. Then one defines the small groupoid $\Def (L^{\bullet}, \ep, A )$ by: 
	\begin{align}\label{eq:defaug}
		\obj \Def\left(L^{\bullet}, \ep, A\right) & =\sC(L^{\bullet}, \ep, A):=\left\{\left(\alpha, e^r\right) \in L^1 \otimes \km \times \exp (\mathfrak{g} \otimes \km) \left\lvert\, d \alpha+\frac{1}{2}[\alpha, \alpha]=0\right.\right\}, \\\nonumber
		\operatorname{Hom}\left(\left(\alpha, e^r\right),\left(\beta, e^s\right)\right) & =\left\{\lambda \in L^0 \otimes \km \mid \exp (\lambda) \alpha=\beta, \exp (\ep(\lambda)) \cdot e^r=e^s\right\} .
	\end{align} 
	In the preceding notation, one has
	\begin{align}\label{eq:dglaep}
		\Def\left(L^{\bullet}, \ep, A\right)=\Def\left(L^{\bullet}, A\right) \bowtie \exp (\kg \otimes \km)_\ep,
	\end{align} 
	where the $\ep$ subscript meaning that the gauge group acts via $\ep$.
	This gives a covariant functor $\Def\left(L^{\bullet}, \ep,-\right)$ on the category Art of $\mathbb{C}$-Artin local rings with values in the category of small groupoids.
	

\medspace 

A main result by Goldman-Millson in \cite{GM88} is the following:
\begin{thm}[ {\cite[Theorems 2.4 \& 3.8]{GM88}}]\label{thm:GM}
	If $(C_1^\bullet,\ep_1) \rightarrow (C_2^\bullet,\ep_2)$ is a 1-quasi-isomorphism of $\kg$-augmented dgla, then   it induces an equivalence of groupoids $\Def(C_1^\bullet, \ep_1, A)\to \Def(C_2^\bullet, \ep_2, A)$ for any local Artin $\bC$-algebra $A$.    \qed
\end{thm}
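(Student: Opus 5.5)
The plan has two parts. First, a 1-equivalence $\varphi\colon C_1^\bullet\to C_2^\bullet$ of $\kg$-augmented dglas induces an equivalence $\Def(C_1^\bullet,A)\to\Def(C_2^\bullet,A)$ of non-augmented groupoids for every Artin local $\bC$-algebra $(A,\km)$. Second, this is upgraded to the augmented groupoids through the description \eqref{eq:dglaep} and \cref{lem:GMequiv}. Since $\varphi$ is compatible with the augmentations, $\ep_2\circ\varphi=\ep_1$, the induced functor on the $\bowtie$-products is exactly $\Def(\varphi,A)\bowtie\exp(\kg\otimes\km)$. So \cref{lem:GMequiv} reduces everything to the non-augmented statement. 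For a zigzag of 1-equivalences one composes the resulting equivalences and their quasi-inverses.

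For the non-augmented statement, I will argue by induction on the length of $A$. Choose a small extension
\[
0\to J\to A\to A'\to 0,\qquad \km J=0,\quad J\cong\bC .
\]
We will prove, assuming the functor is an equivalence over $A'$, that it is one over $A$. The base case $A=\bC$ is trivial, since $\km=0$ and both groupoids are a single point. The key tool is obstruction theory for the Maurer--Cartan equation. Given $\alpha'\in\sC(C^\bullet,A')$ and any lift $\tilde\alpha\in C^1\otimes\km$, set $o=d\tilde\alpha+\tfrac12[\tilde\alpha,\tilde\alpha]\in C^2\otimes J$. One checks, using the Jacobi identity and $\km J=0$, that $o$ is $d$-closed and that its class $o(\alpha')\in H^2(C^\bullet)\otimes J$ is independent of the lift. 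The class vanishes iff $\alpha'$ lifts to $A$. Moreover, the set of lifts is a torsor under $Z^1(C^\bullet)\otimes J$. Gauge transformations by $\exp(C^0\otimes J)$ act on lifts by translation by $dC^0\otimes J$. The stabilizer considerations involve $H^0\otimes J$.

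The steps over $A$ are then as follows.

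Essential surjectivity. Take $\beta\in\sC(C_2^\bullet,A)$. By induction its image $\beta'$ over $A'$ is isomorphic to $\varphi(\alpha')$ for some $\alpha'$ over $A'$, and we may lift the gauge element and replace $\beta$. Naturality of obstruction classes gives $\varphi_*o(\alpha')=o(\varphi\alpha')=o(\beta')=0$. Injectivity of $\varphi_*$ on $H^2$ then yields a lift $\alpha$ of $\alpha'$. The difference $\beta-\varphi(\alpha)$ is a cocycle in $Z^1(C_2)\otimes J$. Surjectivity on $H^1$ lets us adjust $\alpha$ by a cocycle of $C_1$ so that the remaining difference is exact. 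That remaining difference is then removed by a gauge element in $\exp(C_2^0\otimes J)$.

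Fullness and faithfulness. Let $\lambda$ be a morphism $\varphi\alpha\to\varphi\beta$ over $A$. By induction its reduction comes from a unique $\mu'$ over $A'$, up to the obvious identification. Lift $\mu'$ to some $\mu$. Then $\exp(\mu)\alpha$ and $\beta$ differ by a cocycle whose image is exact, so by injectivity on $H^1$ the cocycle itself is exact. Correcting $\mu$ by an element of $C_1^0\otimes J$ therefore gives a morphism. The remaining discrepancy with $\lambda$ lies in $Z^0$-type elements, namely morphisms from $\varphi\alpha$ to itself trivial mod $J$, which form $\ker d\otimes J=H^0\otimes J$. Bijectivity on $H^0$ yields fullness and faithfulness.

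I expect the main obstacle to be the bookkeeping in the fullness/faithfulness step. Gauge-acting by lifts is nonlinear, so the identification of the difference of two morphisms with $H^0(C^\bullet)\otimes J$ needs the Baker--Campbell--Hausdorff formula and $\km J=0$, which kill all higher brackets. One also has to use the twisted differential $d+[\alpha,\cdot]$. This reduces to $d$ on $C^\bullet\otimes J$ precisely because $\km J=0$, and I will verify this carefully.
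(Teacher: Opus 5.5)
Your proposal is correct and is essentially the Goldman--Millson argument that the paper cites without proof. It inducts on the length of $A$ via small extensions, using the obstruction class in $H^2\otimes J$, the $Z^1\otimes J$-torsor of lifts, and the $H^0\otimes J$ ambiguity of morphisms for the non-augmented groupoids, and then passes to the augmented groupoids through \eqref{eq:dglaep} and \cref{lem:GMequiv} using $\ep_2\circ\varphi=\ep_1$. The BCH bookkeeping you flag is harmless: for $\nu\in C^0\otimes J$ one has $\exp(\mu+\nu)=\exp(\nu)\exp(\mu)$ and $\exp(\nu)\gamma=\gamma-d\nu$, precisely because $\km J=0$.
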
 
An analytic germ  $(S, s)$ defines a (covariant) functor of Artin rings
\begin{align}\label{eq:functor}
	F_{S, s}:  {\rm Art} & \longrightarrow \text { Set } \\\nonumber
	A & \longmapsto \operatorname{Hom} (\widehat{O}_{S, s}, A ), 
\end{align}
where the $\Hom$ denotes the set of $\bC$-algebra homomorphisms. 
Such a functor is called \emph{pro-representable}.
\begin{thm}[ {\cite[Theorem 3.1]{GM88}}]\label{thm:GM2} Two analytic germs  $(S_1, s_1)$ and $(S_2, s_2)$ are analytically isomorphic if and only if the associated pro-representable functors $F_{S_1, s_1}, F_{S_2, s_2}$ are isomorphic. \qed
\end{thm}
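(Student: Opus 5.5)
The implication ``germs isomorphic $\Rightarrow$ functors isomorphic'' is immediate: an analytic isomorphism $(S_1,s_1)\cong(S_2,s_2)$ induces an isomorphism of local rings $\cO_{S_2,s_2}\cong\cO_{S_1,s_1}$. This extends to the $\km$-adic completions $\hcO_{S_2,s_2}\cong\hcO_{S_1,s_1}$. Precomposition with it then gives a natural bijection $\Hom(\hcO_{S_1,s_1},A)\cong\Hom(\hcO_{S_2,s_2},A)$ for every $A\in\Art$. The real content is the converse, which I would split into a formal step and an analytic (convergence) step.

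\emph{Formal step.} Suppose $\Phi\colon F_{S_1,s_1}\to F_{S_2,s_2}$ is an isomorphism of functors. Write $R_i:=\hcO_{S_i,s_i}$ with maximal ideals $\km_i$; note $R_i/\km_i^k\in\Art$ for every $k$.
\begin{itemize}
\item Apply $\Phi$ to the projections $p_k\colon R_1\to R_1/\km_1^k$, which are elements of $F_{S_1,s_1}(R_1/\km_1^k)$. This yields homomorphisms $\Phi(p_k)\colon R_2\to R_1/\km_1^k$.
\item By naturality with respect to the transition maps $R_1/\km_1^{k+1}\to R_1/\km_1^k$, these homomorphisms are compatible. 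Passing to the inverse limit gives $\varphi\colon R_2\to R_1$, which is local, hence continuous.
\item Doing the same with $\Phi^{-1}$ gives $\psi\colon R_1\to R_2$.
\item Naturality again (a pro-Yoneda argument) shows that $\varphi\circ\psi$ induces the identity on every quotient $R_1/\km_1^k$. Hence $\varphi\circ\psi=\mathrm{id}$ by completeness, and symmetrically $\psi\circ\varphi=\mathrm{id}$.
\end{itemize}
So $R_1\cong R_2$ as complete local $\bC$-algebras.

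\emph{Analytic step.} It remains to show that analytic local algebras with isomorphic completions are isomorphic. This is where I would invoke Artin's approximation theorem (M. Artin, \emph{On the solutions of analytic equations}).
\begin{itemize}
\item Embed the germs, writing $\cO_{S_1,s_1}=\bC\{x_1,\dots,x_a\}/I_1$ and $\cO_{S_2,s_2}=\bC\{y_1,\dots,y_b\}/I_2$, with generators $f_1,\dots,f_r$ of $I_1$ and $g_1,\dots,g_t$ of $I_2$.
\item A homomorphism $\cO_{S_2,s_2}\to\cO_{S_1,s_1}$ is a tuple $u(x)=(u_1,\dots,u_b)$ of convergent series with $u(0)=0$ satisfying $g_j(u(x))=\sum_l h_{jl}(x)f_l(x)$ for some convergent $h_{jl}$. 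This is a system of analytic equations in the unknowns $(u,h)$.
\item The formal isomorphism $\varphi$ supplies a formal solution. Artin approximation then gives a convergent solution $\tilde u$ agreeing with it modulo $\km^2$ (or to any order). Do the same for $\psi$, obtaining $\tilde\psi$.
\end{itemize}

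\emph{Concluding from the approximations.} The resulting convergent map $\tilde\varphi\colon\cO_{S_2,s_2}\to\cO_{S_1,s_1}$ induces the same map on cotangent spaces $\km/\km^2$ as $\varphi$, namely an isomorphism. By the analytic Nakayama lemma it is therefore surjective, and likewise $\tilde\psi$ is surjective. Then $\tilde\varphi\circ\tilde\psi$ is a surjective endomorphism of the Noetherian ring $\cO_{S_1,s_1}$, hence injective (Vasconcelos, or the ascending chain of kernels of its iterates stabilizes). The same argument applied to $\tilde\psi\circ\tilde\varphi$ shows $\tilde\varphi$ is injective, so $\tilde\varphi$ is an isomorphism of analytic local algebras, i.e. an isomorphism of germs.

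The main obstacle is this last step. The formal argument is pure category theory, whereas passing from a formal to a convergent isomorphism genuinely requires Artin's approximation theorem. I would rely on it as a black box, taking care only to approximate to high enough order that the cotangent maps remain isomorphisms.
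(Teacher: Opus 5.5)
Your proof is correct, and it is the standard argument behind the cited result. The paper gives no proof of its own and simply quotes \cite[Theorem 3.1]{GM88}, where a Yoneda argument identifies the completed local rings and Artin's approximation theorem (in the form ``analytic germs with isomorphic completions are analytically isomorphic'') finishes, exactly as in your plan, with your order-$2$ approximation, analytic Nakayama and surjective-endomorphism argument correctly re-deriving that corollary of Artin's theorem.
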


\subsection{Application to the variety of representations}
Let $X$ be a differential manifold.  
For any $\varrho:\pi_1(X)\to\GL_N(\bC)$, we  define a  functor by
$$
\begin{aligned}
	\sR_\varrho:{\rm Art}& \longrightarrow  {\rm Grp},
\end{aligned}
$$ 
whose objects are 
\begin{align}\label{eq:Rrho}
	\obj \sR_\varrho(A) =R_\varrho(A):=\{\tilde{\rho} \in \Hom(\pi_1(X), \GL_N(A)) \mid \tilde{\rho}=\rho  \mbox{ mod } \km\}
\end{align}
and the morphisms 
$G_A^0:=\exp(\kg\otimes\km)$, where $\kg:={\rm Lie}\, \GL_N(\bC)$.  Here, $\kg\otimes\km$ is a nilpotent Lie algebra, the  Baker-Campbell-Hausdorff series define  a nilpotent Lie group structure on the space $G_A^0$. If  we  consider the morphism $q:\GL_N(A)\to \GL_N(\bC)$ induced by $A\to  A/\km\simeq \bC$, then its kernel is $G_A^0$. Note that there exists a natural map
$$
q_*: \Hom(\pi_1(X), \GL_N(A)) \to  \Hom(\pi_1(X), \GL_N(\bC))
$$
whose fibers are objects $\sR_\varrho(A)$. Since $G_A^0$ preserves $R_\varrho(A)$, they defines a groupoid $$[R_\varrho(A)/ G_A^0]=:\sR_\varrho(A).$$

We now consider a flat bundle $(V,\nabla)$ and  let  $(E,\nabla):=(\End(V),\nabla)$ be the associated endomorphism bundle.  We consider a dgla
$$
(L^\bullet,d):=(A^\bullet(X,E),\nabla). 
$$ 
Fix a base point $x \in X$ and an isomorphism $V_x\simeq \bC^N$.     Let $$\varrho:\pi_1(X,x)\to \GL(V_x)\simeq \GL_N(\bC)$$ be the holonomy representation of $\nabla$.   In \cite[Theorems 6.3 \& 6.6]{GM88}, the following result is proved. 
\begin{lem}\label{lem:GMhol}
	There is a natural equivalence of groupoids 
	\begin{align*}
		h:  [\sC(L^\bullet,A)/\exp(L^0\otimes\km)]&\to  [R_\varrho(A)/G_A^0]\\
		\alpha&\mapsto   {\rm hol}_x(\nabla+\alpha).
	\end{align*} \qed
\end{lem}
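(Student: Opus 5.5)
The equivalence $h$ will be built by parallel transport. Given an object $\alpha\in\sC(L^\bullet,A)$, that is, $\alpha\in A^1(X,E)\otimes\km$ with $\nabla\alpha+\frac12[\alpha,\alpha]=0$, the operator $\nabla+\alpha$ is a flat connection on the bundle $V\otimes A$ of free $A$-modules. Flatness is exactly the Maurer--Cartan equation, since $(\nabla+\alpha)^2=\nabla\alpha+\frac12[\alpha,\alpha]$. Its holonomy at $x$ therefore gives a homomorphism $\pi_1(X,x)\to\GL(V_x\otimes A)\simeq\GL_N(A)$. Reducing modulo $\km$ kills $\alpha$, so the reduction is $\varrho$ and ${\rm hol}_x(\nabla+\alpha)\in R_\varrho(A)$.

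On morphisms, a gauge element $\lambda\in L^0\otimes\km$ gives a bundle automorphism $e^\lambda$ of $V\otimes A$ that is the identity mod $\km$. The transformed form $\exp(\lambda)\alpha$ is defined so that $\nabla+\exp(\lambda)\alpha=e^\lambda\circ(\nabla+\alpha)\circ e^{-\lambda}$. Hence the holonomies are conjugate by $e^{\lambda(x)}\in G^0_A$, and we set $h(\lambda):=e^{\lambda(x)}$. Functoriality follows from $e^{\lambda}e^{\mu}=e^{\mathrm{BCH}(\lambda,\mu)}$, both pointwise and at $x$.

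It remains to check that $h$ is an equivalence in the sense of \ref{a1}--\ref{a3}.

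\emph{Essential surjectivity.} Take $\tilde\rho\in R_\varrho(A)$. Form the flat bundle $\widetilde V:=\widetilde X\times_{\tilde\rho}A^N$. It is a flat bundle of free $A$-modules whose reduction mod $\km$ is $(V,\nabla)$. The key step is to show that $\widetilde V\simeq V\otimes A$ as smooth bundles of $A$-modules, via an isomorphism that lifts the identity mod $\km$. I would argue by induction along a filtration $A\supset\km\supset\cdots\supset\km^k=0$ by small extensions. At each stage, lifting the trivialization is obstructed by, and parametrized by, sections of $E\otimes(\text{ideal})$; since smooth bundles are soft, the obstruction vanishes, so lifts exist. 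Equivalently, the automorphism sheaf of such lifts is soft and unipotent, so every such bundle is smoothly trivial relative to $V$. Transporting the flat connection through this isomorphism gives $\nabla+\alpha$ with $\alpha\in A^1(X,E)\otimes\km$ flat, i.e.\ Maurer--Cartan, and its holonomy is conjugate to $\tilde\rho$ by an element of $G^0_A$.

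\emph{Full.} Suppose $g\in G_A^0$ conjugates ${\rm hol}_x(\nabla+\alpha)$ to ${\rm hol}_x(\nabla+\beta)$. Then $g$ extends uniquely to a flat isomorphism $\Phi\colon(V\otimes A,\nabla+\alpha)\to(V\otimes A,\nabla+\beta)$, obtained by parallel transport of $g$ along paths. This is well defined precisely because $g$ intertwines the holonomies. Modulo $\km$, $\Phi$ is a flat automorphism of $(V,\nabla)$ equal to the identity at $x$, hence the identity, so $\Phi\equiv 1 \bmod \km$. Since $\km$ is nilpotent, $\lambda:=\log\Phi$ is a well-defined finite series in $L^0\otimes\km$, and it satisfies $\exp(\lambda)\alpha=\beta$ and $e^{\lambda(x)}=g$.

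\emph{Faithful.} Suppose $\lambda,\mu$ both send $\alpha$ to $\beta$ and $e^{\lambda(x)}=e^{\mu(x)}$. Then $e^{-\mu}e^{\lambda}$ is a flat automorphism of $(V\otimes A,\nabla+\alpha)$ that is the identity at $x$. Since $X$ is connected, it is the identity everywhere, so $\lambda=\mu$, because $\log$ is injective on unipotent elements.

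The main obstacle is essential surjectivity, namely producing a smooth trivialization of $\widetilde V$ compatible with $V$. I would handle it by the inductive small-extension argument above, using partitions of unity to solve the lifting problem at each step.
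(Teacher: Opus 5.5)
Your proposal is correct. It is essentially the Goldman--Millson argument that the paper cites for this lemma instead of proving it: holonomy defines the functor, the underlying smooth $A$-module bundle of any flat deformation is trivial relative to $V$ (giving essential surjectivity), and parallel transport of $g$, together with uniqueness of flat sections on connected $X$, gives fullness and faithfulness.

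The small-extension induction you flag as the main obstacle can be done in a single step. The $A$-linear lifts $V\otimes A\to\widetilde V$ of the flat identification $V\cong\widetilde V\otimes_A\bC$ form, fiberwise, a nonempty affine space. So local lifts glue by a partition of unity, and the glued map is invertible by Nakayama.
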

By eliminating the superfluous isotropy, we have the following isomorphism between groupoids
\begin{align*}
	[R_\varrho(A)/\{{\rm Id} \}]&\to  [R_\varrho(A)/G_A^0]\bowtie G_A^0\\
	\tilde{\rho}&\mapsto (\tilde{\rho},1)
\end{align*}
Consider the functor
\begin{align*}
	\sR_\varrho':	{\rm Art}& \longrightarrow  {\rm Grp},\\
	A & \mapsto 	 [R_\varrho(A)/\{{\rm Id} \}]
\end{align*}
where 
$$\obj \sR'_\varrho(A) =R_\varrho(A)$$ and the only  morphisms in $ \sR'_\varrho(A)$ are   
the identity morphisms. 
Then we have
\begin{lem}\label{thm:GM3}
	The functor $A\mapsto  \sR'_\varrho(A)$ it is prorepresented by the analytic germ $(R(X,N),\varrho)$. \qed
\end{lem}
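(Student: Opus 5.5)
The strategy is to compare the functor of points of the analytic germ $(R(X,N),\varrho)$ with $\sR'_\varrho$ directly; this should be essentially tautological once both are written out. By \eqref{eq:functor}, the germ $(R(X,N),\varrho)$ defines the functor $A\mapsto \Hom(\hcO_{R(X,N),\varrho},A)$. The statement amounts to a natural bijection, functorial in $A\in\Art$,
\[
\Hom(\hcO_{R(X,N),\varrho},A)\;\simeq\; R_\varrho(A),
\]
where $R_\varrho(A)$ is viewed as a discrete groupoid, which is exactly $\sR'_\varrho(A)$.

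\textbf{Step 1: the representation variety as a closed subscheme.} Since $X$ is a compact manifold (or more generally has finitely presented $\pi_1$), fix a presentation $\pi_1(X,x)=\langle \gamma_1,\dots,\gamma_k\mid r_1,\dots,r_l\rangle$. Then $R(X,N)$ is the closed subscheme of $\GL_N^k$ cut out by the matrix equations $r_j(g_1,\dots,g_k)=1$. Its coordinate ring $\bC[R(X,N)]$ is therefore the quotient of $\bC[\GL_N^k]$ by the ideal generated by the entries of $r_j(g)-1$.

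\textbf{Step 2: $A$-points of the coordinate ring.} For any commutative $\bC$-algebra $A$, a $\bC$-algebra homomorphism $\bC[R(X,N)]\to A$ is the same thing as a tuple $(g_1,\dots,g_k)\in\GL_N(A)^k$ satisfying the relations. Such a tuple is exactly an element of $\Hom(\pi_1(X),\GL_N(A))$. This identification is natural in $A$.

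\textbf{Step 3: local Artin points and completion.} Restrict to local Artin $\bC$-algebras $A$ and to homomorphisms whose composition with $A\to A/\km=\bC$ is the evaluation at $\varrho$. These are exactly the local homomorphisms from the localization $\cO^{\mathrm{alg}}_{R(X,N),\varrho}$ to $A$. Since $\km_A$ is nilpotent, each such homomorphism kills a power of the maximal ideal. Hence these homomorphisms factor uniquely through the completion, which coincides with $\hcO_{R(X,N),\varrho}$: the analytic and algebraic local rings have the same completion by GAGA-type comparison, namely $\widehat{\bC\{x\}/I}=\bC[[x]]/I$.

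\textbf{Step 4: conclusion.} The reduction condition says precisely that $\tilde\rho\equiv\varrho \bmod \km$. We thus obtain a natural bijection $\Hom(\hcO_{R(X,N),\varrho},A)\simeq R_\varrho(A)$, functorial in $A$, and the morphism sets on the $\sR'_\varrho$ side are trivial, as required.

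\textbf{Main obstacle.} The only mildly delicate point is Step 3: one must check that local homomorphisms from the analytic local ring to an Artin ring agree with local homomorphisms from the algebraic one. This follows because both factor through finite-order truncations $\cO/\km^n$, and these truncations agree for the algebraic and analytic local rings.
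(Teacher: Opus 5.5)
Your proof is correct. The paper states this lemma without proof, quoting it from Goldman--Millson, and your argument is exactly the standard one behind that citation: you identify $\bC$-algebra maps from the coordinate ring with $\Hom(\pi_1(X),\GL_N(A))$, then pass to the completion, which is the same for the algebraic and analytic local rings. One point worth keeping explicit, as you implicitly do in Step 1, is that $R(X,N)$ must carry its possibly non-reduced scheme structure given by the relation ideal, since the functor $A\mapsto R_\varrho(A)$ detects nilpotents and the reduced variety would not prorepresent it in general.
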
  Therefore, by \cref{lem:GMequiv,lem:GMhol},  the morphism
\begin{align}\label{eq:equiv}
	h\bowtie G_A^0:  [\sC(L^\bullet,A)/\exp(L^0\otimes\km)]\bowtie G_A^0\to  [R_\varrho(A)/\{{\rm Id} \}]
\end{align} 
is an equivalence of groupoids. 

Let $\ep_x:L^\bullet\to \End(V_x)=E_x$ defined by  the evaluation 
$ A^0(X,E)\to E_x $. It follows from \eqref{eq:dglaep} that  there exists a natural equivalence
$$
[\sC(L^\bullet,A)/\exp(L^0\otimes\km)]\bowtie G_A^0\simeq \Def(L^\bullet,\ep_x,A). 
$$
\begin{lem}\label{lem:holonomy}
	The map 
	\begin{align*}
		h:  [\sC(L^\bullet,\ep_x,A)/\exp(L^0\otimes\km)]&\to  [R_\varrho(A)/{\rm Id}]\\
		(\alpha,e^r) &\mapsto  \exp(-r) {\rm hol}_x(\nabla+\alpha) \exp(r)
	\end{align*} 
	is well-defined, and is an equivalence of groupoids. 
\end{lem}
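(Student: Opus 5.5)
The plan is to obtain the statement by composing two equivalences already available: the equivalence \eqref{eq:equiv}, namely $h\bowtie G_A^0$, and the identification \eqref{eq:dglaep} of $\Def(L^\bullet,\ep_x,A)$ with $[\sC(L^\bullet,A)/\exp(L^0\otimes\km)]\bowtie \exp(\kg\otimes\km)_{\ep_x}$. Note that $\exp(\kg\otimes\km)=G_A^0$ and that $\exp(L^0\otimes\km)$ acts on $G_A^0$ through evaluation at $x$.

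\textbf{Well-definedness on objects.} Given $(\alpha,e^r)$ with $\nabla\alpha+\frac12[\alpha,\alpha]=0$, the connection $\nabla+\alpha$ on $V\otimes A$ is flat. Its holonomy ${\rm hol}_x(\nabla+\alpha)$ therefore lies in $\Hom(\pi_1(X,x),\GL_N(A))$ and reduces to $\varrho$ mod $\km$. Conjugating by $\exp(r)\in G_A^0$ preserves this reduction, so the image lies in $R_\varrho(A)$.

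\textbf{Well-definedness on morphisms.} Since the target has only identity morphisms, we must check that morphisms in the source do not change the image. Let $\lambda\in L^0\otimes\km$ satisfy $\exp(\lambda)\alpha=\beta$ and $e^{\lambda(x)}e^r=e^s$. The gauge transformation $g=e^\lambda$ intertwines $\nabla+\alpha$ and $\nabla+\beta$. By the standard holonomy transformation rule (the content of \cref{lem:GMhol}),
\[
{\rm hol}_x(\nabla+\beta)=g(x)\,{\rm hol}_x(\nabla+\alpha)\,g(x)^{-1}.
\]
The key point is to pin down the convention in which the augmentation acts. Here we have $e^{-s}=e^{-r}g(x)^{-1}$, so
\[
e^{-s}{\rm hol}_x(\nabla+\beta)e^{s}=e^{-r}{\rm hol}_x(\nabla+\alpha)e^{r}.
\]
The map is therefore constant on isomorphism classes and defines a functor. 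If the signs in the action come out reversed, the formula should instead be read with the convention that $\exp(\ep(\lambda))$ acts by $e^r\mapsto e^{r}e^{-\lambda(x)}$, or the reverse. This sign bookkeeping is the only real obstacle, and I would fix it by checking a constant gauge $\lambda$ on the trivial connection.

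\textbf{Equivalence.} Via \eqref{eq:dglaep}, the functor just defined is exactly $h\bowtie G_A^0$, postcomposed with the isomorphism $[R_\varrho(A)/G_A^0]\bowtie G_A^0\simeq[R_\varrho(A)/\{{\rm Id}\}]$. To see this, use the inverse of $\tilde\rho\mapsto(\tilde\rho,1)$: it sends $(\tilde\rho,e^r)$ to $e^{-r}\tilde\rho e^{r}$, since $(\tilde\rho,e^r)$ is isomorphic via $e^{-r}$ to $(e^{-r}\tilde\rho e^r,1)$. Then \cref{lem:GMequiv} applied to \cref{lem:GMhol} shows that $h\bowtie G_A^0$ is an equivalence. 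Composing with the two isomorphisms above gives the claimed equivalence, and naturality in $A$ is clear from the construction.
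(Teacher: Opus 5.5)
Your proposal is correct and follows the paper's argument. The paper also checks well-definedness via $\nabla+\exp(\lambda)\alpha=\exp(\lambda)\circ(\nabla+\alpha)\circ\exp(-\lambda)$, which gives ${\rm hol}_x(\nabla+\beta)=\exp(\ep_x(\lambda)){\rm hol}_x(\nabla+\alpha)\exp(-\ep_x(\lambda))$, and then deduces the equivalence from \eqref{eq:equiv} and \eqref{eq:dglaep} — the same composition you spell out. Your sign hedge is unnecessary: with $\exp(\ep_x(\lambda))$ acting by left multiplication as in \eqref{eq:defaug}, your identity $e^{-s}=e^{-r}g(x)^{-1}$ already makes the computation close.
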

\begin{proof}
	Recall that 
	$$	\operatorname{Hom}\left(\left(\alpha, e^r\right),\left(\beta, e^s\right)\right)   =\left\{\lambda \in L^0 \otimes \km \mid \exp (\lambda) \alpha=\beta, \exp (\ep_x(\lambda)) \cdot e^r=e^s\right\}.$$
	Note that we have
	$$
	\nabla+\exp(\lambda)\alpha  =\exp(\lambda)\circ(\nabla+\alpha)\circ \exp(-\lambda). 
	$$
	This implies that 
	$$
	{\rm hol}_x(\nabla+\exp(\lambda)\alpha ) =\exp(\ep_x(\lambda)){\rm hol}_x(\nabla+\alpha)\exp(\ep_x(-\lambda)). 
	$$
	Therefore, we have
	$$
	h(\beta,e^s)=\exp(-s)\exp(\ep_x(\lambda)){\rm hol}_x(\nabla+\alpha)\exp(\ep_x(-\lambda))\exp(s)=h(\alpha,e^r).
	$$
	Hence $h$ is well-defined. By the above arguments, it is an equivalence of groupoids. 
\end{proof}

\subsection{An extension to quasi-compact K\"ahler manifolds}
Let $X$ be a compact Kähler $n$-fold and let $D$ be a simple normal crossing divisor on $X$. Set $X_0 := X \setminus D$. Let $(V, \nabla)$ be a semisimple flat bundle on $X$, and we denote the associated endomorphism bundle by $(E,\nabla):=(\End(V),\nabla)$.

Consider a twisted logarithmic De Rham complex
$$
(T^\bullet,d):=( A^\bullet(X,D,E),\nabla) = \left( A^0(X,D,E)\stackrel{\nabla}{\to} \cdots\stackrel{\nabla}{\to}  A^{2n}(X,D,E)\right).
$$   
which constitutes a dgla on $X$. Let
$$
(L^\bullet,d):=(A^0(X_0,E)\stackrel{\nabla}{\to}\cdots\stackrel{\nabla}{\to}A^{2\dim X}(X_0,E)) 
$$
be the standard de Rham complex on $X_0$. Fix a base point $x \in X_0$ and let $\varepsilon_x: A^0(X_0, E) \to E_x$ be the augmentation map. The restriction of $(A^\bullet(X, D, E), \nabla)$ to $X_0$ induces a natural morphism of augmented dglas
$$
a:(T^\bullet,\ep_x, d)\to (L^\bullet,\ep_x,d).
$$
Let $\mathcal{V}$ be the local system associated to $(V, \nabla)|_{X_0}$. By \cite[II, 6.10]{Del70}, there is a natural isomorphism between the logarithmic de Rham cohomology and the singular cohomology with coefficients in $\mathcal{V}$:
$$
H^i(T^\bullet,d)\simeq H_{\rm sing}^i(X_0,\cV).
$$
This isomorphism factors through the map
$$
H^i(T^\bullet,d)\to  H^i(L^\bullet,d) 
$$
induced by the restriction morphism $a:T^\bullet|_{X_0}\to L^\bullet$. 
This implies that, the dgla morphism of $a$ is a quasi-isomorphism. Hence, 
by \cref{thm:GM,lem:holonomy}, we have
\begin{lem}\label{lem:holonomy2}
	Let $\varrho$ be the monodromy representation of $(V, \nabla)|_{X_0}$. 
	We define	$$
	R_\varrho(A) := \{ \tilde{\rho} \in \Hom(\pi_1(X_0, x), \GL_N(A)) \mid \tilde{\rho} \equiv \varrho \pmod{\mathfrak{m}} \}
	$$
	The holonomy map 
	h:  \begin{align*}
		h:  [\sC(T^\bullet,\ep_x,A)/\exp(T^0\otimes\km)]&\to  [R_\varrho(A)/{\rm Id}]\\
		(\alpha,e^r) &\mapsto  \exp(-r) {\rm hol}_x(\nabla+\alpha) \exp(r)
	\end{align*}    is well-defined and induces an equivalence of groupoids.\qed
\end{lem}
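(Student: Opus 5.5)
The plan is to reduce \cref{lem:holonomy2} to the compact-style statements already recorded. We work with the chain of augmented dgla morphisms
$$(T^\bullet,\ep_x,d)\xrightarrow{a}(L^\bullet,\ep_x,d),$$
where $L^\bullet=A^\bullet(X_0,E)$ is the full smooth de Rham complex on the manifold $X_0$. There are three steps.

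\textbf{Step 1: holonomy on $L^\bullet$.} \cref{lem:holonomy} is a statement about an arbitrary differentiable manifold. Applying it with $X$ replaced by $X_0$ gives that
$$h_L\colon (\alpha,e^r)\mapsto \exp(-r)\,{\rm hol}_x(\nabla+\alpha)\exp(r)$$
is a well-defined equivalence from $\Def(L^\bullet,\ep_x,A)$ to $[R_\varrho(A)/{\rm Id}]$. Its proof is exactly the gauge computation
$${\rm hol}_x(\nabla+\exp(\lambda)\alpha)=\exp(\ep_x(\lambda))\,{\rm hol}_x(\nabla+\alpha)\exp(-\ep_x(\lambda)),$$
together with \cref{lem:GMhol} and \cref{lem:GMequiv}. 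Since $X_0$ is noncompact, I would check that nothing in Goldman--Millson's Theorems 6.3 and 6.6 uses compactness. Those arguments use parallel transport along paths and the correspondence between flat $A$-connections and $A$-local systems, and both are purely local-to-global on any connected manifold.

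\textbf{Step 2: transfer along $a$.} As explained just before the statement, Deligne's theorem \cite[II, 6.10]{Del70} implies that the restriction $a$ induces isomorphisms
$$H^i(T^\bullet)\simeq H^i_{\rm sing}(X_0,\cV)\simeq H^i(L^\bullet).$$
The second isomorphism is de Rham's theorem on $X_0$. Hence $a$ is a quasi-isomorphism, and in particular a $1$-equivalence. It is compatible with the augmentations, since both are evaluation at $x\in X_0$. So \cref{thm:GM} gives an equivalence
$$a_*\colon \Def(T^\bullet,\ep_x,A)\to \Def(L^\bullet,\ep_x,A).$$

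\textbf{Step 3: compose.} The holonomy map $h$ of the statement is exactly $h_L\circ a_*$, because the holonomy of $\nabla+\alpha$ at $x$ depends only on $\alpha|_{X_0}$. A logarithmic $\alpha\in T^1\otimes\km$ satisfying the Maurer--Cartan equation restricts to a smooth Maurer--Cartan element on $X_0$, so $\nabla+\alpha$ is a flat connection on $V\otimes A$ over $X_0$. A composition of equivalences is an equivalence, and well-definedness is inherited from Step 1.

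The main obstacle is Step 2, and specifically the claim that Deligne's isomorphism is induced by the restriction map $a$. This is needed because \cref{thm:GM} requires the $1$-equivalence to be realized by an actual dgla morphism, not merely by an abstract isomorphism of cohomology groups. To settle it, I would first verify at the sheaf level that $\cA^\bullet(X,D,E)\to Rj_*\cA^\bullet_{X_0}(E)$ is a quasi-isomorphism. This can be done locally via a Künneth computation on polydisc neighbourhoods minus coordinate hyperplanes, where $E$ is trivialized because $(V,\nabla)$ extends over $X$. Taking global sections of fine sheaves then yields that $a$ is a quasi-isomorphism.
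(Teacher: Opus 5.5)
Your proposal is correct and follows essentially the same route as the paper. The paper likewise uses Deligne's comparison \cite[II, 6.10]{Del70} to see that the restriction $a\colon T^\bullet\to L^\bullet$ is an augmentation-compatible quasi-isomorphism, and then combines \cref{thm:GM} with \cref{lem:holonomy} applied on $X_0$. Your sheaf-level check that $\cA^\bullet(X,D,E)\to j_*\cA^\bullet_{X_0}(E)$ is a quasi-isomorphism, and your remark that the Goldman--Millson holonomy correspondence does not need compactness, simply make explicit details the paper takes as known.
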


\section{New perspectives on deformations of local systems on compact K\"ahler manifolds}\label{sec:tour}
In this section, we offer new perspectives on several foundational results established by Goldman--Millson \cite{GM88}, Eyssidieux--Simpson \cite{ES11}, and Eyssidieux et al. \cite{EKPR12}.  While this section is logically independent of the rest of the paper,  it serves as a   warm-up for understanding the quasi-compact case. Readers familiar with the standard compact theory may skip this section without loss of continuity. 
\subsection{Universal connection: analytic construction}\label{subsec:compactuni}
Let $X$ be a compact K\"ahler manifold and let $(V,\nabla)$ be a semisimple flat bundle on $X$, and  we denote the associated endomorphism bundle by $(E,\nabla):=(\End(V),\nabla)$. Fix a basis $\eta_1,\ldots,\eta_p\in \cH^1(X,E)$.  
For any multi-index $I=(i_1,\ldots,i_k)$ with $|I|\geq 2$, we define inductively
\begin{align}  \label{eq:gammaI}
	\gamma_I:&=\sum_{\substack{I=(I_1,I_2); \\ 1\leq |I_1| < |I_2| }}[\eta_{I_1 },\eta_{I_2 }]+\frac{1}{2}\sum_{\substack{I=(I_1,I_2); \\   |I_1| = |I_2|  }}[\eta_{I_1 },\eta_{I_2 }]\\
	\label{eq:induction}\eta_{I} :&=-\nabla''^*G\gamma_I\in A^1(X,E),
\end{align}
where $G$ is Green operator with respect to the Laplacian $\Delta'$.  Note that the decomposition $\nabla = \nabla' + \nabla''$ does not correspond to the standard splitting into $(1,0)$ and $(0,1)$ parts, unless $(V,\nabla)$ is unitary. Instead, this decomposition arises from the harmonic bundle (see \cite{Sim92}).


\begin{lem}\label{lem:gammaeta}
	For any $|I|\geq 2$, we have
	$\eta_I\in \Im \nabla'$, and any $|I|\geq 3$,  $\gamma_I\in  \Im \nabla'$. 
\end{lem}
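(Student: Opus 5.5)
The plan is to prove, by induction on $|I|$, the stronger statement that $\nabla'\gamma_I=0$ and $\nabla'\eta_I=0$ for all $|I|\ge 2$, together with $\eta_I\in\Im\nabla'$. The ingredients are only the identities \eqref{eq:basicgreen}, the Hodge decomposition for $\Delta'$ (\cref{currentdecomp}), and the fact that $\nabla'$ is a graded derivation for the bracket on $E=\End(V)$-valued forms:
\[
\nabla'[a,b]=[\nabla' a,b]+(-1)^{\deg a}[a,\nabla' b].
\]

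\emph{Base input.} Each $\eta_i\in\cH^1(X,E)$ is harmonic for $\Delta=2\Delta'=2\Delta''$. Hence $\nabla'\eta_i=\nabla''\eta_i=0$, and the same holds for the adjoints.

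\emph{Inductive step.} Suppose $\nabla'\eta_J=0$ for all $J$ with $|J|<|I|$; this covers both $|J|=1$ and the already-treated $2\le|J|<|I|$. Every term of $\gamma_I$ is a bracket $[\eta_{I_1},\eta_{I_2}]$ with $|I_1|,|I_2|<|I|$. The derivation rule then gives $\nabla'\gamma_I=0$.

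Next I show that $\eta_I=-\nabla''^*G\gamma_I$ is $\nabla'$-closed. By \eqref{eq:basicgreen}, $\nabla'$ commutes with $G$, and $[\nabla',\nabla''^*]=0$ as a graded commutator, i.e. $\nabla'\nabla''^*=-\nabla''^*\nabla'$. Therefore
\[
\nabla'\eta_I=\nabla''^*G\nabla'\gamma_I=0 .
\]
Moreover $\eta_I$ lies in $\Im\nabla''^*$, so it is $L^2$-orthogonal to harmonic forms and $\cH(\eta_I)=0$.

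Write $\eta_I=\Delta'G\eta_I=\nabla'\nabla'^*G\eta_I+\nabla'^*\nabla'G\eta_I$. Since $\nabla'$ commutes with $G$ and $\nabla'\eta_I=0$, the second term vanishes. Hence $\eta_I=\nabla'(\nabla'^*G\eta_I)\in\Im\nabla'$, which closes the induction.

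\emph{The case $|I|\ge 3$ for $\gamma_I$.} For any splitting $I=(I_1,I_2)$ with $|I_1|+|I_2|=|I|\ge3$, at least one of $I_1,I_2$ has length $\ge2$. Say $\eta_{I_1}=\nabla'\sigma$, with $\sigma$ of degree $0$. Since $\nabla'\eta_{I_2}=0$, the derivation rule gives
\[
[\eta_{I_1},\eta_{I_2}]=[\nabla'\sigma,\eta_{I_2}]=\nabla'[\sigma,\eta_{I_2}].
\]
The symmetric case is identical up to sign. Summing over all splittings shows $\gamma_I\in\Im\nabla'$. When $|I|=2$ both factors are harmonic and no such conclusion is available, which is why the statement excludes that case.

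\emph{Main obstacle.} The only delicate point is the sign and commutation bookkeeping. I need that $\nabla'$ really anticommutes with $\nabla''^*$ in the harmonic-bundle setting, not only in the unitary case; this is exactly the identity listed in \eqref{eq:basicgreen}. I also need that $\nabla'$ acts as a derivation of the bracket on $\End(V)$-valued forms, which holds because $\nabla'$ on $E$ is induced from $V$. Everything else is formal.
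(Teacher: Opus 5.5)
Your proof is correct and is essentially the paper's argument: induction on $|I|$, with $\nabla'\gamma_I=0$ obtained from the derivation rule and the inductive hypothesis. This gives $\nabla'\eta_I=\nabla''^*G\nabla'\gamma_I=0$ and, together with $\cH(\eta_I)=0$, $\eta_I=\nabla'\nabla'^*G\eta_I$; exactness of $\gamma_I$ for $|I|\geq 3$ then follows by writing each bracket that has a $\nabla'$-exact factor as $\nabla'$ of a bracket, and you simply make explicit the $\nabla'$-closedness of the lower-order terms that the paper's short proof uses implicitly.
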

\begin{proof}
	We prove by induction on $|I|$, and assume that it holds for any $|I|\in \{2,\ldots,k-1\}$. Fix some $I$ with $|I|=k$, we have 
	\begin{align*}
		\nabla'\nabla'^*G\eta_I&=  (1-\cH)\eta_I-\nabla'^*G\nabla'\eta_I\\
		&=  \eta_I+\nabla''^*G\nabla'\gamma_I  =\eta_I.
	\end{align*} 
	Together with \eqref{eq:gammaI}, it follows that $\gamma_I$ is also $\nabla'$-exact when $|I|\geq 3$. 
	The lemma is proved. 
\end{proof}

\begin{lem}\label{lem:quadratic}
	Set 
	\begin{align}\label{eq:sigma}  
		\sigma_k:=-(\sum_{|I|=k} z_I\eta_I).
	\end{align}  
	Let $\sQ'$ be the quadratic cone in $\bC^p$ defined by  $$\{[\sum_{i=1}^{p}z_i\eta_i,\sum_{i=1}^{p}z_i\eta_i]\}=0\in H^2(X,E), 
	$$Then for any $z\in \sQ'$,  we have
	\begin{align}\label{eq:MC}
		\nabla\sigma_k=\nabla''\sigma_k=-\sum_{i=1}^{k-1}\sigma_i\wedge\sigma_{k-i}. 
	\end{align}
\end{lem}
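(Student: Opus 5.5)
The plan is to argue by induction on $k$. The first step is to rewrite the inductive definition \eqref{eq:gammaI} in terms of the forms $\sigma_i$. Set $\Gamma_k:=\sum_{|I|=k}z_I\gamma_I$. Splitting $I=(I_1,I_2)$ according to the lengths $|I_1|=i$ and $|I_2|=k-i$, the two sums in \eqref{eq:gammaI} reassemble into
\[
\Gamma_k=\tfrac12\sum_{i=1}^{k-1}[\sigma_i,\sigma_{k-i}].
\]
Here the terms with $|I_1|\neq|I_2|$ are counted once with the smaller block first, and the factor $\tfrac12$ handles $|I_1|=|I_2|$. For $E$-valued $1$-forms one has $[a,b]=a\wedge b+b\wedge a$, and the sum is symmetric under $i\leftrightarrow k-i$. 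Hence
\[
\Gamma_k=\sum_{i=1}^{k-1}\sigma_i\wedge\sigma_{k-i}.
\]
By \eqref{eq:induction} and \eqref{eq:sigma}, for $k\ge 2$ this gives $\sigma_k=\nabla''^*G\Gamma_k$. The claim to prove is therefore $\nabla'\sigma_k=0$ and $\nabla''\sigma_k=-\Gamma_k$.

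The $\nabla'$-part follows from results already available. For $k=1$ the form $\sigma_1$ is a combination of harmonic forms, so $\nabla'\sigma_1=\nabla''\sigma_1=0$, and \eqref{eq:MC} holds trivially. For $k\ge2$, \cref{lem:gammaeta} gives $\eta_I\in\Im\nabla'$, hence $\nabla'\sigma_k=0$, and therefore $\nabla\sigma_k=\nabla''\sigma_k$.

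For the $\nabla''$-part, use $\Delta'=\Delta''$ (so $G$ is also the Green operator of $\Delta''$) together with \eqref{eq:basicgreen}. This gives
\[
\nabla''\sigma_k=\nabla''\nabla''^*G\Gamma_k=(1-\cH)\Gamma_k-\nabla''^*G\nabla''\Gamma_k .
\]
So it suffices to show two things: that $\nabla''\Gamma_k=0$, and that $\cH\Gamma_k=0$.

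To show $\nabla''\Gamma_k=0$, first note that $\nabla'\sigma_i=0$ for all $i$ gives $\nabla'\Gamma_k=0$, so $\nabla''\Gamma_k=\nabla\Gamma_k$. By the Leibniz rule,
\[
\nabla\Gamma_k=\sum_i\bigl(\nabla\sigma_i\wedge\sigma_{k-i}-\sigma_i\wedge\nabla\sigma_{k-i}\bigr).
\]
Substitute the induction hypothesis $\nabla\sigma_j=-\sum_{a+b=j}\sigma_a\wedge\sigma_b$ for $j<k$. The result is
\[
-\sum_{a+b+c=k}\sigma_a\wedge\sigma_b\wedge\sigma_c+\sum_{a+b+c=k}\sigma_a\wedge\sigma_b\wedge\sigma_c=0,
\]
by associativity of the wedge product in $\End(V)$. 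This is the usual Maurer--Cartan/Jacobi cancellation.

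To show $\cH\Gamma_k=0$, split into two cases. For $k\ge3$, \cref{lem:gammaeta} gives $\gamma_I\in\Im\nabla'$, so $\Gamma_k$ is $\nabla'$-exact and hence orthogonal to harmonic forms. For $k=2$, we have $\Gamma_2=\tfrac12[\sigma_1,\sigma_1]$, which is $\nabla$-closed. Its cohomology class is $\tfrac12\{[\sum z_i\eta_i,\sum z_i\eta_i]\}$, which vanishes precisely because $z\in\sQ'$. Thus $\cH\Gamma_2=0$.

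Combining these gives $\nabla''\sigma_k=-\Gamma_k$, which is \eqref{eq:MC}. I expect the main obstacle to be bookkeeping rather than analysis. One point is checking that the combinatorics of \eqref{eq:gammaI} (ordered splittings, distinct entries, the factor $\tfrac12$) really yields $\Gamma_k=\sum\sigma_i\wedge\sigma_{k-i}$. The other is tracking the graded signs in the Leibniz rule so that the triple sums cancel exactly.
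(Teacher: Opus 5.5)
There is a sign error at the last step, and it cannot be repaired while you keep the sign convention you adopted. Otherwise your route is the paper's: induction, $\nabla'\sigma_k=0$ from \cref{lem:gammaeta}, closedness of $\Gamma_k$ by the Leibniz/associativity cancellation, $\cH\Gamma_2=0$ from the cone and $\cH\Gamma_k=0$ for $k\ge3$ from $\nabla'$-exactness of the $\gamma_I$, and finally the Hodge identity for $\nabla''\nabla''^*G$.

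With $\sigma_k=-\sum_{|I|=k}z_I\eta_I$ you correctly get $\sigma_k=\nabla''^*G\Gamma_k$ and
\[
\nabla''\sigma_k=(1-\cH)\Gamma_k-\nabla''^*G\nabla''\Gamma_k .
\]
So $\nabla''\Gamma_k=0$ and $\cH\Gamma_k=0$ give $\nabla''\sigma_k=+\Gamma_k=+\sum_i\sigma_i\wedge\sigma_{k-i}$, not $-\Gamma_k$.

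With the minus sign in \eqref{eq:sigma}, the identity \eqref{eq:MC} is actually false. Already for $k=2$ your computation gives $\nabla\sigma_2=\tfrac12[\sigma_1,\sigma_1]=\sigma_1\wedge\sigma_1$. The cone condition only kills the class of $\sigma_1\wedge\sigma_1$, not the form itself. For instance, on a curve of genus $\ge2$ with $V$ trivial, take $\eta_1=\omega\otimes A$ and $\eta_2=\bar\theta\otimes B$ (extended to a basis of $\cH^1(X,E)$), where $\omega,\theta$ are $L^2$-orthogonal holomorphic $1$-forms and $[A,B]\neq0$. Then $z=(1,1,0,\dots,0)\in\sQ'$, while $\sigma_1\wedge\sigma_1=\omega\wedge\bar\theta\otimes[A,B]\neq0$.

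For the same reason, your induction hypothesis $\nabla\sigma_j=-\sum\sigma_a\wedge\sigma_b$ has the wrong sign under your convention. The triple-sum cancellation works for either overall sign, which is why the problem only surfaces at the end.

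The minus sign in \eqref{eq:sigma} is a misprint. The paper's proof works with $\sigma_1=\sum z_i\eta_i$ and $\sigma_2=-\nabla''^*G\tfrac12[\sigma_1,\sigma_1]$, i.e.\ $\sigma_k=+\sum_{|I|=k}z_I\eta_I$. This is also how \eqref{eq:sigma} is quoted later in the same section, and it is exactly what makes $\nabla+\sum z_I\eta_I$ flat.

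With that convention, $\sigma_k=-\nabla''^*G\Gamma_k$; your identity $\Gamma_k=\sum_i\sigma_i\wedge\sigma_{k-i}$ is unaffected since it is quadratic. The same computation then yields $\nabla''\sigma_k=-\Gamma_k$, every other step of your argument goes through verbatim, and the result is the paper's proof.

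On the bookkeeping point you flagged: $\Gamma_k=\tfrac12\sum_i[\sigma_i,\sigma_{k-i}]$ holds only if the multi-indices in \eqref{eq:gammaI} are arbitrary words in $\{1,\dots,p\}$ with repetitions allowed, and $(I_1,I_2)$ means concatenation. Under the paper's global ``distinct entries'' convention, the diagonal terms $z_i^2\,\eta_i\wedge\eta_i$ of $\tfrac12[\sigma_1,\sigma_1]$ would be lost.
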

\begin{proof}
	Set $\sigma_1(z):=\sum_{i=1}^{m}z_i\eta_i$. 
	Note that 
	$$
	\nabla\sigma_2(z):=-\nabla\nabla''^*G\frac{1}{2}[\sigma_1,\sigma_1]=-(1-\cH)\frac{1}{2}[\sum_{i=1}^{p}z_i\eta_i,\sum_{i=1}^{p}z_i\eta_i]. 
	$$ 
	By the assumption,  for any $z\in \sQ$,   $[\sigma_1,\sigma_1]$ is cohomologous  to zero.  Hence  we have
	$$
	\nabla \sigma_2(z)+\frac{1}{2}[\sigma_1(z),\sigma_1(z)]=0  
	$$   
	for any $z\in \sQ$. 
	
	We now make the induction for $\ell\geq 3$. Assume that \eqref{eq:MC} holds for $k=2,\ldots,\ell-1$. 
	Then  
	we have 
	\begin{align*}
		\nabla''(\sum_{|I|=\ell}z_I\gamma_I)=  \nabla''(\sum_{i=1}^{\ell}\sigma_i\wedge\sigma_{\ell-i})=0
	\end{align*}
	by the induction for \eqref{eq:MC} and the Jacobian identity. Thus, 
	\begin{align*}
		\nabla\sigma_\ell(z)&=-\sum_{|I|=\ell}z_I\nabla\nabla''^*G\gamma_I\\
		&=-(1-\cH)(\sum_{|I|=\ell}z_I\gamma_I)+\sum_{|I|=\ell}z_I\nabla''^*G\nabla''(\gamma_I)\\      &=-\sum_{|I|=\ell}z_I\gamma_I
	\end{align*}
	where the last equality follows from \cref{lem:gammaeta}.   
	Hence the induction is proved for $\ell$. The lemma  is proved.  
\end{proof}
For any $z\in \bC^p$,  
we   define  a connection  
\begin{align} \label{eq:universalconnection}
	\nabla_z:=\nabla+\sum_{|I|\geq 1}z_I \eta_I.
\end{align}    
for $V$. In general, this connection is neither flat nor smooth. However, by applying elliptic estimates for the Laplacian, there exists an $\ep > 0$ such that on the intersection $\sQ'_\ep := \sQ' \cap \mathbb{B}_\ep^{p}$, the operator $\nabla_z$ forms an analytic family of flat connections for $V$. We omit the proof here, as we will treat the more general quasi-projective setting later.
\subsection{Universal connection: universal property} Consider a dgla defined by
$$
(L^\bullet,d):=(A^\bullet(X,E),\nabla).
$$  Set
$$
(C^\bullet,d):=(\ker \nabla',\nabla''), \quad
(D^\bullet,d):=(H^i(L^\bullet,d),0), 
$$
both of which are naturally endowed with dgla structures.
Then by the formality in \cite{Sim92}, we have the quasi-isomorphisms 
\begin{align}\label{eq:formality}
	L^\bullet \stackrel{\phi}{\leftarrow}  C^\bullet\stackrel{\psi}{\rightarrow}  D^\bullet. 
\end{align} 
wehre $\phi$ is the inclusion,  and $\psi(\eta):=\{\cH(\eta)\}$.  
Fix a base point $x \in X$. Let $\varepsilon: L^0 \to E_x$, $\varepsilon: C^0 \to E_x$, and $\varepsilon: D^0 \to E_x$ be the evaluation maps which define the augmentations.
By \cref{thm:GM}, for any Artin local $\bC$-algebra $A$,  the natural maps 
\begin{align}
	[\sC(L^\bullet,\ep,A)/\exp(L^0\otimes\km)]\leftarrow	  [\sC(C^\bullet,\ep,A)/\exp(C^0\otimes\km)]\to   [\sC(D^\bullet,\ep,A)/\exp(D^0\otimes\km)]
\end{align}
are equivalences of groupoids.  

Let $\sQ_0$ be the quadratic cone in $D^1$ defined by 
\begin{align}\label{eq:defquadratic}
	\sQ_0:=	 \left\{u \in D^1 \otimes \km\mid   [u, u]=0\right\}.
\end{align} 
Denote by 
\begin{align}\label{eq:sQ}
	\sQ:=\mathscr{Q}_0\times \kg / \varepsilon(D^0). 
\end{align}
The following lemma was proven in {\cite[Lemma 3.10]{GM88}}. Here, we refine the result by providing an explicit natural transformation between the functors.
\begin{lem} \label{lem:GM}
	The analytic germ $(\sQ,0)$ (non-canonically) prorepresents the functor  
	\begin{align*}
		{\rm Art}&\to {\rm Grp}\\ 
		A&\mapsto \Def (D^{\bullet}, \ep, A )
	\end{align*}
	where $\Def (D^{\bullet}, \ep, A )$ is the small groupoids  defined in \eqref{eq:defaug}. 
\end{lem}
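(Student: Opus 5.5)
We follow Goldman--Millson's proof of \cite[Lemma 3.10]{GM88}, but write down an explicit natural transformation instead of arguing up to equivalence. Since $D^\bullet$ has zero differential, an object of $\Def(D^\bullet,\ep,A)$ is a pair $(u,e^r)$ with $u\in D^1\otimes\km$, $[u,u]=0$ and $e^r\in\exp(\kg\otimes\km)$. Because $d=0$, the gauge action of $\lambda\in D^0\otimes\km$ is simply $\exp(\lambda)u=e^{\mathrm{ad}\lambda}u$, and on the second factor it acts by $e^r\mapsto \exp(\ep(\lambda))e^r$. To use this, we first choose a complementary subspace $\kg=\ep(D^0)\oplus W$, so that $W\simeq \kg/\ep(D^0)$; this choice is the only non-canonical ingredient. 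Next we observe that $\ep\colon D^0=H^0(X,E)\to E_x=\kg$ is injective, since a flat section vanishing at $x$ vanishes identically. Consequently the group $\exp(D^0\otimes\km)$ acts freely on the second factor.

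The second step is a slice lemma: every $e^r\in\exp(\kg\otimes\km)$ can be written uniquely as $e^r=\exp(\ep(\lambda))\,e^{w}$ with $\lambda\in D^0\otimes\km$ and $w\in W\otimes\km$. We would prove this by induction on the length of $A$, i.e.\ on $\km^k$. Modulo $\km^2$ the groups are abelian and the decomposition is just the linear splitting of $\kg$. Passing from $A/\km^{k}$ to $A/\km^{k+1}$, the kernel $\exp(\kg\otimes\km^k)$ is central and the Baker--Campbell--Hausdorff corrections are linear there, so existence and uniqueness lift. Equivalently, the map $(\lambda,w)\mapsto \exp(\ep(\lambda))e^{w}$ is a bijection, being a homeomorphism on graded pieces.

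With the slice in hand, we define the functor $\Phi_A\colon \Def(D^\bullet,\ep,A)\to[\operatorname{Hom}(\widehat{\cO}_{\sQ,0},A)/\mathrm{Id}]$ by $(u,e^r)\mapsto\bigl(e^{-\mathrm{ad}\lambda}u,\,w\bigr)$. The pair lies in $\sQ_0(A)\times (W\otimes\km)$ because $[\cdot,\cdot]$ is $\mathrm{ad}$-equivariant, and an $A$-point of the germ $(\sQ,0)$ is exactly such a pair. The construction sends isomorphic objects to the same point, since the gauge action only changes $\lambda$. The functor is surjective on objects via $(u,w)\mapsto(u,e^w)$. It is fully faithful because $\operatorname{Hom}$-sets in $\Def(D^\bullet,\ep,A)$ are singletons or empty: if $\exp(\lambda)$ fixes the second factor, then $\ep(\lambda)=0$ by freeness of the action, hence $\lambda=0$.

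Finally, we check naturality in $A$, which holds because the slice decomposition is functorial under local homomorphisms $A\to A'$ by uniqueness. We expect the main obstacle to be the slice lemma, specifically confirming that the free action together with the BCH group law produces a decomposition that is genuinely functorial. The rest is formal.
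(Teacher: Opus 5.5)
Your argument is correct and is essentially the paper's proof. You choose a complement $W\simeq\kg/\ep(D^0)$, use injectivity of $\ep$ on flat sections to get a free left action of $\exp(\ep(D^0)\otimes\km)$, take $\exp(W\otimes\km)$ as a slice, and identify the resulting discrete groupoid with the $A$-points of $\sQ_0\times W$, i.e.\ with $\operatorname{Hom}(\widehat{\cO}_{\sQ,0},A)$. The differences are cosmetic: you write the retraction $\Def(D^{\bullet},\ep,A)\to\operatorname{Hom}(\widehat{\cO}_{\sQ,0},A)$ where the paper writes the section $(\alpha,r)\mapsto[(\alpha,\exp(r))]$; you prove the slice bijection by induction on powers of $\km$, which the paper only asserts; and the paper spells out the completion argument that you take as standard.
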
\begin{proof}
	Let $\ell := \dim_{\mathbb{C}}(\mathfrak{g}/\ep(C^0))$. We choose a subset  $\{v_{1},\ldots,v_{\ell}\}$ of $\mathfrak{g}$ such that 
	the images of $\{v_1,\ldots,v_{\ell}\}$ form a basis for the quotient space $\mathfrak{g}/\ep(C^0)$.  
	Then the natural linear map $W\to  \kg/\ep(D^0)$ is an isomorphism.  Since $
	\ep:D^0\to E_x
	$ 
	defined by the evaluation is injective, it follows that  $\exp \left(\ep(D^0) \otimes \km\right)$ acts freely on $G_A^0:=\exp(\kg\otimes\km)$  by left-multiplication.  Hence the following (non-canonical) maps 
	\[
	\begin{tikzcd}
		\kg/\ep(D_0)\otimes \km \arrow[d]  &  W\otimes\km \arrow[r]\arrow[l]\arrow[d]&	\frac{\exp(\kg\otimes\km)}{\exp(\ep(D^0)\otimes \km)}\arrow[d] \\
		p(r)& r   \arrow[l,maps to] \arrow[r,maps to]&  \exp(\ep(D^0)\otimes \km)\cdot \exp(r)
	\end{tikzcd}
	\]
	are both bijective.  Here   $\exp(\ep(D^0)\otimes \km)\cdot \exp(r)$ is the right coset of $\exp(r)$.  
	Hence  we can define an equivalence
	\begin{align}\label{eq:first}
		\sC(D^\bullet,A)  \times W\otimes\km &\to	 \frac{\sC(D^\bullet,\ep,A)}{\exp(D^0\otimes \km)}  \\ \nonumber
		(\alpha,r) &\mapsto  [(\alpha,\exp(r))]. 
	\end{align}  
	Note that $C^0=D^0$, the map
	\begin{align}\label{eq:first12}
		\sC(C^\bullet,A)  \times W\otimes\km &\to	 \frac{\sC(C^\bullet,\ep,A)}{\exp(C^0\otimes \km)}  \\ \nonumber
		(\alpha,r) &\mapsto  [(\alpha,\exp(r))]. 
	\end{align}   
	
	We consider $D^1\times W$ as an affine $\bC$-space.   Let $\cI$ be the ideal of $\bC[D^1\times W]$ generated by the  quadratic equations 
	$$
	[q(u),q(u)]=0, 
	$$   
	where $q:D^1\times W\to D^1$ denotes the projection map. It follows that 
	$$
	\bC[D^1\times W]/\cI=\spec(\sQ). 
	$$
	Then for any element $\alpha\in D^1\otimes\km$  and $r\in W\otimes \km$,  it can be written as 
	\begin{align}\label{eq:alpha}
		\alpha=\sum_{i=1}^{m} \{\eta_i\}\otimes f_i, \quad r=\sum_{i=1}^{\ell} v_i\otimes g_i, 
	\end{align}
	where $f_i,g_j\in \km$. This gives rise to a morphism
	\begin{align*}
		\Hom(\bC[D^1\times W], A)\\
		(\{\eta_i\}^*,(v_j)^*)\mapsto  f_i+g_j
	\end{align*} 
	Here we consider $\{\eta_i\}^*\in (D^1)^*$ and $(v_j)^*\in W^*$. 
	Note  that the above morphism factors through
	\begin{align*}
		\Hom(\bC[D^1\times W]/\cI, A) 
	\end{align*} 
	if and only if $\alpha$ satisfies  $$\frac{1}{2}[\alpha,\alpha]=d(\alpha)+\frac{1}{2}[\alpha,\alpha]=0,$$
	where $d: D^1 \to D^2$ denotes the differential of the dgla  $(D^\bullet, d)$, that is zero.
	
	Let \(\cI_1\) be the ideal of \(\bC[\sQ]\) generated by the image of 
	\((D^1\times W)^* \subset \bC[D^1\times W]\) under the natural projection 
	\(\bC[D^1\times W] \to \bC[D^1\times W]/\cI\).
	Let \(\widehat{\bC[\sQ]}_{\cI_1}\) denote the \(\cI_1\)-adic completion of \(\bC[\sQ]\).
	Since $A$ is an Artin local $\bC$-algebra, it follows that 
	\begin{align*}
		\Hom(\bC[D^1\times W]/\cI, A) \simeq 	\Hom(\widehat{\bC[\sQ]}_{\cI_1}, A). 
	\end{align*} 
	By standard properties of completion of quotient rings, there is a natural isomorphism
	\[
	\widehat{\bC[\sQ]}_{\cI_1} \;\simeq\; 
	\frac{\bC[[D^1\times W]]}{\cI},
	\]
	where, by abuse of notation, we continue to denote by \(\cI\) the ideal  
	\(\cI\cdot  \bC[[D^1\times W]]\).  
	
	On the other hand, \(\cI_1\)-adic completion commutes with localization at the
	maximal ideal \(\cI_1\) of \(\bC[\sQ]\).
	Therefore  we have a natural isomorphism
	$$
	\widehat{\bC[\sQ]}_{\cI_1} \stackrel{\simeq}{\to}
	\widehat{\cO_{\sQ,0}}.
	$$  
	Therefore, the following map 
	\begin{align}\label{eq:precise}
		\sC(D^\bullet,A)  \times W\otimes\km\to 	\Hom( \widehat{\cO_{\sQ,0}}, A) \\ \nonumber
		(\alpha,r) \mapsto \left(   	(\{\eta_i\}^*,v_j^*)\mapsto  f_i+g_j \right)
	\end{align}
	is an equivalence, where $(\alpha,r)$ is written as in~\eqref{eq:alpha}, and  $\{\eta_i\}^*$ and $v_j^*$ are understood as the images of  $\{\eta_i\}^*$,$v_j^*$  under the composite morphisms 
	$$\bC[D^1\times W]\to \bC[\sQ]\to \cO_{\sQ,0}\to\hcO_{\sQ,0}.$$  
	We remark that $\widehat{\cO_{\sQ,0}}$ is generated by 
	$\{\eta_i\}^*$ and $v_j^*$.  
	Therefore, for any morphism in 
	$\Hom(\widehat{\cO_{\sQ,0}},A)$, the images of these generators
	completely determine the morphism.  
	Combining \eqref{eq:precise} with \eqref{eq:first} yields the lemma. 
\end{proof}

Fix a positive integer $k\geq 2$.   We set
\begin{align*} 
	\sQ'':=\sQ'\times \bC^\ell. 
\end{align*} 
Let $\km$ be the maximal ideal of $\cO_{\mathscr{Q}'',0}$ and we consider $A_k:= \cO_{\mathscr{Q}'',0}/\km^{k+1}$, that is an  Artin local $\bC$-algebra.  Write $\km_k:=\km A_k$, that is the maximal ideal for $A_k$.   We now regard $z_I$ and $x_j$  as the image of   $z_I$ and  $x_j$ under the morphism 
$$\cO_{\bC^{p+\ell},0}\to \cO_{\mathscr{Q}'',0}\to \cO_{\mathscr{Q}'',0}/\km^{k+1}=A_k.$$
Then $z_I\in \km_k$   for each $I$ with $|I|\in \{1,\ldots,k\}$. 
For any $i\in \{1,\ldots,k\}$, let 
$$\sigma_i=\sum_{|I|=i}z_I \eta_I $$
be defined in \eqref{eq:sigma}. 
By \cref{lem:gammaeta},   we have
$\eta_I\in \Im \nabla'$ for $|I|\geq 2$, and for any $z\in \sQ'$, we have
$$\nabla'\sigma_i(z)=0$$ for any $i\geq 1$.  This implies that 
\begin{align}\label{eq:psivani}
	\psi(\sigma_i)=0\in D^1\otimes\km_k,\quad \mbox{ if }i\geq 2,
\end{align} 
and $$
\delta_k:=\sum_{1\leq |I|\leq k}z_I\eta_I\in C^1\otimes\km_k. 
$$
On the other hand, by \cref{lem:quadratic},   we have 
$$
\delta_k:=\sum_{1\leq |I|\leq k}z_I\eta_I\in \sC(C^\bullet,A_k). 
$$
Together with \eqref{eq:psivani}, this yields
$$
\psi(\delta_k)=\sum_{i=1}^{p}z_i\{\eta_i\}\in \sC(D^\bullet,A_k). 
$$  
By \eqref{eq:formality} and \cref{thm:GM},    we have  the equivalences
\begin{align}\label{eq:Phi}
	\Phi:  	\sC(C^\bullet,A)  \times W\otimes\km_k &\stackrel{\psi\times \mathrm{Id}}{\to } 	\sC(D^\bullet,A)  \times W\otimes\km_k  {\to} \Hom(\hcO_{\sQ,0},A_k),
\end{align}  
with 
$$
\Phi(\delta_k, \sum_{i=1}^{\ell}w_iv_i)	=  \left(	(\{\eta_i\}^*,v_j^*)\mapsto  z_i+w_j  \right).
$$
As remarked above,  since $\widehat{\cO_{\sQ,0}}$ is generated by 
$\{\eta_i\}^*$ and $v_j^*$,  any morphism in 
$\Hom(\widehat{\cO_{\sQ,0}},A_k)$ is determined by the images of these generators.

\begin{thm}\label{thm:ES}Let \(\nabla_{z}\) denote the analytic family of flat connections on \(V\),
	parametrized by \(z \in \sQ'_{\ep}\) for some \(\ep>0\), as defined in \eqref{eq:universalconnection}. Let $\mathrm{ev}_{x}:H^0(X,E)\to E_x$  be the evaluation map, and let  $\{v_{1},\ldots,v_{\ell}\}$ be a subset of $E_x$ such that 
	the images of $\{v_1,\ldots,v_{\ell}\}$ form a basis for the quotient space $\mathfrak{g}/\mathrm{ev}_{x}H^0(X,E)$.  
	Let  ${\rm Mon}(\nabla_z) $ be the monodromy representation for any $z\in \mathscr{Q}'_\ep$.  Then  the natural map between analytic germs
	\begin{align}\label{eq:monof}
		f:	(\sQ'_\ep\times \bC^\ell,0) &\to \left( R(X,N),\varrho\right)\\\nonumber
		(z,w)&\mapsto \exp(-\sum_{i=1}^{\ell}w_iv_i){\rm Mon}(\nabla_z)\exp(\sum_{i=1}^{\ell}w_iv_i) 
	\end{align}
	is an  analytic isomorphism. 
\end{thm}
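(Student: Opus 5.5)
Via \cref{thm:GM2}, analytic isomorphism of germs amounts to an isomorphism of the pro-representable functors $F_{\sQ''_\ep,0}\to F_{R(X,N),\varrho}$ on $\Art$, where $\sQ''_\ep:=\sQ'_\ep\times\bC^\ell$. So the plan is to show that the natural transformation induced by $f$, namely composition with $f^*\colon\hcO_{R(X,N),\varrho}\to\hcO_{\sQ''_\ep,0}$, is a bijection $\Hom(\hcO_{\sQ''_\ep,0},A)\to\Hom(\hcO_{R(X,N),\varrho},A)$ for every Artin local $\bC$-algebra $A$. Since any local homomorphism out of a complete local ring into $A$ factors through some $A_k=\cO_{\sQ'',0}/\km^{k+1}$-type truncation, it suffices to understand the composite
$$
\Hom(\hcO_{\sQ'',0},A)\xrightarrow{\ \Phi^{-1}\ }\sC(C^\bullet,\ep,A)/\exp(C^0\otimes\km)\xrightarrow{\ \phi\ }\sC(L^\bullet,\ep,A)/\exp(L^0\otimes\km)\xrightarrow{\ h\ }R_\varrho(A)=\Hom(\hcO_{R,\varrho},A),
$$
using \cref{lem:GM} (with $\sQ''\simeq\sQ$ once $W$ is identified with $\kg/\mathrm{ev}_xH^0(X,E)$ via the $v_i$, noting $C^0=D^0=H^0(X,E)$ and $\ep=\mathrm{ev}_x$), the formality quasi-isomorphisms \eqref{eq:formality} with \cref{thm:GM} and \cref{lem:GMequiv}, \cref{lem:holonomy}, and \cref{thm:GM3}. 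Each arrow is a bijection on isomorphism classes (and the last groupoid has trivial automorphisms), so the composite is a bijection; the point is to identify it with $f$.

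The key step is this identification. Given $\chi\in\Hom(\hcO_{\sQ'',0},A)$, it factors through $A_k$ for $k$ large, so it suffices to treat the tautological point $\chi_k\colon\hcO_{\sQ'',0}\to A_k$ and use naturality in $A$ (all the groupoid functors above are functorial in $A$, and pulling back $\chi_k$ along $A_k\to A$ gives any $\chi$). By the computation preceding the statement, $\Phi(\delta_k,\sum w_iv_i)=\chi_k$, with $\delta_k=\sum_{1\le|I|\le k}z_I\eta_I\in\sC(C^\bullet,A_k)$ by \cref{lem:quadratic}, \cref{lem:gammaeta} and \eqref{eq:psivani}. Applying $\phi$ (inclusion) and then $h$ gives $\exp(-\sum w_iv_i)\,\mathrm{hol}_x(\nabla+\delta_k)\exp(\sum w_iv_i)\in\GL_N(A_k)$. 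One must check this equals $f^*$ composed with $\chi_k$, i.e.\ the $k$-jet of the analytic map $(z,w)\mapsto\exp(-\sum w_iv_i)\mathrm{Mon}(\nabla_z)\exp(\sum w_iv_i)$. This holds because $\nabla_z-\nabla-\delta_k$ has coefficients in $\km^{k+1}$ (all $z_I$ with $|I|>k$), so modulo $\km^{k+1}$ the holonomy of $\nabla_z$ along any loop agrees with that of $\nabla+\delta_k$; holonomy depends analytically on $z$ and reduction of the power series commutes with parallel transport (solving the ODE order by order).

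The expected obstacle is making this last comparison rigorous: one needs that $z\mapsto\mathrm{Mon}(\nabla_z)$ is genuinely analytic on $\sQ'_\ep$ with Taylor expansion computed termwise, which requires uniform convergence of $\sum z_I\eta_I$ in a $C^\ell$ norm (from the elliptic estimates on $G$ and $\nabla''^*$, giving $\|\eta_I\|\le C^{|I|}$), and that $\nabla_z$ is flat on $\sQ'_\ep$ rather than merely order by order. I would take the flatness/convergence as given by the construction stated before \cref{thm:ES}, then argue that holonomy of a convergent family of flat connections is analytic with jets given by the truncations. With the natural transformation identified, \cref{thm:GM2} concludes that $f$ is an analytic isomorphism of germs.
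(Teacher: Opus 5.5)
Your proposal is correct and follows the paper's own argument. The paper uses the same chain of groupoid equivalences (the Goldman--Millson/formality chain through $\Phi$ and the holonomy map $h$), checks that the tautological point $(\delta_k,\sum_i w_iv_i)$ goes to $g_k$ on one side and to the $k$-jet $f_k$ of $f$ on the other, and concludes via $f^*=\varprojlim_k f_k$ and \cref{thm:GM2}. The one step the paper only asserts, that the holonomy of $\nabla+\delta_k$ modulo $\km^{k+1}$ equals the $k$-jet of $z\mapsto\mathrm{Mon}(\nabla_z)$, is justified explicitly in your proposal.
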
 
\begin{proof}
	We recall that 	 we have morphisms of groupoids 
	\begin{equation}\label{eq:great}\hspace{-0.45cm}
		\begin{tikzcd}[column sep=1em]
			& \Hom\!\left(\widehat{\cO}_{\sQ,0},A_k\right)		& \\
			{ [R_\varrho(A_k)/{\rm Id}] }	 &	\sC(C^\bullet,A_k) \times W\otimes\km_k \arrow[r,"\psi\times \mathrm{Id}"] \arrow[d]\arrow[u,"\Phi"]
			& 	\sC(D^\bullet,A_k)  \times W\otimes\km_k  \arrow[ul] \arrow[d]  \\
			{[\sC(L^\bullet,\ep,A_k)/\exp(L^0\otimes\km_k)]} \arrow[u,"h"']
			& {[\sC(C^\bullet,\ep,A_k)/\exp(C^0\otimes\km_k)]} \arrow[r,"\psi"]\arrow[l,"\Phi"'] 
			& {[\sC(D^\bullet,\ep,A_k)/\exp(D^0\otimes\km_k)]}    
		\end{tikzcd}
	\end{equation}
	in which each of them is an isomorphism by \eqref{eq:first}, \eqref{eq:first12}, \eqref{eq:formality} and \cref{thm:GM}.     Here \begin{align*}
		h:  [\sC(L^\bullet,\ep_x,A_k)/\exp(L^0\otimes\km)]&\to  [R_\varrho(A_k)/{\rm Id}]\\
		(\alpha,e^r) &\mapsto  \exp(-r) {\rm hol}_x(\nabla+\alpha) \exp(r)
	\end{align*}  
	is defined in   \Cref{lem:holonomy}. 
	We denote by
	$$
	\Upsilon: \sC(C^\bullet,A_k) \times W\otimes\km \to  [R_\varrho(A_k)/{\rm Id}]
	$$
	the induced equivalence of groupoids in \eqref{eq:great}. It follows that 
	$$
	\Upsilon(\delta_k,\sum_{i=1}^{\ell}w_iv_i)=\exp(-\sum_{i=1}^{\ell}w_iv_i){\rm Mon}(\nabla_z)\exp(\sum_{i=1}^{\ell}w_iv_i)\in R_\varrho(A_k)
	$$
	corresponds to the morphism 
	$$
	f_k: \hcO_{R(X,N),\varrho}\to  A_k=\cO_{\mathscr{Q}'',0}/\km^{k+1}
	$$
	induced by $f$ defined in \eqref{eq:monof}.  
	
	Let us define a morphism  
	$
	g_k\in \Hom(\hcO_{\sQ,0},A_k) 
	$
	by setting
	\begin{align}\label{eq:g_k}
		g_k(\{\eta_i\}^*)=  z_i, \quad g_k(v_j^*)=w_j. 
	\end{align}
	for each $i\in \{1,\ldots,m\}$ and $j\in \{1,\ldots,\ell\}$.  By \eqref{eq:Phi}, we have
	$$
	\Phi (\delta_k, \sum_{i=1}^{\ell}w_iv_i)=g_k. 
	$$

	One can see the compatibility of $ {g}_k$ by
	\[
	\begin{tikzcd}
		& \vdots \arrow[d] \\
		& A_{k+1} \arrow[d] \\
		\hcO_{\mathscr{Q},0}  \arrow[ur," {g}_{k+1}"] \arrow[r," {g}_k"']  \arrow[ddr,"g_1"']
		& A_k \arrow[d] \\ 
		& \vdots\arrow[d]\\
		&	A_1
	\end{tikzcd}
	\]
	In conclusion, there exists   natural maps \begin{equation*}
		\begin{tikzcd} 
			\Hom\!\left(\widehat{\cO}_{\sQ,0},A_k\right)
			\arrow[r] 
			&
			\Hom\!\left(\widehat{\cO}_{\sQ,0},A_{k-1}\right) 
			\\
			\sC(C^\bullet,A_k) \times W\otimes\km_k 
			\arrow[r]
			\arrow[u,"\Phi"]
			\arrow[d,"\Upsilon"']
			&
			\sC(C^\bullet,A_{k-1}) \times W\otimes\km_{k-1}
			\arrow[u,"\Phi"']
			\arrow[d,"\Upsilon"]
			\\
			\Hom(\widehat{\cO}_{{R(X,N)},\varrho}, A_k)
			\arrow[r]
			&
			\Hom(\widehat{\cO}_{{R(X,N)},\varrho}, A_{k-1})
		\end{tikzcd}
	\end{equation*}
	such that all vertical maps are isomorphisms.   We have the inverse limit
	$$
	\varprojlim_k g_k\in  \Hom\!\left(\widehat{\cO}_{\sQ,0},\varprojlim_k A_k\right)=\Hom\!\left(\widehat{\cO}_{\sQ,0},\widehat{\cO}_{\sQ'',0}\right)
	$$
	and by \eqref{eq:g_k}, it is an isomorphism.
	By \eqref{eq:great}, it follows that
	$$
	f^*=\varprojlim_k f_k: \widehat{\cO}_{R(X,N),\varrho}\to   \hcO_{\mathscr{Q}'',0},
	$$
	is  also an isomorphism. 
	By \cref{thm:GM2}, this implies that 
	$f:(\sQ'_\ep\times W,0)\to (R(X,N),\varrho)$ is a \emph{biholomorphism} between analytic germs. 
	The theorem is therefore proved. 
\end{proof}

\subsection{1-step phenomenon}
We are now in a position to prove the result on the so-called \emph{1-step phenomenon} for semisimple systems on compact Kähler manifolds.
\begin{thm}[1-step phenomenon]\label{thm:1-step}
	Let $h:Y \to X$ be a holomorphic map between compact K\"ahler manifolds.
	Let $(V,\nabla)$ be a semisimple flat bundle on $X$ with monodromy 
	representation $\varrho:\pi_1(X,x)\to \GL_N(\bC)$, and set $y=h(x)$.
	Denote by 
	\[
	h^*: R(X,N) \longrightarrow R(Y,N)
	\]
	the induced morphism of representation varieties, given by
	$\tau \mapsto h^*\tau$ for any $\tau \in R(X,N)$.
	
	Assume that:
	\begin{enumerate}[label=(\alph*)]
		\item\label{item:center} 
		the pullback representation $h^*\varrho$ has image in the center 
		$Z(\GL_N(\bC))$, that is, $h^*\varrho$ factors through a representation
		$\tau_0:\pi_1(Y,y)\to Z(\GL_N(\bC))$, where
		\[
		Z(\GL_N(\bC))=\{\lambda\,\mathrm{Id}_N\mid \lambda\in \bC^\times\};
		\]
		
		\item 
		for every $\eta \in H^1(X,E)$, one has $h^*\eta = 0$ in 
		$H^1(Y,h^*E)$.
	\end{enumerate}
	
	Let $R$ be any (geometrically) irreducible component of $R(X,N)$ 
	containing $\varrho$.  
	Then the image of $R$ under $h^*$ consists of the single point
	\[
	h^*(R) = \{\tau_0\}.
	\]
\end{thm}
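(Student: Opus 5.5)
The plan is to use the universal connection of \cref{thm:ES} to show that $h^*$ is constant on a neighbourhood of $\varrho$ in $R(X,N)$, and then to spread this to all of $R$ by irreducibility. Since $h^*:R(X,N)\to R(Y,N)$ is algebraic and $R$ is irreducible, the set $\{\tau\in R: h^*\tau=\tau_0\}$ is Zariski closed in $R$. So it suffices to show that it contains a nonempty Euclidean open subset of $R$ near $\varrho$, and by \cref{thm:ES} it is enough to check this on the germ $(R(X,N),\varrho)$.

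\textbf{Step 1: parametrize the germ.} By \cref{thm:ES}, every representation near $\varrho$ has the form
$$\tau=\exp(-w\cdot v)\,{\rm Mon}(\nabla_z)\,\exp(w\cdot v), \qquad (z,w)\in \sQ'_\ep\times\bC^\ell,$$
with $\nabla_z=\nabla+\sum_{|I|\ge1}z_I\eta_I$. Pulling back gives $h^*\tau=\exp(-w\cdot v)\,{\rm Mon}(h^*\nabla_z)\,\exp(w\cdot v)$. Because $\tau_0$ is central, conjugation does not change it, so it suffices to prove ${\rm Mon}(h^*\nabla_z)=\tau_0$ for all $z$ near $0$ in $\sQ'$.

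\textbf{Step 2: show the correction forms pull back to zero.} I claim $h^*\eta_I=0$ for every $I$.

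\emph{Case $|I|=1$.} The form $\eta_i$ is harmonic, so $\nabla'\eta_i=\nabla''\eta_i=0$, and pullback preserves these identities, so $h^*\eta_i$ is both $\nabla'$- and $\nabla''$-closed. By hypothesis (b), $h^*\eta_i$ is $\nabla$-exact. Since $h^*(V,\nabla)$ has central monodromy, $h^*E$ is a trivial flat bundle, and $h^*$ of the harmonic metric may be taken flat on $h^*E$. The $\d\db$-lemma (\cref{lem:ddbar} with $k=1$) on $Y$ then gives $h^*\eta_i=0$. One point needs care: the $\nabla',\nabla''$ on $X$ must pull back to the corresponding harmonic decomposition on $Y$. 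I expect this to hold because $h^*E\cong\End$ of a rank-one twist is flat unitary, so its Higgs field vanishes; I would verify it via pluriharmonicity of the pulled-back harmonic metric. This compatibility is the main technical obstacle.

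\emph{Case $|I|\ge2$, by induction on $|I|$.} If $h^*\eta_J=0$ for all $|J|<|I|$, then $h^*\gamma_I=0$, since $\gamma_I$ is a sum of brackets of such $\eta_J$. By \cref{lem:gammaeta}, $\eta_I=\nabla'\nabla'^*G\eta_I$ and $\nabla''\eta_I=-\gamma_I$ for $z\in\sQ'$; in the multilinear sense this reads $\nabla''\sigma_k=-\sum\sigma_i\wedge\sigma_{k-i}$. Hence $h^*\eta_I$ is $\nabla'$-exact and $\nabla''$-closed. It is $\nabla'$-closed because it is $\nabla'$-exact. Thus $h^*\eta_I$ is a $\nabla$-closed, $\nabla'$-exact $1$-form, and \cref{lem:ddbar} with $k=1$ gives $h^*\eta_I=0$. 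The multilinear/symmetrization issue should be harmless: I will work with the $\sigma_k(z)$ directly for $z\in\sQ'$ rather than with individual $\eta_I$.

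\textbf{Step 3: conclude.} By Step 2, $h^*\nabla_z=h^*\nabla$, so ${\rm Mon}(h^*\nabla_z)=h^*\varrho=\tau_0$. Therefore $h^*$ is constant equal to $\tau_0$ on a neighbourhood of $\varrho$ in $R(X,N)$. That neighbourhood meets $R$ in a Euclidean open subset of $R$, which is Zariski dense in $R$ because $R$ is irreducible. Since the set where $h^*\tau=\tau_0$ is Zariski closed in $R$ and contains this dense subset, we get $h^*(R)=\{\tau_0\}$.
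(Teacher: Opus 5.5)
Your proposal is correct and follows the paper's proof. You parametrize the germ at $\varrho$ by the universal family of \cref{thm:ES}, show $h^*\nabla_z=h^*\nabla$ on $\sQ'_\ep$, use centrality of $\tau_0$ to absorb the conjugation by $\exp(\sum_i w_iv_i)$, and spread the local constancy of the algebraic map $h^*$ to all of $R$ by irreducibility. You also fill in what the paper asserts only ``by assumption'' and ``by construction'': the passage from hypothesis (b) to $h^*\eta_i=0$ via the $\partial\bar\partial$-lemma on $Y$ with the pulled-back pluriharmonic metric, and the induction giving $h^*\sigma_k(z)=0$ for $z\in\sQ'$ from $\nabla'$-exactness and \cref{lem:quadratic}; you are right to run this induction on the sums $\sigma_k(z)$ rather than on the individual $\eta_I$, which do not satisfy the Maurer--Cartan relation on their own.
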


\begin{proof}
	We use the notations introduced in this subsection.  
	Let $\eta_1,\ldots,\eta_m \in \cH^1(X,E)$ be a basis, and let 
	$\eta_I$ be defined as in \eqref{eq:induction}.  
	By assumption, $h^*\eta_i=0$ for each $i$.  
	By the construction of $\eta_I$ in \eqref{eq:induction}, this yields
	\[
	h^*\eta_I = 0 \qquad \text{for all } I \text{ with } |I|\ge 1.
	\]
	This implies that
	\[
	h^*\nabla_z = h^*\nabla
	\qquad \text{for every } z\in \sQ'_r.
	\]
	
	On the other hand, by \Cref{item:center}, for any $g\in \GL_N(\bC)$ we have 
	$g^{-1}\tau_0 g = \tau_0$; hence the representation $\tau_0$ is fixed under
	conjugation. 
	Therefore, for the map $f: \sQ'_\ep\times W \to  R(X,N) $ constructed in \cref{thm:ES}, the composition
	\[
	h^* \circ f : \sQ'_\ep\times W  \longrightarrow R(Y,N).
	\]
	is constant. 
	By \cref{thm:ES}, this implies that $h^*$ is constant on some neighborhood of $\varrho$ in  $R$. 
	Since $h^*$ is an algebraic morphism, it follows that 
	\[
	h^*(R)=\{h^*\varrho\}=\{\tau_0\}. 
	\]
	The theorem follows.
\end{proof}

\subsection{Geometric applications I: Katzarkov's and Campana's theorems}\label{subsec:KC}
\begin{dfn} [Shafarevich morphism]\label{def:Sha}
	Let $X$ be a quasi-compact K\"ahler manifold, and let 
	$\varrho:\pi_1(X)\to \GL_N(\mathbb{C})$ be a linear representation. 
	Assume that there exists a \emph{compactifiable} dominant holomorphic map
	\[
	{\rm sh}_\varrho : X \to {\rm Sh}_\varrho(X)
	\]
	onto a complex normal variety ${\rm Sh}_\varrho(X)$ with connected general fibers, 
	such that for every compactifiable holomorphic map $f:Y\to X$, the following properties are equivalent:
	\begin{enumerate}[label=\textup{(\alph*)}]
		\item ${\rm sh}_\varrho\circ f(Y)$ is a point;
		\item $ f^*\varrho(\pi_1(Y)) $ is finite; 
	\end{enumerate} 
	Then such ${\rm sh}_\varrho$ is called the \emph{Shafarevich morphism} of $\varrho$. 
\end{dfn}
The following lemma allows us to pass to a finite \'etale cover when constructing the Shafarevich morphism.\begin{lem}\label{lem:passetale}
	Let $X$ be a  compact K\"ahler manifold and let $\varrho \colon \pi_1(X) \to \operatorname{GL}_N(\mathbb{C})$ be a linear representation. Assume there exists a finite \'etale Galois cover $\mu \colon \widehat{X} \to X$ such that the Shafarevich morphism $\operatorname{sh}_{\mu^*\varrho} \colon \widehat{X} \to \operatorname{Sh}_{\mu^*\varrho}(\widehat{X})$ associated to the pullback $\mu^*\varrho$ exists. Then the Shafarevich morphism of $\varrho$ exists.
\end{lem}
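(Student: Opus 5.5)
The plan is to descend the Shafarevich morphism of $\mu^*\varrho$ along the Galois group $G$ of $\mu$. Set $\widehat{\varrho}:=\mu^*\varrho$ and write $s:=\operatorname{sh}_{\widehat\varrho}\colon \widehat X\to S:=\operatorname{Sh}_{\widehat\varrho}(\widehat X)$.

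\textbf{Step 1: $G$ acts on $S$ compatibly with $s$.} For $g\in G$, the representation $g^*\widehat\varrho$ is conjugate to $\widehat\varrho$. Indeed, $\pi_1(\widehat X)$ is normal in $\pi_1(X)$, and $g$ acts on it by conjugation by a lift $\gamma\in\pi_1(X)$; then $g^*\widehat\varrho=\varrho(\gamma)\,\widehat\varrho\,\varrho(\gamma)^{-1}$. Hence, for every compactifiable $f\colon Y\to\widehat X$, the group $f^*\widehat\varrho(\pi_1(Y))$ is finite if and only if $(g\circ f)^*\widehat\varrho(\pi_1(Y))$ is finite. By the defining property of $s$, this means $s\circ f$ is constant if and only if $s\circ g\circ f$ is constant.

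To turn this into an action on $S$, I apply it to the fibers. Take $f$ to be the inclusion of (a desingularization of) a connected component of a fiber of $s$; this is compactifiable since $\widehat X$ is compact. Then $s\circ g$ is constant on the connected fibers of $s$. Since $s$ is proper and surjective with connected fibers onto a normal space, the map $s\circ g$ factors through $s$ by Zariski's main theorem / rigidity. This gives a holomorphic automorphism $\bar g$ of $S$ with $s\circ g=\bar g\circ s$.

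I expect the main obstacle to be here: the fibers must be connected, not just the general fibers. I would handle this by first replacing $s$ by its Stein factorization, which still satisfies the universal property because finite maps do not change which maps become constant.

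\textbf{Step 2: construct the quotient and the candidate morphism.} Form $S':=S/G$. This is a normal complex space, as a quotient of a normal space by a finite group. Define $\operatorname{sh}\colon X=\widehat X/G\to S'$ as the map induced by $s$. It is holomorphic, proper and surjective. After Stein factorization it has connected general fibers, and again this step does not affect the universal property.

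\textbf{Step 3: verify the defining property.} Let $f\colon Y\to X$ be compactifiable. Take a connected component $\widehat Y$ of $Y\times_X\widehat X$, with its maps $\widehat f\colon\widehat Y\to\widehat X$ and $\nu\colon\widehat Y\to Y$; here $\nu$ is finite étale and $\widehat f$ is compactifiable. Since $\nu_*\pi_1(\widehat Y)$ has finite index in $\pi_1(Y)$, the group $f^*\varrho(\pi_1(Y))$ is finite if and only if $\widehat f^*\widehat\varrho(\pi_1(\widehat Y))$ is finite. By the defining property of $s$, the latter holds if and only if $s\circ\widehat f$ is constant. Because $S\to S'$ is finite and $\widehat Y$ is connected, $s\circ\widehat f$ is constant if and only if $\operatorname{sh}\circ f\circ\nu$ is constant. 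Since $\nu$ is surjective, this is equivalent to $\operatorname{sh}\circ f$ being constant. This proves the equivalence required by \cref{def:Sha}, so $\operatorname{sh}$ is the Shafarevich morphism of $\varrho$.
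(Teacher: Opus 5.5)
Your proposal is correct and follows the paper's route: since $g^*\mu^*\varrho$ is conjugate to $\mu^*\varrho$ (which the paper loosely writes as an equality), each $g\in G$ maps fibers of $\operatorname{sh}_{\mu^*\varrho}$ into fibers, so the $G$-action descends to $\operatorname{Sh}_{\mu^*\varrho}(\widehat X)$, and you take the induced map $X\to\operatorname{Sh}_{\mu^*\varrho}(\widehat X)/G$. Your Step 3 (a connected component of $Y\times_X\widehat X$, plus the fact that finite-index subgroups detect finiteness) supplies the verification the paper leaves as ``one can check''; the only fixes are to test against desingularizations of irreducible rather than connected components of fibers, and to note that the extra Stein factorizations are harmless but unnecessary, since all fibers are already connected here.
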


\begin{proof} 
	Let $G \coloneqq \operatorname{Aut}(\widehat{X}/X)$ be the Galois group of the cover.  
	Let $F$ be a fiber of $\operatorname{sh}_{\mu^*\varrho}$.  Note that, for any $\gamma \in G$, we have
	$$
	\mu^*\varrho\bigl(\operatorname{Im}[\pi_1(\gamma \cdot F) \to \pi_1(\widehat{X})]\bigr) = \mu^*\varrho\bigl(\operatorname{Im}[\pi_1(F) \to \pi_1(\widehat{X})]\bigr),
	$$
	which implies that the image of the fundamental group of   $\gamma \cdot F$ is also finite. By \cref{def:Sha}, it follows that $\operatorname{sh}_{\mu^*\varrho}$ must contract $\gamma \cdot F$ to a point; in other words, $\gamma \cdot F$ is contained in a fiber of $\operatorname{sh}_{\mu^*\varrho}$.
	
	Consequently, the action of $G$ on $\widehat{X}$ descends to an action on  $\operatorname{Sh}_{\mu^*\varrho}(\widehat{X})$, making $\operatorname{sh}_{\mu^*\varrho}$ equivariant with respect to this action. Therefore, the quotient  $X \to \operatorname{Sh}_{\mu^*\varrho}(\widehat{X})/G$ is proper, and one can check that it satisfies the properties required to be the Shafarevich morphism of $\varrho$.
\end{proof}
\begin{dfn}[Big representation]\label{def:bigrep}
	Let $X$ be a smooth quasi-projective variety.
	A linear representation
	$
	\varrho:\pi_1(X)\to \GL_N(\bC)
	$ 
	is called \emph{big} if, for any closed subvariety
	$Z \subset X$ passing through a very general point,
	the group
	\[
	\varrho\!\left(\mathrm{Im}\bigl[\pi_1(Z^{\mathrm{norm}})\to \pi_1(X)\bigr]\right)
	\]
	is infinite, where $Z^{\mathrm{norm}}$ denotes the normalization of $Z$.
\end{dfn}
In \cite{Kat95}, Katzarkov proved that for a smooth projective variety with linear nilpotent fundamental groups, its Shafarevich morphism is the Stein factorization of its Albanese map. This was extended to compact K\"ahler manifolds by Leroy \cite{Ler} in his PhD thesis (see also the survey paper by Claudon \cite{Cla}).  Based on \cref{thm:1-step}, we can give another proof of  Katzarkov's theorem following same ideas in \cite{EKPR12}. 

We first prove the following lemma. 
\begin{lem}\label{lem:common}
	Let $X$ be a compact K\"ahler manifold, and let 
	$\tau:\pi_1(X)\to \GL_{N}(\bC)$ be a linear representation whose image is
	virtually nilpotent.  
	Let $a:X\to A$ denote the Albanese map, and let $f:Y\to X$ be a holomorphic map
	from another compact K\"ahler manifold such that $(a\circ f)(Y)$ is a point.
	Then $f^*\tau(\pi_1(X))$ has finite image. 
\end{lem}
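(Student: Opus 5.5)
Note first that the statement presumably means $f^*\tau(\pi_1(Y))$ is finite. The plan is to reduce to a unipotent representation and then apply the 1-step phenomenon, \cref{thm:1-step}, with $h=f$ and $\varrho$ the trivial representation.

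\textbf{Reduction to the unipotent case.} Since $\tau(\pi_1(X))$ is virtually nilpotent, I pass to a finite \'etale cover $\mu\colon\widehat X\to X$ on which the image is nilpotent. Replacing $Y$ by a connected component of $Y\times_X\widehat X$ changes $f^*\tau(\pi_1(Y))$ only up to finite index. The Albanese map of $\widehat X$ followed by the induced map to $A$ agrees with $a\circ\mu$ up to translation. Hence the image of the new $Y$ in $\mathrm{Alb}(\widehat X)$ lies in a fiber of $\mathrm{Alb}(\widehat X)\to A$. That fiber is a finite union of translates of a subtorus, so I must check that $Y$ still maps to a point. I would handle this using the universal property for maps $Y\to\widehat X$: $H^1(\widehat X,\bC)\to H^1(Y,\bC)$ vanishes on $\mu^*H^1(X)$, and Hodge theory controls the remainder. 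If this fails, I would instead apply the argument directly on $X$, using $\tau|_{\mu_*\pi_1(\widehat X)}$ via induced representations.

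Next, the Zariski closure $G$ of a finitely generated nilpotent linear group is nilpotent algebraic, hence, after a further finite cover, isomorphic to $U\times T$ with $U$ unipotent and $T$ a torus. The $T$-component factors through $H_1(X,\bZ)$, so it is pulled back from $A$ modulo torsion. Its pullback to $Y$ therefore has finite image, because $a\circ f$ is constant and $H_1(Y)\to H_1(X)/\mathrm{torsion}\cong\pi_1(A)$ is zero. So it suffices to treat the unipotent part $\sigma\colon\pi_1(X)\to U\subset\GL_N(\bC)$ and to show that $f^*\sigma$ is trivial.

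\textbf{Applying the 1-step phenomenon.} Take $(V,\nabla)$ trivial of rank $N$, so that $\varrho$ is trivial, $E=\End(V)$ is trivial, and $H^1(X,E)=H^1(X,\bC)\otimes\kg$. Hypothesis (a) of \cref{thm:1-step} holds trivially. For (b), $a\circ f$ constant forces $f^*\colon H^1(X,\bC)\to H^1(Y,\bC)$ to vanish. Indeed, $H^{1,0}$ and $H^{0,1}$ of $X$ are pulled back from $A$, and $f^*$ of a holomorphic 1-form factors through a point. So $f^*\eta=0$ for all $\eta\in H^1(X,E)$, and \cref{thm:1-step} gives $f^*(R)=\{\mathrm{triv}\}$ for every irreducible component $R$ of $R(X,N)$ through the trivial representation.

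It remains to see that $\sigma$ lies on such a component. Write $\sigma=\exp(u)$ on generators with $u$ nilpotent. The family $\sigma_t$, obtained by conjugating with the one-parameter subgroup $t^{H}$ for a suitable grading cocharacter, or more simply via the $\mathbb{G}_m$-action scaling the unipotent Lie algebra filtration by degree, is an algebraic curve in $R(X,N)$ joining $\sigma$ to a limit as $t\to0$. I would make this concrete by conjugating $\sigma$ into upper unitriangular form and conjugating by $\mathrm{diag}(1,t,\dots,t^{N-1})$. The entry at position $(i,j)$ gets multiplied by $t^{j-i}$, so as $t\to0$ the representation tends to the trivial one. This curve is irreducible and contains both $\sigma$ and the trivial representation, so $\sigma$ lies in an irreducible component containing the trivial representation. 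Hence $f^*\sigma$ is trivial.

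The main obstacle I expect is the bookkeeping in the \'etale cover step, namely ensuring that $Y$ still maps to a point in the Albanese of the cover. The fallback is to work on $X$ with a finite-index normal subgroup, using that the image of $\pi_1(Y)$ in $H_1(X,\bQ)$ vanishes, which is all the torus argument needs.
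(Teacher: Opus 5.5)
Your strategy is the same as the paper's. You pass to a finite \'etale cover so that the Zariski closure of the image is $T\times U$. You kill the unipotent part with \cref{thm:1-step} for the trivial flat bundle; your diagonal-conjugation curve is exactly \cref{claim:samecon}. You kill the torus part through $\pi_1(A)$.

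\textbf{The gap.} The step you flag as the main obstacle is a genuine gap, and neither of your remedies can close it. The statement is false when the image is only \emph{virtually} nilpotent. The paper's proof performs the same reduction without comment.

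\textbf{Counterexample.} Let $X=(E\times F)/(\bZ/2)$ be a bielliptic surface, where the involution acts by $(e,x)\mapsto(e+\epsilon,-x)$ and $\epsilon\in E$ is a nonzero $2$-torsion point. Put $E':=E/(\bZ/2)$ and $\widehat X:=E\times F$.
\begin{itemize}
\item Every holomorphic $1$-form on $X$ comes from $E'$. Hence the Albanese map contracts the fibres of the bundle $X\to E'$, which are copies of $F$.
\item The homotopy sequence of this bundle gives $\pi_1(F)\cong\bZ^2\hookrightarrow\pi_1(X)$.
\item $\pi_1(\widehat X)\cong\bZ^4$ has index $2$ in $\pi_1(X)$, and the other coset acts on $\pi_1(F)$ by $-1$.
\item Let $\chi$ be a character of $\pi_1(\widehat X)$ of infinite order on $\pi_1(F)$, and set $\tau:=\mathrm{Ind}\,\chi$. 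Then $\tau$ has rank $2$ and virtually abelian image.
\item But $\tau|_{\pi_1(F)}\cong\chi|_{\pi_1(F)}\oplus\chi^{-1}|_{\pi_1(F)}$ has infinite image.
\end{itemize}

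\textbf{Why your remedies fail here.} Your first remedy fails because the Albanese map of $\widehat X$ is an isomorphism. So $\widehat Y=F$ is not contracted, and $H^1(\widehat X,\bC)\to H^1(F,\bC)$ is onto; Hodge theory does not kill the ``remainder''. Your fallback fails because the torus part exists only on $\pi_1(\widehat X)$. It factors through $H_1(\widehat X,\bZ)$, not through $H_1(X,\bZ)$. In the example, $\pi_1(F)\to H_1(X,\mathbb{Q})$ is zero, while $\tau|_{\pi_1(\widehat X)}$ is diagonal with infinite image on $\pi_1(F)$.

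\textbf{What is true.} The correct statement assumes $\tau(\pi_1(X))$ \emph{nilpotent}, and then no cover is needed. This is all that \cref{thm:Kat} and \cref{thm:Cam} use, since there the lemma is applied to a representation with nilpotent image and to the Albanese map of the space on which it lives. The proof runs as follows.
\begin{itemize}
\item The torsion elements of the finitely generated nilpotent group $\Gamma:=\tau(\pi_1(X))$ form a finite normal subgroup $\Gamma_{\rm tor}$.
\item $\Gamma/\Gamma_{\rm tor}$ embeds into the group of upper unitriangular integer matrices.
\item Composing gives a unipotent representation $\sigma$ of $\pi_1(X)$ itself. By your conjugation curve, $\sigma$ lies on an irreducible component through the trivial representation.
\item Your verification of hypothesis~(b) of \cref{thm:1-step} is valid on $X$, so $f^*\sigma$ is trivial.
\item Hence $f^*\tau(\pi_1(Y))\subset\Gamma_{\rm tor}$ is finite.
\end{itemize}
This also makes the torus/unipotent splitting unnecessary.
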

\begin{proof}
	Since $\tau(\pi_1(X))$ is virtually nilpotent, after replacing $X$ by a finite \'{e}tale cover, we may assume that its Zariski closure in $\GL_N(\bC)$  is a connected nilpotent algebraic group $G$. Then $G$ decomposes as a direct product of its central torus $T$ and its unipotent radical $U$. This induces a natural decomposition of the representation 
	$$ \tau = (\tau_1, \tau_2) \colon \pi_1(X) \to T \times U. $$
	By fixing a faithful linear representation $U \hookrightarrow \GL_{N_2}(\bC)$, we can view the projection $\tau_2$ as a unipotent representation.
	
	Let $(V,\nabla)$ denote the trivial flat bundle of rank $N_2$.    
	By the universal property of the Albanese map, one has $f^*\omega=0$ for 
	every $\omega\in H^0(X,\Omega_X)$.  
	Since $(V,\nabla)$ is the trivial flat bundle, it follows that 
	$f^*\eta=0$ for all $\eta\in H^1(X,\End(V))$.  
	Thus the hypotheses of \cref{thm:1-step} are satisfied for the trivial representation $\varrho:\pi_1(X)\to \GL_{N_2}(\bC)$  and the map $f:Y\to X$.  
	Note that $\tau_2$ belongs to the same irreducible component of  
	$\varrho$.  
	Hence $f^*\tau_2$ is trivial by \cref{thm:1-step}.
	
	On the other hand, since $\tau_1(\pi_1(X))$ is abelian, the representation 
	$\tau_1$ factors through $H_1(X,\bZ)$.  
	Moreover, the map $a_*:H_1(X,\bZ)/{\rm tor}\to H_1(A,\bZ)$ is an isomorphism. 
	Thus there exists a factorization
	\[
	\begin{tikzcd}
		\pi_1(X)\arrow[r] \arrow[dr,"\tau_1"'] & \pi_1(A)\arrow[d]\\
		& \GL_{N_1}(\bC).
	\end{tikzcd}
	\]
	It follows that $f^*\tau_1(\pi_1(Y))=\{1\}$.  
	Consequently, $f^*\tau(\pi_1(Y))=\{1\}$.   The lemma is proved. 
\end{proof}
\begin{thm}[Katzarkov, Leroy]\label{thm:Kat}
	Let $X$ be a compact K\"ahler manifold. Assume that $ \pi_1(X)$ is   nilpotent. Then the Shafarevich morphism of $X$ is the Stein factorization of its Albanese map $a:X\to A$. Moreover, the universal cover of $X$ is holomorphically convex. 
\end{thm}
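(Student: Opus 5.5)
Write $a\colon X\to A$ for the Albanese map and $X\xrightarrow{s}S\xrightarrow{b}A$ for its Stein factorization; $s$ is proper with connected fibers onto a normal complex space $S$. The plan has three steps: (i) show that $s$ satisfies the defining property of the Shafarevich morphism (\cref{def:Sha}) for the representation $\varrho=\mathrm{Id}\colon\pi_1(X)\to\pi_1(X)$, made linear; (ii) identify the covering of $X$ that is naturally attached to $s$; (iii) deduce holomorphic convexity of the universal cover.

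\emph{Step (i).} A finitely generated nilpotent group is linear, and its torsion subgroup is finite. Choose a faithful representation $\tau\colon\pi_1(X)\to\GL_N(\bC)$; its image is nilpotent. Let $f\colon Y\to X$ be a holomorphic map from a compact K\"ahler manifold.
\begin{itemize}
\item If $s\circ f(Y)$ is a point, then $a\circ f(Y)$ is a point, and \cref{lem:common} shows that $f^*\tau(\pi_1(Y))$ is finite. Since $\tau$ is faithful, $\mathrm{Im}[\pi_1(Y)\to\pi_1(X)]$ is finite.
\item Conversely, suppose $f_*\pi_1(Y)$ is finite. Then its image in $H_1(A,\bZ)$, which is torsion free, is trivial. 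So $f^*$ kills $H^1(X,\bC)$, hence kills every holomorphic $1$-form, and $a\circ f$ is constant. Because $s$ has connected fibers and $b$ is finite, $s\circ f(Y)$, being connected and contained in the finite set $b^{-1}(a\circ f(Y))$, is a point.
\end{itemize}
If one wants to reduce to the case where the Zariski closure of $\tau(\pi_1(X))$ is connected, as in the proof of \cref{lem:common}, one may first pass to a finite \'etale Galois cover using \cref{lem:passetale}. Stein factorizations are compatible with this quotient.

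\emph{Step (ii).} Let $\Gamma:=\ker[\pi_1(X)\to\pi_1(A)]$ modulo torsion considerations; more precisely, consider the Galois cover $X'\to X$ associated to $\ker(\pi_1(X)\to H_1(X,\bZ)/\mathrm{tor})$. On $X'$ the Albanese map lifts to $a'\colon X'\to\bC^q$, the universal cover of $A$. Take its Stein factorization $X'\to S'$.
\begin{itemize}
\item $S'$ is a Stein space, since it is finite over $\bC^q$ (finiteness should follow from properness of $X'\to X\times_A\bC^q$, which holds because $X'\to X\times_A\bC^q$ is a covering with finite fibers, $H_1$ torsion being finite).
\item Hence $X'$ is holomorphically convex.
\end{itemize}

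\emph{Step (iii).} This is the main obstacle: passing from $X'$ to the universal cover $\widetilde X$. By Step (i), every fiber $F$ of $s$ (after normalization and resolution) has finite image $\pi_1(F)\to\pi_1(X)$. I would follow the argument of \cite{EKPR12}: consider the covering $\widetilde X\to X'$ with group $\Gamma':=\pi_1(X')$, which is nilpotent. The fibers of $\widetilde X\to S'$ are finite unions of compact components because $\pi_1$ of the fibers maps to finite groups; this uses Step (i) applied to the connected components of the fibers of $X'\to S'$, which are fibers of $s$. The difficulty is to show that the induced map $\widetilde X\to \widetilde S$ (the Cartan–Remmert reduction candidate) has a Stein base. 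The plan here is to show that $\widetilde X\to X'$ is, up to finite covers, obtained by successive central extensions in which the relevant fibrations are principal bundles of tori $\bC^{r}/\Lambda$ or of $\bC^r$, using that unipotent representations are trivial on fibers (\cref{thm:1-step}) and that $\Gamma'$ has a central series with abelian quotients. Steinness of the base is then propagated inductively, since a Stein space carrying such a fibration with fibers $\bC^r$ remains Stein. If this induction proves too delicate, I would instead invoke the general criterion of Eyssidieux: holomorphic convexity of $\widetilde X$ holds once the Shafarevich morphism exists and $\pi_1$ is linear with reductive/unipotent parts controlled as in \cite{EKPR12}.
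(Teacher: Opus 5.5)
Your Step (i) is essentially the paper's argument (faithful linear $\tau$, \cref{lem:common} on fibers of $s$, and the Albanese characterization for the converse). Step (ii) is also fine; in fact $X'=X\times_A\bC^q$ on the nose, so the finiteness discussion there is unnecessary.

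The genuine gap is Step (iii), and that step is the entire content of the holomorphic convexity claim.

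\textbf{The proposed induction does not apply.} The map $\widetilde X\to X'$ is a covering with discrete fibers. There are no torus or $\bC^r$-fibrations in it through which Steinness could be propagated. Moreover, the principle you invoke, that a space fibered with fibers $\bC^r$ over a Stein space is Stein, is false in that generality: Skoda constructed a $\bC^2$-bundle over a Stein base whose total space is not Stein.

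\textbf{The fallback is not an argument.} Appealing to Eyssidieux or \cite{EKPR12} is a citation rather than a proof, and the linear Shafarevich result there is established for projective manifolds, not compact K\"ahler ones.

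\textbf{A smaller error.} The preimage in $\widetilde X$ of a fiber of $X'\to S'$ has in general infinitely many compact components, not finitely many.

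\textbf{The missing idea.} This is what the paper uses, via the proof of \cref{thm:GGK}: the base of the candidate reduction is an \emph{unramified covering} of $S'$.
\begin{enumerate}
\item Pass to a finite \'etale cover so that $\pi_1(X)$ is torsion-free. Step (i) then upgrades to $\mathrm{Im}[\pi_1(F)\to\pi_1(X)]=\{1\}$ for every fiber $F$ of $s$.
\item Consequently, the preimage in $\widetilde X$ of each fiber of $X'\to S'$ is a disjoint union of isomorphic copies of that compact fiber, permuted simply transitively by $G:=\pi_1(X')$.
\item Cartan's quotient theorem factors $\widetilde X\to S'$ as $\widetilde X\xrightarrow{\psi}\widetilde S\xrightarrow{h}S'$, with $\psi$ proper and $h$ having discrete fibers.
\item The group $G$ acts freely and properly discontinuously on $\widetilde S$ with quotient $S'$. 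So $h$ is a topological covering of the Stein space $S'$, and $\widetilde S$ is Stein by Stein's theorem on coverings.
\item Since $\widetilde X$ is proper over the Stein space $\widetilde S$, it is holomorphically convex.
\end{enumerate}
Nilpotency enters only through Step (i); no induction along the central series is needed.
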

\begin{proof}
	Note that any finitely generated nilpotent group is   linear. Hence, there exists a faithful representation $\tau \colon \pi_1(X) \to \operatorname{GL}_N(\mathbb{C})$ whose image is nilpotent. By \cref{lem:passetale}, it suffices to construct the Shafarevich morphism for this cover.
	
	Let  $ X\stackrel{g}{\to} S\to A$ be the Stein 
	factorization of $a$.  
	Let $f:Y\to X$ be a holomorphic map from another compact K\"ahler manifold 
	such that $(g\circ f)(Y)$ is a point.  
	By  \cref{lem:common},  $f^*\tau:\pi_1(Y)\to \GL_{N}(\bC)$  has finite image. 
	Since $\tau$ is faithful, the image 
	${\rm Im}[\pi_1(Y)\to \pi_1(X)]$ is  thus finite.
	
	Finally, by the characterization of the Albanese map, for any holomorphic 
	map $h:Z\to X$ from a compact K\"ahler manifold, the group $h^*H^1(X,\bZ)$ 
	is finite if and only if $(a\circ h)(Z)$ is a point.  
	Combining this with the property established above, we conclude that $g$ 
	is the Shafarevich morphism of $X$. 
	
	We omit the proof of holomorphic convexity here, as it is identical to the proof of the more general \cref{thm:GGK} established later.
\end{proof}
We also provide a new proof of Campana's abelianity conjecture in the linear
and compact setting.

\begin{thm}[\cite{Cam04}]\label{thm:Cam}
	Let $X$ be a smooth projective variety.  
	If $X$ is special, then every linear representation 
	$\tau:\pi_1(X)\to \GL_{N}(\bC)$ has virtually abelian image.
\end{thm}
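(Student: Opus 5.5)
We reduce to a unipotent representation on a finite étale cover, and then kill it on the fibers of the Albanese map using \cref{lem:common}. The abelian quotient will then come from the Albanese torus.

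\textbf{Step 1: reduction to the case $\pi_1$ nilpotent image and unipotent part.} Since $X$ is special and projective, \cref{thm:nilpotent} applies. After replacing $X$ by a finite étale cover (specialness is preserved under finite étale covers), the Zariski closure of $\tau(\pi_1(X))$ is $U\times T$, with $U$ unipotent and $T$ a torus. In particular $\tau(\pi_1(X))$ is nilpotent. It therefore suffices to show that the projection $\tau_2\colon\pi_1(X)\to U$ has abelian image, because $\tau_1\colon \pi_1(X)\to T$ is already abelian and $\tau=(\tau_1,\tau_2)$.

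\textbf{Step 2: $\pi_1$-exactness of the Albanese map.} By \cref{thm:pi1}, the Albanese map $a\colon X\to A$ is surjective with connected general fibers, and for a general smooth fiber $F$ we have the exact sequence
\[
\pi_1(F)\to\pi_1(X)\to\pi_1(A)\to 0 .
\]
This is Campana's well-known result in the projective case.

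\textbf{Step 3: triviality on the fiber.} Apply \cref{lem:common} to the inclusion $f\colon F\hookrightarrow X$. Its composite with $a$ is a point, so $f^*\tau$ has finite image. More precisely, the proof of \cref{lem:common} gives that $f^*\tau_2$ is trivial: the unipotent representation $\tau_2$ lies in the irreducible component of the trivial representation, so \cref{thm:1-step} applies. Hence $\tau_2$ factors through the quotient $\pi_1(X)/\mathrm{Im}\,\pi_1(F)\cong\pi_1(A)$, which is abelian. We conclude that $\tau(\pi_1(X))$ is a subgroup of the abelian group $\tau_1(\pi_1(X))\times\tau_2(\pi_1(X))$. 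Undoing the finite étale cover, the original image is virtually abelian.

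\textbf{Main obstacle.} The delicate point is Step 3 together with Step 1. One must check that $\tau_2$, viewed through a faithful embedding $U\hookrightarrow \GL_{N_2}(\bC)$, lies in the irreducible component of $R(X,N_2)$ containing the trivial representation. This holds because the scaling $t\mapsto$ (conjugation by a one-parameter torus, or the path $\exp(t\log\tau_2)$ in the unipotent group) connects $\tau_2$ to the trivial representation inside a connected family.

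One must also check that the general fiber $F$ is smooth, so that \cref{thm:1-step} applies to a compact Kähler $F$. This follows from generic smoothness.
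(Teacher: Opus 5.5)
Your proof is correct and follows essentially the same route as the paper's: pass to a finite \'etale cover where the image is nilpotent, use $\pi_1$-exactness of the Albanese map, kill the representation on a general fiber via \cref{lem:common}/\cref{thm:1-step}, and conclude that it factors through $\pi_1(A)$. The only real difference is that the paper makes the image torsion-free so that ``finite on $F$'' becomes ``trivial on $F$'', whereas you use the $U\times T$ splitting and apply \cref{thm:1-step} to $\tau_2$ directly. One correction: $\gamma\mapsto\exp(t\log\tau_2(\gamma))$ is not a family of homomorphisms when $U$ is non-abelian, so drop it; your torus-conjugation argument (exactly \cref{claim:samecon}, giving an irreducible $\mathbb{A}^1$ through $\tau_2$ and the trivial representation) is the justification that $\tau_2$ lies in the irreducible component of the trivial representation.
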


\begin{proof}
	By \cite{CDY25b}, after replacing $X$ by a finite \'etale cover, we may assume
	that $\varrho(\pi_1(X))$ is nilpotent and torsion free.  
	By \cite{Cam04}, the Albanese map 
	$a:X\to A$ is an algebraic fiber space.  
	Let $F$ be a general smooth fibre of $a$.  
	By  \cite{CDY25b}, we have an exact sequence
	\begin{equation}\label{eq:pi}
		\pi_1(F)\longrightarrow \pi_1(X)\longrightarrow \pi_1(A)\longrightarrow 0.
	\end{equation}
	Let $\iota:F\to X$ denote the inclusion.  
	By \cref{lem:common}, the representation 
	$\iota^*\tau:\pi_1(F)\to\GL_{N}(\bC)$ has finite image, and hence is trivial
	since $\tau(\pi_1(X))$ is torsion free.  
	By \eqref{eq:pi}, the representation $\tau$ therefore factors through 
	$\pi_1(A)$, and thus has abelian image.  
	This proves the theorem.
\end{proof}
\subsection{Geometric applications II: a structure theorem for the linear Shafarevich morphism}

The following result is implicit in \cite{EKPR12}. It played a crucial role in the work of Wang and the second author on the proof of the linear Koll\'ar conjecture (cf. \cite[Proof of Theorem 4.6]{DW24b}). Here, we provide an alternative proof using \cref{thm:1-step}.

\begin{thm}\label{thm:structure}
	Let $X$ be a smooth projective variety, and let $\varrho : \pi_1(X) \to \GL_N(\bC)$ be a big linear representation. Let $\sigma:\pi_1(X)\to \GL_N(\bC)$ denote its semisimplification. Let ${\rm sh}_{\sigma}:X\to {\rm Sh}_{\sigma}(X)$ be the Shafarevich morphism associated with $\sigma$, whose existence is guaranteed by \cite{Eys04} (see also \cite{DY23}). If $Y$ is a very general fiber of ${\rm sh}_{\sigma}$, then the Albanese map of $Y$ is generically finite onto its image.
\end{thm}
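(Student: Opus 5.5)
We argue by contradiction. Suppose that for a very general fiber $Y$ of ${\rm sh}_\sigma$ the Albanese map $a_Y\colon Y\to {\rm Alb}(Y)$ is not generically finite onto its image. Then through a very general point of $Y$, hence through a very general point of $X$, there passes a positive-dimensional subvariety $Z$ that is contracted by $a_Y$. We will show that $\varrho$ has finite image on $\pi_1(Z^{\rm norm})$ (up to the usual passage to a resolution), contradicting bigness. The data can be made uniform in families: take a fiberwise Albanese or Stein-factorization construction over a Zariski open subset of ${\rm Sh}_\sigma(X)$, and choose $Z$ to be a general fiber of $a_Y$. Let $\mu\colon \widetilde Z\to Z$ be a resolution, and write $f\colon \widetilde Z\to Y$ and $\iota\colon Y\to X$ for the natural maps.

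\textbf{Step 1 (the semisimple part).} By the defining property of the Shafarevich morphism, $\iota^*\sigma(\pi_1(Y))$ is finite. Since $\varrho$ is big, we may replace $X$ by a finite \'etale cover; this is allowed by \cref{lem:passetale} and the invariance of bigness. We then assume $\iota^*\sigma$ is trivial on $\pi_1(Y)$ (after also passing to the induced cover of $Y$).

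Since $\sigma$ is the semisimplification of $\varrho$, the image $\iota^*\varrho(\pi_1(Y))$ lies in the unipotent radical of a parabolic subgroup, i.e.\ in a unipotent group $U$. Fixing a faithful representation of $U$, we obtain a unipotent representation $\tau:=\iota^*\varrho\colon\pi_1(Y)\to \GL_{N'}(\bC)$.

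\textbf{Step 2 (applying the 1-step phenomenon).} Let $(V,\nabla)$ be the trivial flat bundle of rank $N'$ on $Y$, with trivial monodromy $\varrho_0$. Since $a_Y\circ f$ is constant, the map $f$ kills all holomorphic $1$-forms on $Y$. Hence $f^*\eta=0$ for every $\eta\in H^1(Y,\End V)=H^1(Y,\bC)\otimes\kg$, exactly as in the proof of \cref{lem:common}. Hypothesis (a) of \cref{thm:1-step} holds trivially.

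A unipotent representation lies in the same irreducible component of $R(Y,N')$ as the trivial one: conjugating by a one-parameter torus gives a curve $t\mapsto g_t\tau g_t^{-1}$ degenerating to the trivial representation. So \cref{thm:1-step} gives that $f^*\tau$ is trivial. Therefore $\varrho(\operatorname{Im}[\pi_1(\widetilde Z)\to\pi_1(X)])$ is trivial, or finite before passing to the étale cover. Since $\pi_1(\widetilde Z)\to\pi_1(Z^{\rm norm})$ is surjective, this contradicts the bigness of $\varrho$.

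\textbf{Main obstacle.} The delicate point is Step 1, making $\iota^*\sigma$ trivial rather than merely finite uniformly for very general $Y$. One cannot pass to a cover of each $Y$ independently and still control the components. I would handle this as follows. Over a Zariski open subset of ${\rm Sh}_\sigma(X)$ the finite groups $\iota^*\sigma(\pi_1(Y))$ form a locally constant family of subgroups. Take the finite-index normal subgroup of $\pi_1(X)$ given by the kernel of $\sigma$ composed with reduction modulo a suitable finitely generated ring, so that torsion elements with small image are killed (Selberg's lemma). This yields a single étale cover $\widehat X$ on which $\sigma$ has torsion-free image. The finite groups then become trivial, and very general fibers of the new Shafarevich morphism are finite covers of the old ones. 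A secondary technical point is ensuring that $\widetilde Z$ can be taken to be compact Kähler, which is automatic for a resolution of a projective $Z$.
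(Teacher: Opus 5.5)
Your argument is the paper's argument: Selberg's lemma makes $\sigma(\pi_1(X))$ torsion-free, so $\sigma$ is trivial on a very general fiber $Y$; then \cref{thm:1-step}, the torus degeneration into a component through the trivial representation, and a contradiction with bigness. The only variation is where you apply \cref{thm:1-step}. You apply it on $Y$, to the trivial bundle, the unipotent $\iota^*\varrho$ and $f\colon\widetilde Z\to Y$. The paper applies it on $X$, to the bundle of $\sigma$ and the composite $Z\to Y\to X$, using \cref{claim:samecon}. Condition (b) is verified the same way in both, so this difference is cosmetic.

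\textbf{The gap is the reduction to a finite \'etale cover} $\mu\colon\widehat X\to X$. You justify it by \cref{lem:passetale} and invariance of bigness, but neither addresses the conclusion. If $\widehat Y$ is a component of $\mu^{-1}(Y)$, generic finiteness of $a_{\widehat Y}$ does not imply that of $a_Y$. In your contradiction set-up, the subvariety $Z\subset Y$ contracted by $a_Y$ has preimage in $\widehat Y$ that need not be contracted by $a_{\widehat Y}$. So Step 2 cannot be run on the cover, which is exactly where Step 1 puts you, and ``or finite before passing to the \'etale cover'' does not close this.

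The gap cannot be closed, because the statement as written is false. Take a bielliptic surface $X=(E\times F)/G$. Let $\varrho\colon\pi_1(X)\to\GL_3(\bC)$ be the affine holonomy representation $\gamma\mapsto\bigl(\begin{smallmatrix}A_\gamma & b_\gamma\\ 0 & 1\end{smallmatrix}\bigr)$ of $\pi_1(X)$ acting freely on $\bC^2$. Then:
\begin{itemize}
\item $\varrho$ is faithful, hence big: every curve in $X$ lifts to a curve in $E\times F$ whose $\pi_1$ has infinite image in $H_1(E\times F,\bZ)$.
\item Its semisimplification is $A\oplus 1$, which factors through $G$. So ${\rm sh}_\sigma$ is constant and $Y=X$.
\item Yet $a_X\colon X\to E/G$ has one-dimensional fibers, so it is not generically finite.
\end{itemize}

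The paper's proof makes the same ``replace $X$ with a finite \'etale cover'' step. What both arguments actually prove is the statement after replacing $X$ by a suitable finite \'etale cover, which is the form used in \cref{cor:structure}. You should state it that way and run the contradiction argument only on $\widehat X$.
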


\begin{proof}
	By Selberg's theorem, we may replace $X$ with a finite \'etale cover such that the images $\sigma(\pi_1(X))$ and $\varrho(\pi_1(X))$ are torsion-free. Let $(V,\nabla)$ be the semisimple flat bundle on $X$ associated with the monodromy representation of $\sigma$.
	
	Let $f:Y\to X$ be the natural morphism. Since $\varrho$ is big and $Y$ is a very general fiber of ${\rm sh}_{\sigma}$, $Y$ is smooth and  the pullback $f^*\varrho:\pi_1(Y)\to \GL_N(\bC)$ remains big. On the other hand, by \cref{def:Sha}, the image of $\pi_1(Y)$ under $\sigma$ is finite; given our torsion-free assumption, $f^*\sigma$ is trivial. Consequently, the pullback bundle $f^*(V,\nabla)$ is a trivial flat bundle.
	
	Let $a:Y\to A$ be the Albanese map. Let $Z$ be a connected component of a very general smooth fiber of $a$, and let $g:Z\to Y$ denote the natural inclusion.  The pullback $g^*f^*\varrho:\pi_1(Z)\to \GL_N(\bC)$ is also big. Since $Z$ maps to a point in $A$, the induced map on cohomology vanishes, i.e., $g^*H^1(Y,\bC)=0$. Since $f^*(V,\nabla)$ is a trivial flat bundle,  we have
	\[
	g^*H^1(Y,f^*\End(V)) \simeq g^*H^1(Y,\bC)\otimes \bC^{N^2} = 0.
	\]
	Define $h:=f\circ g$. The  conditions in \cref{thm:1-step} are thus satisfied for $h:Z\to X$.
	\begin{claim}\label{claim:samecon}
		The representations     $\varrho$ and $\sigma$ are in the same irreducible component of $R(X,N)$. 
	\end{claim}
	\begin{proof}
		Let $V = \mathbb{C}^N$. Since $V$ is a finite-dimensional vector space, there exists a Jordan-Hölder filtration  of $\pi_1(X)$-submodules:
		\[
		0 = V_0 \subset V_1 \subset V_2 \subset \dots \subset V_k = V,
		\]
		such that the successive quotients $W_i := V_i / V_{i-1}$ are irreducible representations. The semisimplification $\sigma$ is defined as the direct sum representation acting on $\bigoplus_{i=1}^k W_i$.
		
		We choose a basis for $V$ compatible with this filtration. With respect to this basis, for any $\gamma \in \pi_1(X)$, the matrix $\rho(\gamma)$ is block upper-triangular:
		\[
		\rho(\gamma) = \begin{pmatrix}
			\rho_1(\gamma) & * & \dots & * \\
			0 & \rho_2(\gamma) & \dots & * \\
			\vdots & \vdots & \ddots & \vdots \\
			0 & 0 & \dots & \rho_k(\gamma)
		\end{pmatrix},
		\]
		where $\rho_i(\gamma)$ denotes the action on the quotient $W_i$. The semisimplification corresponds to the block-diagonal part:
		\[
		\sigma(\gamma) = \text{diag}(\rho_1(\gamma), \rho_2(\gamma), \dots, \rho_k(\gamma)).
		\]
		
		To construct the deformation, we define a one-parameter family of diagonal matrices. Let $n_i = \dim(W_i)$. For $t \in \mathbb{C}^*$, define $D(t) \in \mathrm{GL}_N(\mathbb{C})$ as:
		\[
		D(t) = \text{diag}(t^{w_1} I_{n_1}, t^{w_2} I_{n_2}, \dots, t^{w_k} I_{n_k}),
		\]
		where   $w_i := k-i$.
		
		We define an algebraic morphism $\Phi: \bC^* \to R(X,N)$ by conjugation:
		\[
		\Phi(t)(\gamma) := D(t) \varrho(\gamma) D(t)^{-1}.
		\]
		A direct calculation shows that the limit $\lim_{t \to 0} \Phi(t)(\gamma)$ exists and equals $\sigma(\gamma)$. Thus, $\Phi$ extends to a regular morphism on $\bC$ such that $\Phi(0)=\sigma$. Since $\Phi(1)=\varrho$, the representations $\sigma$ and $\varrho$ are connected by an affine line, and therefore lie in the same irreducible component of $R(X,N)(\bC)$.
	\end{proof}
	
	We now apply \cref{thm:1-step} to conclude that $h^*\varrho$ is trivial. However, recall that $h^*\varrho$ is big when restricted to the very general fiber $Z$. The only way a representation can be both trivial and big is if the domain is a point. Therefore, $\dim Z = 0$. Since $Z$ is the generic fiber of the Albanese map, this implies $\dim a(Y)=\dim Y$. The theorem is thus proved.
\end{proof}

\medskip

In the remainder of the paper, we pursue a similar strategy to study \cref{conj:nilpotent}. However, in general there is no formality as in \eqref{eq:formality}, and, as we shall see later, the \emph{1-step phenomenon} of \cref{thm:1-step} does not hold in this setting. We therefore have to develop Hodge-theoretic methods for the deformation of local systems over quasi-compact K\"ahler manifolds, and subsequently establish a \emph{2-step phenomenon} in this more general context.

\section{Deformation theory of local systems over quasi-compact K\"ahler manifolds}
The results in this section form the technical core of the paper. Here, we prove \cref{main3,main:zigzag}, which provide the foundation for the deformation theory of local systems over quasi-compact K\"ahler manifolds. 

Throughout this section, let $(X,\omega)$ be a compact K\"ahler manifold and let $D=\sum_{i=1}^{m}D_i$ be a simple normal crossing divisor on $X$. Set $X_0 = X \setminus D$. Let $(V,\nabla)$ be a  \emph{unitary} flat bundle on $X$, and let $(E,\nabla) = (\End(V), \nabla)$ be the induced bundle of endomorphisms. Note that the unitary structure on $V$  induces a  hermitian flat   metric $h$ for $(E,\nabla)$.  We write    $\nabla=\nabla'+\nabla''$ where $\nabla'$ is the $(1,0)$-part and $\nabla''$ is the $(0,1)$-part.   Let  $\bnabla'^*$ and $\bnabla''^*$ denote the formal adjoints of $\bnabla'$ and $\bnabla''$, defined with respect to the metrics $h$ and $\omega$. Finally, let $G$ be the Green operator associated with the Laplacian $\Delta'=\Delta''$. These operators satisfy the relations given in \eqref{eq:basicgreen}; we will use these identities freely throughout  this section. 

\subsection{Residue maps}
Let us first recall the definition of residues.

For a multi-index $I = (i_1, \ldots, i_k)$ with entries in $\{1, \ldots, m\}$, we denote its length by $|I|=k$. We say that $I$ is \emph{ordered} if $i_1 < \cdots < i_k$. 

For any ordered $I$, we set $D_I:=D_{i_1}\cap\cdots\cap D_{i_j}$.  
Set $D^{(j)}:=\sqcup_{|I|=j}D_I$ and let 
$$D_{(j)}:=\sqcup_{|I|=j}\sum_{p\notin I}D_I\cap D_p$$
that is a simple normal crossing divisor on $D^{(j)}$.   

Fix any $\eta \in A^k(X,D,E)$ and any integer $j \leq k$.  For any point $x \in D_I$ with $|I|=j$, choose a coordinate neighborhood $(U; z_1, \ldots, z_n)$ centered at $x$ such that there exists $j'\geq j$ with 
\[
D\cap U=(\cup_{\ell=1}^{j'}D_{i_\ell})\cap U; \quad D_{i_\ell}\cap U  = \{ z_\ell = 0 \}
\]
for each $\ell \in \{1, \ldots, j'\}$. On this neighborhood $U$, we can decompose $\eta$ as
\begin{align*} 
	\eta|_U =  \frac{dz_1}{z_1} \wedge \cdots \wedge \frac{dz_j}{z_j}\wedge a(z) + b(z),
\end{align*} 
where $a(z)$ is a section of $\cA^{k-j}(X, \sum_{p \notin I}D_p, E)(U)$, and $b(z)$ is a term that does not contain the full wedge product $\frac{dz_1}{z_1} \wedge \cdots \wedge \frac{dz_j}{z_j}$. We then define the residue locally by
\begin{align}\label{eq:localresidue}
	\Res_{D_I}(\eta|_U) \coloneqq (2\pi\sn)^ja(z)|_{D_I \cap U} \in \cA^{k-j}\Big(D_I  , \sum_{p \notin I}D_p \cap D_I, E\Big)(U\cap D_I).
\end{align}
One can verify that this definition is well-defined and independent of the choice of local coordinates. Consequently, these local constructions glue together to get
$$
\Res_{D_I}\eta\in  A^{k-j}(D_I,\sum_{p\notin I}D_I\cap D_p, E).
$$
for each strata $D_I$ with $|I|=j$.

We write 
$$
\Res_D \eta = \left(\Res_{D_i}\eta\right)_{i=1,\ldots,m}. 
$$
We rely on the following fact throughout this section.
\begin{lem}\label{lem:equiv}
	Let $\eta \in A^k(X, D, E)$. Then we have\begin{equation}\label{eq:diffresi}
		\bnabla''\eta =    \iota_*(\Res_D\eta) + \nabla''\eta.
	\end{equation} 
\end{lem}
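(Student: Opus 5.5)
The plan is to prove \eqref{eq:diffresi} by a local computation against test forms. First I make precise the objects involved.
\begin{itemize}
\item Any $\eta\in A^k(X,D,E)$ has coefficients that are locally $O(|z_1\cdots z_{j'}|^{-1})$, which are $L^1_{\mathrm{loc}}$. So $\eta$ defines a current of order $0$, and $\nabla''\eta$, computed pointwise on $X_0$, is again a logarithmic form. Hence it also defines a current, extended by zero across $D$.
\item $\iota_*(\Res_D\eta)$ denotes the sum over $i$ of the push-forwards, as currents of degree $k+1$, of the logarithmic forms $\Res_{D_i}\eta$ on $D_i$. Each of these is itself locally integrable on $D_i$.
\end{itemize}
The claim is then the identity of currents
\[
\langle \bnabla''\eta,\varphi\rangle=\langle \iota_*\Res_D\eta,\varphi\rangle+\langle\nabla''\eta,\varphi\rangle
\]
for every test form $\varphi\in A^{2n-k-1}(X,E^*)$. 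The sign convention is the one defining $\bnabla''$ on currents, $\langle\bnabla''T,\varphi\rangle=(-1)^{k+1}\langle T,\nabla''\varphi\rangle$.

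Using a partition of unity, I reduce to $\varphi$ supported in a coordinate chart $U$ as in the definition of residues. On $U$ I also fix a flat unitary frame of $V$, hence of $E$. In this frame $\nabla''=\bar\partial$ acts componentwise, so everything reduces to scalar logarithmic forms. On $U$ I then write $\eta$ as a finite sum of terms
\[
\frac{dz_{J}}{z_{J}}\wedge a,\qquad J\subset\{1,\dots,j'\},
\]
with $a$ smooth. Both sides of \eqref{eq:diffresi} are additive in $\eta$, so it suffices to treat one such term.

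For this, I cut out tubes $U_\ep:=U\cap\{|z_i|>\ep,\ i=1,\dots,j'\}$. By dominated convergence (using $L^1$), the pairing $\int_X\eta\wedge\bar\partial\varphi$ is the limit of $\int_{U_\ep}\eta\wedge\bar\partial\varphi$. Since $\eta$ is smooth on $U_\ep$, Stokes' theorem gives
\[
\int_{U_\ep}\eta\wedge\bar\partial\varphi=\pm\int_{U_\ep}\bar\partial\eta\wedge\varphi\;\pm\sum_i\int_{\{|z_i|=\ep\}\cap \overline{U_\ep}}\eta\wedge\varphi ,
\]
where I use that $\partial$-type pieces of $d(\eta\wedge\varphi)$ integrate to zero by bidegree. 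The first term converges to the pairing with $\nabla''\eta$.

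For the boundary term on $\{|z_i|=\ep\}$, I distinguish two cases.
\begin{itemize}
\item If $i\notin J$, the integrand has coefficients bounded by $\ep^{-0}$ times logarithmic poles in the other variables. Integrating over a circle of length $O(\ep)$, this boundary term tends to $0$.
\item If $i\in J$, I write $dz_i/z_i=\sn\,d\theta_i+d\log|z_i|$. Only the $d\theta_i$ part survives on the circle, and averaging in $\theta_i$ gives, as $\ep\to0$, $2\pi\sn\int_{D_i}\bigl(\pm\,\tfrac{dz_{J\setminus i}}{z_{J\setminus i}}\wedge a\bigr)|_{D_i}\wedge\varphi$.
\end{itemize}
By definition \eqref{eq:localresidue} with $j=1$, the limit in the second case is exactly $\langle\iota_*\Res_{D_i}\eta,\varphi\rangle$. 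The remaining logarithmic poles along $D_p\cap D_i$, $p\neq i$, are integrable on $D_i$, and the convergence is uniform away from sets of small measure, again by dominated convergence.

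The step I expect to be most delicate is this limit in the presence of several logarithmic factors at once. The tubes around different $D_i$ intersect, so corner contributions on $\{|z_i|=|z_l|=\ep\}$ must be shown to vanish. Also, the signs coming from reordering $dz_i/z_i$ to the front must match the orientation convention for $\iota_*$ and the ordering in \eqref{eq:localresidue}. I would handle the corners by noting they have real codimension $2$ in the boundary. The integrand there is $O(1)$ in the angular directions, times integrable factors in the others, so these contributions are $O(\ep)$ and vanish in the limit.

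Summing over charts yields \eqref{eq:diffresi}. As a by-product, the same computation with $\partial$ in place of $\bar\partial$ shows there is no boundary term, since $dz_i/z_i$ has no $d\bar z_i$ part. This gives the remark in the notation that $\bnabla'\eta=\nabla'\eta$.
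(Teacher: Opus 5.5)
Your proposal is correct and follows the same route as the paper. The paper just asserts the identity in a chart from the definition \eqref{eq:localresidue} of the residue, and your tube-and-Stokes computation is the standard verification it leaves implicit: on $\{|z_i|=\ep\}$ one has $dz_i/z_i=\sn\,d\theta_i$, which produces the factor $2\pi\sn$. One small correction: Stokes' theorem on the cornered domain $U_\ep$ produces no separate corner terms, because the corners are null sets in $\partial U_\ep$ (they have real codimension $1$ there, not $2$), so there is nothing to estimate on them.
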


\begin{proof}
	It suffices to verify the statement locally. By \eqref{eq:localresidue}, we have  
	\begin{equation*} 
		\bnabla''\eta = \sum_{i=1}^{m}(\iota_i)_*\Res_{D_i}\eta + \nabla''\eta = \iota_*(\Res_D\eta) + \nabla''\eta.
	\end{equation*}
	The lemma follows immediately.
\end{proof}

\subsection{Gysin morphism} 
Recall the complex of sheaves of twisted logarithmic forms
\begin{equation}\label{eq:logcomplex}
	(\mathcal{A}^\bullet(X,D,E),\nabla) := \mathcal{A}^0(X,D,E)\xrightarrow{\nabla} \cdots\xrightarrow{\nabla} \mathcal{A}^{2n}(X,D,E),
\end{equation}
where $\mathcal{A}^k(X,D,E)$ denotes the sheaf of smooth differential $k$-forms on $X$ with logarithmic singularities along $D$ taking values in the flat vector bundle $E$, and the differential $\nabla$ is the exterior covariant derivative induced by the flat connection on $E$.

Consider the weight filtration $W_\bullet \mathcal{A}^{\bullet}(X,D,E)$ defined by the number of logarithmic poles. Locally, $W_k \mathcal{A}^m(X, D, E)$ is spanned by forms containing at most $k$ logarithmic differentials $dz_{i}/z_{i}$. Explicitly, these are linear combinations of terms of the form
$$
\alpha \wedge \frac{dz_{i_1}}{z_{i_1}} \wedge \cdots \wedge \frac{dz_{i_j}}{z_{i_j}},
$$
where $\alpha$ is a smooth form and $j \le k$. In this paper, we are primarily interested in the case $W_1 \mathcal{A}^\bullet(X, D, E)$, which contains forms with at most one logarithmic factor.

For the general filtration, we have the $k$-th residue map
\begin{equation}
	\operatorname{Res}^{(k)}: W_k \mathcal{A}^{\bullet}(X,D,E) \to (\iota_k)_*\mathcal{A}^{\bullet-k}_{D^{(k)}}(E),
\end{equation}
where $\iota_k: D^{(k)} \hookrightarrow X$ denotes the inclusion. We adopt the convention that $\mathcal{A}^{j}_{D^{(k)}}(E)=0$ if $j<0$. Let $W'_{k-1}\mathcal{A}^{\bullet}(X,D,E)$ denote the kernel of this residue map, and define its complex of global sections by
\begin{equation}\label{eq:global}
	W'_{k-1} A^{\bullet}(X,D,E) := \Gamma\left(X, W'_{k-1}\mathcal{A}^{\bullet}(X,D,E)\right).
\end{equation}
Since these are fine sheaves, the functor of taking global sections is exact. Consequently, we obtain the following short exact sequence of complexes:
\begin{equation}\label{eq:extension}
	0\to W'_{k-1} A^{\bullet}(X,D,E) \to W_k A^{\bullet}(X,D,E)\xrightarrow{\operatorname{Res}^{(k)}} A^{\bullet-k}_{D^{(k)}}(E) \to 0.
\end{equation}
Note that we have the quasi-isomorphism
\begin{align}\label{eq:gr}
	{\rm Gr}^W_k \cA^{\bullet}(X,D,E)\to (\iota_k)_*\cA^{\bullet-k}_{D^{(k)}}(E) 
\end{align} 
Since we have 
$$0\to W_{k-1} \cA^{\bullet}(X,D,E)\to W_{k} \cA^{\bullet}(X,D,E)\to \gr_k^W\cA^{\bullet}(X,D,E)\to 0,$$
if we take global sections, we have 
$$0\to W_{k-1}  A^{\bullet}(X,D,E)\to W_{k}  A^{\bullet}(X,D,E)\to \Gamma(X,\gr_k^W\cA^{\bullet}(X,D,E))\to 0. $$
This implies that
$$
\Gamma(X,\gr_k^W\cA^{\bullet}(X,D,E))\simeq \gr_k^WA^{\bullet}(X,D,E).
$$
Together with \eqref{eq:gr}, the natural morphism 
\begin{align*} 
	{\rm Gr}^W_k A^{\bullet}(X,D,E)\to  A^{\bullet-k}_{D^{(k)}}(E) 
\end{align*} 
induced by the residue  gives to  an isomorphism of their cohomologies 
\begin{align}\label{eq:qis}
	H^i({\rm Gr}^W_k A^{\bullet}(X,D,E))\to H^i(A^{\bullet-k}_{D^{(k)}}(E)).
\end{align} 
It follows that  we have the long exact sequence
\begin{equation*}
	\fontsize{9.3pt}{10pt}\selectfont
	\hspace*{-1.5cm} 
	\begin{tikzcd}[column sep=1.2em, row sep=1.7em]
		H^i(W_{k} A^{\bullet}(X,D,E))\arrow[r] \arrow[d,"\simeq"]&H^i({\rm Gr}^W_k  A^{\bullet}(X,D,E))\arrow[r]\arrow[d,"\simeq"]&H^{i+1}(W_{k-1}  A^{\bullet}(X,D,E))\arrow[d]\arrow[r]&	H^{i+1}(W_{k}	 A^{\bullet}(X,D,E))\arrow[d,"\simeq"]  \arrow[r] &H^{i+1}({\rm Gr}^W_k  A^{\bullet}(X,D,E))\arrow[d,"\simeq"] \\
		H^i(W_{k}  A^{\bullet}(X,D,E))\arrow[r] &H^i(A^{\bullet-k}_{D^{(k)}}(E) )\arrow[r]&H^{i+1}(W'_{k-1}  A^{\bullet}(X,D,E))\arrow[r]&	H^{i+1}(W_{k}A^{\bullet}(X,D,E)) \arrow[r]&H^{i+1}(A^{\bullet-k}_{D^{(k)}}(E) ) 
	\end{tikzcd}
\end{equation*}
By the Five Lemma, we obtain:
\begin{lem}\label{lem:fivelemma}
	The inclusion
	\[
	i_k : W_k A^{\bullet}(X,D,E) \hookrightarrow W_k' A^{\bullet}(X,D,E)
	\]
	between the two complexes is a quasi-isomorphism.
\end{lem}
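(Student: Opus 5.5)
The plan is to read the statement with indices shifted by one. Since $W'_{k}A^\bullet(X,D,E)$ is the kernel of $\operatorname{Res}^{(k+1)}$ on $W_{k+1}A^\bullet(X,D,E)$, we have $W_kA^\bullet\subset W'_kA^\bullet\subset W_{k+1}A^\bullet$. It therefore suffices to show that the inclusion $W_{k-1}A^\bullet(X,D,E)\hookrightarrow W'_{k-1}A^\bullet(X,D,E)$ is a quasi-isomorphism for every $k\ge 1$.

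\textbf{Step 1: build a morphism of short exact sequences.} Compare
\[
0\to W_{k-1}A^\bullet\to W_kA^\bullet\to \gr^W_kA^\bullet\to 0
\quad\text{and}\quad
0\to W'_{k-1}A^\bullet\to W_kA^\bullet\xrightarrow{\operatorname{Res}^{(k)}} A^{\bullet-k}_{D^{(k)}}(E)\to 0,
\]
the second being \eqref{eq:extension}. The vertical maps are the inclusion on the left, the identity on $W_kA^\bullet$ in the middle, and on the right the map induced by $\operatorname{Res}^{(k)}$.

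To see that this is a morphism of complexes, I will check two local facts in coordinates adapted to $D$, using \eqref{eq:localresidue}:
\begin{itemize}
\item $\operatorname{Res}^{(k)}$ vanishes on $W_{k-1}$, since a $k$-fold residue needs $k$ logarithmic factors. Hence $W_{k-1}\subset W'_{k-1}$, and the residue descends to $\gr^W_k$.
\item $\operatorname{Res}^{(k)}$ commutes with $\nabla$ up to the usual sign $(-1)^k$. This holds because each $dz_i/z_i$ is closed and $\nabla$ acts only on the coefficient form $a(z)$. The sign can be absorbed into the right-hand complex without affecting cohomology.
\end{itemize}
With these, both squares commute by construction.

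\textbf{Step 2: the right vertical map is a quasi-isomorphism.} This is \eqref{eq:qis}. Its justification is the step I expect to be the main obstacle, so I would spell it out as follows.
\begin{itemize}
\item At the sheaf level, $\gr^W_k\cA^\bullet(X,D,E)\to(\iota_k)_*\cA^{\bullet-k}_{D^{(k)}}(E)$ is a quasi-isomorphism \eqref{eq:gr}. This is Deligne's local computation: locally $E$ is trivialized by flat sections, so it reduces to the scalar case. There, $\gr_k^W$ near a point of $D_I$ is a sum of copies of the smooth de Rham complex of the strata, tensored with $dz_{I}/z_{I}$, and the Poincaré lemma applies.
\item All sheaves involved are fine, being $C^\infty$-modules. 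Hence the hypercohomology of each complex is computed by global sections.
\item Global sections preserve the short exact sequences. This is already shown in the text for $\gr^W_k A^\bullet\simeq\Gamma(X,\gr^W_k\cA^\bullet)$.
\end{itemize}
Together these give \eqref{eq:qis}.

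\textbf{Step 3: apply the Five Lemma.} A morphism of short exact sequences of complexes induces a morphism of the associated long exact cohomology sequences, including compatibility with the connecting homomorphisms. This is exactly the diagram displayed before the statement.

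In that diagram, the maps on $H^i(W_kA^\bullet)$ and $H^{i+1}(W_kA^\bullet)$ are identities. The maps on the $\gr$ terms are isomorphisms by Step 2. The Five Lemma then shows that
\[
H^{i+1}(W_{k-1}A^\bullet)\to H^{i+1}(W'_{k-1}A^\bullet)
\]
is an isomorphism for all $i$. For the lowest degrees, use the convention $\cA^j_{D^{(k)}}=0$ for $j<0$, so that the sequences start with zeros. Reindexing $k-1\mapsto k$ gives the lemma.
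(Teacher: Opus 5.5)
Your argument follows the paper's proof. You compare $0\to W_{k-1}A^\bullet\to W_kA^\bullet\to\gr^W_kA^\bullet\to 0$ with \eqref{eq:extension}, pass from the sheaf-level statement \eqref{eq:gr} to \eqref{eq:qis} using fineness, apply the Five Lemma, and shift the index in the same way (the paper's diagram also proves $W_{k-1}A^\bullet\simeq W'_{k-1}A^\bullet$). Steps 1 and 3 are fine. The one place where you go beyond the paper is your justification of \eqref{eq:gr}, and that justification is wrong. This is exactly where the content of the lemma lies.

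In the $C^\infty$ logarithmic complex, $\gr^W_k\cA^\bullet(X,D,E)$ is \emph{not} locally a sum of smooth de Rham complexes of the strata tensored with $dz_I/z_I$. If it were, with the identification given by the residue, the residue would be injective on $\gr^W_k$. Then $W'_{k-1}=W_{k-1}$ and the lemma would be empty.

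Already for $k=1$ this fails. The forms $\bar z_1\frac{dz_1}{z_1}$ and $\frac{dz_1}{z_1}\wedge d\bar z_1$ lie in $W_1$, have zero residue, and are not smooth. So they define nonzero classes in $\gr^W_1$ that the residue kills. Compare \cref{lem:continuous}, where $\bnabla'$-primitives of such forms are only continuous.

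In fact the kernel of the residue on $\gr^W_k\cA^\bullet$ is exactly $W'_{k-1}\cA^\bullet/W_{k-1}\cA^\bullet$. So \eqref{eq:gr} is equivalent to the acyclicity of this complex of sheaves, which is the sheaf version of the lemma itself. Fineness and the Five Lemma only transport that statement to global sections.

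To prove \eqref{eq:gr} you need a different local argument. Two options:
\begin{itemize}
\item Direct computation. For $D=\{z_1=0\}$, this kernel is generated by $\bar z_1$ and $d\bar z_1$ modulo $(z_1,dz_1)$. By Borel's lemma its stalks are formal power series in $\bar z_1$ with coefficients smooth along $D$. A formal homotopy in the $\bar z_1$-variable then shows it is acyclic.
\item Dolbeault route. Tensor Deligne's holomorphic isomorphism $\gr^W_k\Omega^\bullet_X(\log D)\simeq(\iota_k)_*\Omega^{\bullet-k}_{D^{(k)}}$ with the Dolbeault complex $\cA^{0,\bullet}_X$, which is flat over $\cO_X$ by Malgrange's theorem. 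Then use that $\cO_{D_I}\otimes_{\cO_X}\cA^{0,\bullet}_X$ and $\cA^{0,\bullet}_{D_I}$ both resolve $\cO_{D_I}$, together with a filtration argument on the total complexes.
\end{itemize}
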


We write $A^\bullet_{X,D}(E):=W_0'A^\bullet(X,D,E)$ for simplicity.  Then the natural morphism $A^\bullet(X,E)\to A^\bullet_{X,D}(E)$  induces an  isomorphism
\begin{align}\label{eq:iso}
	H^i(X,E)\to H^i(A^\bullet_{X,D}(E)).
\end{align}

For any $\beta\in H^q(D^{(1)},E)$,  we can find an element $\eta\in W_1A^{q+1}(X,D,E)$ such that $\Res_D\eta$ is $\nabla$-closed and  represents $\beta$. Then by the snake lemma, $\nabla\eta\in A^{q+2}_{X,D}(E)$ that is $\nabla$-closed. Hence it represents a cohomology class in $H^{q+2}(A^\bullet_{X,D}(E))\simeq H^{q+2}(X,E) $ 
\begin{dfn}[Gysin morphism]\label{def:Gysin}
	The map
	\begin{align*}
		g: H^q(D^{(1)},E)&\to H^{q+2}(X,E) \\
		\beta&\mapsto \{\nabla\eta\}
	\end{align*} is called the \emph{Gysin morphism}.
\end{dfn} 
We will the following result.  
\begin{lem}\label{lem:Gysin}
	For any $\beta \in A^q(D^{(1)}, E)$ such that $\nabla\beta=0$,  we have
	\[
	g(\{\beta\}) = \{ -\iota_*\beta \},
	\]
	where $\iota_*\beta \in \sD^{q+2}(X, E)$ denotes the $E$-valued current defined by the pushforward under the natural inclusion $\iota \colon D^{(1)} \to X$.
\end{lem}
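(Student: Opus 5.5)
The plan is to use the current form of the residue formula, \cref{lem:equiv}, to compare $\nabla\eta$ with $-\iota_*\beta$ directly as currents. Then we show that the current cohomology class of $\nabla\eta$ is the class of the smooth form in $A^{q+2}_{X,D}(E)$ that defines $g(\{\beta\})$.

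First choose a lift. Since $\Res_D\colon W_1A^{q+1}(X,D,E)\to A^{q}(D^{(1)},E)$ is surjective by \eqref{eq:extension}, pick $\eta\in W_1A^{q+1}(X,D,E)$ with $\Res_D\eta=\beta$. This $\eta$ is admissible in \cref{def:Gysin}, because $\Res_D\eta=\beta$ is $\nabla$-closed. Then $\nabla\eta$, the pointwise covariant derivative of $\eta$ as a logarithmic form, lies in $A^{q+2}_{X,D}(E)$ and is $\nabla$-closed. By definition $g(\{\beta\})=\{\nabla\eta\}$ via the isomorphism \eqref{eq:iso}.

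Next compute the current derivative. Because $\bnabla'\eta=\nabla'\eta$, \cref{lem:equiv} gives
\[
\bnabla\eta=\nabla\eta+\iota_*(\Res_D\eta)=\nabla\eta+\iota_*\beta
\]
as currents, up to the sign and $2\pi\sn$ normalisations built into \eqref{eq:localresidue}. Hence $\nabla\eta=-\iota_*\beta+\bnabla\eta$, so $\nabla\eta$ and $-\iota_*\beta$ are cohomologous in the current complex $(\sD^\bullet(X,E),\bnabla)$. The class $\{-\iota_*\beta\}$ in the statement is understood in $H^{q+2}(X,E)$, computed by currents; this is the de Rham theorem for currents, and it also follows from \cref{currentdecomp}.

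It remains to check that the identification $H^{q+2}(A^\bullet_{X,D}(E))\simeq H^{q+2}(X,E)$ from \eqref{eq:iso} agrees with the map sending a smooth form $\nabla\eta\in A_{X,D}(E)$ to its class as a current. Elements of $A^\bullet_{X,D}(E)$ are $L^1_{\rm loc}$, so they define currents. The inclusion $A^\bullet(X,E)\subset A^\bullet_{X,D}(E)\subset\sD^\bullet(X,E)$ is compatible with differentials on the subcomplex of $\bnabla$-closed forms, since forms in $W_0'$ have no residue and \cref{lem:equiv} shows $\bnabla=\nabla$ on them. The composite $A^\bullet(X,E)\to\sD^\bullet(X,E)$ is a quasi-isomorphism, and $A^\bullet(X,E)\to A^\bullet_{X,D}(E)$ is one by \eqref{eq:iso}. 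Therefore $A^\bullet_{X,D}(E)\to\sD^\bullet(X,E)$ is a quasi-isomorphism compatible with both identifications, and the two classes of $\nabla\eta$ coincide.

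The main obstacle is this last compatibility step. One has to verify that $\bnabla\xi=\nabla\xi$ for every $\xi\in A^{\bullet}_{X,D}(E)$ and not merely for the closed ones, so that the inclusion into currents is a genuine map of complexes. This is where the residue-free condition defining $W_0'$ is used, again through \cref{lem:equiv}. Keeping the sign and $2\pi\sn$ conventions consistent between \eqref{eq:localresidue} and the pushforward is the other point needing care.
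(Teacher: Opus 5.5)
Your proposal is correct and follows the paper's proof. Both lift $\beta$ to $\eta\in W_1A^{q+1}(X,D,E)$, apply \cref{lem:equiv} to get $\bnabla\eta=\nabla\eta+\iota_*\beta$, and conclude because $\bnabla\eta$ is exact as a current. Your extra check, that $A^\bullet_{X,D}(E)\hookrightarrow\sD^\bullet(X,E)$ is a chain map and a quasi-isomorphism compatible with \eqref{eq:iso}, is correct and makes explicit a point the paper leaves implicit. Since \cref{lem:equiv} is an exact identity, you do not need to hedge about sign or $2\sn\pi$ normalisations.
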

\begin{proof}
	Choose  an element $\eta\in W_1A^{q+1}(X,D,E)$ such that $\Res_D\eta$ is $\nabla$-closed and  represents $\beta$. We then have  the following formula: 
	\[
	\bnabla \eta =\bnabla' \eta+\bnabla'' \eta  \stackrel{\eqref{eq:diffresi}}{=} \nabla \eta+ \iota_*(\beta).
	\]
	Since $\{\bnabla\eta\}=0$, it follows that 
	$$
	g(\{\beta\})= \{\nabla \eta\}=-\{ \iota_*(\beta)\}.
	$$The lemma is proved. 
\end{proof}
%

\subsection{A canonical lift of residue map}\label{setting}

Let $\pi_i:U_i\to D_i$ be a deformation retraction of some open neighborhood $U_i$ of $D_i$. Let $s_i$ be a canonical section of $\cO_X (D_i)$ and let $h_i$ be a smooth metric for $\cO_X (D_i)$. Let $\varrho_i$ be a smooth function on $X$ supported in $U_i$, that is equal to 1 in a neighborhood of $D_i$. 
Consider a flat section $\beta\in H^0(D^{(1)},E)$. Then we have
\begin{align}
	\nabla'\beta=0, \quad \nabla''\beta=0. 
\end{align} 
Since $\pi_i$ is a deformation retraction, it follows that $\pi_1(D_i)\to \pi_1(U_i)$ is an isomorphism. Therefore, $\beta$ extends to a flat section of $H^0(U_i,E)$, denoted by $\beta_i$.   
Since $\nabla\beta=0$, it follows that $\nabla\beta_i=0$. Therefore, one has 
\begin{align} \label{eq:lift}
	\frac{\sn}{2\pi }\nabla'(\sum_{i}\varrho_i \log |s_i|_{h_i}^2 \cdot \beta_i)	\in A^{1}(X,D,E)
\end{align}  
and  the residue  
$$
\Res_D\frac{\sn}{2\pi }\nabla'(\sum_{i}\varrho_i \log |s_i|_{h_i}^2 \cdot \beta_i)=-\beta. 
$$ 
Hence, 
$$
h(\beta):=\frac{\sn}{2\pi }\nabla'' \nabla'(\sum_{i}\varrho_i \log |s_i|_{h_i}^2 \cdot \beta_i)\in A^{2}(X,E)
$$ 
is $\nabla$-closed, and lies  in the cohomology class of $g(\beta)$.   

If we consider the covariant derivative in the sense of currents, then we have
$$
\frac{\sn}{2\pi }\bnabla'' \bnabla'(\sum_{i}\varrho_i \log |s_i|_{h_i}^2 \cdot \beta_i)=-\iota_*\beta+ h(\beta).
$$   
Set 
$$\gamma:=2\pi \sn(\nabla')^*G(\nabla'')^*G(h(\beta)).$$
Then $$\frac{\sn}{2\pi }\nabla''\nabla'\gamma=-(1-\cH)(h(\beta)),$$
where $\cH$ is the harmonic projection. 
Therefore, we have
$$
\frac{\sn}{2\pi }\bnabla'' \bnabla'(\sum_{i}\varrho_i \log |s_i|_{h_i}^2 \cdot \beta_i+\gamma)=-\iota_*(\beta)+\cH(h(\beta)). 
$$
Hence, we define  a map 
$$
f:H^0(D^{(1)},E)\to A^1(X,D,E)
$$
by setting 
\begin{align} \label{eq:f}
	f(\beta):=\frac{\sn}{2\pi  } \bnabla'(\sum_{i}\varrho_i \log |s_i|_{h_i}^2 \cdot \beta_i+ \gamma).   
\end{align} 
Then we have  
\begin{align} \label{eq:ff}
	\bnabla f(\beta)=\bnabla''f(\beta)=-\iota_*\beta+\cH(h(\beta))= -\iota_*\beta-\cH(g(\beta))
\end{align}
where $g$ is the Gysin morphism defined in \cref{def:Gysin},
and 
$$
{\rm Res}_{D}(f(\beta))=-\beta. 
$$
One can check that $f$ is a linear map.

\subsection{Regularity}
Note that although the complex $A^\bullet(X,D,E)$ is preserved under the Lie bracket, it is not preserved by Green operator $G$ or $\bnabla''^*$. Moreover, since the product of currents is generally ill-defined, extending the analytic constructions in \cref{subsec:compactuni} requires establishing additional regularity properties when applying these operators. In this subsection, we establish various regularity theorems used in the construction of \cref{thm:construction}.
\begin{lem}
	Let $\eta\in W_1A^2(X,D,E)$ such that  $\eta=\bnabla' \sigma$, where $\sigma\in \sD^1(X,E)$. Assume that 
	\begin{itemize}
		\item  	$$
		\nabla\eta=0
		$$ 
		\item  $\Res_D\eta\in A^1(D^{(1)},E)$ is $\nabla$-closed, such that its cohomology class in $H^1(D^{(1)},E)$ is trivial. 
	\end{itemize} 
	Then $\Res_D \eta=0$. In particular,  $\bnabla''\eta=\bnabla\eta=0$. 
\end{lem}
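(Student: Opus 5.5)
The plan is to turn the hypotheses into the current identity $\bnabla\eta=\iota_*\beta$, where $\beta:=\Res_D\eta\in A^1(D^{(1)},E)$. From it I will read off that $\beta$ is separately $\nabla'$- and $\nabla''$-closed on each compact Kähler manifold $D_i$. Then $\nabla$-exactness of $\beta$ together with the Kähler identities forces $\beta=0$.

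\emph{Step 1: the current equation.} By \cref{lem:equiv}, $\bnabla''\eta=\iota_*(\Res_D\eta)+\nabla''\eta$. Since $\eta\in W_1A^2(X,D,E)$ has only logarithmic poles and $\partial(dz/z)=0$, no residue term appears in the $(1,0)$-direction, so $\bnabla'\eta=\nabla'\eta$ (as recorded in the conventions). Using $\nabla\eta=0$ we get
\[
\bnabla\eta=\iota_*\beta .
\]
On the other hand $\eta=\bnabla'\sigma$, and $\bnabla'^2=0$, $\bnabla'\bnabla''=-\bnabla''\bnabla'$, so
\[
\bnabla\eta=\bnabla''\bnabla'\sigma=-\bnabla'\bnabla''\sigma .
\]
Hence $\iota_*\beta$ is $\bnabla'$-exact, and in particular $\bnabla'(\iota_*\beta)=0$ as a current on $X$.

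\emph{Step 2: separate closedness on $D^{(1)}$.} Each $D_i$ is a closed complex submanifold without boundary, so pushforward of currents commutes with $\bnabla'$ up to a sign. Therefore $\iota_*(\nabla'\beta)=0$, and since $\iota_*$ is injective on smooth forms on $D^{(1)}$, we get $\nabla'\beta=0$. The hypothesis that $\beta$ is $\nabla$-closed then gives $\nabla''\beta=\nabla\beta-\nabla'\beta=0$.

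\emph{Step 3: vanishing.} Since the class of $\beta$ in $H^1(D^{(1)},E)$ is trivial, $\beta=\nabla\tau=\nabla'\tau+\nabla''\tau$ for some $\tau\in A^0(D^{(1)},E)$. Comparing bidegrees in Step 2 yields $\nabla'\nabla''\tau=0$ and $\nabla''\nabla'\tau=0$ on each $D_i$. The metric on $E$ is flat unitary, so on $E$-valued $1$-forms the Kähler identity $\nabla''^*=\sn[\Lambda,\nabla']$ holds on the compact Kähler manifold $D_i$. Since $\Lambda\nabla''\tau=0$, this gives $\nabla''^*\nabla''\tau=\sn\Lambda\nabla'\nabla''\tau=0$, hence $\|\nabla''\tau\|_{L^2}^2=\langle\tau,\nabla''^*\nabla''\tau\rangle=0$. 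The same argument with the conjugate identity $\nabla'^*=-\sn[\Lambda,\nabla'']$ gives $\nabla'\tau=0$. Thus $\beta=0$. Equivalently, once $\nabla'\beta=\nabla''\beta=0$ is known, one can invoke the degree-one case of \cref{lem:ddbar} on $D_i$, applied to the $(1,0)$- and $(0,1)$-parts of $\beta$, each being $\nabla$-closed. Finally, Step 1 gives $\bnabla''\eta=\bnabla\eta=\iota_*\beta=0$.

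\emph{Main obstacle.} I expect the delicate point to be Step 1: justifying $\bnabla'\eta=\nabla'\eta$ with no residue contribution, and the sign bookkeeping for the commutation of $\iota_*$ with $\bnabla'$. Both are local computations in the adapted coordinates of \eqref{eq:localresidue}. The key observation is that $\bnabla'$-exactness of $\eta$, rather than mere closedness, is what supplies the extra equation $\nabla'\beta=0$.
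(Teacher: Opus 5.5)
Your proof is correct and follows essentially the same route as the paper. You obtain $\bnabla\eta=\iota_*\beta$, use injectivity of $\iota_*$ on smooth forms to get $\nabla'\beta=0$, then write $\beta=\nabla\tau$ on $D^{(1)}$ and use the Kähler identities to force $\nabla\tau=0$; the paper compresses this last step into $\Delta' h=0$ via $\Lambda\nabla''\nabla' h=0$. One small remark: what is really used is only $\bnabla'\eta=0$, which exactness provides, since then $\bnabla'\bnabla\eta=\bnabla'\bnabla''\eta=-\bnabla''\bnabla'\eta=0$.
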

\begin{proof}
	Let $\delta:=\Res_D \eta\in A^1(D^{(1)},E)$. Then we have
	$$
	\bnabla\eta=\iota_*(\delta). 
	$$   
	Since $\bnabla'\bnabla \eta=0$, it follows that
	$$
	\bnabla'\bnabla\eta=\iota_*\nabla'\delta=0.
	$$
	Since $\iota_*:A^k(D^{(1)},E)\to \sD^{k+2}(X,E)$ is injective, it follows that
	$$
	\nabla'\delta=0.
	$$
	By the assumption, $\delta=\nabla h$ for some $h\in A^0(D^{(1)},E)$.  Therefore, we have
	$$
	\hess h=0. 
	$$
	This implies that
	$$
	\Lambda \hess h=\Delta' h=0.
	$$
	Hence, $\delta=\nabla h=0$. 
	Therefore, we have 
	$$
	\bnabla \eta=0, \quad \bnabla''\eta=0.
	$$ 
\end{proof}

\begin{lem}\label{lem:restriction}
	Let $\bB^n$ be the unit ball in $\bC^n$. Let $\sigma$ be a distribution on $\bB^n$ such that $$\partial\sigma=\sum_{i=1}^{m}a_i(z)\frac{dz_i}{z_i}+\sum_{j=m+1}^{n}a_j(z)dz_j.$$
	Assume that $a_i(z)$  is smooth for each $i\in \{1,\ldots,n\}$, and $a_i(z)|_{D_i}=0$ when restricted on the divisor $D_i:=(z_i=0)$ for each $i\in \{1,\ldots,m\}$. 
	Then $\sigma$ is a continuous function on $\bB^n$.  Moreover,  for each $i\in \{1,\ldots,m\}$, we have
	\begin{align}\label{eq:remove}
		\d (\sigma|_{D_i})=\big(\sum_{j=1,\ldots,m; j\neq i}a_j(z)\frac{dz_j}{z_j}+\sum_{j=m+1}^{n}a_j(z)dz_j\big)  |_{D_i}. 
	\end{align} 
\end{lem}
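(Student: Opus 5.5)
The plan is to show that the hypotheses force $\partial\sigma$ to be a \emph{smooth} $(1,0)$-form, and then to get smoothness (hence continuity) of $\sigma$ from a local $\partial$-Poincaré lemma plus elliptic regularity. Formula \eqref{eq:remove} should then follow by pulling back to $D_i$.

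\emph{Step 1: removing the poles.} For each $i\in\{1,\ldots,m\}$, $a_i$ is smooth and vanishes on $D_i=(z_i=0)$. By Hadamard's lemma, applied in the real coordinates $(\mathrm{Re}\,z_i,\mathrm{Im}\,z_i)$ and using the complex form of the Taylor remainder, we can write
\[
a_i(z)=z_i b_i(z)+\bar z_i c_i(z)
\]
with $b_i,c_i$ smooth. Here we must check that the $\bar z_i$-part causes no trouble. The quotient $a_i/z_i=b_i+(\bar z_i/z_i)c_i$ is bounded, but it is smooth only if $c_i$ vanishes on $D_i$. So the first thing to extract from the hypothesis is that $\omega:=\partial\sigma$ is $\partial$-closed as a current. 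Comparing the $dz_i\wedge d\bar z_i$-type terms in $\partial\omega=0$, or integrating against test forms near $D_i$, should force the non-holomorphic part $c_i|_{D_i}$ to vanish, so that $a_i/z_i$ is smooth. If this step is not clean, I would fall back on reading the hypothesis as saying that each $a_i\,dz_i/z_i$ is a smooth $(1,0)$-form. That is the situation in which the lemma is applied, since there $a_i$ comes from a $W_1$-form with vanishing residue, which is exactly smoothness of the coefficient. Either way, we obtain that
\[
\omega=\sum_{j=1}^n \tilde a_j\,dz_j,\qquad \tilde a_j=a_j/z_j \ (j\le m),\quad \tilde a_j=a_j\ (j>m),
\]
is a smooth $(1,0)$-form on $\bB^n$.

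\emph{Step 2: regularity of $\sigma$.} Since $\partial\omega=\partial^2\sigma=0$ in the sense of currents and $\omega$ is smooth, the (conjugate) Dolbeault--Grothendieck lemma on the ball gives a smooth function $u$ with $\partial u=\omega$. If needed, we shrink the ball slightly; this is harmless because the statement is local. Then $\tau:=\sigma-u$ is a distribution with $\partial\tau=0$, so $\bar\tau$ is a holomorphic distribution. In particular $\Delta\tau=4\sum_j\partial_j\bar\partial_j\tau=0$, and Weyl's lemma shows that $\tau$ is smooth, in fact antiholomorphic. Hence $\sigma=u+\tau$ is smooth, and in particular continuous.

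\emph{Step 3: restriction formula.} Since $\sigma$ is smooth, $\sigma|_{D_i}$ makes sense and $\partial(\sigma|_{D_i})=\iota_{D_i}^*\partial\sigma=\iota_{D_i}^*\omega$. Pulling back to $D_i$ kills $dz_i$, and for $j\neq i$ it leaves $\tilde a_j\,dz_j=a_j\,dz_j/z_j$ (respectively $a_j\,dz_j$ for $j>m$), restricted to $D_i$. This is exactly \eqref{eq:remove}.

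I expect the main obstacle to be Step 1, namely justifying that $a_i\,dz_i/z_i$ is genuinely smooth rather than merely bounded, i.e.\ controlling the $\bar z_i$-component in the Taylor expansion. Steps 2 and 3 are standard.
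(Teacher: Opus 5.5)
There is a genuine gap, and it sits where you expected it. In Step~1 the $\bar z_i$-component can neither be shown to vanish nor assumed away, and in fact $\sigma$ need not be smooth.

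\emph{Counterexample.} Take $m\ge 1$ and $\sigma=\bar z_1\log|z_1|^2$ on $\bB^n$. This function lies in $W^{1,p}_{\mathrm{loc}}$ for every $p<\infty$, so $\partial\sigma=\bar z_1\,dz_1/z_1$ as distributions. The hypotheses hold with $a_1=\bar z_1$, which is smooth and vanishes on $D_1$, and with all other $a_j=0$. Of course $\partial\omega=\partial^2\sigma=0$. Yet $\partial_{\bar z_1}\sigma=\log|z_1|^2+1$ is unbounded, so $\sigma$ is continuous but not $C^1$.

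In general, $\partial\omega=0$ only forces $g_i:=\partial_{\bar z_i}a_i|_{D_i}$ to be antiholomorphic along $D_i$; it does not force $g_i=0$. Your fallback fails for the same reason. In $A^1(X,D,E)$ the coefficient of $dz_j/z_j$ is an arbitrary smooth function, so zero residue only means $a_j|_{D_j}=0$, and $\bar z_j\,dz_j/z_j$ is admissible. This is exactly why \cref{lem:continuous} concludes only $f\in A^0_{\mathrm{cts}}(X,E)$. Consequently Step~2 aims at a false statement, and Step~3, which pulls back $\partial\sigma$ assuming $\sigma\in C^1$, has no foundation.

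\emph{The missing idea.} You need to subtract an explicit, non-smooth primitive of the $\bar z_i$-part, using $\partial(\bar z_i\log|z_i|^2)=\bar z_i\,dz_i/z_i$.
\begin{itemize}
\item Write $a_i=z_if_i(\hat z_i)+\bar z_ig_i(\hat z_i)+b_i$. Here $f_i,g_i$ are the $\partial_{z_i}$- and $\partial_{\bar z_i}$-derivatives of $a_i$ along $D_i$, and $b_i=O(|z_i|^2)$.
\item Set $f:=\sigma-\sum_i\big(\bar z_i\log|z_i|^2g_i+z_if_i\big)$. The only term with a pole left in $\partial f$ is $\sum_i b_i\,dz_i/z_i$.
\item Hence $\Delta'f=\Lambda\bar\partial\partial f$ is bounded. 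Indeed $\partial_{\bar z_i}b_i/z_i=O(1)$, and the $\log|z_i|^2\,d\bar z_i\wedge\partial g_i$ terms are killed by $\Lambda$ because $g_i$ does not depend on $z_i$.
\item So $f\in C^{1,\alpha}$, and $\sigma$ is continuous because $\bar z_i\log|z_i|^2$ is.
\end{itemize}

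For \eqref{eq:remove} you cannot differentiate $\sigma$ along $D_i$ directly. Work instead with $h_i:=\sigma-\bar z_i\log|z_i|^2g_i-z_if_i$. It coincides with $\sigma$ on $D_i$, and it is $C^{1,\alpha}$ near $D_i$ away from the other components. Its $\partial$ differs from the form on the right of \eqref{eq:remove} only by $\frac{b_i}{z_i}dz_i$, $\bar z_i\log|z_i|^2\partial g_i$ and $z_i\partial f_i$, all of which vanish on $D_i$. This is essentially the paper's argument.
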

\begin{proof} 
	Write $\hat{z}_i:=(z_1,\ldots,z_{i-1},z_{i+1},\ldots,z_n)$.  Since $a_i$ is smooth and vanishes on $(z_i=0)$, the functions  defined by 
	$$f_i(\hat{z}_i) :=\frac{\partial a_i (z_1,\ldots,z_{i-1},0,z_{i+1},\ldots,z_n)}{\partial z_i},\quad  g_i(\hat{z}_i) := \frac{\overline{\partial} a_i (z_1,\ldots,z_{i-1},0,z_{i+1},\ldots,z_n)}{\partial \overline{z}_i},$$
	are smooth functions. 
	Then  we have 
	\begin{align}\label{eq:bound2}
		b_i(z):=a_i (z)-z_i f_i(\hat{z}_i)  - \bar{z}_ig_i(\hat{z}_i)  \in O(|z_i|^2). 
	\end{align} 
	Therefore, 
	\begin{align}\label{eq:bound}
		\frac{1}{z_i}\frac{\d b_i(z)}{\d \bar{z}_i}\in L^\infty(\bB^n_{\ep})
	\end{align} 
	for any $\ep\in (0,1)$. 
	Consider
	$$
	f:=\sigma-\sum_{i=1}^{m}\bar{z}_i \log |z_i|^2 g_i(\hat{z}_i)- z_if_i(\hat{z}_i), 
	$$
	then we have 
	\begin{align*}
		\d f&=\d\sigma-\sum_{i=1}^{m}(\bar{z}_i   g_i(\hat{z}_i)\frac{dz_i}{z_i}+\bar{z}_i \log |z_i|^2\d g_i+ f_i(\hat{z}_i)dz_i+z_i\d f_i)\\
		&=\sum_{i=1}^{m}( a_i(z)-g_i(\hat{z}_i)\bar{z}_i- z_if_i(\hat{z}_i))\frac{dz_i}{z_i}+\sum_{i=1}^{m}(\bar{z}_i \log |z_i|^2\d g_i+ z_i\d f_i)+ \sum_{j=m+1}^{n}a_j(z)dz_j\\
		&	 =\sum_{i=1}^{m}b_i(z)\frac{dz_i}{z_i}+\sum_{i=1}^{m}(\bar{z}_i \log |z_i|^2\d g_i+ z_i\d f_i)+ \sum_{j=m+1}^{n}a_j(z)dz_j.
	\end{align*} 
	Hence 
	\begin{align*}
		\Delta'f&=\Lambda\db\d f=\sum_{i=1}^{m}\frac{\d b_i(z)}{\d \bar{z}_i}\frac{dz_i}{z_i}+\Lambda\sum_{i=1}^{m}\db(\bar{z}_i \log |z_i|^2\d g_i+ z_i\d f_i)+ \Lambda\db(\sum_{j=m+1}^{n}a_j(z)dz_j)\\
		&=\sum_{i=1}^{m}\frac{\d b_i(z)}{\d \bar{z}_i}\frac{dz_i}{z_i} + {\mbox{ smooth terms}}.
	\end{align*} 
	It follows from \eqref{eq:bound} that $\Delta'f\in L^\infty(\bB^n_{\ep})$. Therefore, 
	$ 
	f\in C^{1,\alpha}(\bB^n)
	$ for any $\alpha\in (0,1)$. This implies that
	$$
	\sigma-\sum_{i=1}^{m}\bar{z}_i \log |z_i|^2 g_i(\hat{z}_i)\in C^{1,\alpha}(\bB^n).
	$$
	In particular, $\sigma$ is a continuous function. 
	
	Write 
	$$
	h_i(z):=\sigma- \bar{z}_i \log |z_i|^2 g_i(\hat{z}_i) - z_if_i(\hat{z}_i),
	$$
	which is a continuous function. Note that 
	$$
	\sigma|_{D_i}=h_i|_{D_i}. 
	$$ 
	Since we have 
	\begin{align*}
		\d h_i&  =\sum_{j=1,\ldots,m; j\neq i}a_j(z)\frac{dz_j}{z_j}+\sum_{j=m+1}^{n}a_j(z)dz_j 	\\
		&+ ( a_i-g_i(\hat{z}_i)\overline{z_i}- z_if_i(\hat{z}_i))\frac{dz_i}{z_i} 
		+ (\bar{z}_i \log |z_i|^2\d g_i+ z_i\d f_i)\\
		& =\sum_{j=1,\ldots,m; j\neq i}a_j(z)\frac{dz_j}{z_j}+\sum_{j=m+1}^{n}a_j(z)dz_j 	\\
		&+ \frac{b_i(z)}{z_i}dz_i + (\bar{z}_i \log |z_i|^2\d g_i+ z_i\d f_i)\\
	\end{align*} 
	By \eqref{eq:bound2},  
	$
	\frac{b_i(z)}{z_i} 
	$ is continuous and $ 
	\frac{b_i(z)}{z_i} 
	|_{D_i}=0$. It follows that 
	$$
	\d (\sigma|_{D_i})=\d ( h_i|_{D_i})= \big(\sum_{j=1,\ldots,m; j\neq i}a_j(z)\frac{dz_j}{z_j}+\sum_{j=m+1}^{n}a_j(z)dz_j\big)  |_{D_i}. 
	$$
	The lemma is proved. 
\end{proof}

\begin{lem}\label{lem:continuous}
	Let $\eta \in A^1(X,D,E)$ be such that $\Res_D\eta=0$. If there exists some $f\in \sD^0(X,E)$ such that $\bnabla' f=\eta$, then $f$ is smooth on $X_0$ and $f\in A^0_{\mathrm{cts}}(X, E)$.  Moreover, for any $D_i$, we have
	$$
	\bnabla' (f|_{D_i})\in A^1(D_i,\sum_{j\neq i}D_j\cap D_i,E). 
	$$
	The following property is satisfied: For any coordinate system $(U;z_1,\ldots,z_n)$ such that $D\cap U=(z_1\cdots z_k=0)$ with $D_i\cap U=(z_1=0)$, if we write 
	$$
	\eta|_{U}=  \sum_{j=1,\ldots,k}a_j(z)\frac{dz_j}{z_j}+\sum_{j=k+1}^{n}a_j(z)dz_j,
	$$
	then we have
	\begin{align}\label{eq:restrictionabu2}
		\bnabla' (f|_{D_i})|_{U\cap D_i}= \big(\sum_{j=2,\ldots,k}a_j(z)\frac{dz_j}{z_j}+\sum_{j=k+1}^{n}a_j(z)dz_j\big)  |_{(z_1=0)}.
	\end{align} 
\end{lem}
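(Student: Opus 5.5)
The statement is local, so I would reduce it to \cref{lem:restriction}. Away from $D$, $\eta$ is smooth, and since $\nabla'=\bnabla'$ on $X_0$ the equation $\bnabla' f=\eta$ together with ellipticity gives smoothness of $f$ on $X_0$. The argument is as follows. Apply $\bnabla'^*$, or better, use $\Delta' f=\sn\Lambda\bnabla''\bnabla' f$ (up to a Kähler-identity sign convention). On $X_0$ this equals $\Lambda\nabla''\eta$, which is smooth there, so $f$ is smooth on $X_0$ by elliptic regularity.

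Near a point of $D$, take a chart $(U;z_1,\ldots,z_n)$ with $D\cap U=(z_1\cdots z_k=0)$ in which $(V,\nabla)$ is trivialized by a flat unitary frame. This is possible on a simply connected $U$. In this frame $E|_U\simeq \bC^{N^2}$, $\nabla'=\d$ componentwise, and $f$ becomes a vector of distributions $\sigma$ with $\d\sigma=\sum_{j\le k}a_j\,dz_j/z_j+\sum_{j>k}a_j\,dz_j$ and smooth $a_j$. The hypothesis $\Res_D\eta=0$ means exactly that $a_j|_{(z_j=0)}=0$ for $j\le k$, by the local formula \eqref{eq:localresidue}. So after shrinking to a ball, each component satisfies the hypotheses of \cref{lem:restriction} with $m=k$.

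\cref{lem:restriction} then gives that $\sigma$ is continuous on the ball. Covering $X$ by such charts shows $f\in A^0_{\mathrm{cts}}(X,E)$, and in particular $f|_{D_i}$ makes sense. The same lemma gives \eqref{eq:remove}, which in the flat frame is exactly \eqref{eq:restrictionabu2}, since $\bnabla'$ on $D_i$ is $\d$ in a flat frame. The right-hand side is a form on $D_i$ with at most logarithmic poles along $\sum_{j\ne i}D_j\cap D_i$ (the coordinates $z_2,\ldots,z_k$ cut out $D_j\cap D_i$). Gluing over charts, which is consistent because both sides are intrinsically defined, gives $\bnabla'(f|_{D_i})\in A^1(D_i,\sum_{j\neq i}D_j\cap D_i,E)$.

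The only delicate point is justifying that the derivative of the restriction equals the restriction of the derivative, in the sense of currents on $D_i$. \cref{lem:restriction} already handles this through the $C^{1,\alpha}$ decomposition $\sigma=h_i+\bar z_i\log|z_i|^2g_i+z_if_i$. I expect the main care to be in checking that the flat-frame reduction is harmless. It is, because the connection matrix vanishes in a flat frame, so no zeroth-order terms enter $\nabla'$.
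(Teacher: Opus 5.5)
Your proposal is correct and matches the paper's proof. Both trivialize $E$ by a flat frame near a point of $D$, read $\Res_D\eta=0$ as $a_j|_{(z_j=0)}=0$, and apply \cref{lem:restriction} componentwise to get continuity and \eqref{eq:remove}, which is \eqref{eq:restrictionabu2}. Your elliptic-regularity argument for smoothness on $X_0$ (via $\Delta' f=\sn\Lambda\bnabla''\bnabla' f$) supplies a step the paper leaves implicit.
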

In the above setting, we shall abusively write 
\begin{align}\label{eq:restrictionabu}
	\eta|_{D_i}:=	\bnabla' (f|_{D_i})\in A^1(D_i,\sum_{j\neq i}D_i\cap D_j,E). 
\end{align}
Then for any $j\neq i$,  by \eqref{eq:restrictionabu2}, we have
\begin{align}\label{eq:2residue}
	\Res_{D_i\cap D_j} (\eta|_{D_i})=	(\Res_{D_j}\eta)|_{D_i\cap D_j}=0.
\end{align}
\begin{proof}[Proof of \cref{lem:continuous}]
	Fix any point $p\in D$. We take admissible coordinates $(U;z_1,\ldots,z_n)$ centered at $p$ such that $U\cap D=(z_1\cdots z_m=0)$.   Let $e_1,\ldots,e_r$ be a flat frame for $E|_U$.  Then $$\nabla''e_i=\nabla'e_i=0.$$ 
	Let us write $f=\sum_i f_i e_i $  and $\eta=\sum_i \eta_i e_i$
	where $f_i\in \sD^0(U)$ and $\eta_i\in \cA^1(X,D)(U)$.  Then we have 
	$$
	\bd f_i=\eta_i.  
	$$
	We write 
	$$\eta_i=\sum_{j=1}^{m}a_j(z)\frac{dz_j}{z_j}+\sum_{j=m+1}^{n}a_j(z)dz_j$$
	such that   $a_j(z)$  is a smooth function for each $j$.    Since $\Res_D\eta=0$, it follows that $a_j(z)|_{(z_j=0)}\equiv 0$  for each $j\in \{1,\ldots,m\}$.   By \cref{lem:restriction}, $f_i$ is continuous, and hence  $f$ is continuous.   The second assertion follows from \eqref{eq:remove}.  The lemma is proved. 
\end{proof}

The following criterion determines when the Lie bracket of two log forms is $\bnabla'$-exact.
\begin{lem}\label{lem:exact0}
	Let $\eta_1,\eta_2\in A^1(X,D,E)$. Assume that  $\eta_1$ is $\bnabla'$-closed and
	$\eta_2$ is $\bnabla'$-exact. Then  $[\eta_1, \eta_2]$ is $\bnabla'$-exact. 
\end{lem}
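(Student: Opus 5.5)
Write $\eta_2=\bnabla' f$ with $f\in \sD^0(X,E)$. The natural candidate primitive is the bracket $[\eta_1,f]$, since formally
$$
\bnabla'[\eta_1,f]=[\bnabla'\eta_1,f]-[\eta_1,\bnabla' f]=-[\eta_1,\eta_2],
$$
using the graded Leibniz rule and $\bnabla'\eta_1=0$. Hence $[\eta_1,\eta_2]=\bnabla'(-[\eta_1,f])$. The proof therefore reduces to two points. First, $[\eta_1,f]$ must be a well-defined current. Second, the Leibniz rule must hold for it in the sense of currents. Neither is automatic, since products of currents are generally ill-defined.

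\emph{Step 1: regularity of $f$.} We want to apply \cref{lem:continuous}, which needs $\Res_D\eta_2=0$. We argue as in the lemma preceding \cref{lem:restriction}. Since $\eta_2=\bnabla' f$ is a $(1,0)$-type log form, \cref{lem:equiv} gives $\bnabla''\eta_2=\iota_*\Res_D\eta_2+\nabla''\eta_2$. On the other hand, $\bnabla''\eta_2=\bnabla''\bnabla' f=-\bnabla'\bnabla'' f$.

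Alternatively, we use degrees directly. Locally in a flat frame, $\d f_i=\eta_{2,i}$ with $\eta_{2,i}=\sum a_j\,dz_j/z_j+\dots$. A distribution whose $\d$ has a genuine log pole along $z_j=0$ must behave like $a_j\log|z_j|^2$, so its $\db$ contains $a_j\,d\bar z_j/\bar z_j$, which is not a log form of the type allowed. This is incompatible only if we know more about $f$. So the plan is to check $\Res_D\eta_2=0$ by computing $\bnabla''\bnabla' f$ in two ways and comparing the current parts supported on $D$, as in that earlier lemma. If the residue cannot be forced to vanish in general, I would keep $f$ of the form $(\text{continuous})+\sum_j \varrho_j\log|s_j|^2\cdot(\text{smooth})$, obtained from the proof of \cref{lem:restriction} applied to $\eta_2$ minus its explicit logarithmic part. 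Once $\Res_D\eta_2=0$ holds, \cref{lem:continuous} gives $f\in A^0_{\mathrm{cts}}(X,E)$, smooth on $X_0$. In fact the proof of \cref{lem:restriction} gives more: $f$ equals $\sum\bar z_i\log|z_i|^2g_i$ plus a $C^{1,\alpha}$ function, so $f$ is locally Lipschitz-type up to $|z\log|z||$ terms.

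\emph{Step 2: the bracket is a current, and Leibniz holds.} Since $\eta_1$ has log poles, $\eta_1\in L^1_{\mathrm{loc}}$, and since $f$ is continuous, $[\eta_1,f]$ is an $L^1$ form, hence a current. To verify $\bnabla'[\eta_1,f]=-[\eta_1,\eta_2]$, I would regularize. Pair with a test form and cut off near $D$ by functions $\chi_\epsilon(|s|)$ equal to $0$ near $D$, with $\|d\chi_\epsilon\|_{L^1}$ controlled by the usual $\log\log$ cutoff. On $X_0$, where everything is smooth, the Leibniz identity holds pointwise. Here $\nabla'\eta_1=0$ on $X_0$, since $\bnabla'\eta_1=0$ and $\bnabla'=\nabla'$ on log forms. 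The error term is $\int d\chi_\epsilon\wedge[\eta_1,f]\wedge\phi$. Because $f$ is bounded and $\eta_1=O(1/|z_j|)$, with $d\chi_\epsilon$ supported in $\{\epsilon^2<|z_j|<\epsilon\}$ and of size $\lesssim 1/(|z_j|\log(1/|z_j|))$, the error is bounded by a multiple of $\int_{\epsilon^2}^{\epsilon}\frac{r\,dr}{r^2\log(1/r)}\to$ a bounded quantity. This is not small, so we need to use the vanishing of $f$'s bad part more carefully. The refinement is that only the $dz_j/z_j$-component of $\eta_1$ times $\d_{\bar z_j}$-type parts of $d\chi_\epsilon$ contributes, because the $(1,0)$-derivative pairs $\d\chi_\epsilon$ with a $(1,0)$-form. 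Then $\d\chi_\epsilon\wedge dz_j/z_j$ vanishes for radial cutoffs in the $z_j$ direction. This kills the dangerous term, and the remaining terms have integrable singularity with vanishing support, so the error tends to $0$.

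The main obstacle is Step 2: justifying the Leibniz rule across $D$ for the product of a log form with a merely continuous function. The type argument that $d\chi\wedge dz_j/z_j$ has the $dz_j$ component cancel is what I expect to make it work, and I would try it first.
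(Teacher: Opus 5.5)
\textbf{Genuine gap: Step 1 and its use in Step 2.} Your argument needs $f$ to be bounded, which requires $\Res_D\eta_2=0$. That is neither a hypothesis of the lemma nor a consequence of $\bnabla'$-exactness. The form $f(\beta)$ of \eqref{eq:f} is $\bnabla'$-exact with $\Res_D f(\beta)=-\beta\neq 0$. This nonzero-residue case is exactly where the lemma is used in \cref{thm:construction}: it is applied to $[\eta_1,\eta_2]$ and to $[\eta_2,\eta_2]$ with $\Res_D\eta_2=-\beta$, and $\eta_2$ must sit in the exact slot.

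Comparing the parts of $\bnabla''\bnabla' f$ supported on $D$ yields only $\nabla'\Res_D\eta_2=0$, i.e.\ the residue is flat, not that it vanishes. The unlabeled lemma before \cref{lem:restriction} could conclude vanishing only because it assumed the residue class is trivial. Your remark that $\db f$ would contain $d\bar z_j/\bar z_j$ is also not a contradiction, since nothing constrains $\bnabla'' f$.

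So in general $f$ has $\log|z_j|^2$ growth, and your Step 2 bound, which explicitly uses that $f$ is bounded, does not apply. Your fallback $f=(\text{continuous})+\sum_j c\,\varrho_j\log|s_j|_{h_j}^2\,\beta_j$ is correct, with $\beta_j$ a flat extension of the flat residue, and the cut-off estimates could be redone with the extra logarithm. But you did not carry this out.

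\textbf{How to close it without pointwise control of $f$.} Since $\eta_2\in L^{2-\delta}$, elliptic regularity for $\bnabla'+\bnabla'^*$ gives $f\in W^{1,2-\delta}\subset L^{2+c_0}$. Hence $[\eta_1,f]\in L^1$ by H\"older. For the cut-off $\mu_\ep=\rho_\ep(\log\log|s_D|_h^{-2})$ one has
$$\d\mu_\ep\wedge\eta_1=-\frac{\rho_\ep'(\log\log|s_D|_h^{-2})}{\log|s_D|_h^{-2}}\;\d\log|s_D|_h^2\wedge\eta_1 .$$
Here $\d\log|s_D|_h^2\wedge\eta_1$ is again a logarithmic $2$-form, hence lies in $L^{2-\delta}$. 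This is the correct global version of your type cancellation. Near $D_i\cap D_j$ a cut-off in $|s_D|$ is not radial in each $z_j$ separately; what survives is a $\frac{dz_i}{z_i}\wedge\frac{dz_j}{z_j}$ term, which is still integrable.

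Therefore $\d\mu_\ep\wedge[\eta_1,f]\wedge\phi$ is dominated by a fixed $L^1$ function and tends to $0$ pointwise. Dominated convergence then gives $\bnabla'(-[\eta_1,f])=[\eta_1,\eta_2]$ for any residue of $\eta_2$. This is the paper's argument. In the zero-residue case your version is sound and essentially agrees with it.
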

\begin{proof}
	
	As $\eta_2$ is $\bnabla'$-exact, we have $\eta_2 =\bnabla'\sigma $ for some current $\sigma$. As $\sigma$ is of degree $0$, we have 
	$$(\bnabla' + (\bnabla')^* )\sigma = \eta_2 .$$
	As $\eta_2 \in L^{2-\ep}$ for any $\ep >0$, by the elliptic regularity of $\bnabla' + (\bnabla')^*$, we know that $\sigma \in L^{2+c_0}$ for some $c_0 >0$ and $\sigma$ is smooth on $X\backslash D$.
	Since $\eta_1$ has logarithmic poles, the bracket product
	\[
	[\eta_1, \sigma] \in L^1 ,
	\]
	is a well-defined current in $\sD^1(X, E)$.
	
	Now we show that 
	\[
	\bnabla' [\eta_1, \sigma] =  - [\eta_1, \eta_2] \qquad\text{ on } X
	\]
	in the sense of currents.
	Let $\mu_\ep$ be the standard cut-off functions with respect to $X\setminus D$, namely 
	$$\mu_\ep = \rho_\ep (\log \log \frac{1}{|s_D|^2_h}) ,$$
	where $s_D$ is the canonical section of $\mathcal{O}_X (D)$, $h$ is a smooth metric, $\rho_\ep$ is a function which is equal to $1$ on $[1, \ep^{-1}]$ and is equal to $0$ on $[1+ \ep^{-1}, +\infty[$ and $|\rho_\ep'| \leq 2$.
	
	Let $\beta \in A^{2n-2} (X, E^*)$. Then
	$$\int_X [\eta_1, \sigma] \wedge (\bnabla' \beta ) = \lim_{\ep \to 0} \int_X \mu_\ep \cdot [\eta_1, \sigma] \wedge (\bnabla' \beta ) $$
	$$= -\lim_{\ep \to 0} \int_X \partial \mu_\ep \wedge [\eta_1, \sigma] \wedge \beta - \lim_{\ep \to 0} \int_X\mu_\ep \cdot \bnabla' [\eta_1, \sigma] \wedge \beta .$$
	We now compute the two terms on the right-hand side. Note that on the support of $\mu_\ep$, $\eta_1$ and $\sigma$ are smooth. We have thus $\mu_\ep \cdot \bnabla' [\eta_1, \sigma]= -\mu_\ep [\eta_1, \eta_2]  $. 
	Therefore we have 
	$$-\lim_{\ep \to 0} \int_X\mu_\ep \cdot \bnabla' [\eta_1, \sigma] \wedge \beta  =\lim_{\ep \to 0} \int_X\mu_\ep [\eta_1, \eta_2] \wedge \beta  = \int_X [\eta_1, \eta_2] \wedge \beta .$$
	For the other term $\lim_{\ep \to 0} \int_X \partial \rho_\ep \wedge [\eta_1, \sigma] \wedge \beta$, note that 
	$$\partial \mu_\ep \wedge \eta_1 = \rho'_\ep \cdot \frac{1}{\log \frac{1}{|s_D|^2_h}} ( \partial (\log |s_D|^2_h )\wedge \eta_1 ).$$
	As $ \partial (\log |s_D|^2_h )\wedge \eta_1 \in A^1 (X, D, E)$ and  $\sigma \in L^{2+c_0}$ for some $c_0 >0$, 
	we know that 
	$$\partial (\log |s_D|^2_h )\wedge \eta_1 \cdot \sigma \in L^1 .$$ 
	Then by Lebesgue's dominated convergence theorem, we have 
	$$\lim_{\ep \to 0} \int_X \partial \mu_\ep \wedge [\eta_1, \sigma] \wedge \beta =0.$$
	
	As a consequence, we obtain 
	$$\int_X [\eta_1, \sigma] \wedge (\bnabla' \beta ) =  - \int_X [\eta_1, \eta_2] \wedge \beta . $$
	Thus, $[\eta_1, \eta_2] = \bnabla' \left(-[\eta_1, \sigma]\right)$ in the sense of currents.  The lemma is thus proved.
\end{proof}

\begin{cor}\label{corexact}
	Let $\eta_1, \eta_2 \in A^1 (X, D,E)$. We assume that $\eta_1$ is $\bnabla'$-closed and $\eta_2$ is $\bnabla'$-exact, and $\Res_{D} \eta_2=0$. Then $\Res_{D} (\eta_1 \wedge \eta_2)$ is $\bnabla'$-exact. 
\end{cor}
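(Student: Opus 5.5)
The plan is to reduce everything to a computation on each component $D_i$, using \cref{lem:continuous} to restrict the primitive of $\eta_2$ to $D_i$. Write $\eta_2=\bnabla' f$ with $f\in\sD^0(X,E)$. Since $\Res_D\eta_2=0$, \cref{lem:continuous} gives three facts: $f\in A^0_{\mathrm{cts}}(X,E)$, $f$ is smooth on $X_0$, and the restriction $f|_{D_i}$ satisfies $\bnabla'(f|_{D_i})=\eta_2|_{D_i}\in A^1(D_i,\sum_{j\neq i}D_i\cap D_j,E)$, in the sense of \eqref{eq:restrictionabu}. Set $\rho_i:=\Res_{D_i}\eta_1\in A^0(D_i,E)$; this is a smooth section, since logarithmic $0$-forms are smooth. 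The goal is to show that, up to sign,
\[
\Res_{D_i}[\eta_1,\eta_2]=\pm\,[\rho_i,\eta_2|_{D_i}]=\pm\,\bnabla'\bigl([\rho_i,f|_{D_i}]\bigr).
\]
This exhibits the residue as $\bnabla'$ of a continuous section on $D_i$.

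\textbf{Step 1: the residue formula.} This is a local computation in admissible coordinates $(U;z_1,\dots,z_n)$ with $D_i\cap U=(z_1=0)$. Write
\[
\eta_1=a_1\frac{dz_1}{z_1}+\eta_1',\qquad \eta_2=b_1\frac{dz_1}{z_1}+\eta_2',
\]
where $\eta_1',\eta_2'$ contain no $dz_1/z_1$ factor and $b_1|_{z_1=0}=0$. Then the $dz_1/z_1$-part of $[\eta_1,\eta_2]$ is $\frac{dz_1}{z_1}\wedge\bigl([a_1,\eta_2']-[\eta_1',b_1]\bigr)$. The term involving $b_1$ restricts to zero on $D_i$. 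By \eqref{eq:restrictionabu2}, $\eta_2'|_{D_i}=\eta_2|_{D_i}$. Hence $\Res_{D_i}[\eta_1,\eta_2]=[\rho_i,\eta_2|_{D_i}]$, with the sign fixed by the convention \eqref{eq:localresidue}.

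\textbf{Step 2: $\nabla'\rho_i=0$ on $D_i$.} I will derive this from $\bnabla'\eta_1=0$. Since $\bnabla'$ does not create residue terms, the local expansion gives $\nabla'\eta_1=\frac{dz_1}{z_1}\wedge\nabla' a_1+\cdots$ up to sign, and every other term involves $\nabla'\eta_1'$. Taking the residue of $\nabla'\eta_1=0$ along $D_i$ then yields $\nabla'\rho_i=0$, because $\Res$ commutes with $\nabla'$ on logarithmic forms.

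\textbf{Step 3: Leibniz rule for the product.} Given Step 2, I want
\[
[\rho_i,\bnabla'(f|_{D_i})]=\bnabla'[\rho_i,f|_{D_i}]
\]
as currents on $D_i$. The product $[\rho_i,f|_{D_i}]$ is of a smooth section with a continuous one, so it is a well-defined current, and the identity is the Leibniz rule for smooth times distribution. This is legitimate because $\rho_i$ is smooth on all of $D_i$, not merely logarithmic. If one prefers, it can be checked against test forms exactly as in the proof of \cref{lem:exact0}, using cut-offs $\mu_\ep$ along $\sum_{j\ne i}D_i\cap D_j$ on $D_i$, with boundary terms vanishing by dominated convergence since $f|_{D_i}$ is bounded. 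Combining Steps 1–3, $\Res_{D_i}[\eta_1,\eta_2]$ is $\bnabla'$-exact for every $i$, and hence so is $\Res_D(\eta_1\wedge\eta_2)$.

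\textbf{Main obstacle.} The delicate part is Step 1. One has to check that the restriction $\eta_2'|_{D_i}$ appearing in the residue coincides with the abusively defined $\eta_2|_{D_i}=\bnabla'(f|_{D_i})$, including near the deeper strata $D_i\cap D_j$. One also has to check that the $b_1$ term contributes nothing even though $b_1/z_1$ need not be smooth, since $b_1$ may vanish like $\bar z_1$. For both points I would rely on \eqref{eq:restrictionabu2} and on the decomposition \eqref{eq:bound2} from the proof of \cref{lem:restriction}.
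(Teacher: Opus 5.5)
Your argument is correct and is essentially the paper's proof. The paper also uses \cref{lem:continuous} and \eqref{eq:restrictionabu2} to obtain $\Res_{D_i}(\eta_1\wedge\eta_2)=\Res_{D_i}(\eta_1)\wedge(\eta_2|_{D_i})$, and then invokes the $\bnabla'$-exactness of $\eta_2|_{D_i}=\bnabla'(f|_{D_i})$; it leaves implicit what your Steps 2--3 spell out, namely that $\Res_{D_i}\eta_1$ is smooth and $\nabla'$-closed, so the Leibniz rule yields the primitive $\Res_{D_i}(\eta_1)\, f|_{D_i}$. One cosmetic point: the statement concerns the composition product $\eta_1\wedge\eta_2$ rather than the bracket $[\eta_1,\eta_2]$, but your computation goes through verbatim with composition in place of the bracket.
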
 
\begin{proof}
	By \cref{lem:continuous} and \eqref{eq:restrictionabu2},  we can check that,  for each $D_i$,   we have 
	\begin{align}\label{eq:resiwedge}
		\Res_{D_i} (\eta_1 \wedge \eta_2)= \Res_{D_i} (\eta_1) \wedge (\eta_2 |_{D_i})-
		(\eta_1 |_{D_i}) \wedge \Res_{D_i} (\eta_2) =\Res_{D_i} (\eta_1) \wedge (\eta_2 |_{D_i}),
	\end{align} 
	where $\eta_1 |_{D_i}$ and  $\eta_2 |_{D_i}$ are defined in \eqref{eq:restrictionabu}. 
	By \cref{lem:continuous} again, $\eta_2 |_{D_i}$   is $\bnabla'$-exact. It follows that  
	$$
	\Res_{D_i} (\eta_1) \wedge (\eta_2 |_{D_i})
	$$
	is also $\bnabla'$-exact. The corollary is thus proved. 
\end{proof}

\begin{proposition}\label{prop:regularity}
	Let $\eta\in  A^2(X,D,E)$ such that $\eta$ is $\bnabla$-closed and $\bnabla'$-exact in the sense of currents. If we define 
	$$
	\omega :=\bnabla'\bnabla'^*G\bnabla''^* G \eta=\bnabla''^* G \eta, 
	$$ 
	then
	$\omega \in  A^1 (X,D,E)$, and $\omega$ is the unique element in $\sD^1(X,E)$ that is $\bnabla'$-exact, and $$\bnabla\omega=\bnabla''\omega=\eta.$$ 
\end{proposition}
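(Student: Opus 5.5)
The plan is to separate the formal identities, which follow from the Kähler identities \eqref{eq:basicgreen} extended to currents, from the regularity statement $\omega\in A^1(X,D,E)$, which is the real content.

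\emph{Step 1: formal identities.} Since $(V,\nabla)$ is unitary, $\bnabla'$ and $\bnabla''$ raise different bidegrees. So $\bnabla\eta=0$ splits into $\bnabla'\eta=0$ and $\bnabla''\eta=0$ as currents. By \eqref{eq:diffresi}, the second equation also forces $\Res_D\eta=0$ together with $\nabla''\eta=0$ on $X_0$.

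Because $\eta=\bnabla'\tau$ is $\bnabla'$-exact, it is $L^2$-orthogonal (in the current pairing) to harmonic forms, so $\cH(\eta)=0$. \cref{currentdecomp} with $\Delta'=\Delta''$ then gives
\[
\eta=\Delta''G\eta=\bnabla''\bnabla''^*G\eta+\bnabla''^*G\bnabla''\eta=\bnabla''\omega .
\]
Using $[\bnabla',\bnabla''^*]=0$ and $[\bnabla',G]=0$, we get $\bnabla'\omega=\pm\bnabla''^*G\bnabla'\eta=0$. Hence $\bnabla\omega=\bnabla''\omega=\eta$.

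Next, $\omega=\bnabla''^*(G\eta)$ is orthogonal to harmonic forms, so $\cH(\omega)=0$. Decomposing $\omega=\Delta'G\omega$ and using $\bnabla'\omega=0$ yields $\omega=\bnabla'\bnabla'^*G\omega=\bnabla'\bnabla'^*G\bnabla''^*G\eta$. This proves both the displayed formula and the $\bnabla'$-exactness of $\omega$.

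\emph{Step 2: uniqueness.} Suppose $\omega_1$ is another $\bnabla'$-exact current with $\bnabla\omega_1=\eta$. Then $u=\omega-\omega_1$ is a $\bnabla$-closed, $\bnabla'$-exact current of degree $1$, and \cref{lem:ddbar} (case $k=1$) gives $u=0$.

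\emph{Step 3: regularity.} By uniqueness, it suffices to produce \emph{some} $\omega_0\in A^1(X,D,E)$ that is $\bnabla'$-exact and satisfies $\bnabla\omega_0=\eta$; then $\omega=\omega_0$.

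First I handle the pole part. Since $\Res_D\eta=0$, we have $\eta\in A^2_{X,D}(E)$, and by \eqref{eq:iso} we can write $\eta=\xi+\nabla\nu$ with $\xi\in A^2(X,E)$ smooth and $\nu\in A^1_{X,D}(E)$. Because $\eta$ is $\bnabla'$-exact, its class vanishes in current cohomology, hence so does the class of $\xi$. The smooth $\d\db$-lemma then gives $\xi=\nabla''\nabla' a$ with $a$ smooth, so it remains to solve for $\nabla\nu$.

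Here I would split $\nu=\nu'+\nu''$ into types and work locally in adapted coordinates, as in \cref{lem:restriction}. I would look for a degree-$0$ potential $\sigma$ with continuous coefficients, built from terms like $\bar z_i\log|z_i|^2g_i(\hat z_i)$, such that $\bnabla'\sigma$ is a log form absorbing the $(1,0)$-part. Gluing with a partition of unity, and correcting the remaining smooth $\nabla$-closed error by the smooth Hodge theory of \cref{subsec:compactuni}, would give a candidate $\omega_0=\bnabla'(\text{potential})$. Its exactness is then automatic, and its logarithmic nature comes from the local ansatz.

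\emph{Main obstacle.} The main obstacle is this last step: checking that the local potential produces only logarithmic, rather than worse, singularities, and that the type-mismatch terms $\nu''$ can be absorbed. My first attempt would be to use $\Res_D\eta=0$ exactly as in the proof of \cref{lem:restriction}, subtracting the explicit terms $z_if_i+\bar z_i\log|z_i|^2g_i$ and controlling the remainder by $L^\infty$ bounds on $\Delta'$.

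If that fails, the fallback is direct elliptic regularity for the system $\bnabla''\omega=\eta$, $\bnabla'\omega=0$, $\bnabla''^*\omega=0$ near $D$. Since $\Delta''\omega=\bnabla''^*\eta$ has controlled growth, I would show that $\omega\cdot|s_D|$ is smooth in the tangential directions, and then identify the $dz_i/z_i$ coefficients via the residue computation \eqref{eq:ff}.
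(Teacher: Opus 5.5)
Your Steps 1 and 2 are correct and essentially coincide with the paper. One small slip: for a $2$-form of mixed type, $\bnabla'\eta$ and $\bnabla''\eta$ share bidegrees, so $\bnabla\eta=0$ does not split by type. But $\bnabla'\eta=0$ holds anyway, since $\eta$ is $\bnabla'$-exact.

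The gap is Step~3, which is the entire content of the proposition, and the route you sketch does not close. \cref{lem:restriction} runs in the opposite direction. It deduces continuity of a potential from the fact that its $\d$ is \emph{already} logarithmic with zero residues. It does not produce a potential, and $\d(\bar z_i\log|z_i|^2g_i)$ is not a logarithmic form in general, because the term $\bar z_i\log|z_i|^2\,\d g_i$ is not smooth. Your gluing step only works if, on each chart, you already have $\sigma_\alpha$ with $\bnabla''\bnabla'\sigma_\alpha=\eta$ as currents across $D$ and $\bnabla'\sigma_\alpha$ logarithmic. Then the differences on overlaps are pluriharmonic, hence smooth, and the glued error is smooth. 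Producing such $\sigma_\alpha$ is precisely the local problem you leave open. Without it, the error $\eta-\bnabla''\bnabla'\sigma$ is a singular, $\bnabla$-closed, $\bnabla'$-exact form of the same kind as $\eta$, so correcting it is the original problem again. In addition, the smooth $\d\db$-lemma does not give $\xi=\nabla''\nabla'a$ for the $\xi$ produced by \eqref{eq:iso}, since that $\xi$ need not be of pure type. The fallback elliptic argument is only a sketch.

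The missing observation is that $\omega$ is of type $(1,0)$. By your own Step~1, $\omega=\bnabla'(\bnabla'^*G\omega)$ with $\bnabla'^*G\omega\in\sD^0(X,E)$, so $\eta=\bnabla''\omega$ is of type $(1,1)$. Since $\omega$ already exists as a current, logarithmicity is a purely local $\db$-regularity question, and neither the uniqueness reduction nor any gluing is needed. The argument goes as follows.
\begin{itemize}
\item Work near $p\in D$ in a flat frame. Since $(\cA^{1,\bullet}(X,D),\db)$ is a fine resolution of $\Omega^1_X(\log D)$ and $\nabla''\eta=0$, on a small polydisc there is a logarithmic $(1,0)$-form $h$ with $\db h=\eta$.
\item Because $\Res_D\eta=0$, each $\Res_{D_j}h$ is holomorphic on $D_j$. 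Subtracting $g_j\,dz_j/z_j$, with $g_j$ a holomorphic extension, makes $\Res_Dh=0$, so $\bdb h=\eta$ as currents by \eqref{eq:diffresi}.
\item Then $\omega-h$ is a $\bdb$-closed $(1,0)$-current, hence a holomorphic $1$-form. So $\omega=h+(\omega-h)$ is logarithmic near $p$.
\end{itemize}
This is exactly the paper's argument.
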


\begin{proof} 
	It is straightforward to see that 
	$$
	\omega= \bnabla'\bnabla'^*G\bnabla''^* G \eta=(1-\cH)\bnabla''^* G \eta-\bnabla'^*\bnabla'G\bnabla''^* G \eta=\bnabla''^* G \eta.
	$$

	\medskip

	If $\bnabla  \bnabla' f=0$ for some $f\in \sD^0(X,E)$, then by elliptic regularity, $f$ is smooth. So $f$ is pluri-harmonic. By the compactness of $X$, we know that $\bnabla' f=0$. This in particular shows the uniqueness of $\omega$. 
	
	\medskip
	
	Now we would like to show that $\omega \in  A^1 (X,D,E)$. Write $f:=\bnabla''^* G \eta.$
	Fix any point $p\in D$. We take an admissible  coordinate $(U;z_1,\ldots,z_n)$ centered at $p$ such that $U\cap D=(z_1\cdots z_m=0)$.   Let $e_1,\ldots,e_r$ be a flat frame for $E|_U$.  Then $$\nabla''e_i=\nabla'e_i=0$$ 
	Let us write $f=\sum_i f_i e_i $  and $\eta=\sum_i \eta_i e_i$
	where $f_i\in \sD^0(U)$ and $\eta_i\in \cA^2(X,D)(U)$.  Then we have 
	$$
	\bdb\bd f_i=\eta_i. 
	$$
	Therefore, $\eta_i\in \cA^{1,1}(X,D)(U)$. 
	
	Note that $(\cA^{p,\bullet}(X,D),\db)$ is a fine resolution for $\Omega_X^p(\log D)$. It follows that there exists some $h_i\in \cA^{1,0}(X,D)(U)$   such that
	$$
	\db h_i=\eta_i.
	$$ 
	Since $\Res_D\eta_i=0$, it follows that $\Res_{D_j} \db h_i=0$, which implies that
	$$
	\db\Res_{D_j} h_i=0. 
	$$ 
	Hence $ \Res_{D_j} h_i$  is a holomorphic function on $D_j\cap U$. Define a holomorphic function  $g_{i,j}\in \cO(U)$ such that $g_{i,j}|_{D_j}=\Res_{D_j} h_i$. We replace $h_i$ by 
	$$
	h_i-\sum_{j=1}^{m}g_{i,j}\frac{d z_j}{z_j}. 
	$$
	Then $\Res_D h_i=0$. It follows  from \cref{lem:equiv} that
	$$
	\bdb h_i=\db h_i=\eta_i. 
	$$
	Therefore, we have
	$$
	\bdb(h_i-\bd f_i)=0.
	$$
	
	By the regularity of $\db$-operator, it follows that $h_i-\bd f_i$ is a holomorphic 1-form. Hence $\bd f_i\in \cA^1(X,D)(U)$.  Therefore, we obtain 
	$  \omega = \bnabla' f\in A^1(X,D,E).$
\end{proof}

Following  \cite{CDHP25, LRW}, we prove an effective estimate of the solution in Proposition \ref{prop:regularity}. 
To begin with, we consider the vector bundle 
$$E_p := \Omega_X ^p (\log D) \otimes E$$
on $X$. We have an identification 
$$\phi:  A^i (X, D, E) \simeq \oplus_{p+q =i} C^\infty _{(0,q)} (X, E_p)$$
We equip the vector bundle $E_p$ with a fixed smooth hermitian metric $h_p$, and we fix a Kähler metric $\omega_X$ on $X$. For any constant $k\in \mathbb N$, and any $\alpha \in A^i (X, D, E)$, we define
\begin{equation}\label{normsob}
	\|\alpha\|_{W^k} := \|\phi (\alpha)\|_{W^k} ,
\end{equation}
where $\|\phi (\alpha)\|_{W^k}$ is the norm of the Sobolev space whose derivatives up to order $k$ are $L^2$ with respect to $h_p$ and $\omega_X$. We have the following effective version of Proposition \ref{prop:regularity}.
\begin{cor}\label{effectiveest}
	Let $\eta\in  A^2(X,D,E)$ such that $\eta$ is $\bnabla$-closed and $\bnabla'$-exact in the sense of currents.  Fix  $k\in \mathbb N^* $.  
	Let $\omega\in A^1(X,D,E)$ be the unique $\bnabla'$-exact solution of
	$$\bnabla \omega =\eta \qquad\text{ on } X $$
	as in Proposition \ref{prop:regularity}. Then there exists a constant $C$ (depending only on $k$) such that 
	$$\|\omega\|_{W^k} \leq C \|\eta\|_{W^k} .$$
\end{cor}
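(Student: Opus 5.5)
The plan is to make the local argument in the proof of \cref{prop:regularity} quantitative and then patch the local estimates together with a finite partition of unity. We fix a finite cover of $X$ by admissible coordinate charts $(U_\nu;z_1,\ldots,z_n)$, each with a flat frame of $E$, so that $D\cap U_\nu=(z_1\cdots z_{m_\nu}=0)$. We also fix a subordinate partition of unity $\chi_\nu$ and slightly smaller charts $U'_\nu\Subset U_\nu$.

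\textbf{Step 1 (global $L^2$-type control).} Since $\omega=\bnabla''^*G\eta$, standard elliptic theory for $G$ on the compact manifold $X$ gives $\|\omega\|_{W^{k-1}_{\mathrm{std}}}\lesssim\|\eta\|_{W^{k-2}_{\mathrm{std}}}$ in the usual Sobolev norms of currents. The logarithmic $W^k$-norm of \eqref{normsob} dominates $L^{2-\ep}$-type norms of $\eta$ viewed as a current. The real difficulty is that $\omega$ has log poles, so the standard norm cannot see the $W^k$ regularity of its coefficients in the frame $dz_j/z_j$. Step 1 therefore only supplies a weak global bound, say $\|\omega\|_{L^{p}}\le C\|\eta\|_{W^k}$ for some $p<2$, together with smooth $W^{k}$ control on $X\setminus\bigcup_\nu U'_\nu$-neighbourhoods of $D$ by interior elliptic estimates.

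\textbf{Step 2 (local log estimate).} On each $U_\nu$ we redo the construction from the proof of \cref{prop:regularity} with bounds. First solve $\db h_i=\eta_i$ in $\cA^{1,0}(X,D)(U_\nu)$ via the fine log resolution. In coefficients with respect to $dz_j/z_j$ and $dz_j$ this is an ordinary $\db$-problem on a polydisc. So the polydisc Cauchy-kernel operator (or a Hörmander-type solution) gives $\|h_i\|_{W^k(U'_\nu)}\le C\|\eta_i\|_{W^k(U_\nu)}$ in the log norm. The residue correction $\sum_j g_{i,j}\,dz_j/z_j$ is obtained by holomorphic extension of $\Res_{D_j}h_i$. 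It is controlled by the trace theorem together with Cauchy estimates for holomorphic functions on a shrunken polydisc, so it costs only a shrink of the domain.

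Then $\bd f_i-h_i$ is a holomorphic $1$-form on $U_\nu$. Cauchy estimates bound all its derivatives on $U'_\nu$ by its $L^1(U_\nu)$ norm. That $L^1$ norm is in turn bounded by $\|\omega\|_{L^p}+\|h_i\|_{L^p}$, hence by $C\|\eta\|_{W^k}$ using Step 1. Altogether this yields $\|\omega\|_{W^k(U'_\nu)}\le C\|\eta\|_{W^k}$ in the log norm, and summing over $\nu$ gives the corollary.

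\textbf{Main obstacle.} I expect the delicate step to be the quantitative $\db$-solution with log poles while keeping $W^k$ bounds in the log norm. The proof of \cref{prop:regularity} only used abstract fineness of $(\cA^{1,\bullet}(X,D),\db)$. I would handle this by working coefficientwise: each coefficient of $\eta_i$ is smooth, and $\db(a\,dz_j/z_j)=\db a\wedge dz_j/z_j$. So the log $\db$-problem reduces to a finite system of scalar $\db$-equations with smooth data on a polydisc, to which the standard Cauchy-integral estimate, losing no derivatives in Sobolev norms after shrinking, applies. This is the approach of \cite{CDHP25, LRW}.
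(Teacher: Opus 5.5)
Your argument is correct, but it takes a genuinely different route from the paper's.

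\textbf{The paper's route.} It argues by contradiction and compactness. If no uniform $C$ exists, it takes $\bnabla'$-exact $\omega_i$ with $\|\omega_i\|_{W^k}=1$ and $\|\bnabla\omega_i\|_{W^k}\to 0$, and extracts a weak $W^k$-limit $\omega$. This limit is $\bnabla$-closed and, by closedness of $\Im\bnabla'$, also $\bnabla'$-exact, so $\omega=0$ by the $\partial\bar\partial$-lemma. It then upgrades to strong convergence: by interior elliptic regularity on $U\Subset X\setminus D$, and near $D$ by observing that the log-frame coefficients $f_{i,s}$ satisfy $\db f_{i,s}\to 0$ in $W^k$. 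Together with convergence away from $D$, this gives $\omega_i\to 0$ in $W^k$, a contradiction.

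\textbf{Your route.} You prove the bound directly and linearly. You first get a weak global bound on $\omega=\bnabla''^*G\eta$ from standard elliptic theory, applied to $\eta$ viewed as an integrable current. On each chart you then solve a coefficientwise $\db$-problem in the log frame with $W^k$ bounds and make a residue correction by holomorphic extension. Cauchy estimates for the holomorphic discrepancy $\bd f_i-h_i$ then turn the weak bound into a log-$W^k$ bound.

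\textbf{What each buys.} The paper's argument is shorter and purely qualitative: it only needs uniqueness of the $\bnabla'$-exact solution and a closed-range statement, but its final step near $D$ is only sketched. Yours needs neither compactness nor closed range. It also makes explicit the local mechanism that this last step relies on: $\db$-control of the log coefficients plus weak control forces strong control up to $D$.

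\textbf{Points to tighten.}
\begin{enumerate}
\item The estimate in standard positive-order Sobolev norms at the start of your Step 1 is not available, since log forms do not lie in $W^1_{\mathrm{std}}$. What you actually use is $\|\eta\|_{L^p}\lesssim\|\eta\|_{W^k}$ for some $p>1$. This follows from Sobolev embedding for the coefficients plus H\"older against $|z_j|^{-1}\in L^r_{\mathrm{loc}}$, $r<2$; the resulting $p$ depends on $k$ and $n$ and can be close to $1$. It is $p>1$, not $p$ near $2$, that the Calder\'on--Zygmund bound for $\bnabla''^*G$ requires. Alternatively, work in a negative Sobolev norm, which the Cauchy estimates accept equally well.
\item The scalar problems $\db b_j=-\alpha_j$ on the polydisc are solvable only because each coefficient $(0,1)$-form $\alpha_j$ of $\eta_i$ in the log frame is $\db$-closed. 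This follows from $\nabla''\eta=0$ on $X_0$ and continuity, and you should say so.
\item The separate interior estimates away from $D$ are superfluous: Step 2 applies verbatim on charts with $m_\nu=0$.
\end{enumerate}
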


\begin{proof}
	We follow ideas from the proof of \cite[Thm 8.7]{CDHP25}. Suppose by contradiction that there is no such uniform constant $C$. Then there exists a sequence of $\eta_i \in A^2 (X, D, E)$ and $\omega_i \in A^1 (X, D, E)$ such that $\omega_i \in \Im \bnabla'$,
	$$\bnabla \omega_i =\eta_i \qquad\text{ on } X, $$
	$\|\omega_i\|_{W^k}=1$ for every $i$, and $\lim_{i\to +\infty}\|\eta_i\|_{W^k}\to 0 $.
	
	After passing to some subsequence, we may assume that $\omega_i$ converges weakly to some $\omega$ in $W^k$. Then $\bnabla\omega=0$, and by the closeness of $\Im \nabla'$ in the Hodge decomposition, it follows that $\omega \in \Im \nabla'$. Therefore $\omega=0$.
	
	For every open set $U \Subset X \setminus D$, by elliptic regularity, we know that $\omega_i |_{U}$ converges strongly to $\omega |_{U} =0$ in $W^k (U)$. Near the boundary $D$, we suppose that $D$ is defined locall by $\prod_{s\in I} z_s =0$. Then $\omega_i$ is locally of type 
	$$\omega_i =\sum_{s\in I} \frac{dz_s}{z_s} f_{i, s} + g_{i,s} ,$$ 
	where $g_{i,s}$ is $(0,1)$-type or $(1,0)$-type with $dz_s$ for $s\notin I$.
	By using the fact that $\bnabla' \omega_i =0$ and $\bnabla \omega_i \to 0$, we know that 
	$$\db f_{i, s} \to 0$$
	in the $W^k$ sense.
	Together with the fact that $f_{i, s} \to 0$ on $U \Subset X \setminus D$ in $W^k$, we know that $f_{i, s}\to 0$ near $D$ in the $W^k$ sense. For the term $g_{i,s}$, by the same argument, we know also that $g_{i,s}\to 0$ near $D$ in the $W^k$ sense. As a consequence, we know that $\omega_i \to 0$ in $W^k$. This contradicts the fact that $\|\omega_i\|_{W^k}=1$ for every $i$.
\end{proof}

\begin{lem}\label{lem:exact 2}
	Consider $\eta_1,\eta_2\in A^1(X,D,E)$. Assume that
	\begin{itemize}
		\item   $\bnabla \eta_i=\nabla\eta_i$  for $i=1,2$.
		\item $\eta_1,\eta_2$ are both $\bnabla'$-exact.
	\end{itemize}    Then 
	\begin{thmlist}
		\item \label{item:zero residue}$\bnabla[\eta_1,\eta_2]=\nabla[\eta_1,\eta_2],$ or equivalently, $\Res_D[\eta_1,\eta_2]=0$. 
		\item   $[\eta_1,\eta_2]$ is $\bnabla'$-exact. 
	\end{thmlist}
\end{lem}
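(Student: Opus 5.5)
Both assertions should follow quickly from \cref{lem:equiv}, \cref{lem:continuous} and \cref{lem:exact0}. The plan is to first convert the hypothesis $\bnabla\eta_i=\nabla\eta_i$ into the vanishing of residues. Then item (i) becomes a local computation in admissible coordinates, and item (ii) becomes a direct application of \cref{lem:exact0}.

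\emph{Step 1 (residues of $\eta_i$ vanish, and $\eta_i$ are of type $(1,0)$).} Since $\bnabla'\eta_i=\nabla'\eta_i$ always holds, the hypothesis $\bnabla\eta_i=\nabla\eta_i$ reduces to $\bnabla''\eta_i=\nabla''\eta_i$. By \eqref{eq:diffresi} this gives $\iota_*(\Res_D\eta_i)=0$. Since $\iota_*$ is injective on smooth forms, we get $\Res_D\eta_i=0$. Moreover, $\eta_i=\bnabla'\sigma_i$ with $\sigma_i\in\sD^0(X,E)$ is of type $(1,0)$. By \cref{lem:continuous}, each $\sigma_i$ is continuous on $X$ and smooth on $X_0$.

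\emph{Step 2 (item (i)).} Fix admissible coordinates $(U;z_1,\ldots,z_n)$ with $D\cap U=(z_1\cdots z_k=0)$ and a flat frame of $E|_U$. Write
\[
\eta_1=\sum_{j\le k}a_j\frac{dz_j}{z_j}+\sum_{j>k}a_jdz_j,\qquad \eta_2=\sum_{j\le k}b_j\frac{dz_j}{z_j}+\sum_{j>k}b_jdz_j,
\]
with $\End$-valued smooth coefficients. By Step 1, $a_j|_{D_j}=b_j|_{D_j}=0$ for $j\le k$. In $[\eta_1,\eta_2]$, the only terms with a pole along $D_j$ are
\[
\bigl([a_j,b_l]-[a_l,b_j]\bigr)\frac{dz_j}{z_j}\wedge dz_l' ,
\]
where $dz_l'$ denotes $dz_l/z_l$ or $dz_l$. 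So $\Res_{D_j}[\eta_1,\eta_2]$ is the restriction to $D_j$ of $\sum_l([a_j,b_l]-[a_l,b_j])\,dz'_l$, and this vanishes because $a_j$ and $b_j$ vanish on $D_j$. This is the same computation as \eqref{eq:resiwedge}, now with both residues zero. Hence $\Res_D[\eta_1,\eta_2]=0$. By \cref{lem:equiv}, $\bnabla''[\eta_1,\eta_2]=\nabla''[\eta_1,\eta_2]$, and since $\bnabla'=\nabla'$ on log forms, $\bnabla[\eta_1,\eta_2]=\nabla[\eta_1,\eta_2]$. The equivalence stated in (i) comes from the same formula \eqref{eq:diffresi}.

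\emph{Step 3 (item (ii)).} Since $\eta_1=\bnabla'\sigma_1$, we have $\bnabla'\eta_1=\bnabla'\bnabla'\sigma_1=0$, so $\eta_1$ is $\bnabla'$-closed. Also $\eta_2$ is $\bnabla'$-exact by hypothesis. So \cref{lem:exact0} applies directly and yields $[\eta_1,\eta_2]=\bnabla'(-[\eta_1,\sigma_2])$.

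The only delicate point is Step 2. There one has to make sure that the "$dz_j/z_j\wedge dz_l/z_l$" cross terms produce no residue along the double locus $D_j\cap D_l$ that escapes the single-divisor residue. Since $\Res_D$ only involves $\Res_{D_j}$ for single components, this is settled by the vanishing of $a_j$ and $b_j$ on $D_j$. If one wants the stronger statement that $[\eta_1,\eta_2]\in W'_0$, one can use \eqref{eq:2residue} instead.
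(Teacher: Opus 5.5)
Your proof is correct and essentially the paper's. Part (i) is the same local computation using $a_j|_{D_j}=b_j|_{D_j}=0$. For (ii) the only difference is this: the paper takes the continuous potential $\sigma_1$ from \cref{lem:continuous} and reruns the cut-off argument of \cref{lem:exact0} on $[\sigma_1,\eta_2]$, while you quote \cref{lem:exact0} directly, which even shows that (ii) needs no residue hypothesis.

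Drop your closing aside, however. The bracket $[\eta_1,\eta_2]$ need not lie in $W_1A^2(X,D,E)$, let alone in $W'_0$. For instance, take $E$ trivial and non-commuting matrices $A,B$; then $\eta_1=\bnabla'(\bar z_1\log|z_1|^2A)$ and $\eta_2=\bnabla'(\bar z_2\log|z_2|^2B)$ near $D_1\cap D_2$ give $\bar z_1\bar z_2[A,B]\frac{dz_1}{z_1}\wedge\frac{dz_2}{z_2}$, which is not of the form smooth $+$ smooth $\wedge\frac{dz_i}{z_i}$. Moreover, \eqref{eq:2residue} concerns the restrictions $\eta|_{D_i}$, not the bracket.
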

\begin{proof}
	Fix any point $p\in D$. We take admissible  coordinates $(U;z_1,\ldots,z_n)$ centered at $p$ such that $U\cap D=(z_1\cdots z_m=0)$.   With these coordinates, we can write 
	$$\eta_1=\sum_{i=1}^{m}  \frac{dz_i}{z_i} \otimes a_i +\sum_{j=m+1}^{n}dz_j\otimes e_j$$
	$$\eta_2=\sum_{i=1}^{m}  \frac{dz_i}{z_i} \otimes b_i +\sum_{j=m+1}^{n}dz_j\otimes f_j$$
	where $a_i,b_i,e_j,f_j\in A^0(U,E)$.   Since $\bnabla \eta_i=\nabla\eta_i$, by \Cref{lem:equiv}, we have  $\iota_i^*a_i=0$ and  $\iota_i^*b_i=0$ for any $i\in \{1,\ldots,m\}$. Here $\iota_i:D_i\to X$ is the inclusion map.    Note that 
	\begin{align*} 
		[\eta_1,\eta_2]&=\sum_{1\leq i<j\leq m} ([a_i,b_j]-[a_j,b_i])\frac{dz_i}{z_i}\wedge \frac{dz_j}{z_j} \\  
		&+\sum_{1\leq i\leq m<j\leq n} ([a_i,f_j]-[e_j,b_i])  \frac{dz_i}{z_i}\wedge dz_j +\sum_{m\leq  i<j\leq n}   ([e_i,f_j]-[e_j,f_i]) dz_i\wedge dz_j 
	\end{align*}
	It follows that $\Res_D[\eta_1,\eta_2]=0$.  This implies that $\bnabla[\eta_1,\eta_2]=\nabla[\eta_1,\eta_2]$ by \cref{lem:equiv}.   The first item is proved.  
	
	By \cref{lem:continuous} and our assumption, there exists $\sigma_i\in A^0_{\mathrm{cts}}(X, E)$ such that $\bnabla'\sigma_i=\eta_i$. Therefore,  $[\sigma_1,\eta_2]\in \sD^1(X,E)$, and by the proof of \cref{lem:exact0}, we  have
	$$
	[\eta_1,\eta_2]=\bnabla'[\sigma_1,\eta_2]. 
	$$
	Hence $[\eta_1,\eta_2]$ is $\bnabla'$-exact.  The lemma is thus proved. 
\end{proof}

\begin{lem}\label{lem:residue}
	Let $\eta=f(\beta)\in A^1(X,D,E)$ for some $\beta\in H^0(D^{(1)},E)$, where $f$ is defined in \eqref{eq:f}.   Assume also that for any $i<j$, we have 
	\begin{align}\label{eq:commuteresidue}
		[\iota_{ij}^*(\beta|_{D_i}),\iota_{ji}^*(\beta|_{D_j})]=0,
	\end{align}  
	where we denote by $\iota_{ij}:D_{ij}\to D_i$  the natural map.  
	Then there exists some $h\in \sD^0(D^{(1)},E)$ such that
	\begin{align}\label{eq:1}
		\Res_{D}[\eta,\eta]&=\bnabla'h\\ \label{eq:2}
		\bnabla  \Res_{D}[\eta,\eta]&= 2[\beta,\iota^*\nabla \eta]. 
	\end{align} 
	In particular, if $g(\beta)=0$, where $g$ is the Gysin morphism, then we have
	\begin{align*}
		\Res_{D}[\eta,\eta]&=0\\
		\bnabla[\eta,\eta]&=0.
	\end{align*} 
\end{lem}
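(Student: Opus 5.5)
We will work locally near $D$ and then globalize through the explicit shape of $f(\beta)$ from \eqref{eq:f}. Write $\eta=f(\beta)=\frac{\sn}{2\pi}\bnabla'(\sum_i\varrho_i\log|s_i|^2_{h_i}\beta_i+\gamma)$. Near a point of $D_I$, with admissible coordinates in which $D\cap U=(z_1\cdots z_k=0)$ and $\varrho_i\equiv1$, this reads
$$\eta=\frac{\sn}{2\pi}\sum_{i\le k}\beta_i\frac{dz_i}{z_i}+\theta,$$
where $\theta$ is smooth. The residue of $\eta$ along $D_i$ is $-\beta$, so up to normalization the log coefficient is $\beta_i$, a flat section. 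We then expand
$$[\eta,\eta]=2\sum_{i<j}c\,[\beta_i,\beta_j]\frac{dz_i}{z_i}\wedge\frac{dz_j}{z_j}+2\sum_i c\,\frac{dz_i}{z_i}\wedge[\beta_i,\theta]+[\theta,\theta].$$
The double-pole term has coefficient $[\beta_i,\beta_j]$, which is flat near $D_{ij}$. By \eqref{eq:commuteresidue} it vanishes on $D_{ij}$, hence it vanishes identically on the neighbourhood (the flat section is determined on the connected retraction). So $[\eta,\eta]\in W_1A^2(X,D,E)$, and up to the sign and constant fixed by the residue convention,
$$\Res_{D_i}[\eta,\eta]=2[\beta,\eta|_{D_i}],$$
where $\eta|_{D_i}$ denotes the smooth-part restriction of $\eta$, in the sense of \eqref{eq:restrictionabu}, extended to log forms.

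For \eqref{eq:1} we use that $\eta$ is $\bnabla'$-exact, since $\eta=\bnabla'u$ with $u:=\frac{\sn}{2\pi}(\sum\varrho_i\log|s_i|^2\beta_i+\gamma)$. The natural candidate is
$$h:=2[\beta,\widetilde u|_{D_i}],$$
where $\widetilde u$ is $u$ with the singular part $\frac{\sn}{2\pi}\log|s_i|^2\beta_i$ removed. That removed part restricts to the continuous quantity $\log|s_i|^2_{h_i}$ only after one subtracts $\log|z_i|^2$. Concretely, $u-\frac{\sn}{2\pi}\varrho_i\log|s_i|^2\beta_i$ is a current whose $\bnabla'$ equals $\eta$ minus its $D_i$-pole part. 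The residue of this difference along $D_i$ vanishes, so \cref{lem:continuous} applies: the difference is continuous, and its restriction to $D_i$ has $\bnabla'$ equal to $\eta|_{D_i}$. Since $\beta$ is flat, $\nabla'[\beta,\cdot]=[\beta,\nabla'\cdot]$, and \eqref{eq:1} follows with $h$ the bracket of $\beta$ with this restriction.

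For \eqref{eq:2} we take $\bnabla$ of $\Res_{D_i}[\eta,\eta]=2[\beta,\eta|_{D_i}]$. By flatness of $\beta$ this equals $2[\beta,\bnabla(\eta|_{D_i})]$. We then compute $\bnabla(\eta|_{D_i})=\iota^*\nabla\eta$ plus residue terms along $D_i\cap D_j$. Those residue terms are $[\beta|_{D_i},\beta|_{D_j}]$-type contributions, via \eqref{eq:2residue}-style bookkeeping, and they vanish by \eqref{eq:commuteresidue} once bracketed. By \eqref{eq:ff}, $\nabla\eta=\cH(h(\beta))$ away from $D$ equals $-\cH(g(\beta))$, which gives \eqref{eq:2}.

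For the final assertion, if $g(\beta)=0$ then $\nabla\eta=0$, so $\Res_D[\eta,\eta]$ is $\bnabla$-closed. By \eqref{eq:1} it is also $\bnabla'$-exact, so the $\d\db$-lemma (\cref{lem:ddbar}, in degree $1$ on each $D_i$) gives $\Res_D[\eta,\eta]=0$. Then by \cref{lem:equiv},
$$\bnabla[\eta,\eta]=\nabla[\eta,\eta]=2[\nabla\eta,\eta]=0.$$

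The main obstacle is the precise regularity and sign bookkeeping in the restriction step. We must make sense of $\eta|_{D_i}$ when $\eta$ has genuine residues along $D_i$ itself. We must also verify that the $\bnabla$ of the restricted current produces no extra delta terms along $D_i\cap D_j$ beyond those killed by \eqref{eq:commuteresidue}. We would handle this by subtracting the explicit $\log|s_i|^2\beta_i$ singularity so that \cref{lem:restriction} applies verbatim.
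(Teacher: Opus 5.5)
Your proposal is correct and is essentially the paper's proof. It expands $[\eta,\eta]$ via the explicit formula \eqref{eq:f}, takes $h$ to be the bracket of $\beta|_{D_i}$ with the restriction to $D_i$ of the potential $\frac{\sn}{2\pi}\bigl(\sum_{j\neq i}\varrho_j\log|s_j|^2_{h_j}\beta_j+\gamma\bigr)$ (this is \eqref{eq:exactbrack}), uses \eqref{eq:commuteresidue} to kill the contributions along $D_i\cap D_j$, and concludes with \cref{lem:ddbar}.

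Two small corrections, neither affecting the conclusion. First, \cref{lem:continuous} and \cref{lem:restriction} do not apply to your subtracted current, because it still has residues along the other $D_j$; its restriction to $D_i$ is therefore only $L^1$ with logarithmic singularities along $D_i\cap D_j$, not continuous. This is harmless, since $h$ need only be a current and is given explicitly. Second, $\bnabla(\eta|_{D_i})$ also contains a smooth term proportional to $\beta$ coming from the curvature of $h_i$, which likewise disappears once bracketed with $\beta$.
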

\begin{proof} 
	We keep the notations from \Cref{setting} and do not recall their meaning. By   \eqref{eq:ff}, there exists some $\gamma\in A^0(X,E)$ such that we have
	\begin{align*} 
		f(\beta):= \frac{\sn}{2\pi}\bnabla'(\sum_{i=1}^{m}\varrho_i \log |s_i|_{h_i}^2 \cdot \beta_i+\gamma)\\\nonumber
		=\frac{\sn}{2\pi}(\sum_{i=1}^{m}\d(\varrho_i \log |s_i|_{h_i}^2)   \beta_i+\nabla'\gamma)
	\end{align*}
	We then have
	$$
	-4\pi^2 [\eta,\eta]=\sum_{i\neq j}\d(\varrho_i \log |s_i|_{h_i}^2)\wedge  \d(\varrho_j \log |s_j|_{h_j}^2)   [\beta_i ,  \beta_j]+ 2 [\sum_{i=1}^{m}\d(\varrho_i \log |s_i|_{h_i}^2)   \beta_i,\nabla'\gamma]+[\nabla'\gamma,\nabla'\gamma].
	$$
	It follows that
	\begin{align}\nonumber
		\Res_{D_i} [\eta,\eta]&=\frac{-\sn}{\pi}\left(  [\beta|_{D_i} , \iota_i^*( \sum_{j\neq i}   \d(\varrho_j \log |s_j|_{h_j}^2)  \beta_j+\nabla'\gamma)]\right)\\\nonumber
		&=\frac{-\sn}{\pi}\left(  [\beta|_{D_i} , \iota_i^*( \sum_{j=1}^{m}   \d(\varrho_j \log |s_j|_{h_j}^2)  \beta_j+\nabla'\gamma)]\right)\\ \label{eq:preciseres2}
		&=\frac{-\sn}{\pi}\left(  [\beta|_{D_i} , \iota_i^*\eta]\right)\\ \label{eq:exactbrack}
		& =\frac{-\sn}{\pi}\bnabla'\left(  [\beta|_{D_i} , \iota_i^*( \sum_{j=1}^{m}    \varrho_j \log |s_j|_{h_j}^2   \beta_j+ \gamma)]\right)
	\end{align}  
	This proves \eqref{eq:1}. 
	
	Let us prove \eqref{eq:2}.   Recall that we have assumed that $\Res_{D_{ij}}[\eta,\eta]=0$ for any $1\leq i<j\leq m$.  By \eqref{eq:preciseres2} and \eqref{eq:commuteresidue}, this implies that for any fixed $j\neq i$, we have
	$$
	\Res_{D_j\cap D_i}\Res_{D_i}[\eta,\eta]=  2  [\iota_{ij}^*(\beta|_{D_i}),\iota_{ji}^*(\beta|_{D_j})]=0
	$$
	for each $j>i$.  \cref{lem:equiv} together with \eqref{eq:exactbrack} imply that
	$$
	\bnabla \Res_{D_i}[\eta,\eta]=\bnabla''\Res_{D_i}[\eta,\eta]=\nabla''\Res_{D_i}[\eta,\eta]= 2[\beta,\iota^*\nabla \eta]. 
	$$
	\Cref{eq:2} is thus proved. 
	
	If $g(\beta)=0$, then by the construction of $f$, we have $\nabla\eta=0$ and  $\bnabla\eta=\iota_*(\beta)$.  By  \eqref{eq:2}, $$
	\bnabla  \Res_{D}[\eta,\eta]=0. 
	$$ By \eqref{eq:1},  
	$$
	\Delta'h=\sn \Lambda\bhess h=\sn \Lambda\bnabla\bnabla'h =0. 
	$$
	Hence $$\Res_{D}[\eta,\eta]=\bnabla'h=0. $$ The last claim is proved. 
	The lemma  is proved. 
\end{proof}

	\subsection{Explicit construction of the deformation}
	\begin{thm}\label{thm:construction}
		Let $X$ be a compact K\"ahler manifold and let $D$ be a simple normal crossing divisor on $X$.  Let $(V,\nabla)$ be a unitary bundle on $X$. Denote by $(E,\nabla):=(\End(V),\nabla)$.      Assume that there exist  $\eta_1\in \cH^1(X,E)$ and $\beta\in H^0(D^{(1)},E)$ (it might happen that  $\eta_1=0$ or $\beta=0$) such that  
		\begin{enumerate}[label=(\alph*)]
			\item\label{item:condition1}  $g(\beta)+\frac{1}{2}\{[\eta_1,\eta_1]\}=0$, where $g:H^0(D^{(1)},E)\to H^2(X,E)$ is the Gysin morphism. 
			\item \label{item:condition2}$ \{[\iota^*\eta_1,\beta]\}=0\in H^1(D^{(1)},E)$, where $\iota:D^{(1)}\to X$ is the natural morphism.
			\item \label{item:condition3}For any $i\neq j$,  $ [\iota_{ij}^*(\beta|_{D_i}),\iota_{ji}^*(\beta|_{D_j})]=0\in H^0(D_i\cap D_j,E)$, where $\iota_{ij}:D_i\cap D_j\to D_i$ is the natural morphism.
		\end{enumerate}
		
		Then 
		\begin{thmlist}
			\item for every $i\geq 2$, there exists $\eta_i \in A^1(X,D,E) \cap \Im \bnabla'$ such that they solve the following equations: 
			\begin{itemize}
				\item $\Res_D\eta_2=-\beta$ and 
				\begin{align}\label{eq:diffeta2}
					\bnabla \eta_2 = -\iota_*(\beta) - \eta_1\wedge\eta_1 
				\end{align} 
				
				\item For every $k\geq 3$, we have 
				$$ 
				\eta_k:=- \bnabla''^*G(\sum_{i=1}^{k-1}\eta_i\wedge\eta_{k-i}),  
				$$
				such that 
				\begin{align}\label{eq:maurercartan}
					\bnabla \eta_k = \sum_{i=1}^{k-1}\eta_i\wedge \eta_{k-i}. 
				\end{align}
			\end{itemize}
			In particular, we have \begin{align}\label{eq:residuenull}
				\Res_D\eta_k=0\quad \mbox{for } k\geq 3.
			\end{align}
			\medskip
			\item \label{item:2step}	There exists some $\ep>0$ such that for any  $t\in \bD_\ep$, we have 
			$$\sum_{i=1}^{\infty}t^i\eta_i \in A^1(X,D,E) .$$ 
			As a consequence, we have an \emph{analytic} familly of \emph{logarithmic connections}    $ \nabla_t$ for $V$ parametrized by   $t\in \bD_\ep$ in the following explicit  way:
			$$
			\nabla_t:=\nabla+\sum_{i=1}^{\infty}t^i\eta_i,
			$$ 
			with its $(0,1)$-part given by $$\nabla_t''=\nabla''+t\eta_1^{0,1}.$$
			\item \label{item:pullback2step}Let $f: Y \to X$ be a holomorphic map from a compact Kähler manifold $Y$ such that $D_Y := f^{-1}(D)$ is a simple normal crossing divisor. Let $f_0$ denote the restriction $f|_{Y_0}$. If $f^*\eta_1 = 0$ and $f^*\eta_2 = 0$, then $f_0^*\nabla_t = f_0^*\nabla$ for all $t \in \mathbb{D}_\ep$. 
		\end{thmlist}
	\end{thm}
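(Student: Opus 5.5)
We construct the $\eta_k$ inductively. The proof then splits into three parts: the inductive construction (i), convergence and the form of $\nabla_t''$ (ii), and the pullback statement (iii).

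\textbf{Construction of $\eta_2$.} We use the canonical lift $f(\beta)$ of \eqref{eq:f}. By \eqref{eq:ff}, $\bnabla f(\beta)=-\iota_*\beta-\cH(g(\beta))$ and $\Res_D f(\beta)=-\beta$. By condition \ref{item:condition1}, $-\cH(g(\beta))=\frac12\cH([\eta_1,\eta_1])$. Since $\eta_1$ is harmonic, $\eta_1\wedge\eta_1=\frac12[\eta_1,\eta_1]$ is $\nabla'$-closed, and it is smooth and $\nabla$-closed. By the $\d\db$-lemma (\cref{lem:ddbar}) applied to $(1-\cH)(\eta_1\wedge\eta_1)$, its non-harmonic part equals $\nabla\nabla''^*G(\eta_1\wedge\eta_1)$. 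We therefore set
\[\eta_2:=f(\beta)-\nabla''^*G(\eta_1\wedge\eta_1),\]
adjusting signs so that \eqref{eq:diffeta2} holds. Both summands are $\bnabla'$-exact, so $\eta_2\in A^1(X,D,E)\cap\Im\bnabla'$.

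\textbf{Inductive step.} Assume $\eta_1,\dots,\eta_{k-1}$ are constructed with \eqref{eq:maurercartan}, and set $\gamma_k:=\sum_{i=1}^{k-1}\eta_i\wedge\eta_{k-i}\in A^2(X,D,E)$. To apply \cref{prop:regularity} we must check three things.

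First, $\gamma_k$ is $\bnabla'$-exact. Terms $\eta_1\wedge\eta_j$ with $j\ge2$ are exact by \cref{lem:exact0}, since $\eta_1$ is closed. Terms $\eta_i\wedge\eta_j$ with $i,j\ge2$ are exact by \cref{lem:exact 2} when both have zero residue. The pairs involving $\eta_2$ need \cref{lem:exact0} again.

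Second, $\Res_D\gamma_k=0$, so that $\bnabla\gamma_k=\nabla\gamma_k$. For $k=3$ the residue is $2\Res_D(\eta_1\wedge\eta_2)$, which by \eqref{eq:resiwedge}-type formulas is essentially $-[\iota^*\eta_1,\beta]$ up to exact terms. Condition \ref{item:condition2} combined with the argument of the unnamed residue lemma (closed, exact, and cohomologically trivial residue forces zero) kills it. For $k=4$ the term $\eta_2\wedge\eta_2$ appears, and its residue is handled by \cref{lem:residue} using condition \ref{item:condition3}; the combined Gysin-type relation comes from \ref{item:condition1}. For $k\ge5$, every term pairs a residue-free $\eta_i$ ($i\ge3$) with something, and \cref{corexact}/\cref{lem:exact 2} apply.

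Third, $\bnabla\gamma_k=0$. This follows from the Jacobi identity and the induction hypothesis, exactly as in \cref{lem:quadratic}, except that the contribution of $\iota_*\beta$ must be tracked. Its bracket with $\eta_j$ restricts to $D$ and is controlled by the residue computations above.

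\cref{prop:regularity} then gives $\eta_k=-\bnabla''^*G\gamma_k\in A^1(X,D,E)\cap\Im\bnabla'$ satisfying \eqref{eq:maurercartan}. Since $\bnabla\eta_k$ has no current part, \eqref{eq:residuenull} follows from \cref{lem:equiv}. I expect this residue bookkeeping, particularly for $k=3,4$, to be the main obstacle.

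\textbf{Part (ii).} Convergence follows from \cref{effectiveest}, which gives $\|\eta_k\|_{W^s}\le C\|\gamma_k\|_{W^s}$. For $s$ large, $W^s$ is an algebra on log forms up to a constant, so $\|\gamma_k\|\le C'\sum_i\|\eta_i\|\|\eta_{k-i}\|$. A standard majorant (Catalan-type) series argument then yields geometric growth of $\|\eta_k\|$, and Sobolev embedding upgrades this to convergence in $A^1(X,D,E)$ for $|t|<\ep$. For the $(0,1)$-part, each $\eta_k$ with $k\ge2$ is $\bnabla'$-exact of degree $1$, hence of type $(1,0)$. Therefore $\nabla''_t=\nabla''+t\eta_1^{0,1}$. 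Flatness on $X_0$ is \eqref{eq:maurercartan} summed over $k$, because $\iota_*\beta$ vanishes on $X_0$.

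\textbf{Part (iii).} We prove $f_0^*\eta_k=0$ for all $k$ by induction, the cases $k=1,2$ being hypotheses. For $k\ge3$, $f_0^*\gamma_k=0$ by induction. The form $f^*\eta_k$ is a log form on $(Y,D_Y)$ with zero residue (pulled back from residue-free forms). It is $\bnabla'$-exact, because $\eta_k=\bnabla'\sigma_k$ with $\sigma_k$ continuous by \cref{lem:continuous}, so $f^*\eta_k=\bnabla'f^*\sigma_k$. It is also $\bnabla$-closed, since $\bnabla f^*\eta_k=f^*\gamma_k=0$. The uniqueness argument of \cref{prop:regularity} on $Y$ (an exact $1$-form $\bnabla'u$ with $\bnabla\bnabla'u=0$ must vanish, since $u$ is pluriharmonic) then gives $f^*\eta_k=0$. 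Hence $f_0^*\nabla_t=f_0^*\nabla$.
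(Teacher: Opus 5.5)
Your outline follows the paper's proof step for step: the same $\eta_2=f(\beta)-\nabla''^*G(\eta_1\wedge\eta_1)$, the same recursion through \cref{prop:regularity}, Catalan majorants, and the same $\partial\bar\partial$-induction for (iii). The gap is in the residue bookkeeping you flag as the main obstacle, and that is exactly what is needed to know that $\gamma_k$ is $\bnabla$-closed.

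For $k\ge5$ you assert that every term of $\gamma_k$ contains a residue-free factor, so \cref{corexact} and Lemma~\ref{lem:exact 2} give $\Res_D\gamma_k=0$. They do not. By \eqref{eq:resiwedge}, the terms $\eta_2\wedge\eta_{k-2}+\eta_{k-2}\wedge\eta_2$ contribute $\Res_{D_i}\gamma_k=-\beta_i\wedge(\eta_{k-2}|_{D_i})+(\eta_{k-2}|_{D_i})\wedge\beta_i$. This has no reason to vanish pointwise, and \cref{corexact} only says it is $\bnabla'$-exact on $D_i$. The missing argument, which the paper gives, runs in the opposite order to your ``Second/Third'' steps:
\begin{itemize}
\item First prove $\nabla\gamma_k=0$ as a log form by the Jacobi/induction computation. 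This uses only the smooth relations $\nabla\eta_i=-\sum_j\eta_j\wedge\eta_{i-j}$ ($i\ge2$), and no $\iota_*\beta$ appears.
\item Then $\nabla\Res_{D_i}\gamma_k=\pm\Res_{D_i}\nabla\gamma_k=0$.
\item Since $\Res_D\eta_{k-2}=0$, \eqref{eq:2residue} shows that $\Res_{D_i}\gamma_k$ has no residue along $D_i\cap D_j$. By \cref{lem:equiv} it is therefore $\bnabla$-closed on $D_i$.
\item A $\bnabla'$-exact, $\bnabla$-closed $1$-form on the compact K\"ahler manifold $D_i$ vanishes by \cref{lem:ddbar}.
\end{itemize}
Only then is $\bnabla\gamma_k=\nabla\gamma_k=0$, so that \cref{prop:regularity} applies. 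As written, your third step hands the $\iota_*$-terms back to the second, so neither the residue vanishing nor the $\bnabla$-closedness is actually established.

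At $k=4$ the same mechanism is needed, and the input is condition (b), not condition (a). \cref{lem:residue} applies to $\eta_2$, since $\eta_2-f(\beta)$ is $\nabla'$ of a smooth section. It gives $\Res_{D_i}[\eta_2,\eta_2]\in\Im\bnabla'$, no residues along $D_i\cap D_j$ by condition (c), and $\bnabla\Res_{D_i}[\eta_2,\eta_2]=2[\beta_i,\iota^*\nabla\eta_2]$. Its vanishing clause needs $g(\beta)=0$, which you do not have. With $\nabla\eta_2=-\frac12[\eta_1,\eta_1]$, the right-hand side is, by the Jacobi identity, a multiple of $[\iota^*\eta_1,[\beta_i,\iota^*\eta_1]]$. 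What kills it is the pointwise identity $[\iota^*\eta_1,\beta]=\pm\Res_D[\eta_1,\eta_2]=0$ on $D^{(1)}$, produced at the $k=3$ step from condition (b). So you must record the $k=3$ conclusion as an identity of forms, not ``up to exact terms'', and feed it into $k=4$; then \cref{lem:ddbar} on $D_i$ finishes as above.

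Two smaller points. First, the printed signs are not mutually consistent: condition (a) as written matches $\Res_D\eta_2=+\beta$, and $\eta_k=-\bnabla''^*G\gamma_k$ gives $\bnabla\eta_k=-\gamma_k$ rather than \eqref{eq:maurercartan}. Fix the convention $\nabla\eta_i=-\sum_j\eta_j\wedge\eta_{i-j}$ for all $i\ge2$ explicitly, since the Jacobi cancellation depends on it. Second, in (ii), convergence in a single $W^s$ gives only finite regularity, with a radius depending on $s$. Your Sobolev-algebra choice is the right one for the product bound, but you still need the elliptic bootstrap from $\nabla'\eta(t)=0$ and $\nabla\eta(t)+\eta(t)\wedge\eta(t)=0$ to land in $A^1(X,D,E)$.
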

	\begin{proof}
		\noindent {\bf Step 1.} \emph{Construction $\eta_2$}. 
		
		Consider the map \begin{align} \label{eq:f2}
			f:H^0(D^{(1)},E)\to A^1(X,D,E)
		\end{align} 
		defined in \eqref{eq:f}. Then by \eqref{eq:ff}, we have 
		$\Res_D f(\beta)=-\beta$, $f (\beta)$ is $\bnabla'$-exact and 
		$$
		\bnabla f(\beta)=-\cH(g(\beta))-\iota_*(\beta). 
		$$   We define
		$$
		\eta_2:= f(\beta)-\frac{1}{2}\nabla''^*G [\eta_1,\eta_1]. 
		$$
		Then we have 
		$$\bnabla \eta_2 =-\cH(g(\beta))-\iota_*(\beta)+(\cH-1)\frac{1}{2} [\eta_1,\eta_1].$$
		\Cref{item:condition1} implies that, 
		\begin{align}\label{eq:second}
			\bnabla \eta_2 =-\iota_*(\beta)-\frac{1}{2} [\eta_1,\eta_1].
		\end{align}
		It remains to check that $\eta_2 \in A^1(X,D,E)$. Indeed, 
		$$\bnabla (\eta_2 - f (\beta)) = -\frac{1}{2} [\eta_1,\eta_1] + \cH(g(\beta)) \in A^2 (X,E).$$
		As $\eta_2 + f (\beta) \in \Im \bnabla'$, by \cref{lem:ddbar}, we know that $\eta_2 + f (\beta) \in A^1(X,E)$. Therefore $\eta_2 \in A^1(X,D, E)$. 
		Since $f(\beta)$ is $\bnabla'$-exact, and 
		$$
		\nabla''^*G [\eta_1,\eta_1]= \nabla'\nabla'^*G \nabla''^*G [\eta_1,\eta_1],
		$$
		it follows that $\eta_2$ is $\bnabla'$-exact.

		\noindent {\bf Step 2.} \emph{Constructing $\eta_3$}. 
		Since $\eta_2 \in \Im \bnabla'$, and $\eta_1$ is harmonic, we know that $$[\eta_1,\eta_2] \in \Im \bnabla' \cap W_1A^2(X,D,E) . $$
		
		Note that
		\begin{align}
			\nabla[\eta_1,\eta_2]=\frac{1}{2}[\eta_1,[\eta_1,\eta_1]]=0
		\end{align}
		by the Jacobian identity.   We also have
		$$
		\Res_D[\eta_1, \eta_2]=[\iota^*\eta_1,\beta] =0\in H^1(D^{(1)},E) ,
		$$
		where the last equality comes from \Cref{item:condition2}.
		Then by \cref{lem:equiv}, $[\eta_1,\eta_2] \in \Im \bnabla' \cap \ker \bnabla \cap A^2(X,D,E)$.
		
		Let us define
		$$
		\eta_3:=-\bnabla''^*G[\eta_1,\eta_2]. 
		$$	Applying \cref{lem:ddbar} and \Cref{prop:regularity} to $[\eta_1,\eta_2]$, it follows that $\eta_3 \in A^1 (X,D,E) \cap \Im \bnabla'$, and that
		\begin{align}\label{eq:second2}
			\bnabla \eta_3 =-[\eta_1,\eta_2].
		\end{align} 
		In particular, this implies that
		\begin{align}\label{eq:residue}
			\Res_D[\eta_1,\eta_2] =0.
		\end{align}
		
		\medspace
		
		\noindent {\bf Step 3.} \emph{Constructing $\eta_4$}. 
		
		We consider  $\sum_{i=1}^3 \eta_{i} \wedge \eta_{4-i}$. Thanks to Lemma \ref{lem:exact0}, $\sum_{i=1}^3 \eta_{i} \wedge \eta_{4-i}$ is $\bnabla'$-exact.
		
		Now we show that $\sum_{i=1}^3 \eta_{i} \wedge \eta_{4-i}$ is $\bnabla$-closed. 
		\begin{align}
			\nabla(\frac{1}{2}[\eta_2,\eta_2]+[\eta_1,\eta_3])=[\eta_2,\eta_2]+[\eta_1,\eta_3]= 
			-\frac{1}{2}[\eta_2,[\eta_1,\eta_1]]+[\eta_1,-[\eta_1,\eta_2]]=0
		\end{align} 
		By the Jacobian identity, we have
		\begin{align}
			[\eta_2,[\eta_1,\eta_1]] - [\eta_1,[\eta_2,\eta_1]]+ [\eta_1,[\eta_1,\eta_2]]=0
		\end{align}
		By \cref{lem:equiv}, $\Res_{D} \eta_3=0$. Hence
		$$
		\Res_D [\eta_1,\eta_3]=0.
		$$
		Hence we have
		\begin{align}
			\bnabla(\frac{1}{2}[\eta_2,\eta_2]+ [\eta_1,\eta_3])= \frac{1}{2}\iota_*(\Res_D[\eta_2,\eta_2]).
		\end{align}

		Using \Cref{item:condition3} and by \Cref{lem:residue}, we know that $\Res_{D_i}[\eta_2,\eta_2] \in \Im \bnabla' $ and
		\begin{align*} 
			\bnabla  \Res_{D_i}[\eta_2,\eta_2]&= 2[\beta_i,\iota^*\nabla \eta_2]\\
			&	= -[\beta_i,\iota^*[\eta_1,\eta_1]]\\
			&	=2[\iota^*\eta_1,[\beta_i,\iota^*\eta_1]]\\
			&	\stackrel{\eqref{eq:residue}}{=}0. 
		\end{align*} 
		Hence we have $\Res_{D_i}[\eta_2,\eta_2] \in \Im \bnabla' \cap \ker\bnabla$. By \cref{lem:ddbar} and the fact that $\Res_{D_i}[\eta_2,\eta_2]$ is a $1$-form, we know that $\Res_{D_i}[\eta_2,\eta_2]=0$. 
		Therefore, we have
		\begin{align}
			\bnabla(\sum_{i=1}^3 \eta_{i} \wedge \eta_{4-i})= 0.
		\end{align}
		Since $\sum_{i=1}^3 \eta_{i} \wedge \eta_{4-i}$ is $\bnabla'$-exact, we have
		$$\sum_{i=1}^3 \eta_{i} \wedge \eta_{4-i} \in \Im \bnabla' \cap \ker \bnabla \cap A^1 (X, D, E) .$$
		Let us define
		$$
		\eta_4:=-\bnabla''^*G(\sum_{i=1}^3 \eta_{i} \wedge \eta_{4-i}).
		$$
		By   \Cref{prop:regularity}, $\eta_4\in A^1(X,D,E) \cap \Im \bnabla'$ and  
		\begin{align}\label{eq:second3} 
			\bnabla\eta_4=- (\sum_{i=1}^3 \eta_{i} \wedge \eta_{4-i}).
		\end{align} 
		In particular, $\Res_D\eta_4=0$.

		\medspace
		
		\noindent {\bf Step 4.} \emph{Constructing general $\eta_k$ for $k\geq 5$}. 
		The construction of higher order $\eta_k$ is similar and we continue this process by induction. Assume that for some $k\geq 5$, we obtain a sequence of $\eta_3,\ldots,\eta_{k-1}\in A^1(X,D,E)$ such that  for any $i\in \{3,\ldots,k-1\}$, we have 
		\begin{enumerate}[label=(\alph*)]
			\item\label{item:exact}  
			$\eta_i\in \Im \bnabla'$ 
			
			\item\label{item:inductive} 
			$\bnabla \eta_i =- \sum_{j=1}^{i-1}\eta_j\wedge \eta_{i-j}$
			
			\item\label{item:zeroresidue} $\Res_D\eta_i=0$.
		\end{enumerate} 
		Let us define 
		$$
		\gamma_k:=-\sum_{j=1}^{k-1}\eta_j\wedge \eta_{k-j}.
		$$
		It follows from \Cref{item:exact,item:inductive} together with \cref{lem:exact0} that   $\gamma_k$ is $\bnabla'$-exact. 
		Now we prove that $\gamma_k$ is $\bnabla$-closed. First, by indcution \eqref{item:inductive}, we have
		\begin{align*}
			-\nabla\gamma_k&= \sum_{j=1}^{k-1} \nabla \eta_j\wedge \eta_{k-j} - \sum_{j=1}^{k-1}  \eta_j\wedge \nabla \eta_{k-j}\\
			&=-\sum_{j=1}^{k-1} \sum_{i=1}^{j-1} \eta_i \wedge \eta_{j-i-1}\wedge \eta_{k-j} + \sum_{j=1}^{k-1}\sum_{i=1}^{k-j-1}  \eta_j\wedge \eta_i \wedge \eta_{k-j-i-1} \\
			&=- \sum_{ i+j+s =k-1} \eta_i \wedge \eta_j \wedge \eta_s + \sum_{ i+j+s =k-1} \eta_i \wedge \eta_j \wedge \eta_s =0,
		\end{align*}
		where the index $i, j, s \geqq 1$. 

		By \eqref{eq:resiwedge},  for each $D_i$, we have 
		\begin{align*}
			\Res_{D_i}\gamma_k &= -\Res_{D_i} \eta_2 \wedge \eta_{k-2} + \eta_{k-2} \wedge \Res_{D_i} \eta_2\\
			& = -\beta_i \wedge (\eta_{k-2} |_{D_i}) + (\eta_{k-2} |_{D_i}) \wedge  \beta_i.    \end{align*}
		Here  $(\eta_{k-2} |_{D_i})$ is defined in \eqref{eq:restrictionabu}.   
		By \Cref{item:zeroresidue} together with \eqref{eq:2residue}, we have 
		$$
		\Res_{D_i\cap D_j} (\eta_{k-2} |_{D_i})=0 
		$$
		for each $j\neq i$.
		This implies that
		\begin{align}\label{eq:ttresi}
			\Res_{D_j\cap D_i}\Res_{D_i}\gamma_k=0.
		\end{align} 
		for each $j\neq i$. 
		
		As $(\eta_{k-2} |_{D_i})$ is $\bnabla'$-exact by \eqref{eq:restrictionabu}, $\beta_i \wedge (\eta_{k-2}|_{D_i}) $ is $\bnabla'$-exact.  
		It follows that
		$
		\Res_{D_i}\gamma_k
		$ 
		is also $\bnabla'$-exact.  
		Note that
		$$
		\nabla\Res_{D_i}\gamma_k=-\Res_{D_i}(\nabla\gamma_k)=0. 
		$$
		By \cref{lem:equiv} together with \eqref{eq:ttresi},
		$$
		\bnabla \Res_{D_i}\gamma_k=0
		$$
		for each $i$.  
		Then 
		$$\Res_{D_i}\gamma_k \in \Im \bnabla' \cap \ker \bnabla.$$
		Since $\Res_{D_i}\gamma_k\in A^1(D_i,\sum_{j\neq i}D_j\cap D_i,E)$, by \cref{lem:ddbar}, $\Res_{D_i}\gamma_k =0$. It follows from \cref{lem:equiv} that $\gamma_k$ is   $\bnabla$-closed.  Hence we have  $$\gamma_k \in \Im \bnabla' \cap \ker \bnabla.$$   
		Let us define
		$$
		\eta_k:= \bnabla''^*G\gamma_k. 
		$$
		Then by \Cref{prop:regularity},  $\eta_k\in A^1(X,D,E) \cap \Im \bnabla'$ and we have 
		\begin{align*} 
			\bnabla\eta_k=- \sum_{j=1}^{k-1}\eta_j\wedge \eta_{k-j}.
		\end{align*} 
		In particular, $\Res_D\eta_k=0$. 
		Therefore, \Cref{item:exact,item:inductive,item:zeroresidue} all hold for $\eta_k$. So the induction is proved.  We conclude the existence of each $\eta_k$ that satisfies the above conditions.  This proves Item (i). 
		
		\medskip
		
		\noindent {\bf Step 5.} We would like to show the convergence of $\sum_i t^i \eta_i$ for $|t| \ll 1$. 
		
		By our construction, for $k\geq 3$, $\eta_k$ is the unique solution such that $\eta_k$ is $\bnabla'$-exact and satisfying
		$$\bnabla \eta_k = \sum_{i=1}^{k-1} \eta_{i} \wedge \eta_{k-i} .$$
		By using Corollary \ref{effectiveest}, there exists some \emph{uniform} constant $C>1$ such that
		$$\|\eta_k\|_{W^1} \leq C (\sum_{i=1}^{k-1} \|\eta_i\|_{W^1} \cdot \|\eta_{k-i}\|_{W^1} ) ,$$
		where the $W^1$-norm is defined in \eqref{normsob}.
		Therefore by induction and the recurrence formula for Catalan numbers, we have
		\begin{equation*}
			\|\eta_k\|_{W^1} \leq \sum_{n=\lceil k/2 \rceil}^{k} \frac{1}{n} \binom{2n-2}{n-1} C^{n-1} \binom{n}{k-n} \|\eta_1\|_{W^1}^{2n-k} \|\eta_2\|_{W^1}^{k-n}.
		\end{equation*}
		This yields the following estimate valid for all $k \geq 3$:
		\begin{equation}\label{eq:cata}
			\|\eta_k\|_{W^1} \leq \frac{1}{2C} \Lambda^k.
		\end{equation}
		where $$\Lambda = 2C \|\eta_1\|_{W^1} + \sqrt{4C^2 \|\eta_1\|_{W^1}^2 + 4C \|\eta_2\|_{W^1}}.$$ 
		
		Then $\sum_i t^i \eta_i$  converges in the $W^1$-normed space for $|t| \leq  \frac{1}{2\Lambda}$. 
		\medskip
		
		Set $\eta (t) := \sum_{i=1}^{\infty} t^i \eta_i$.  It remains to show that $\sum_{i=1}^{\infty}  t^i \eta_i \in A^1 (X, D,E)$ for $|t| \ll 1$. 
		By the above paragraph, we know that $\|\eta (t)\|_{W^1} < +\infty$ for $|t| \ll 1$.
		Moreover, by construction, we have
		$$\nabla  \eta (t)  +\eta (t) \wedge \eta (t)=0.$$
		Moreover, as $\bnabla' \eta_i =0$ for every $i$, we know that
		and $\nabla' \eta (t)=0$. 
		
		Then by the standard elliptic bootstrapping argument, we know that $\eta (t) \in A^1 (X, D,E)$. To see this, we suppose by induction that $\eta (t)$ is in $W^k$ for some $k\in\mathbb N$ in the sense of \eqref{normsob}. 
		We suppose that $D$ is locally defined by $\prod_{i\in I} z_i =0$. 
		Then $\eta (t)$ can be locally written as 
		$$\eta (t) = \sum_{i\in I} f_i \frac{dz_i}{z_i} +g$$ 
		where $f_i$ is $E$-valued function in $W^k$ and $g \in W^k$ is of $(0,1)$-type or $(1,0)$ not containing $dz_i$ for $i\in I$.  
		As $\nabla (\eta (t))$ and $\nabla' \eta (t)$ are in $W^k$, we know that $\nabla'' \eta (t)$ is also in $W^k$. This implies that $\db f_i \in W^k$.
		Then $f_i \in W^{k+1}$ by elliptic regularity. The term $g$ is also in $W^{k+1}$ by the classical elliptic estimates. Therefore $\eta (t)$ is in $W^{k+1}$. It follows that $\eta (t) \in W^k$ for every $k\in\mathbb N$. Then $\eta (t)\in A^1 (X, D,E)$. Furthermore, since  $\eta_i\in A^1(X,D,E)\cap \Im\bnabla'$ for $i\geq 2$, we have 
		$$\nabla_t''=\nabla''+t\eta_1^{0,1}.$$   This proves Item (ii).

		\noindent \textbf{Step 6.} We now prove Item (iii). It suffices to show that $f^*\eta_k = 0$ for all $k \ge 2$. We proceed by induction on $k$. The base cases $k=1, 2$ hold by assumption. Assume that $f^*\eta_i = 0$ for all $1 \le i \le k-1$. Then by \eqref{eq:maurercartan}, we have
		\begin{equation}
			\bnabla f^*\eta_k = -\sum_{j=1}^{k-1} f^*\eta_j \wedge f^*\eta_{k-j} = 0.
		\end{equation}
		Since $\eta_k$ is $\bnabla'$-exact, its pullback $f^*\eta_k$ lies in $\operatorname{Im}\bnabla' \cap A^1(Y,   D_Y, f^*E)$. By \cref{lem:ddbar}, we conclude that $f^*\eta_k = 0$. This completes the inductive step and the proof of Item (iii).
		
		The theorem is proved.   
	\end{proof}

	\subsection{Precise Zigzag between dglas}\label{sec:qis}
	Consider the global dgla $(T^\bullet, d)$ defined by the logarithmic Dolbeault complex:
	$$
	(T^\bullet, d):=	A^0(X,D,E)\stackrel{\nabla}{\to}\cdots\stackrel{\nabla}{\to} A^{2\dim X}(X,D,E).
	$$
	Although this dgla is not formal in general, we can nevertheless construct a quasi-isomorphic dgla $(C^\bullet, d)$ where each $C^i$ is a finite-dimensional vector space. This construction enables us to compute the analytic germ that pro-represents the deformation functor associated with $(T^\bullet, d)$. Roughly speaking, the terms of $(C^\bullet, d)$ are formed by the direct sums of the cohomology groups of the divisor strata:$$C^k := \bigoplus_{p+q=k} H^q(D^{(p)}, E).$$The differential $d: C^k \to C^{k+1}$  is defined using Gysin morphisms  with suitable signs, and the graded Lie bracket is naturally induced by the intersection structure. Precise definitions will be given later in this subsection.

	\begin{lem}\label{lem:1-equi}
		Let $L^\bullet:=(\ker \nabla',\nabla'')$.  Then $\phi:L^\bullet\to (A^\bullet(X,D,E),\nabla)$ is  1-equivalent. 
	\end{lem}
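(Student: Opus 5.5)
We must show that $H^0$ and $H^1$ of $\phi$ are isomorphisms and that $H^2$ is injective. Here $L^k=A^k(X,D,E)\cap\ker\nabla'$, where $\nabla'$ is applied to logarithmic forms as smooth forms on $X_0$; on such forms $\bnabla'=\nabla'$. The tools are the $\d\db$-lemma for currents (\cref{lem:ddbar}), the regularity results (\cref{prop:regularity}, \cref{lem:continuous}), and the residue formula \cref{lem:equiv}.

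\textbf{Degree 0.} An element of $L^0$ is a smooth section $u\in A^0(X,E)$ with $\nabla'u=0$ and $\nabla''u=0$, so $H^0(L^\bullet)=H^0(X,E)$. Since $X$ is compact K\"ahler and $D$ has codimension one, restriction $H^0(X,E)\to H^0(X_0,E)$ is an isomorphism. This identifies $H^0$ on both sides.

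\textbf{Degree 1, injectivity.} Let $\alpha\in L^1$ with $\nabla''\alpha=0$ on $X_0$, and suppose $\alpha=\nabla u$ for some $u\in A^0(X,D,E)$. Then $\nabla' u=\alpha^{1,0}$ has logarithmic poles, and \cref{lem:continuous} (or directly, $u$ being a function) gives $u$ continuous. From $\nabla'\nabla''u=-\nabla''\nabla' u$ and $\nabla'\alpha=0$, one gets $\nabla'\alpha^{0,1}=0$. The task is to show $\nabla' u=0$, i.e. that $\alpha$ is the image of a $\nabla''$-coboundary of some $v\in L^0$. I would decompose $u=\cH u+\Delta'Gu$ as in \cref{currentdecomp} and use the $\d\db$-lemma to kill the $\nabla'$-part.

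\textbf{Degree 1, surjectivity.} Take a $\nabla$-closed $\eta\in A^1(X,D,E)$. Its residue $\Res_D\eta^{1,0}$ is $\nabla$-flat on $D^{(1)}$. Using $f(\Res_D\eta)$ from \eqref{eq:f} together with a harmonic form, I would construct a $\nabla'$-closed representative: subtract $\nabla u$ with $u=\nabla'^*G(\cdot)$ chosen so that the remainder is $\bnabla'$-closed, and show $u\in A^0(X,D,E)$ via \cref{prop:regularity}-type regularity. More concretely, $\eta-\cH(\text{class})+f(\beta)$ has zero residue. Its $\nabla'$-exact part can then be removed by the $\d\db$-lemma \cref{lem:ddbar}, which shows that $\nabla'\eta=\nabla'\nabla''\sigma$ with $\sigma$ logarithmic.

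\textbf{Degree 2, injectivity.} This is the main obstacle. Let $\alpha\in L^2$ with $\nabla''\alpha=0$ and $\alpha=\nabla\gamma$ for some $\gamma\in A^1(X,D,E)$; we must write $\alpha=\nabla''v$ with $v\in L^1$. First I would use the degree-1 analysis to replace $\gamma$ by $\gamma+\nabla u$. Then, considering the current $\bnabla\gamma=\alpha+\iota_*\Res_D\gamma$, I would apply \cref{lem:ddbar} to the $\bnabla'$-exact part. The delicate point is controlling residues so that $v$ stays logarithmic; for this I would use \cref{prop:regularity} and \cref{lem:continuous}, exactly as in Steps 2–4 of \cref{thm:construction}. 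I expect this logarithmic regularity at $D$ to be the real difficulty.
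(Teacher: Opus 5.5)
\textbf{The gap: injectivity of $H^2(\phi)$.} For this step you only say you would apply \cref{lem:ddbar} ``to the $\bnabla'$-exact part'' and control residues via \cref{prop:regularity} and \cref{lem:continuous}. That is not an argument, and it also misplaces the difficulty. No regularity at $D$ is needed, because the primitive $v$ can be taken to be the given $\gamma$ itself, up to a smooth correction. What must be shown is that $\gamma$ can be chosen $\nabla'$-closed. The missing idea is the bidegree splitting
\[
\alpha^{2,0}=\nabla'\gamma^{1,0},\qquad \alpha^{1,1}=\nabla''\gamma^{1,0}+\nabla'\gamma^{0,1},\qquad \alpha^{0,2}=\nabla''\gamma^{0,1},
\]
together with the fact that logarithmic poles occur only in $(1,0)$-directions, so $\gamma^{0,1}$ is smooth on $X$. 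The argument then runs as follows.
\begin{itemize}
\item The ordinary $\d\db$-lemma, using $\nabla'\alpha=0$, gives $\alpha^{0,2}=0$ by type comparison, and $\nabla'\gamma^{0,1}=\nabla'\nabla''h$ with $h$ smooth.
\item Replacing $\gamma$ by $\gamma-\nabla h$ (not via your degree-$1$ analysis), you may assume $\nabla'\gamma^{0,1}=0$.
\item Then $\nabla'\nabla''\gamma^{1,0}=\nabla'\alpha^{1,1}=0$. Taking residues, $\Res_D\gamma^{1,0}$ is pluriharmonic on the compact $D^{(1)}$, hence flat, so by \cref{lem:equiv} $\bnabla'\bnabla''\gamma^{1,0}=0$ as currents.
\item So the $(2,0)$-current $\bnabla'\gamma^{1,0}$ is $\bnabla$-closed and $\bnabla'$-exact. \cref{lem:ddbar} writes it as $\bnabla'\bnabla''\sigma$ with $\sigma\in\sD^0(X,E)$, which has type $(1,1)$. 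Hence $\nabla'\gamma^{1,0}=0$.
\item Therefore $\alpha^{2,0}=0$, $\gamma\in L^1$, and $\alpha=\nabla''\gamma$.
\end{itemize}
This is exactly the paper's argument. The flatness of the residue is what makes the current $\d\db$-lemma applicable, and you do not identify it.

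\textbf{Degrees $0$ and $1$.} These are fixable but contain a misconception. In degree $1$, $A^0(X,D,E)=A^0(X,E)$ consists of smooth sections, so $\alpha=\nabla u$ is smooth and \cref{lem:continuous} plays no role. Since $\nabla'\nabla''u=\nabla'\alpha=0$, $u$ is pluriharmonic, hence flat, and $\alpha=0$. For surjectivity, the paper uses Deligne's $E_1$-degeneration,
\[
H^1(A^\bullet(X,D,E),\nabla)\cong H^0(X,\Omega_X(\log D)\otimes E)\oplus H^{0,1}(X,E),
\]
and observes that $\nabla''$-closed elements of $L^1$ are exactly logarithmic holomorphic forms plus harmonic $(0,1)$-forms. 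Your route via $f(\beta)$ with $\beta=\Res_D\eta$ avoids that theorem and does work, once you supply three facts:
\begin{itemize}
\item $\bnabla\eta=\iota_*\beta$ forces $g(\beta)=0$, so $\theta:=\eta+f(\beta)$ is a $\bnabla$-closed current with zero residue.
\item $\cH(\theta)-f(\beta)$ lies in $L^1$ and is $\nabla''$-closed.
\item $\theta-\cH(\theta)=\bnabla\sigma$ with $\sigma$ smooth, because $\bnabla''\sigma$ is smooth.
\end{itemize}
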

	\begin{proof}
		It is obvious that $H^0(\phi)$ is an isomorphism. Note that by the degeneration of $E_1$-level of Deligne (see also  \cite{LRW}), we have  $$H^1(A^\bullet(X,D,E),\nabla)=H^0(X,\Omega_X(\log D)\otimes E)\oplus H^{0,1}(X, E). $$ 
		Here we endow $E$ with the holomorphic vector bundle structure induced by $\nabla''$. Assume that  $\alpha\in A^1(X,D,E)$ such that $\nabla'\alpha=0$ and $\nabla''\alpha=0$. We decompose $\alpha=\alpha'+\alpha''$, where $\alpha'\in A^{1,0}(X,D,E)$ and $\alpha''\in A^{0,1}(X,E)$. Then we have 
		\begin{align*}
			\nabla''\alpha'=0\quad 		\nabla''\alpha''=0\\
			\nabla'\alpha'=0\quad 		\nabla'\alpha''=0.
		\end{align*}
		This implies that $\alpha'\in H^0(X,\Omega_X(\log D)\otimes E)$ and $\alpha''\in \cH^{0,1}(X,E)$. Hence,    $\phi$ induces an isomorphism for  $H^1$.  
		
		Let us prove that   $H^2(\phi)$ is injective. Again by the degeneration at the $E_1$-level, we have  $$H^2(A^\bullet(X,D,E),\nabla)=H^0(X,\Omega^2_X(\log D)\otimes E)\oplus H^{0,2}(X, E)\oplus H^{1}(X,\Omega_X^1(\log D)\otimes E). $$
		Assume that  $\alpha\in A^2(X,D,E)$ such that $\nabla'\alpha=0$ and $\nabla''\alpha=0$. We decompose $\alpha=\alpha^{2,0}+\alpha^{1,1}+\alpha^{0,2}$, where $\alpha^{i,2-i}\in A^{i,2-i}(X,D,E)$. Then we have 
		\begin{align*}
			\nabla''\alpha^{i,2-i}=0,\quad 			\nabla'\alpha^{i,2-i}=0.
		\end{align*} 
		This implies that $\alpha^{2,0}\in H^0(X,\Omega^2_X(\log D)\otimes E)$ and $\alpha^{0,2}\in \cH^{0,2}(X,E)$.  
		
		Assume that $\phi(\alpha)=0$. Then  there exists  some $\beta\in A^1(X,D,E)$ such that $\alpha=\nabla \beta$.   It follows that
		$$
		\alpha^{2,0}=\nabla'\beta^{1,0},\quad  \alpha^{0,2}=\nabla''\beta^{0,1},\quad   \alpha^{1,1}=\nabla''\beta^{1,0}+\nabla'\beta^{0,1}, 
		$$
		where $\beta^{1,0}\in A^{1,0}(X,D,E)$ and $\beta^{0,1}\in A^{0,1}(X,E)$. 
		Since 
		$$\nabla'\nabla''\beta^{0,1}=\nabla'\alpha^{0,2}=0, $$ by \cref{lem:ddbar}, we have 
		$$
		\nabla''\beta^{0,1}=0,\quad 	\nabla'\beta^{0,1}=\nabla'\nabla'' h
		$$
		for some $h\in A^0(X,E)$. Therefore, if we replace $\beta^{0,1}$ by $\beta^{0,1}-\nabla'' h$, and $\beta^{1,0}=\beta^{1,0}+\nabla' h$, then we have
		$$
		\alpha^{0,2}=\nabla''\beta^{0,1},\quad \nabla'\beta^{0,1}=0. 
		$$
		Thus, by replacing $\alpha$ with $\alpha-\nabla''\beta^{0,1}$, we may assume without loss of generality that $\alpha^{0,2}=0$ and that $\beta$ is of type $(1,0)$. It follows that 
		$$
		\alpha^{2,0}+\alpha^{1,1}=\nabla\beta^{1,0}. 
		$$
		Note that we have
		$$
		\nabla' \nabla''\beta^{1,0}=\nabla'\alpha^{1,1}=0. 
		$$
		This implies that
		$$
		\nabla' \nabla''\Res_D \beta^{1,0}=		\Res_D(\nabla' \nabla'' \beta^{1,0})=0.
		$$
		Hence $\Res_D \beta^{1,0}\in H^0(D^{(1)},E)$. 
		By \cref{lem:equiv}, we have 
		$$
		\bnabla'' \beta^{1,0}= \nabla''\beta^{1,0}+\iota_*\Res_D \beta^{1,0}. 
		$$
		This implies that 
		$$
		\bnabla'	\bnabla'' \beta^{1,0}=\bnabla' \nabla''\beta^{1,0}+\iota_*\bnabla'\Res_D \beta^{1,0}=\bnabla' \nabla''\beta^{1,0} =\nabla' \nabla''\beta^{1,0} =0. 
		$$
		By   \cref{lem:ddbar}, we have 
		$$
		\bnabla' \beta^{1,0}=	 \bnabla'  \bnabla'' h
		$$
		for some $h\in \sD^1(X,E)$. By the type comparison, it implies that $
		\bnabla' \beta^{1,0}=	0. 
		$
		Hence $\alpha^{2,0}=0,$ and we have
		$$
		\alpha^{1,1}=\nabla''\beta^{1,0}
		$$  
		for some $\beta^{1,0}\in \ker\nabla'$.  This proves that the class $\{\alpha\}$ vanishes in $H^2(L^\bullet,d)$, thereby establishing the injectivity of $H^2(\phi)$. This completes the proof of the lemma. 
	\end{proof}

	\medskip

	Let us now introduce  a new dgla.  Let $D=\sum_{i=1}^{m}D_i$. Dentoe by $D^{(k)}:=\sqcup_{|I|=k}D_I$ and let 
	$$D_{(k)}:=\sqcup_{|I|=k}\sum_{j\notin I}D_I\cap D_j$$
	that is a simple normal crossing divisor on $D^{(k)}$.

	We first introduce some terminology.  For an arbitrary multi-index $I$, let $\sigma \in S_{|I|}$ be the unique permutation such that $(i_{\sigma(1)}, \ldots, i_{\sigma(j)})$ is ordered. We denote by $\sigma(I):=(i_{\sigma(1)}, \ldots, i_{\sigma(j)})$ this ordered multi-index. The sign of this permutation by $\ep(I) := \mathrm{sgn}(\sigma)$.  
	We set $C^{p,q}:=H^q(D^{(p)},E)$ and the \emph{differential} $d:C^{p,q}\to C^{p-1,q+2}$  is defined to be the \emph{signed} Gysin morphism as follows:
	Let $I$ be an ordered multi-index with $|I|=p$. For any $i \in I$, let $I_i$ denote the ordered multi-index corresponding to the set $I \setminus \{i\}$. Then for  any element $\alpha\in H^q(D_I,E)$, let 
	$$
	g_{I,i}:H^q(D_I,E)\to H^{q+2}(D_{I_i},E)
	$$
	be the   Gysin morphism defined in \eqref{def:Gysin}.  Then
	we define a differential
	\begin{align}\label{eq:defdiff}
		d\alpha:= \sum_{i\in I}(-1)^ {\ep(I,i)}g_{I,i} (\alpha),
	\end{align}
	where $\ep(I,i)$ denotes the number of elements in $I$ that are strictly smaller than $i$. 
	
	For any element $\alpha\in C^k$, we denote by $|\alpha|=k$ its degree. 
	
	\begin{lem}
		$(C^\bullet,d)$ is a  chain complex. 
	\end{lem}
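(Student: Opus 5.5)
We need to show $d\circ d=0$ on $C^{p,q}=H^q(D^{(p)},E)$. Fix an ordered multi-index $I$ with $|I|=p$ and a class $\alpha\in H^q(D_I,E)$. Then
\[
d(d\alpha)=\sum_{i\in I}\sum_{j\in I_i}(-1)^{\ep(I,i)+\ep(I_i,j)}\,g_{I_i,j}\bigl(g_{I,i}(\alpha)\bigr)\in\bigoplus_{i\neq j}H^{q+4}(D_{I\setminus\{i,j\}},E).
\]
The plan is to group the terms by the unordered pair $\{i,j\}\subset I$. Each component $H^{q+4}(D_{I\setminus\{i,j\}},E)$ then receives exactly two contributions: $g_{I_i,j}\circ g_{I,i}$ and $g_{I_j,i}\circ g_{I,j}$. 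It therefore suffices to prove two facts.

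\textbf{Commutation of Gysin maps.} We claim
\[
g_{I_i,j}\circ g_{I,i}=g_{I_j,i}\circ g_{I,j}\colon H^q(D_I,E)\to H^{q+4}(D_{I\setminus\{i,j\}},E).
\]
To see this, use \cref{lem:Gysin} applied on the ambient manifold $D_{I\setminus\{i,j\}}$ (with its induced snc divisor) to compute each Gysin morphism as minus the pushforward current. Choose a $\nabla$-closed smooth representative $\beta$ of $\alpha$ on $D_I$. Then $g_{I,i}(\{\beta\})$ is represented by $-\iota_*\beta$ as a current on $D_{I_i}$. To apply a second Gysin map we need a smooth closed representative, so we pass to one: by the current de Rham–Kodaira decomposition (\cref{currentdecomp}), $-\iota_*\beta=\cH(-\iota_*\beta)+\bnabla(\text{current})$. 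Pushing forward is compatible with $\bnabla$ and with the cohomology of currents, and the current cohomology agrees with smooth cohomology. Hence the composite class is represented by the current $\iota^{D_I\to D_{I\setminus\{i,j\}}}_*\beta$, which is symmetric in $i$ and $j$ because the inclusion factors both as $D_I\subset D_{I_i}\subset D_{I\setminus\{i,j\}}$ and as $D_I\subset D_{I_j}\subset D_{I\setminus\{i,j\}}$. The two sign contributions $(-1)\cdot(-1)$ coincide in the two orders.

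\textbf{Sign cancellation.} We need
\[
(-1)^{\ep(I,i)+\ep(I_i,j)}=-(-1)^{\ep(I,j)+\ep(I_j,i)}.
\]
Assume without loss of generality that $i<j$. Removing $i$ does not change the count of elements smaller than $i$, so $\ep(I_j,i)=\ep(I,i)$. On the other hand $\ep(I_i,j)=\ep(I,j)-1$, since $i<j$ has been removed. The two exponents therefore differ by exactly $1$, giving the opposite signs. Combined with the commutation of Gysin maps, the two contributions to each component cancel, so $d^2=0$.

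The main obstacle I expect is making the commutation of Gysin maps rigorous: the composite of pushforward currents has to be interpreted in cohomology, which requires justifying that currents compute the twisted cohomology of each stratum $D_J$ and that $\iota_*$ commutes with $\bnabla$. The first point follows from \cref{currentdecomp} applied on the compact Kähler manifold $D_J$; the second is standard, because $E$ is unitary flat and $\iota$ is a closed embedding. Alternatively, one could define the Gysin map via \cref{def:Gysin} with the residue lift $f$ of \cref{setting}, but the current description is cleaner and I would use it.
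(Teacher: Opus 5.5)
Your proof is correct and is essentially the paper's argument: split $d^2\alpha$ into components indexed by the pairs $\{i,j\}\subset I$, use \cref{lem:Gysin} to identify both composites $g_{I_i,j}\circ g_{I,i}(\alpha)$ and $g_{I_j,i}\circ g_{I,j}(\alpha)$ with the class of the pushforward to $D_{I\setminus\{i,j\}}$ of a closed representative of $\alpha$, and check that the two signs are opposite. You add two details that the paper leaves implicit: the count $\ep(I_j,i)=\ep(I,i)$ and $\ep(I_i,j)=\ep(I,j)-1$ for $i<j$, and the passage through a smooth (harmonic) representative of $-\iota_*\beta$ on $D_{I_i}$, which is needed before the second Gysin map can be applied.
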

	\begin{proof}
		It suffices to verify that  $d^2=0$. For any ordered multi-index $I$ and any distinct $i$ and $j$ in $I$, we denote by $K$ the ordered multi-index $I\backslash \{i,j\}$. Then     the summmand  of 
		$
		d^2(\alpha)
		$ in $H^{q+4}(D_{K},E)$ is given by
		$$
		(-1)^{\ep(I_i,j)} g_{I_i,j}\left( (-1)^{\ep(I,i)}g_{I,i}(\alpha)
		\right)+  (-1)^{\ep(I_j,i)} g_{I_j,i}\left( (-1)^{\ep(I,j)}g_{I,j}(\alpha)
		\right)$$
		By \cref{lem:Gysin}, we have
		$$
		g_{I_i,j} \circ g_{I,i}(\alpha)=  g_{I_j,i} \circ g_{I,j}(\alpha)=\{(\iota_{K,I})_*\eta\}
		$$
		where $\eta\in A^q(D_I,E)$ such that $\nabla\eta=0$, and  is a representative of $\alpha$, and $\iota_{I,K}:D_I\to D_K$ is the inclusion map. 
		We also note that
		$$
		(-1)^{\ep(I_i,j)}  (-1)^{\ep(I,i)} +  (-1)^{\ep(I_j,i)}  (-1)^{\ep(I,j)} =0.
		$$
		This implies that $d^2(\alpha)=0$.  Hence $(C^\bullet,d)$ is a chain complex. 
	\end{proof}
	
	\begin{deflem}\label{dfn:dgla} 
		We define  a bracket 
		$$
		C^{p,q}\times C^{k,\ell}\to C^{p+k,q+\ell}
		$$ 
		as follows.   
		Pick any $ \alpha \in A^q(D_I,E)$ and $ \beta \in A^\ell(D_J,E)$  with $|I|=p$ and $|J|=k$, that are both $\nabla$-closed.  We abusively denote by $\alpha$ and $\beta$ their cohomolgy classes.   We define  \begin{equation}\label{eq:Lie}
			[\alpha,\beta]:=\begin{cases}
				0 \quad \mbox{ if }  I\cap J\neq \varnothing\\
				(-1)^{\ep(IJ)+q|J|}\{[\iota_1^*\alpha,\iota_2^*\beta]\}   \quad \mbox{ if }  I\cap J= \varnothing 
			\end{cases}
		\end{equation} 
		Here, 
		\begin{itemize}
			\item $IJ$ denotes the multi-index obtained by appending $J$ to $I$
			\item    $K$ denotes the ordered multi-index corresponding to the set $I\cup J$. 
			\item  $\iota_1:D_K\to D_I$ and $\iota_2:D_K\to D_J$ denote the inclusion maps. 
		\end{itemize}  Then  the chain complex $(C^\bullet,d)$ endowed with the above  bracket is a  dgla.   
	\end{deflem} 
	\begin{proof}
		It is straightforward to see that the above bracket is a (graded) Lie algebra. Let us prove the graded Leibniz rule. 
		
		Pick any $i\in I$. Let $L$ be the ordered multi-index corresponding to the set $K\backslash \{i\}$. 
		Let $\eta\in A^{q+1}(D_{I_i},D_{I},E)$ be such that $\Res_{D_I}\eta=\alpha$.  Let $\iota_3:D_L\to D_{I_i}$ and $\iota_4:D_L\to D_J$.
		Then 
		$$\iota_3^*\eta\in A^{q+1}(D_L,D_K,E).$$ Hence
		$$
		[\iota_3^*\eta,\iota_4^*\beta]\in A^{q+\ell+1}(D_L,D_K,E)
		$$
		such that 
		$$
		\Res_{D_K}[\iota_3^*\eta,\iota_4^*\beta]=[\iota_1^*\alpha,\iota_2^*\beta]. 
		$$
		It follows that $\nabla[\iota_3^*\eta,\iota_4^*\beta]$ represents $$g_{K,i}(\{[\iota_1^*\alpha,\iota_2^*\beta]\})\in H^{q+\ell+2}(D_L,E),$$
		and thus
		$$
		g_{K,i}([\alpha,\beta])=(-1)^{\ep(IJ)+qk}\{\nabla[\iota_3^*\eta,\iota_4^*\beta]\}.
		$$
		On the other hand, note that 
		$$
		\nabla[\iota_3^*\eta,\iota_4^*\beta]=[\iota_3^*\nabla\eta,\iota_4^*\beta].
		$$ 
		Since we have 
		$$
		g_{I,i}(\alpha)=\{\nabla\eta\}\in H^{q+2}(D_{I_i},E),
		$$
		it follows that
		$$
		\{\nabla[\iota_3^*\eta,\iota_4^*\beta]\}=(-1)^{\ep(I_iJ)+(q+2)k}[g_{I,i}(\alpha),\beta]. 
		$$
		This implies that 
		\begin{align}\nonumber
			(d[\alpha,\beta])_L&=(-1)^{\ep(K,i)} g_{K,i}([\alpha,\beta])=(-1)^{\ep(K,i)+\ep(IJ)+qk}\{\nabla[\iota_3^*\eta,\iota_4^*\beta]\}.\\\nonumber
			& = (-1)^{\ep(K,i)+\ep(IJ)+qk+\ep(I_iJ)+(q+2)k}[g_{I,i}(\alpha),\{\beta\}]\\\nonumber
			&= (-1)^{\ep(K,i)+\ep(IJ)+qk+\ep(I_iJ)+(q+2)k+\ep(I,i)}[ (d\alpha)_{I_i},\{\beta\}]\\\label{eq:dg}
			&=[ (d\alpha)_{I_i},\{\beta\}].
		\end{align} 
		
		On the other hand,  pick any $i\in I$. Let $L$ be the ordered multi-index corresponding to the set $K\backslash \{i\}$.  Then
		\begin{align*}
			(d[\alpha,\beta])_L&= (-1)^{|\alpha||\beta|+1}(d[\beta,\alpha])_L\\
			& \stackrel{\eqref{eq:dg}}{=}(-1)^{|\alpha||\beta|+1}([(d\beta)_{J_i},\alpha])\\
			&=(-1)^{|\alpha||\beta|+1}(-1)^{|\alpha|(|\beta|+1)+1}([\alpha,(d\beta)_{J_i}])\\
			&=(-1)^{|\alpha|}([\alpha,(d\beta)_{J_i}]).
		\end{align*} 
		Hence, we have 
		$$
		d[\alpha,\beta]=[d\alpha,\beta]+(-1)^{|\alpha|}[\alpha,d\beta].
		$$
		This shows  the graded Leibniz rule.  The lemma is proved. 
	\end{proof}

	For any \emph{ordered} multi-index $I$, 
	we set  $\eta_{I}:=\Res_{D_I}\eta$. 
	If $I$ not not ordered, then we set 
	\begin{align}\label{eq:permu}
		\eta_I:=(-1)^{\ep(I)}\eta_{\sigma(I)}. 
	\end{align}  
	By \cref{lem:equiv} and \eqref{eq:permu}, we obtain the identity
	\begin{align}\label{eq:resicur3}
		\bnabla\eta_{I}=\nabla\eta_{I}+ \sum_{i\notin I} (\iota_{I,i})_*\eta_{Ii},
	\end{align}
	where $\iota_{I,i} \colon D_I \cap D_i \hookrightarrow D_I$ is the natural inclusion, and the subscript $Ii$ denotes the multi-index formed by appending $i$ to $I$. 
		
		Note that
		\begin{align}\label{eq:nabla0}
			\bnabla' \eta_I=\nabla'\eta_I= (-1)^{|I|}\Res_{D_I}(\nabla'\eta)=0.
		\end{align}  
		
		We are ready to construct the morphism of $\psi: L^\bullet\to C^\bullet $.     
		Consider any $\eta\in L^k$.  Define
		\begin{align} \label{eq:psi}
			\psi(\eta)=\left( (-1)^{|I|}\{\cH(\eta_I)\}\right)_{I}\in C^k
		\end{align} 
		where $I$ ranges over all ordered multi-index contained in $\{1,\ldots,m\}$, and  $\cH(I)\in A^{k-|I|}(D_I,E)$ is the harmonic projection of $\eta_I$. 
		\begin{proposition}\label{prop:1qis}
			The map $\psi:L^\bullet\to C^\bullet$ is a morphism of dgla, and is moreover a $1$-quasi-isomorphism.
		\end{proposition}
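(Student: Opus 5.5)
The plan is to handle the two assertions separately. First, $\psi$ is a morphism of dglas, which is a local residue computation combined with the $\d\db$-lemma. Second, $\psi$ is a $1$-equivalence, which is a comparison of the weight spectral sequences, using \cref{lem:1-equi}.

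\emph{Compatibility with the differentials.} Let $\eta\in L^k$, so $\nabla'\eta=0$, and recall that $\eta_I$ is $\nabla'$-closed by \eqref{eq:nabla0}. For any current $T$ on a compact stratum $D_I$ with $\bnabla' T=0$, the decomposition of \cref{currentdecomp} gives $T-\cH(T)=\bnabla'(\bnabla'^*GT)$. Hence $T-\cH(T)$ is $\bnabla'$-exact, and so is its push-forward under any inclusion of strata. Since $\cH$ kills currents that are $\bnabla$-, $\bnabla'$- or $\bnabla''$-exact, we have $\cH(\nabla''\eta_I)=\cH(\bnabla\eta_I-\sum_{i\notin I}(\iota_{I,i})_*\eta_{Ii})$ by \eqref{eq:resicur3}, and this equals $-\sum_{i\notin I}\cH((\iota_{I,i})_*\cH(\eta_{Ii}))$. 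By \cref{lem:Gysin}, the class of $-(\iota_{I,i})_*\cH(\eta_{Ii})$ is $g_{Ii,i}(\{\cH(\eta_{Ii})\})$. The remaining step is sign bookkeeping: combine the factor $(-1)^{|I|}$ in \eqref{eq:psi}, the permutation sign \eqref{eq:permu} needed to reorder $Ii$, and the signs $(-1)^{\ep(I,i)}$ in \eqref{eq:defdiff}. This yields $\psi(\nabla''\eta)=d\psi(\eta)$.

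\emph{Compatibility with the brackets.} Let $\eta,\theta\in L$. Working in admissible coordinates as in the proof of \cref{lem:exact 2}, I expand $[\eta,\theta]$ in the logarithmic factors $dz_i/z_i$. The residue $\Res_{D_K}[\eta,\theta]$ is then a signed sum, over the decompositions $K=I\sqcup J$, of $[\iota^*\eta_I,\iota^*\theta_J]$. Here the restrictions to $D_K$ are understood in the sense of \cref{lem:continuous} and \eqref{eq:restrictionabu}. Terms with $I\cap J\neq\varnothing$ do not occur, because $dz_i/z_i\wedge dz_i/z_i=0$; this matches the vanishing case of \eqref{eq:Lie}. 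To pass to harmonic projections, I write $\eta_I=\cH(\eta_I)+\bnabla'u$ with $u$ a current. By the argument of \cref{lem:exact0} (the cut-off argument, which works because the forms are $\nabla'$-closed with logarithmic poles), the bracket $[\bnabla'u,\theta_J]=\bnabla'[u,\theta_J]$ is $\bnabla'$-exact, so its harmonic projection vanishes. Therefore $\cH([\eta_I,\theta_J])=\cH([\cH\eta_I,\cH\theta_J])$. Checking that the signs agree with $(-1)^{\ep(IJ)+q|J|}$ is again bookkeeping.

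\emph{$1$-equivalence.} Degree $0$ is immediate: both sides are the flat sections $H^0(X,E)$. For degree $1$, I use $E_1$-degeneration as in \cref{lem:1-equi}, which gives $H^1(L)\simeq H^0(X,\Omega^1_X(\log D)\otimes E)\oplus H^{0,1}(X,E)$. On the other side, $H^1(C)=H^1(X,E)\oplus\ker\bigl(g:H^0(D^{(1)},E)\to H^2(X,E)\bigr)$. The map $\psi$ sends $\alpha$ to $\bigl(\{\cH(\alpha)\},-\Res_D\alpha\bigr)$. Injectivity and surjectivity then follow from the exact residue sequence $0\to H^1(X,E)\to H^1(X_0,E)\to\ker g\to 0$. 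Surjectivity onto $\ker g$ can be made explicit using the lift $f$ of \eqref{eq:f}, which is $\bnabla'$-exact and has residue $-\beta$.

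For degree $2$ (injectivity), which I expect to be the main obstacle, the cleanest route is through weight filtrations. I filter $L$ by $W$ (the number of log poles) and $C$ by $p=|I|$. Then $\psi$ is filtered, and on $\gr^W$ it induces the residue quasi-isomorphisms \eqref{eq:qis} followed by harmonic projection. So $\psi$ gives an isomorphism on the $E_1$ terms, and these are exactly Deligne's weight spectral sequence, whose $E_1$ page is $C$. Since $E$ is unitary, purity implies degeneration at $E_2$. On the $L$ side, degeneration is transported through the $1$-equivalence $\phi$ of \cref{lem:1-equi} together with the quasi-isomorphism of \cref{lem:fivelemma}. The subtle point is that $\phi$ is only a $1$-equivalence, so the comparison can be made only in degrees $\le 2$ and must control $H^2(L)$ rather than $H^2(A^\bullet(X,D,E))$. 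If the spectral-sequence argument proves awkward, I would fall back on a direct argument in the style of the proof of \cref{lem:1-equi}. Suppose $\psi(\alpha)=d c$. Then I subtract $f(c_1)$ and a harmonic form so as to kill the residues of $\alpha$, and conclude with the $\d\db$-lemma (\cref{lem:ddbar}) applied on $X$ and on the strata $D_i$.
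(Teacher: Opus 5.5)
Your chain-map and bracket compatibilities, and your treatment of $H^0$ and $H^1$, follow the paper's route; the five-lemma argument with the residue sequence is a harmless repackaging of the paper's direct computation with $f$. The gap is in the injectivity of $H^2(\psi)$.

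Your main route rests on the claim that on $\gr^W_k$ the map $\psi$ is ``the residue quasi-isomorphism \eqref{eq:qis} followed by harmonic projection'', and hence an isomorphism on $E_1$ pages. But \eqref{eq:qis} concerns $\gr^W_k$ of the full logarithmic complex $(A^\bullet(X,D,E),\nabla)$, where the sheaves are fine and taking global sections is exact. It says nothing about $\gr^W_k L^\bullet$ for $L^\bullet=(\ker\nabla',\nabla'')$. The sheaves of $\nabla'$-closed log forms are not fine, so computing $H^\bullet(\gr^W_k L^\bullet)$ is a global problem. You must lift $\nabla'$-closed classes on the strata to globally $\nabla'$-closed log forms with prescribed residues, and you must show that residue-free cycles are exact modulo $W_{k-1}L^\bullet$. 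These are logarithmic $\d\db$-type statements of exactly the kind the paper proves by hand (\cref{lem:continuous}, \cref{prop:regularity}), and you supply none of them. Note also that if the $E_1$ claim held, the comparison theorem for the finite weight filtrations would make $\psi$ a quasi-isomorphism in all degrees, because the spectral sequence of $C^\bullet$ degenerates trivially at $E_2$. No transport of degeneration through $\phi$ or \cref{lem:fivelemma} would then be needed, and neither of them gives a filtered comparison anyway. That your argument would prove far more than the $1$-equivalence the paper establishes signals that the real difficulty has been assumed rather than solved.

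Your fallback is the paper's route, but the sketch omits what makes it work.
\begin{itemize}
\item \emph{What is subtracted.} One subtracts $\nabla''f(a^{1,0})$, which is a harmonic $2$-form lying in $\nabla''L^1$; subtracting the $1$-form $f(c_1)$ from a $2$-form makes no sense. This step only kills harmonic projections, not residues.
\item \emph{Killing the residues.} This is a separate step. For $|I|=2$, $\eta_I$ is a $\nabla'$-closed $0$-form on the compact stratum $D_I$, hence flat, hence zero once its harmonic projection vanishes. Then $\eta_{(1)}$ is residue-free and harmonic on $D^{(1)}$, so it vanishes as well.
\item \emph{Finding a primitive in $L^1$.} The Hodge decomposition gives $\eta^{2,0}=\eta^{0,2}=0$ and $\{\eta^{1,1}\}=0$. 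After that, \cref{lem:ddbar} only yields $\eta^{1,1}=\bnabla''\bnabla'\sigma$ with $\sigma$ a current. This does not show $\{\eta\}=0$ in $H^2(L^\bullet)$: you need a logarithmic form $\omega\in A^1(X,D,E)$ with $\nabla'\omega=0$ and $\nabla''\omega=\eta^{1,1}$.
\end{itemize}
That primitive is exactly what \cref{prop:regularity} provides: $\omega=\bnabla''^*G\eta^{1,1}$ is a $\bnabla'$-exact log form. It applies because $\Res_D\eta^{1,1}=0$, which makes $\eta^{1,1}$ $\bnabla$-closed as a current. Without this regularity step, degree-$2$ injectivity is not established.
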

		\begin{proof}
			\noindent {\textbf{Step 1.}}	We first show that $\psi$ is a morphism of chain complexes. 
			
			Pick any $\eta\in L^k$. Note that 
			\begin{align}\label{eq:exchange}
				(\nabla'' \eta)_{I}=	 (-1)^{|I|}(\nabla'' \eta_{I})\stackrel{\eqref{eq:resicur3}}{=}(-1)^{|I|}(\bnabla  \eta_{I}+  \sum_{i\notin I} (\iota_{I,i})_*\eta_{Ii}).
			\end{align}
			It follows that
			$$
			\cH((\nabla'' \eta)_{I})=(-1)^{|I|}\cH(\sum_{i\notin I} (\iota_{I,i})_*\eta_{Ii}).$$
			Since $$\bnabla'\eta_{I}=(\bnabla'\eta)_{I}=0,$$ 
			it follows that
			$$
			0= \bnabla' \eta_{I}=\bnabla'\bnabla'^*  \bnabla' G\eta_{I}=\Delta' \bnabla' G\eta_{I}-\bnabla'^* \bnabla' \bnabla' G\eta_{I}=\Delta' \bnabla' G\eta_{I}
			$$
			Therefore, $ \bnabla' G\eta_{I}=0$, and we have  
			\begin{align}\label{eq:exactI}
				\eta_{I}=\cH(\eta_{I})+\bnabla' \bnabla'^* G\eta_{I}.
			\end{align} 
			Hence for any $i\notin I$, we have 
			$$
			(\iota_{I,i})_*\eta_{Ii}=(\iota_{I,i})_*\cH(\eta_{Ii})+\bnabla'(\iota_{I,i})_* \left(\bnabla'^* G\eta_{Ii} \right),
			$$
			which yields
			$$
			\cH\left((\iota_{I,i})_*\eta_{Ii}\right)=	\cH\left((\iota_{I,i})_*\cH(\eta_{Ii}) \right).
			$$
			Therefore, we have
			$$
			\cH((\nabla'' \eta)_{I})=(-1)^{|I|}\sum_{i\notin I}\cH( (\iota_{I,i})_*\cH(\eta_{Ii})).$$
			By \cref{lem:Gysin}, we have
			\begin{align}\label{eq:com2}
				\{\cH\left( (\iota_{I,i})_*\cH(\eta_{Ii})\right)\}=-g_{K(i)}(\{\cH(\eta_{Ii})\})
			\end{align}
			where we denote by $K(i)$ the ordered multi-index corresponding to the set $I\cup \{i\}$, and 
			$$
			g_{K(i)}:H^q(D_{K(i)},E)\to H^{q+2}(D_{I},E)
			$$
			be the   Gysin morphism defined in \eqref{def:Gysin}.
			This implies that
			$$
			\{ \cH((\nabla'' \eta)_{I})\}=(-1)^{|I|+1}\sum_{i\notin I}g_{K(i)}(\{\cH(\eta_{Ii})\})$$
			Hence 
			$$
			\psi(\nabla''\eta)_I=-\sum_{i\notin I}g_{K(i)}(\{\cH(\eta_{Ii})\})
			$$
			
			On the other hand,  by \eqref{eq:defdiff}, we note that 
			\begin{align*}
				\left(	d\psi(\eta)\right)_I&=     \sum_{i\notin I} (-1)^ {\ep(K(i),i)}g_{K(i)}\left((-1)^{|I|+1}\{\cH( \eta_{K(i)})\}\right)\\
				&	\stackrel{\eqref{eq:permu}}{=}  \sum_{i\notin I} (-1)^ {\ep(K(i),i)+\ep(Ii)+|I|+1}g_{K(i)}\left(\{\cH( \eta_{Ii})\}\right)\\
				&	=-\sum_{i\notin I}g_{K(i)}(\{\cH(\eta_{Ii})\})
			\end{align*}
			This implies that
			$$
			d\psi(\eta)=\psi(\nabla''\eta).
			$$
			Hence $\psi$ is a morphism of complexes.

			\medskip
			
			\noindent {\textbf{Step 2.}} We show  $\psi$ preserves the Lie brackets.
			
			For any $\eta\in L^k$ and $\sigma\in L^\ell$, we have
			$$
			\Res_{D_K}[\eta,\sigma]=\sum_{I,J}(-1)^{\ep(IJ)+|J|(k-|I|)}[\iota_1^*\eta_I,\iota_2^*\sigma_J]
			$$
			where the sum varies among all ordered multi-indicies $I$ and $J$ such that $I\cap J=\varnothing$, and $K=I\cup J$.  Here we denote by  $\iota_1:D_K\to D_I$ and $\iota_2:D_K\to D_J$ the inclusion maps. 
			
			By \eqref{eq:exactI}, we have
			$$
			\cH(\Res_{D_K}[\eta,\sigma])=\sum_{I,J}(-1)^{\ep(IJ)+|J|(k-|I|)}\cH\left([\iota_1^*\cH(\eta_I),\iota_2^*\cH(\sigma_J)]\right).
			$$
			On the other hand, by \eqref{eq:Lie}, we have
			$$
			[\{H(\eta_I)\},\{\cH(\sigma_J)\}]=\sum_{I,J}(-1)^{\ep(IJ)+|J|(k-|I|)}\{\left([\iota_1^*\cH(\eta_I),\iota_2^*\cH(\sigma_J)]\right)\}.
			$$ 
			which yields
			$$
			[\psi(\eta),\psi(\sigma)]_K=	\sum_{I,J}(-1)^{\ep(IJ)+(k-|I|)|J|}  \{\left([(-1)^{|I|}\iota_1^*\cH(\eta_I),(-1)^{|J|}\iota_2^*\cH(\sigma_J)]\right)\},
			$$
			where the sum  is also taken as above.  It follows that
			$$
			[\psi(\eta),\psi(\sigma)]_K=(-1)^{|I|+|J|}\{\cH(\Res_{D_K}[\eta,\sigma])\}=(-1)^{|K|}\{\cH(\Res_{D_K}[\eta,\sigma])\}= 		 \psi([\eta,\sigma])_K.
			$$
			Hence $\psi$ preserves the Lie bracket. In conclusion, $\psi$ is  a morphism of dglas.
			
			\medspace
			
			\noindent {\textbf{Step 3.}} We show that  $H^i(\psi)$ is an isomorphim for $i=0,1$.
			
			Note that $d:C^0\to C^1$ is the zero map. Therefore, $H^0(\psi):H^0(L^\bullet)\to H^0(C^\bullet)$ is an identity map. 
			Consider $H^1(\psi):H^1(L^\bullet)\to H^1(C^\bullet)$. By  the proof of \cref{lem:1-equi},   any element  $\alpha\in H^1(L^\bullet)$ has a unique representative 
			$$
			(\eta^{1,0},\eta^{0,1})\in H^0(X,\Omega_X(\log D)\otimes E)\oplus \cH^{0,1}(X, E). 
			$$
			Write $\eta:=\eta^{1,0}+\eta^{0,1}$, and $\eta_{(1)}:=\Res_D\eta\in H^0(D^{(1)},E)$. Then 
			$$
			\bnabla \eta^{1,0}=\iota_*\eta_{(1)}, 
			$$ 
			and $f(\eta_{(1)})\in A^{1,0}(X,D,E)$ where $f$ defined in \eqref{eq:f} satisfies 
			$$
			\bnabla f(\eta_{(1)})=\iota_*\eta_{(1)},
			$$
			and $f(\eta_{(1)})$ is $\bnabla'$-exact. It follows that
			$$
			\Delta''	(\eta^{1,0}- f(\eta_{(1)})=0 
			$$
			which yields
			$$
			\eta^{1,0}- f(\eta_{(1)})\in H^0(X,\Omega_X\otimes E). 
			$$ 
			In particular, 
			$$
			\eta^{1,0}- f(\eta_{(1)})= \cH(\eta^{1,0}- f(\eta_{(1)}))= \cH(\eta^{1,0}). 
			$$ 
			Hence we have
			$$
			\psi(\eta)=(\{\eta^{1,0}- f(\eta_{(1)})+\eta^{0,1}\},-\{\eta_{(1)}\}). 
			$$
			Thus  if $\{\psi(\eta)\}=0$ in $H^1(C^\bullet)$, then 
			$$\eta_{(1)}=0,\quad  \eta^{1,0}- f(\eta_{(1)})=0, \quad \eta^{0,1}=0.$$  This implies that 
			$
			\eta=0.
			$   We obtain the injectivity of $H^1(\psi)$.  
			
			Let $( a^{0,1},a^{1,0})\in  C^{0,1}\oplus C^{1,0}$ such that $d( a^{0,1},a^{1,0} )=0$. It follows that 
			$$
			g(a^{1,0})=0 
			$$
			where $g:H^0(D^{(1)},E)\to H^2(X,E)$ is the Gysin morphism. 
			By the construction of $f$ in \eqref{eq:f}, this implies that  
			$$
			\bnabla f(a^{1,0})=\iota_*a^{1,0}. 
			$$
			In particular,   $$f(a^{1,0})\in H^0(X,\Omega_X\otimes E).$$ 
			Let $\eta\in \cH^1(X,E)$ be a representative of $a^{0,1}\in H^1(X,E)$. Then   $\eta+f(a^{1,0})\in L^1$, such that  
			$$
			\psi(\eta-f(a^{1,0}))=(\{\eta-f(a^{1,0})\} ,-\{\cH(\Res_{\eta-f(a^{1,0})})\}=(a^{0,1},a^{1,0}). 
			$$
			Here we use the fact that $f(a^{1,0})$ is $\bnabla'$-exact. 
			This shows that $H^1(\psi)$ is surjective. 
			
			\medskip
			
			\noindent {\textbf{Step 4.}}  	We now prove that $H^2(\psi)$ is injective.  Pick any $\eta\in L^2$ such that $\nabla''\eta=0$.  We decompose $\eta=\eta^{2,0}+\eta^{1,1}+\eta^{0,2}$, where $\eta^{i,2-i}\in A^{i,2-i}(X,D,E)$. Then we have 
			\begin{align*}
				\nabla''\eta^{i,2-i}=0,\quad 			\nabla'\eta^{i,2-i}=0.
			\end{align*} 
			This implies that $\eta^{2,0}\in H^0(X,\Omega^2_X(\log D)\otimes E)$ and $\eta^{0,2}\in \cH^{0,2}(X,E)$.

			If $\psi(\eta)=d(a^{0,1},a^{1,0})$  for some $(a^{0,1},a^{0,1})\in  C^{0,1}\oplus C^{1,0}$, then we have
			$
			\psi(\eta)\in C^{0,2},
			$ 
			which yields that 
			\begin{align}\label{eq:three}
				\{ \cH(\eta)\}=g(a^{1,0}), \quad \{ \cH(\eta_{(1)})\}=0,  \quad \{ \cH(\eta_{I})\}=0   
			\end{align} 
			for each multi-index $I$ with $|I|=2$     by \eqref{eq:defdiff}.
			By the definition of $f$ in \eqref{eq:f}, we have 
			$$
			\bnabla f(a^{1,0})=\cH(g(a^{1,0}))+\iota_*(a^{1,0})\stackrel{\eqref{eq:three}}{=} \cH(\eta)  +\iota_*(a^{1,0}).
			$$
			Note that $\nabla''f(a^{1,0})=\cH(\eta)\in A^{1,1}(X,E)$, that is $\nabla$-closed.  It follows \eqref{eq:three} that 
			$$
			\psi(\eta-\nabla''f(a^{1,0}))=0. 
			$$
			Therefore, we can replace $\eta$ by $\eta-\nabla''f(a^{1,0})$ and   assume that 
			\begin{align}\label{eq:three2}
				\{ \cH(\eta)\}=0, \quad \{ \cH(\eta_{(1)})\}=0,  \quad \{ \cH(\eta_{I})\}=0.  
			\end{align} 
			for each multi-index $I$ with $|I|=2$. Fix any such $I$.   
			Since  $\nabla'\eta=0$, it follows that
			$$
			\nabla'\eta_I=0. 
			$$
			\eqref{eq:three2} then  implies that $\eta_I=\eta^{2,0}_{I}=0$.  Note that 
			$$
			\eta_{(1)}=\eta^{2,0}_{(1)}+\eta^{1,1}_{(1)} 
			$$
			with 
			$$
			\eta^{2,0}_{(1)}\in  H^0(D^{(1)},\Omega_{D^{(1)}}(\log D_{(1)})\otimes E)
			$$
			and 
			$$
			\eta^{1,1}_{(1)}\in  A^{0,1}(D^{(1)},E). 
			$$ 
			By \eqref{eq:exchange}, we have 
			$$
			\nabla'' \eta_{(1)}= -(\nabla'' \eta)_{(1)}=0, \quad  \nabla' \eta_{(1)}= -(\nabla' \eta)_{(1)}=0. 
			$$
			This implies that  $$
			\eta^{1,1}_{(1)}\in  \cH^{0,1}(D^{(1)},E). 
			$$
			Since
			$$ \{ \cH(\eta_{(1)})\}=0,  \quad \{ \cH(\eta_{ij})\}=\{ \cH(\Res_{D_{i\cap D_j}}\eta_{i})\}=0\quad \mbox{ for any }i<j,$$
			the  arguments  in Step 3 in proving the injectivity of $H^1(\psi)$ show that $\eta_{(1)}=0$.  Hence, we have
			$$
			\eta^{2,0}\in H^0(X,\Omega_X^2\otimes E),\quad {\rm Res}_D\eta^{1,1}=0.   
			$$
			Hence
			$$
			\bnabla \eta^{1,1}=\bnabla'' \eta^{1,1}=\nabla''\eta^{1,1}=0.
			$$
			Note that $$\cH(\eta^{1,1})\in \cH^{1,1}(X,E).$$ 
			By \eqref{eq:three2} together with the Hodge composition of harmonic forms   $$\cH^2(X,E)=\cH^{2,0}(X,E)\oplus \cH^{1,1}(X,E)\oplus \cH^{0,2}(X,E),$$ we deduce that 
			$$
			\eta^{2,0}=0,\quad \eta^{0,2}=0,\quad \{\eta^{1,1}\}=0\in H^{2}(X,E). 
			$$
			By \cref{lem:ddbar}, we have
			$$
			\eta^{1,1}=\bnabla''\bnabla'\sigma
			$$
			for some $\sigma\in \sD^0(X,E)$.  By \cref{prop:regularity}, there exists $\omega\in A^1(X,D,E)$ such that 
			$$ \bnabla''\omega=\eta^{1,1}. 
			$$
			Hence by \cref{lem:equiv}, $\nabla''\omega=\eta^{1,1}.$ This implies that 
			$\{\eta\}=0\in H^2(L^\bullet)$.  The injectivity of $H^2(\psi)$ is proved. 
			
			The proof of the proposition is accomplished. 
		\end{proof}

		In summary, we obtain the following result:
		\begin{thm}\label{thm:zigzag}
			Let $T^\bullet := (A^\bullet(X,D,E), \nabla)$ be the dgla of logarithmic forms. Let $L^\bullet := (\ker \nabla', \nabla'')$ be the dgla defined previously, and let $(C^\bullet, d)$ be the dgla introduced in \Cref{dfn:dgla}. Let $\varepsilon : T^0 \to E_x$, $\varepsilon : L^0 \to E_x$, and $\varepsilon : C^0 \to E_x$ denote the evaluation maps at $x$, which define the corresponding augmentations.
			
			Then, the inclusion map $\phi: L^\bullet \to T^\bullet$ and the map $\psi: (L^\bullet, \nabla'') \to (C^\bullet, d)$ defined in \eqref{eq:psi} are morphisms of dglas, and both are $1$-quasi-isomorphisms. In particular, the diagram
			$$
			T^\bullet \xleftarrow{\phi} L^\bullet \xrightarrow{\psi} C^\bullet
			$$
			establishes a zigzag of $1$-quasi-equivalences connecting $(T^\bullet, d)$ and $(C^\bullet, d)$. \qed
		\end{thm}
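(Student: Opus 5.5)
This theorem is essentially a summary of results already established, so the plan is to assemble them and check the few compatibilities that have not been stated explicitly.

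First I will verify that $\phi$ and $\psi$ are morphisms of dglas.

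For $\phi$: the subspace $L^\bullet=\ker\nabla'\cap A^\bullet(X,D,E)$ is closed under the bracket, since $\nabla'$ is a graded derivation with respect to $[\cdot,\cdot]$. On $\ker\nabla'$ we have $\nabla=\nabla''$, so the inclusion intertwines the differentials $\nabla''$ and $\nabla$. Hence $\phi$ is a dgla morphism. The fact that $\phi$ is a $1$-equivalence is exactly \cref{lem:1-equi}.

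For $\psi$: \cref{prop:1qis} shows that $\psi$ commutes with the differentials (Step 1), preserves brackets (Step 2), induces isomorphisms on $H^0$ and $H^1$ (Step 3), and is injective on $H^2$ (Step 4). So $\psi$ is a dgla morphism and a $1$-equivalence.

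The one point I must add is compatibility with the augmentations, since \cref{def:1qis} requires a zigzag of $\kg$-augmented dglas.

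For $\phi$ this is immediate: $\phi$ is the identity in degree $0$, and both augmentations are evaluation at $x$.

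For $\psi$, I first identify $L^0$. An element $s\in L^0$ is a logarithmic $0$-form, hence a smooth section of $E$, with $\nabla's=0$. Since $E$ is unitary flat, $s$ is then anti-holomorphic, and on compact $X$ this forces $s$ to be flat, so $L^0=H^0(X,E)$. Next, in degree $0$ the only component of $\psi$ is the one with $I=\varnothing$, namely $\psi(s)=\{\cH(s)\}=s$. Thus $\psi|_{L^0}$ is the identity onto $C^0=H^0(X,E)$. Evaluation at $x$ therefore commutes with $\psi$, and $\psi$ is a morphism of $\kg$-augmented dglas.

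With these checks, $T^\bullet\xleftarrow{\phi}L^\bullet\xrightarrow{\psi}C^\bullet$ is a zigzag of augmented $1$-equivalences, i.e. a $1$-quasi-isomorphism in the sense of \cref{def:1qis}.

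The only step needing some care is the identification $L^0=H^0(X,E)$. I would establish it via the $\partial\bar\partial$-lemma (\cref{lem:ddbar}), or directly by pluriharmonicity: a section killed by $\nabla'$ satisfies $\nabla''\nabla'' s=0$ trivially and $\nabla'\nabla'' s=-\nabla''\nabla' s=0$, so $s$ is pluriharmonic, hence flat on compact $X$. Everything else is bookkeeping on top of \cref{lem:1-equi} and \cref{prop:1qis}.
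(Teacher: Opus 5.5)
Your proposal is correct and follows the paper's route. The theorem is stated in the paper as a summary of \cref{lem:1-equi} (for $\phi$) and \cref{prop:1qis} (for $\psi$), and you assemble it in exactly that way. Your extra verification that both maps respect the augmentations is sound: you identify $L^0=H^0(X,E)$ via pluriharmonicity (or $\Delta'=\Delta''$), and note that $\psi|_{L^0}$ is the identity because all residue components vanish in degree $0$. The paper leaves this check implicit, but it is needed for \cref{def:1qis}.
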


		\subsection{Constructing  the universal connection (I)}\label{subsec:uni1}
		In this subsection, we construct the \emph{universal} connection on the representation variety, in the sense specified in \cref{thm:universal2}. This subsection is dedicated to the analytic construction; we defer the proof of its ``universal property'' to \cref{thm:universal2}, using the Goldman–Millson theory  in \cref{sec:GM} and the zigzags of $1$-quasi-equivalences established in \cref{sec:qis}. 
		
		Fix a basis $\alpha_1,\ldots,\alpha_p\in \cH^1(X,E)$, and a basis $\beta_1,\ldots,\beta_q\in H^0(D^{(1)},E)$. 
		Consider the polynomial ring $\bC[z_1,\ldots,z_p,w_1,\ldots,w_q]$. 
		Let $\mathscr{Q}'\subset \bC^{p+q}$ be the quasi-homogeneous cone defined by
		\begin{align}  \label{eq:quasihomo}
			d(\sum_{\gamma=1}^{q}w_\gamma \beta_\gamma)+ \frac{1}{2}[\sum_{i=1}^{p}z_i\alpha_i,\sum_{i=1}^{p}z_i\alpha_i]  =0\in C^2 
		\end{align} where   $d \colon C^1 \to C^2$ is the differential of the dgla $(C^\bullet,d)$ defined in \eqref{eq:defdiff}, and $[-,-]$ is the Lie bracket defined in \eqref{eq:Lie}.
		Let $f(\beta_\gamma)\in A^1(X,D,E)$ be defined in \eqref{eq:f}.   Then $\Res_Df(\beta_\gamma)=-\beta_\gamma$ and we have $\nabla f(\beta_\gamma)\in \cH^2(X,E)$   by our construction.  Let us denote by $\eta_{i,0}:=\alpha_i$ and $\eta_{0,\gamma}:= f(\beta_\gamma)$. 
		
		For any $(z,w):=(z_1,\ldots,z_p;w_1,\ldots,w_q)\in \sQ'$, we define
		\begin{align}\label{eq:eta12}
			\eta_1(z,w)&:=\sum_{i=1}^{p}z_i\eta_{i,0}\\
			\eta_2(z,w)&:=\sum_{\gamma=1}^{q}w_\gamma\eta_{0,\gamma}-\frac{1}{2}\nabla''^*G [\eta_1(z,w),\eta_1(z,w)]\\ \nonumber 
			&=\sum_{\gamma=1}^{q}w_\gamma\eta_{0,\gamma}-\frac{1}{2}\sum_{1\leq i,j\leq p}z_iz_j\nabla''^*G [\eta_{i,0},\eta_{j,0}]
		\end{align}
		We proceed by induction on $k$. Let $k \geq 3$ and assume that $\eta_j(z,w) \in A^1(X,D,E)\cap \mathrm{Im}\bnabla'$ has been defined for all $1 \leq j \leq k-1$. Then the $2$-form $\gamma_k(z,w)$ given by
		\begin{align} 
			\gamma_k(z,w) &:= - \sum_{j=1}^{k-1}\eta_j(z,w)\wedge \eta_{k-j} (z,w),
		\end{align} 
		lies in $A^2(X,D,E)$. Next, we define $\eta_k(z,w)$ using the Green operator:
		\[
		\eta_k(z,w):=\bnabla''^*G\gamma_k(z,w). 
		\] 
		Note that \eqref{eq:quasihomo} is exactly the three equations in \cref{thm:construction}. 	By \cref{thm:construction}, when $(z,w)\in \sQ'$,  this $\eta_k(z,w)$ satisfies the regularity condition
		\[
		\eta_k(z,w)\in A^1(X,D,E)\cap \mathrm{Im}\bnabla',
		\]
		and solves the equation 
		\[
		\bnabla \eta_k(z,w)=\gamma_k(z,w). 
		\]
		Moreover, there exists some sufficiently small $\ep>0$ such that,  for $(z,w)\in \sQ'_\ep:=\sQ'\cap \bB_\ep^{p+q}$, where $\bB_{\ep}^{p+q}$ is the   ball in $\bC^{p+q}$ of radius $\ep$, 
		the connection
		$$
		\nabla_{z,w}:=\nabla+\sum_{i=1}^{\infty}\eta_i(z,w). 
		$$
		is flat over $X_0$ and has logarithmic poles. 
		
		Let us write explicitly the formula of $\eta_i(z,w)$.   
		Inductively, for any multi-indices $I=(i_1, \ldots, i_k)$ and $\Gamma=(\gamma_1, \ldots, \gamma_\ell)$ with entries satisfying $1 \le i_j \le p$ and $1 \le \gamma_m \le q$, we define 1-forms on $X_0$ by \begin{align}  \label{eq:construction2}
			\Phi_{I,\Gamma}&:=	-(\sum_{\substack{I=(I_1,I_2); \Gamma=(\Gamma_1,\Gamma_2)\\ 1\leq |I_1|+2 |\Gamma_1|< |I_2|+ 2|\Gamma_2|  }}[\eta_{I_1,\Gamma_1},\eta_{I_2,\Gamma_2}]+\frac{1}{2}\sum_{\substack{I=(I_1,I_2); \Gamma=(\Gamma_1,\Gamma_2)\\   |I_1|+ 2|\Gamma_1|= |I_2|+ 2|\Gamma_2|  }}[\eta_{I_1,\Gamma_1},\eta_{I_2,\Gamma_2}])\\ \label{eq:construction}
			\eta_{I,\Gamma} &:= \bnabla''^*G\Phi_{I,\Gamma}
		\end{align}
		Note that the $\eta_{I,\Gamma}$ might not extend to log 1-forms on $A^1(X,D,E)$! 
		
		Then for any $k\geq 1$, we have 
		$$
		\eta_k(z,w)=\sum_{ |I|+2|\Gamma|=k}z_Iw_\Gamma  	\eta_{I,\Gamma},
		$$
		where we denote by $z_I:=z_{i_1}\cdots z_{i_{|I|}}$, and $w_\Gamma:=w_{\gamma_1}\cdots w_{\gamma_{|\Gamma|}}$.  
		
		
		We list one fact that will be used later. 
		\begin{lem}\label{lem:nablaexact}
			For any $k \geq 2$ and any $(z,w)\in \sQ'_\ep$,  $\eta_{k}(z,w)$ is $\bnabla'$-exact.   
		\end{lem}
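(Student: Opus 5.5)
We argue by separating the case $k=2$, where $\eta_2(z,w)$ is given by an explicit formula, from the case $k\ge 3$, where $\eta_k(z,w)$ is produced by the Green operator. The main input is \cref{thm:construction}, applied to the data $\eta_1:=\eta_1(z,w)=\sum_i z_i\alpha_i$ and $\beta:=\sum_\gamma w_\gamma\beta_\gamma$. For $(z,w)\in\sQ'$ these data satisfy conditions \ref{item:condition1}--\ref{item:condition3} of that theorem. The reason is that the $C^2$-component of \eqref{eq:quasihomo} in $C^{0,2}$ is exactly $g(\beta)+\frac12\{[\eta_1,\eta_1]\}=0$. Its component in $C^{1,1}$ gives $\{[\iota^*\eta_1,\beta]\}=0$, up to the signs fixed in \eqref{eq:defdiff} and \eqref{eq:Lie}. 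Its component in $C^{2,0}$ gives the commutation relations \eqref{eq:commuteresidue}. Checking these signs is routine bookkeeping.

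\textbf{Case $k=2$.} By linearity of $f$ in \eqref{eq:f}, we have $\sum_\gamma w_\gamma\eta_{0,\gamma}=f(\beta)$. This form is $\bnabla'$-exact by its definition as $\frac{\sn}{2\pi}\bnabla'(\cdots)$. The second term of $\eta_2$ satisfies
\[
\nabla''^*G[\eta_1,\eta_1]=\nabla'\nabla'^*G\nabla''^*G[\eta_1,\eta_1].
\]
This identity follows from \eqref{eq:basicgreen}: we have $\cH\nabla''^*=0$, and $\nabla'^*$ anticommutes with $\nabla''^*$ and commutes with $G$, so $\nabla'^*\nabla''^*G[\eta_1,\eta_1]$ is orthogonal to the relevant terms. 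Hence this term is also $\nabla'$-exact, exactly as in Step 1 of the proof of \cref{thm:construction}. Therefore $\eta_2(z,w)$ is $\bnabla'$-exact.

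\textbf{Case $k\ge 3$.} We proceed by induction on $k$. Assume $\eta_j(z,w)\in A^1(X,D,E)\cap\Im\bnabla'$ for $2\le j\le k-1$. Steps 2--4 of \cref{thm:construction} show that $\gamma_k(z,w)$ lies in $A^2(X,D,E)$, is $\bnabla$-closed, and is $\bnabla'$-exact. The exactness comes from \cref{lem:exact0}, using that $\eta_1$ is harmonic, and hence $\bnabla'$-closed, and that all other factors are exact. The closedness uses the Jacobi identity together with the vanishing of residues proved via \cref{lem:residue} and \cref{lem:ddbar}. Then \cref{prop:regularity} gives
\[
\eta_k(z,w)=\bnabla''^*G\gamma_k=\bnabla'\bnabla'^*G\bnabla''^*G\gamma_k,
\]
which is $\bnabla'$-exact and lies in $A^1(X,D,E)$.

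\textbf{Main obstacle.} The key step is the identity $\bnabla''^*G\gamma=\bnabla'(\bnabla'^*G\bnabla''^*G\gamma)$ at the level of currents. It needs the hypothesis that $\gamma_k$ is $\bnabla'$-exact, so that $\cH\gamma_k=0$ and $\bnabla'\gamma_k=0$; this is why the $\bnabla'$-exactness of $\gamma_k$ must be carried along in the induction. The induction hypothesis is exactly the regularity statement of \cref{thm:construction}(i), so it suffices to observe that the theorem applies uniformly to every $(z,w)\in\sQ'$. The restriction to the ball $\bB_\ep^{p+q}$ is needed only for convergence of the series, not for exactness of each individual term.
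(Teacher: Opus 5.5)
Your proposal is correct and takes the same route as the paper, which simply deduces the lemma from \cref{thm:construction}(i) after observing that the $C^{0,2}$, $C^{1,1}$ and $C^{2,0}$ components of \eqref{eq:quasihomo} are exactly hypotheses (a)--(c); you unpack Steps 1--4 of that proof. One small correction in your $k=2$ case: the identity $\nabla''^*G[\eta_1,\eta_1]=\nabla'\nabla'^*G\nabla''^*G[\eta_1,\eta_1]$ holds because $\nabla'G\nabla''^*G\gamma=-G\nabla''^*G\nabla'\gamma$ and $\nabla'[\eta_1,\eta_1]=0$ (as $\nabla'\eta_1=0$), so the $\nabla'^*\nabla'G$-term of the Hodge decomposition vanishes; it does not follow from any anticommutation of $\nabla'^*$ with $\nabla''^*$.
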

		\begin{proof}
			It follows from \cref{thm:construction}. 
		\end{proof}
		
		We extend \cref{thm:construction} to the universal setting as follows. 
		\begin{thm}\label{main1}
			Let $X$ be a compact K\"ahler manifold and let $D$ be a simple normal crossing divisor on $X$.  Let $(V,\nabla)$ be a unitary bundle on $X$. Denote by $(E,\nabla):=(\End(V),\nabla)$.    
			
			Fix a basis $\eta_{1,0},\ldots,\eta_{p,0}\in \cH^1(X,E)$, and   a basis $\beta_1,\ldots,\beta_q\in H^0(D^{(1)},E)$.  Set $\eta_{0,\gamma}:=f(\beta_\gamma)$ where $f:H^0(D^{(1)},E)\to A^1(X,D,E)$ is defined in \eqref{eq:f}. For any multi-indices $I$ with entries in $\{1, \ldots, p\}$ and $\Gamma$ with entries in $\{1, \ldots, q\}$, we define inductively $E$-valued smooth 1-forms on $X_0$:
			\begin{align}\label{eq:etainductive}
				\eta_{I,\Gamma} :=-\bnabla''^*G(\sum_{\substack{I=(I_1,I_2); \Gamma=(\Gamma_1,\Gamma_2)\\ 1\leq |I_1|+ 2|\Gamma_1|< |I_2|+2 |\Gamma_2|  }}[\eta_{I_1,\Gamma_1},\eta_{I_2,\Gamma_2}]+\frac{1}{2}\sum_{\substack{I=(I_1,I_2); \Gamma=(\Gamma_1,\Gamma_2)\\   |I_1|+ 2|\Gamma_1|= |I_2|+2 |\Gamma_2|  }}[\eta_{I_1,\Gamma_1},\eta_{I_2,\Gamma_2}])
			\end{align} 
			For any $k\geq 1$, we define
			\begin{align}\label{eq:etak}
				\eta_k(z,w)=\sum_{ |I|+ 2|\Gamma|=k }z_Iw_\Gamma  	\eta_{I,\Gamma}, 
			\end{align}
			where we write $z_I:=z_{i_1}\cdots z_{i_{|I|}}$, and $w_\Gamma:=w_{\gamma_1}\cdots w_{\gamma_{|\Gamma|}}$,
			and
			\begin{align}\label{eq:universal}
				\nabla_{z,w}:=\nabla+\sum_{i=1}^{\infty}\eta_k(z,w).   
			\end{align}  
			Then for every $(z,w)\in \sQ'\cap \bB_\ep^{p+q}$,    $\eta_k(z,w)$ extend to logarithmic 1-forms in $\in A^1(X,D,E)$ such that    $\sum_{i=1}^{\infty}\eta_k(z,w) $ converges and lies in $A^1(X,D,E)$. 
			
			Let  $\sQ'\subset  \bC^{p+q}$ be the quasi-homogeneous cone defined by the following three equations with values in $H^2(X,E)$, $H^1(D^{(1)},E)$ and $H^0(D^{(2)},E)$ respectively: 
			\begin{thmlist} 
				\item\label{item:condition1'}  $$g(\sum_{i=1}^{q}w_i\beta_i)+\frac{1}{2}\{[\sum_{i=1}^{p}z_i\eta_{i,0},\sum_{i=1}^{p}z_i\eta_{i,0}]\}=0,$$ where $g:H^0(D^{(1)},E)\to H^2(X,E)$ is the Gysin morphism defined in \cref{def:Gysin}. 
				\item \label{item:condition2'}  
				\begin{align*} 
					\{	[\iota^*\sum_{i=1}^{p}z_i\eta_{i,0},\sum_{j=1}^{q}w_j\beta_j]\}=0\in H^1(D^{(1)},E),
				\end{align*} where $\iota:D^{(1)}\to X$ denotes  the natural morphism.
				\item \label{item:condition3'}For any $i\neq j$,  
				\begin{align*} 
					[\iota_{ij}^*(\sum_{\ell=1}^{q}w_k \beta_k|_{D_i}),\iota_{ji}^*(\sum_{k=1}^{q}w_k \beta_k|_{D_i}) ]=0\in H^0(D_i\cap D_j,E),
				\end{align*} where $\iota_{ij}:D_i\cap D_j\to D_i$ is the natural morphism.
			\end{thmlist}
			Then there exists some $\ep>0$ such that,   $\nabla_{z,w}$  is an analytic family of flat connections for $V|_{X_0}$ with logarithmic poles  around $D$ parametrized by $(z,w)\in \sQ'_\ep:=\sQ'\cap \bB_\ep^{p+q}$. 
		\end{thm}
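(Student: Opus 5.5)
The plan is to reduce \cref{main1} to \cref{thm:construction}, applied pointwise in the parameter, and then to upgrade the pointwise statement to a uniform, analytic one. Fix $(z,w)\in\sQ'$ and put $\eta_1:=\sum_i z_i\eta_{i,0}\in\cH^1(X,E)$ and $\beta:=\sum_\gamma w_\gamma\beta_\gamma\in H^0(D^{(1)},E)$. The three equations \cref{item:condition1',item:condition2',item:condition3'} are then literally conditions \ref{item:condition1}--\ref{item:condition3} of \cref{thm:construction} for the pair $(\eta_1,\beta)$. Since $f$ is linear, the forms $\eta_k(z,w)$ of \eqref{eq:etak} agree with the forms produced in the proof of \cref{thm:construction}.

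To check this agreement, I will compare the recursion \eqref{eq:etainductive} with the one in \cref{thm:construction}. Grouping the multi-index terms by weight $|I|+2|\Gamma|=k$, the sum of the brackets in \eqref{eq:etainductive} over the ordered and half-weighted splittings reproduces $\tfrac12\sum_{j=1}^{k-1}[\eta_j,\eta_{k-j}]=\sum_j\eta_j\wedge\eta_{k-j}$. This is the usual bookkeeping, and I expect only sign and normalisation conventions to need care. Hence, for every $(z,w)\in\sQ'$, \cref{thm:construction}(i) gives $\eta_k(z,w)\in A^1(X,D,E)\cap\Im\bnabla'$ together with the Maurer--Cartan relations \eqref{eq:maurercartan}. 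Note that the individual coefficient forms $\eta_{I,\Gamma}$ need not be logarithmic; only their weighted sums over points of the cone are.

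The main obstacle is uniformity of the radius of convergence together with analyticity in $(z,w)$. \cref{thm:construction}(ii) gives convergence for $|t|<1/(2\Lambda)$ with $\Lambda$ controlled by $\|\eta_1\|_{W^1}$ and $\|\eta_2\|_{W^1}$. Both are bounded by $C|(z,w)|$ on $\sQ'$, by linearity of $\eta_1$ and of $f$ and by the boundedness of $\nabla''^*G$ on smooth forms, and the quadratic part of $\eta_2$ is even of order $|z|^2$. The key point is that the Catalan estimate \eqref{eq:cata} uses \cref{effectiveest} with a constant independent of the point. It therefore yields $\|\eta_k(z,w)\|_{W^1}\le \tfrac1{2C}(C'|(z,w)|)^{k/2}$ (or a comparable bound exploiting the weights), so $\sum_k\eta_k(z,w)$ converges in $W^1$ uniformly on $\sQ'\cap\bB^{p+q}_\ep$ for $\ep$ small. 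I would apply this to the rescaled point $(tz,t^2w)$, which stays in the quasi-homogeneous cone, and take $t=1$.

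Regularity and flatness then follow as in Step 5 of \cref{thm:construction}. Summing \eqref{eq:maurercartan} gives $\nabla\eta+\eta\wedge\eta=0$ on $X_0$ and $\nabla'\eta=0$ in the appropriate sense, and the elliptic bootstrap places the sum in $W^k$ for all $k$, hence in $A^1(X,D,E)$. For analyticity, each $\eta_k(z,w)$ is a polynomial in $(z,w)$ with coefficients that are smooth forms on $X_0$, and the series converges uniformly in $W^1$ and, after bootstrapping, locally in $C^\infty$ on $X_0$. Consequently $\nabla_{z,w}$ is the restriction to the analytic set $\sQ'_\ep$ of a holomorphic family of flat logarithmic connections. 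If uniformity of the bootstrap constants turns out to be delicate, I would fall back on checking analyticity on compact subsets of $X_0$ only, which is all that the later monodromy argument requires.
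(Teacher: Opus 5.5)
Your proposal follows the paper's proof: you apply \cref{thm:construction} pointwise on $\sQ'$, use that the constant in \cref{effectiveest} does not depend on the point to get the bound $\|\eta_k(z,w)\|_{W^1}\le \frac{1}{2C}\Lambda(z,w)^k$ with $\Lambda(z,w)$ uniformly small on $\sQ'_\ep$, and then bootstrap as in Step 5. Your treatment of analyticity in $(z,w)$ is more explicit than the paper's, which simply asserts it; note, however, that your uniform bounds hold only on $\sQ'_\ep$. Holomorphy should therefore be deduced on the analytic set itself, as a locally uniform limit of polynomials there, rather than from convergence of the series off the cone, where the terms need be neither flat nor logarithmic.
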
 
		\begin{proof}
			Suppose that $(z,w) \in \sQ'$.    Then $\eta_1(z,w)$ and 
			$$\Res_D\eta_2(z,w)=-\sum_{j=1}^{q}w_j\beta_j$$ satisfy the three conditions in \cref{main1}. Moreover,   $\eta_k(z,w)$ defined in \eqref{eq:etak} satisfies the inductive equations in the proof of \cref{thm:construction} for each $k\geq 1$. Therefore, we have
			$$
			\bnabla\eta_k(z,w)+\sum_{i=1}^{k}\eta_{i}(z,w)\wedge\eta_{k-i}(z,w)=0
			$$
			for $k\geq 3$.  By \eqref{eq:cata},   we have 
			\begin{equation}
				\|\eta_k(z,w)\|_{W^1} \leq \frac{1}{2C} \Lambda(z,w)^k,
			\end{equation}
			where $$\Lambda(z,w): = 2C \|\eta_1(z,w)\|_{W^1} + \sqrt{4C^2 \|\eta_1(z,w)\|_{W^1}^2 + 4C \|\eta_2(z,w)\|_{W^1}}.$$ 
			By the definitions of $\eta_1$ and $\eta_2$, there exists some $\ep\in (0,\frac{1}{2})$ such that 
			$
			\Lambda(z,w) \leq 1
			$ for any $(z,w)\in \sQ_\ep'$. Consequently,  $\|\eta(z,w)\|_{W^1} < \infty$ for all $(z,w) \in \sQ'_\ep$. The bootstrapping arguments used in the proof of \cref{thm:construction} imply that $\eta(z,w) \in A^1(X,D,E)$ and satisfies 
			$$\nabla\eta(z,w) + \eta(z,w)\wedge\eta(z,w) = 0.$$
			Thus,   we conclude that $\nabla_{z,w} := \nabla + \eta(z,w)$ defines an analytic family of flat logarithmic connections on $V$, parametrized by $(z,w) \in \sQ'_\ep$. This completes the proof.
		\end{proof}

		\subsection{Constructing the universal connection (II)}
		In this subsection, we prove that the connection constructed in \cref{main1} is universal. The argument closely follows that of \cref{thm:ES}, and we include all details for the sake of completeness.
		
		Consider the dgla  $T^\bullet:=(A^\bullet(X,D,E),\nabla)$. Define $L^\bullet:=(\ker \nabla',\nabla'')$.     Let $\varepsilon : T^0 \to E_x$, $\varepsilon : L^0 \to E_x$, and $\varepsilon : C^0 \to E_x$ be the evaluation maps at a fixed base point $x\in X_0$, which define the corresponding augmentations. 
		Then by \cref{thm:zigzag}, we have the 1-quasi-equivalences  
		$$
		(T^\bullet,d) \stackrel{\phi}{\leftarrow} (L^\bullet,d)\stackrel{\psi}{\rightarrow} (C^\bullet,d). 
		$$
		By \cref{thm:GM}, for any Artin local $\bC$-algebra,  the natural maps 
		\begin{align}
			[\sC(T^\bullet,\ep,A)/\exp(T^0\otimes\km)]\leftarrow	  [\sC(L^\bullet,\ep,A)/\exp(L^0\otimes\km)]\to   [\sC(C^\bullet,\ep,A)/\exp(C^0\otimes\km)]
		\end{align}
		are equivalences of groupoids.  
		
		We note that  augmentation 
		$$
		\ep:C^0\to E_x
		$$
		defined by the evaluation is injective. In this case, we have the following result. Let $\sQ$ be the quasi-homogeneous cone in $C^1=C^{1,0}\oplus C^{0,1}$ defined by 
		\begin{align}\label{eq:defquasi}
			\sQ_0:=	 \left\{u\in  C^1  \mid   d u+\frac{1}{2}[u,u]=0\right\}.
		\end{align} 
		Denote by 
		\begin{align}\label{eq:sQ2}
			\sQ:=\mathscr{Q}_0\times \kg / \varepsilon(C^0). 
		\end{align} 
		\begin{lem} \label{lem:GM2}
			The analytic germ $(\sQ,0)$ (non-canonically) prorepresents the functor  
			\begin{align*}
				{\rm Art}&\to {\rm Grp}\\ 
				A&\mapsto \Def (C^{\bullet}, \ep, A )
			\end{align*}
			where $\Def (C^{\bullet}, \ep, A )$ is the small groupoid  defined in \eqref{eq:defaug}. 
		\end{lem}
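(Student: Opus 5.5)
The plan is to follow the proof of \cref{lem:GM} almost verbatim. The only change is that $D^\bullet$ (zero differential) is replaced by $C^\bullet$, whose differential $d\colon C^1\to C^2$ is nonzero, so the quadratic cone becomes the quasi-homogeneous cone $\sQ_0$ of \eqref{eq:defquasi}.

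First, I fix a complement to the image of the augmentation. Choose $v_1,\ldots,v_\ell\in\kg=E_x$ whose images form a basis of $\kg/\ep(C^0)$, and let $W$ be their span. Since $\ep\colon C^0=H^0(X,E)\to E_x$ is injective (flat sections are determined by their value at $x$), the group $\exp(\ep(C^0)\otimes\km)$ acts freely on $G^0_A=\exp(\kg\otimes\km)$ by left multiplication. Moreover, $r\mapsto \exp(\ep(C^0)\otimes\km)\exp(r)$ gives a bijection from $W\otimes\km$ onto the right cosets; this follows by induction on the length of $A$ using the BCH formula and the splitting $\kg=\ep(C^0)\oplus W$.

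Second, I identify the groupoid. Using \eqref{eq:dglaep}, the groupoid $\Def(C^\bullet,\ep,A)$ is $[\sC(C^\bullet,A)/\exp(C^0\otimes\km)]\bowtie\exp(\kg\otimes\km)_\ep$. The key point is that $C^0=H^0(D^{(0)},E)$ acts trivially on $\sC(C^\bullet,A)$ in the gauge sense, because $d|_{C^0}=0$. For $\lambda\in C^0\otimes\km$ and $\alpha\in C^1\otimes\km$, however, the bracket $[\lambda,\alpha]$ need not vanish. I therefore need to be more careful than in the formal case: the action of $\exp(\lambda)$ on $\alpha$ is $e^{\mathrm{ad}\lambda}\alpha$, which is generally nontrivial.

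The way around this is to use the free action on the second factor. Each isomorphism class contains a unique representative of the form $(\alpha,\exp(r))$ with $r\in W\otimes\km$: given $(\alpha,e^s)$, write $e^s=\exp(\ep(\lambda))\exp(r)$ uniquely and apply $\exp(-\lambda)$. Uniqueness of $\lambda$ and $r$ also shows that the only automorphisms are identities. Hence the map
\[
\sC(C^\bullet,A)\times W\otimes\km\to \Def(C^\bullet,\ep,A),\qquad (\alpha,r)\mapsto(\alpha,\exp(r)),
\]
is an equivalence onto the discrete groupoid of isomorphism classes. This is the analogue of \eqref{eq:first}.

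Third, I pro-represent the discrete part. Write $u=\sum_i c_i\otimes f_i$ in a basis $\{c_i\}$ of the finite-dimensional space $C^1=C^{1,0}\oplus C^{0,1}$, and $r=\sum_j v_j\otimes g_j$. The condition $du+\frac12[u,u]=0$ in $C^2\otimes\km$ is a finite system of polynomial equations, each with linear and quadratic parts, in the coordinates of $C^1$. Let $\cI\subset\bC[C^1\times W]$ be the ideal they generate, so that $\bC[C^1\times W]/\cI$ is the coordinate ring of $\sQ$. A pair $(u,r)$ is then exactly a $\bC$-algebra map $\bC[C^1\times W]/\cI\to A$ sending the coordinate functions into $\km$. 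Since $A$ is Artinian, such maps factor through the completion at the origin. That completion is $\bC[[C^1\times W]]/\cI\simeq\hcO_{\sQ,0}$, because completion commutes with localization at the maximal ideal. This yields a natural bijection $\sC(C^\bullet,A)\times W\otimes\km\simeq\Hom(\hcO_{\sQ,0},A)$, functorial in $A$.

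The main obstacle is the second step: making sure the nontrivial $\mathrm{ad}$-action of $C^0$ on $C^1$ does not create extra identifications or isotropy. The argument for this rests entirely on the injectivity of $\ep$ and the freeness of the coset decomposition. The dependence on the choice of $W$ is what makes the pro-representation non-canonical.
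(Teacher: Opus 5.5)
Your proposal is correct and follows the paper's proof essentially step by step. You choose a complement $W$ to $\ep(C^0)$ in $\kg$, use injectivity of $\ep$ to show that $(\alpha,r)\mapsto[(\alpha,\exp(r))]$ from the discrete groupoid $\sC(C^\bullet,A)\times W\otimes\km$ is an equivalence, and then identify the latter with $\Hom(\hcO_{\sQ,0},A)$ via the ideal of the Maurer--Cartan equations and completion at the origin. Your normal-form argument, showing that the adjoint action of $C^0$ on $C^1$ is absorbed by the free action on the $\exp(\kg\otimes\km)$ factor so that no extra identifications or isotropy appear, fills in a detail the paper leaves implicit; note that your sentence saying $C^0$ ``acts trivially in the gauge sense'' is misstated (only the affine term $-d(e^\lambda)e^{-\lambda}$ vanishes), but you correct this immediately and the argument never uses it.
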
\begin{proof}
			Let $\ell := \dim_{\mathbb{C}}(\mathfrak{g}/\ep(C^0))$. We choose a subset  $\{v_{1},\ldots,v_{\ell}\}$ of $\mathfrak{g}$ such that 
			the images of $\{v_1,\ldots,v_{\ell}\}$ form a basis for the quotient space $\mathfrak{g}/\ep(C^0)$.  
			Let $W$ be the subspace of $\kg$ spanned by   $\{v_1,\ldots,v_\ell\}$.  Then  the natural map $p:W\to  \kg/\ep(C^0)$ is an isomorphism.  Since $
			\ep:C^0\to E_x
			$  and $\ep:D^0\to E_x$
			defined by the evaluation is injective, it follows that  $\exp \left(C^0 \otimes \mathrm{m}\right)$ acts freely on $G_A^0:=\exp(\kg\otimes\km)$  by left-multiplication.  Hence the following (non-canonical) maps 
			\begin{align*}
				\kg/\ep(C^0)\otimes \km 
				\;\leftarrow\; & W\otimes\km 
				\;\rightarrow\; \frac{\exp(\kg\otimes\km)}{\exp(\ep(C^0)\otimes \km)} \\
				p(r) 
				\;\mapsfrom\; & r 
				\;\mapsto\; \exp(\ep(C^0)\otimes \km)\cdot \exp(r).
			\end{align*}
			are both bijective.  Here   $\exp(\ep(C^0)\otimes \km)\cdot \exp(r)$ is the right coset of $\exp(r)$.  
			Hence  the following maps
			\begin{align}\label{eq:first2}
				\sC(C^\bullet,A)\times W\otimes\km &\to	 \frac{\sC(C^\bullet,\ep,A)}{\exp(C^0\otimes \km)}  \\ \nonumber
				(\alpha,r) &\mapsto  [(\alpha,\exp(r))]. 
			\end{align} 
			and
			\begin{align}\label{eq:first22}
				\sC(L^\bullet,A)\times W\otimes\km &\to	 \frac{\sC(L^\bullet,\ep,A)}{\exp(L^0\otimes \km)}  \\ \nonumber
				(\alpha,r) &\mapsto  [(\alpha,\exp(r))]. 
			\end{align} 
			are both equivalences of groupoids. 
			
			\medspace
			
			Since $C^1$ is a finite-dimensional complex vector space, we view $C^1 \times W$ as an affine space over $\mathbb{C}$.  Let $\cI$ be the ideal of $\bC[C^1\times W]$ generated by the  quasi-homogeneous equations 
			\begin{align*}
				dq(u)+\frac{1}{2}[q(u),q(u)]=0
			\end{align*} 
			for 	  $u\in  C^1\times W  $, 
			where $q:C^1\times W\to C^1$ denotes the projection map,     $d \colon C^1 \to C^2$ is the differential of the dgla $(C^\bullet,d)$ defined in \eqref{eq:defdiff}, and $[-,-]$ is the Lie bracket defined in \eqref{eq:Lie}.    It follows that 
			$$
			\spec\bC[C^1\times W]/\cI=\sQ.
			$$
			Let $\{\alpha_{1},\ldots,\alpha_{p}\}$ be a basis of $\cH^1(X,E)$, and we denote abusively $\{\alpha_{1},\ldots,\alpha_{p}\}$   their cohomology classes in $H^1(X,E)=C^{0,1}$. Let  $\{\beta_1,\ldots,\beta_q\}\in  C^{1,0}$ be a basis.   Then, any elements $\alpha\in C^1\otimes\km$  and $r\in W\otimes \km$,   can be written as 
			\begin{align}\label{eq:alpha2}
				\alpha=(\sum_{i=1}^{p} \alpha_i \otimes f_i, \sum_{\gamma=1}^{q} \beta_\gamma\otimes g_\gamma), \quad r=\sum_{j=1}^{\ell} v_j \otimes h_j,
			\end{align}
			where $f_i,g_\gamma,h_j\in \km$. Hence, $(\alpha,r)$ gives rise to a morphism
			\begin{align*}
				\Hom(\bC[C^1\times W], A)\\
				((\alpha_i)^*,(\beta_\gamma)^*,(v_j)^*)\mapsto  f_i+g_\gamma+h_j
			\end{align*} 
			Here we consider $(\alpha_i)^*\in (C^{0,1})^*$, $(\beta_\gamma)^*\in (C^{1,0})^*$ and $(v_j)^*\in W^*$. 
			Note that the above morphism factors through
			\[
			\Hom(\bC[C^1 \times W]/\cI, A)
			\]
			if and only if 
			$$d(\alpha)+\frac{1}{2}[\alpha,\alpha]=0.$$
			Equivalently, this holds if and only if
			\[
			\alpha \in \sC^1(C^\bullet,A).
			\]
			Let \(\cI_1\) be the ideal of \(\bC[\sQ]\) generated by the image of 
			\((C^1\times W)^* \subset \bC[C^1\times W]\) under the natural projection 
			\(\bC[C^1\times W] \to \bC[C^1\times W]/\cI\).
			Let \(\widehat{\bC[\sQ]}_{\cI_1}\) denote the \(\cI_1\)-adic completion of \(\bC[\sQ]\).
			Since $A$ is an Artin local $\bC$-algebra, it follows that 
			\begin{align*}
				\Hom(\bC[C^1\times W]/\cI, A) \simeq 	\Hom(\widehat{\bC[\sQ]}_{\cI_1}, A). 
			\end{align*} 
			By standard properties of completion of quotient rings, there is a natural isomorphism
			\[
			\widehat{\bC[\sQ]}_{\cI_1} \;\simeq\; 
			\frac{\bC[[C^1\times W]]}{\cI},
			\]
			where, by abuse of notation, we continue to denote by \(\cI\) the ideal  
			\(\cI\cdot  \bC[[C^1\times W]]\).  
			
			On the other hand, \(\cI_1\)-adic completion commutes with localization at the
			maximal ideal \(\cI_1\) of \(\bC[\sQ]\).
			Therefore  we have a natural isomorphism
			$$
			\widehat{\bC[\sQ]}_{\cI_1} \stackrel{\simeq}{\to}
			\widehat{\cO_{\sQ,0}}.
			$$  
			Therefore, the following map 
			\begin{align}\label{eq:precise2}
				\sC(C^\bullet,A)\times W\otimes\km&\to 	\Hom( \widehat{\cO_{\sQ,0}}, A) \\ \nonumber
				(\alpha,r) &\mapsto \left(   ((\alpha_i)^*,(\beta_\gamma)^*,(v_j)^*)\mapsto  f_i+g_\gamma+h_j \right)
			\end{align}
			is an equivalence, where $(\alpha,r)$ is written as in~\eqref{eq:alpha2}, and  $ \alpha_i ^*$, $\beta_\gamma ^*$, and $v_j^*$ are understood as the images of  $ \alpha_i ^*$, $\beta_\gamma ^*$, and $v_j^*$  under the composite morphisms 
			$$\bC[C^1\times W]\to \bC[\sQ]\to \cO_{\sQ,0}\to\hcO_{\sQ,0}.$$  
			We remark that $\widehat{\cO_{\sQ,0}}$ is generated by 
			$ \alpha_i ^*$, $\beta_\gamma ^*$, and $v_j^*$.  
			Therefore, for any morphism in 
			$\Hom(\widehat{\cO_{\sQ,0}},A)$, the images of these generators
			completely determine the morphism.  
			Combining \eqref{eq:precise2} with \eqref{eq:first2} yields the lemma. 
		\end{proof}  
		
		Consider the  polynomial ring $\bC[z_1,\ldots,z_p,w_1,\ldots,w_q]$. 
		Let $\mathscr{Q}'\subset \bC^{p+q}$ denote  the quasi-homogeneous cone defined  in \eqref{eq:quasihomo}.   We set
		\begin{align*} 
			\sQ'':=\sQ'\times \bC^\ell. 
		\end{align*}
		Write $\eta_{i,0}:=\alpha_i$ and  let $\eta_{0,\gamma}:=f(\beta_\gamma)\in A^1(X,D,E)$ be defined in \eqref{eq:f}.   Then $\Res_D\eta_{0,\gamma}=-\beta_\gamma$ and we have 
		$$\nabla f(\beta_\gamma)\in \cH^2(X,E)$$   by our construction. 
		Let 
		\begin{align}\label{eq:universal2}
			\nabla_{z,w}:=\nabla+\sum_{i=1}^{\infty}\eta_k(z,w)
		\end{align}  
		be the connection parametrized by $(z,w)\in \sQ'$, 
		where  $$
		\eta_k(z,w):=\sum_{ |I|+2|\Gamma|=k}z_Iw_\Gamma  	\eta_{I,\Gamma},
		$$
		with the notation $z_I:=z_{i_1}\cdots z_{i_{|I|}}$, and $w_\Gamma:=w_{\gamma_1}\cdots w_{\gamma_{|\Gamma|}}$, where the forms  $\eta_{I,\Gamma}\in A^1(X,D,E)$ are  defined in \eqref{eq:etainductive}.  
		
		By \cref{lem:nablaexact}, we know for each $(z,w)\in \sQ'$,  each 
		$
		\eta_k(z,w) \in A^1(X,D,E)
		$ is $\bnabla'$-exact for $k\geq 2$.   Hence we have
		\begin{align}\label{eq:nabla00}
			\nabla' \eta_k(z,w) =\bnabla' \eta_k(z,w) =0\quad \mbox{ for }k\geq 1.
		\end{align} 
		Fix a  $k\in\bZ_{> 0}$.  Set $A_k:= \cO_{\mathscr{Q}'',0}/\km^{k+1}$, that is an  Artin local $\bC$-algebra. Let $\km$ be the maximal ideal of $\cO_{\mathscr{Q}'',0}$.  Write $\km_k:=\km A_k$, that is the maximal ideal for $A_k$.   We now regard $z_I$,  $w_\Gamma$, and $x_j$ as the image of   $z_I$, $w_\Gamma$ and $x_j$ under the morphism 
		$$\cO_{\bC^{p+q+\ell},0}\to \cO_{\mathscr{Q}'',0}\to \cO_{\mathscr{Q}'',0}/\km^{k+1}=A_k.$$ Then we have $z_I\in \km_k$ and $w_\Gamma\in \km_k$ for $|I|\geq 1$ and $|\Gamma|\geq 1$, and $x_i\in \km_k$. Thus   by \eqref{eq:nabla00}, 
		we can consider $$ \eta_i(z,w)  \in L^{1,0}\otimes\km_k $$  
		for any $i\in \bN$. It follows that 
		$$ \eta_j(z,w)=0  \in L^{1,0}\otimes\km_k $$  
		for any $j\geq 2k+1$. 
		By \cref{main1}, we have
		$$
		\nabla   \big(\sum_{j=1}^{2k}\eta_j(z,w)\big) +\frac{1}{2}[ \sum_{j=1}^{2k}\eta_j(z,w), \sum_{j=1}^{2k}\eta_j(z,w)]=0\in L^1\otimes \km_k.   
		$$
		This implies that, 
		\begin{align}\label{eq:2k}
			\sum_{j=1}^{2k}\eta_j(z,w) \in  \obj\Def(L^\bullet,  A_k).
		\end{align}  
		Consider the morphism $\psi:L^\bullet\to C^\bullet$  in \eqref{eq:psi}. By \cref{lem:nablaexact}, each 
		$
		\eta_j(z,w)
		$ is $\bnabla'$-exact for any $j\geq 2$.  By \eqref{eq:residuenull} and  \eqref{eq:psi}, for any $k\geq 2$,    we have
		$$
		\psi(\sum_{j=1}^{2k}\eta_j(z,w))=\sum_{i=1}^{p}z_i \eta_{i,0} +\sum_{\gamma=1}^{q}w_\gamma \beta_{\gamma}   \in \sC(C^\bullet,A_k). 
		$$ 
		Here we denote abusively by $\{\eta_{1,0},\ldots,\eta_{p,0}\}$  and   $\{\beta_{1},\ldots,\beta_{q}\}$   their cohomology class in $H^1(X,E)=C^{0,1}$ and  $H^0(D^{(1)},E)=C^{1,0}$ respectively. 
		Therefore, using   the isomorphisms of groupoids defined in \eqref{eq:first2} and \eqref{eq:precise2}, we can see   that  we have  the isomorphism  
		\begin{align}\label{eq:Phi2}
			\Phi:  	\sC(L^\bullet,A)\times W\otimes\km_k  \to	\sC(C^\bullet,A)\times W\otimes\km_k {\to} \Hom(\hcO_{\sQ,0},A_k)
		\end{align}  
		such that 
		$$  
		\Phi	( \sum_{j=1}^{2k}\eta_j(z,w),  \sum_{j=1}^{\ell}x_jv_j)	=\left(	(\eta_{i,0}^*,\beta_{\gamma}^*,v_j^*)\mapsto  z_i+w_\gamma+x_j \right).$$
		As remarked above,  since $\widehat{\cO_{\sQ,0}}$ is generated by 
		$ \eta_{i,0}^*$, $\beta_{\gamma}^*$, and $v_j^*$,  any morphism in 
		$\Hom(\widehat{\cO_{\sQ,0}},A_k)$ is determined by  images of these generators.

		We state and prove the universal property of the connection constructed in \cref{main1}.
		\begin{thm}\label{thm:universal2}
			Let $X$ be a compact K\"ahler manifold and let $D$ be a simple normal crossing divisor on $X$. Assume that $(V,\nabla)$ be a unitary flat bundle of rank $N$ on $X$ and we denote by $(E,\nabla):=(\End(V),\nabla)$.   Let \(\nabla_{z,w}\) denote the analytic family of flat connections on \(V|_{X_0}\), 
			parametrized by \((z,w) \in \sQ'_{r}\) for some \(r>0\), as defined in \eqref{eq:universal2}.
			Let  ${\rm Mon}(\nabla_{z,w}) $ be the monodromy representation for any $(z,w)\in \mathscr{Q}'_r$.   Let $R(X_0,N)$ be the representation variety  for $\pi_1(X_0,x)$ into $\GL_N$. Consider the evaluation map $\operatorname{ev}_x: H^0(X,E) \to E_x$. Let $\{v_1, \ldots, v_\ell\} \subset E_x$ be a subset such that its image forms a basis of the quotient space $E_x / \operatorname{Im}(\operatorname{ev}_x)$.  Then  the natural map between analytic germs
			\begin{align*}
				f:	\left(\mathscr{Q}'_r\times \bC^\ell,0\right) &\to R(X_0,N)\\
				(z,w;x)&\mapsto \exp(-\sum_{i=1}^{\ell}x_iv_i){\rm Mon}(\nabla_{z,w})\exp(\sum_{i=1}^{\ell}x_iv_i) 
			\end{align*}
			is an isomorphism. 
		\end{thm}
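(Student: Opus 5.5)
The plan is to mirror the proof of \cref{thm:ES}, replacing the formality quasi-isomorphisms \eqref{eq:formality} by the zigzag of $1$-quasi-isomorphisms $T^\bullet \xleftarrow{\phi} L^\bullet \xrightarrow{\psi} C^\bullet$ of \cref{thm:zigzag}. By \cref{thm:GM2}, it suffices to show that the induced morphism of complete local rings
$$f^*\colon \hcO_{R(X_0,N),\varrho}\to \hcO_{\sQ'',0},\qquad \sQ'':=\sQ'_r\times \bC^\ell,$$
is an isomorphism, where $\varrho$ is the monodromy of $(V,\nabla)|_{X_0}$. We check this modulo $\km^{k+1}$ for every $k$ and compatibly in $k$, then pass to the inverse limit. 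Note that $\sQ'$ from \eqref{eq:quasihomo} coincides with the cone $\sQ_0$ of \eqref{eq:defquasi} once we use coordinates $(z,w)$ relative to the bases $\{\alpha_i\}$ of $C^{0,1}=H^1(X,E)$ and $\{\beta_\gamma\}$ of $C^{1,0}=H^0(D^{(1)},E)$. Hence $\sQ''$ is literally the germ $(\sQ,0)$ of \eqref{eq:sQ2}. Also, $\mathrm{ev}_x(H^0(X,E))=\ep(C^0)$, so $\ell$ and $W$ agree with those of \cref{lem:GM2}.

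\textbf{Step 1: commutative diagram.} Fix $k$ and set $A_k=\cO_{\sQ'',0}/\km^{k+1}$. I assemble the analogue of diagram \eqref{eq:great}. On the top row, $\Hom(\hcO_{\sQ,0},A_k)$ is identified with $\sC(C^\bullet,A_k)\times W\otimes\km_k$ through \eqref{eq:precise2}. Via \eqref{eq:first2} and \eqref{eq:first22}, together with \cref{thm:GM} applied to $\phi$ and $\psi$, the groupoids
$$[\sC(T^\bullet,\ep,A_k)/\exp(T^0\otimes\km_k)],\quad [\sC(L^\bullet,\ep,A_k)/\exp(L^0\otimes\km_k)],\quad [\sC(C^\bullet,\ep,A_k)/\exp(C^0\otimes\km_k)]$$
are all equivalent. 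Finally, \cref{lem:holonomy2} identifies the first of these with $[R_\varrho(A_k)/\mathrm{Id}]$, and by \cref{thm:GM3} the latter is $\Hom(\hcO_{R(X_0,N),\varrho},A_k)$. Because the target groupoids have only identity morphisms, the composite equivalence $\Upsilon$ descends to a bijection on isomorphism classes, i.e.\ on sets. This yields a bijection
$$\Hom(\hcO_{\sQ,0},A_k)\xrightarrow{\ \sim\ }\Hom(\hcO_{R(X_0,N),\varrho},A_k),$$
natural in $A_k$.

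\textbf{Step 2: tracking the tautological point.} Take the element $\big(\sum_{j\le 2k}\eta_j(z,w),\ \sum_j x_jv_j\big)$, which lies in $\obj\Def(L^\bullet,A_k)\times W\otimes\km_k$ by \eqref{eq:2k}. Under $\Phi$ of \eqref{eq:Phi2}, it maps to the tautological morphism
$$g_k\colon\ \alpha_i^*\mapsto z_i,\quad \beta_\gamma^*\mapsto w_\gamma,\quad v_j^*\mapsto x_j,$$
using $\psi(\eta_j)=0$ for $j\ge2$, which follows from \cref{lem:nablaexact} and \eqref{eq:residuenull}. Under $\phi$ followed by $h$, the same element maps to
$$\exp\Big(-\sum_j x_jv_j\Big)\,\mathrm{hol}_x\Big(\nabla+\sum_{j\le 2k}\eta_j\Big)\exp\Big(\sum_j x_jv_j\Big)\ \bmod \km^{k+1}.$$
Since $\eta_j(z,w)\in\km^{\lceil j/2\rceil}$, this equals the reduction of $f$ modulo $\km^{k+1}$, i.e.\ the morphism $f_k$ induced by $f$. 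Consequently $f_k$ corresponds to $g_k$ under the bijection above, and the correspondence is compatible with the transition maps $A_{k+1}\to A_k$.

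\textbf{Step 3: conclusion and main obstacle.} The $g_k$ assemble to $\varprojlim g_k\colon\hcO_{\sQ,0}\to\hcO_{\sQ'',0}$, which is the identity in these coordinates. Naturality of the bijections then forces $\varprojlim f_k=f^*$ to be an isomorphism. The delicate part is Step 2: one must verify that the holonomy of the \emph{truncated} connection agrees modulo $\km^{k+1}$ with the reduction of $\mathrm{Mon}(\nabla_{z,w})$. This holds because flatness of $\nabla_{z,w}$ on $\sQ'_r$ (\cref{main1}), together with analyticity in parameters, makes holonomy a convergent power series, so truncation commutes with reduction. A further point is justifying that the naturality of the Goldman--Millson equivalence with respect to the morphisms $A_k$ turns a level-wise bijection identifying $f_k$ with an isomorphism into the statement that $f^*$ itself is an isomorphism. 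I would argue this as in \cref{thm:ES}: a morphism of complete local rings that induces bijections on $\Hom(-,A)$ for all Artin $A$ is an isomorphism, by a Yoneda argument with pro-representability.
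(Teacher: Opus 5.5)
\textbf{Overall.} Your outline is the paper's proof: the same diagram \eqref{eq:great2} built from \cref{thm:zigzag}, \cref{thm:GM}, \eqref{eq:first2}, \eqref{eq:first22}, \eqref{eq:precise2} and \cref{lem:holonomy2}, the same tautological element \eqref{eq:2k}, and the same passage to the inverse limit followed by \cref{thm:GM2}. Your Yoneda remark in Step 3 makes explicit what the paper leaves implicit. Any $u\colon \hcO_{\sQ'',0}\to A$ factors through some projection $\pi_k$ to $A_k$, so naturality forces the Goldman--Millson bijection to send $u$ to $u\circ f^*$.

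\textbf{The error in Step 2.} It is not true that $\psi(\eta_j(z,w))=0$ for all $j\ge 2$, because \eqref{eq:residuenull} only gives $\Res_D\eta_j=0$ for $j\ge 3$.
For $j=2$, \cref{lem:nablaexact} does kill the $C^{0,1}$-component, since a $\bnabla'$-exact current has zero harmonic projection. But the residue survives: $\Res_D\eta_2(z,w)=\sum_\gamma w_\gamma\Res_D f(\beta_\gamma)=-\sum_\gamma w_\gamma\beta_\gamma$. Together with the sign $(-1)^{|I|}$ in \eqref{eq:psi}, this gives
\[
\psi(\eta_1)=\sum_i z_i\{\alpha_i\}\in C^{0,1},\qquad \psi(\eta_2)=\sum_\gamma w_\gamma\beta_\gamma\in C^{1,0},\qquad \psi(\eta_j)=0\quad (j\ge 3).
\]
This is not cosmetic: the $w$-coordinates of the tautological morphism $g_k$ (that is, $\beta_\gamma^*\mapsto w_\gamma$) come precisely from $\psi(\eta_2)$. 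With your stated vanishing, the image under $\psi$ would be $\sum_i z_i\{\alpha_i\}$ alone. Then $g_k$ would send every $\beta_\gamma^*$ to $0$, and $\varprojlim_k g_k$ would not be an isomorphism, so Step 3 would fail.

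\textbf{Fix.} Replace your justification by the three-line computation above, which is what the paper does just before \eqref{eq:Phi2}. With that correction your argument goes through exactly as in the paper.
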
 
		\begin{proof}
			We recall that 	 we have morphisms of groupoids 
			\begin{equation}\label{eq:great2}\hspace{-0.4cm}
				\begin{tikzcd}[column sep=1em]
					& \Hom\!\left(\widehat{\cO}_{\sQ,0},A_k\right)		& \\
					{ [R_\varrho(A_k)/{\rm Id}] }	 &	\sC(L^\bullet,A_k) \times W\otimes\km_k \arrow[r,"\psi\times \mathrm{Id}"] \arrow[d]\arrow[u,"\Phi"]
					& 	\sC(C^\bullet,A_k)  \times W\otimes\km_k  \arrow[ul] \arrow[d]  \\
					{[\sC(T^\bullet,\ep,A_k)/\exp(T^0\otimes\km_k)]} \arrow[u,"h"']
					& {[\sC(L^\bullet,\ep,A_k)/\exp(L^0\otimes\km_k)]} \arrow[r,"\psi"]\arrow[l,"\phi"'] 
					& {[\sC(C^\bullet,\ep,A_k)/\exp(C^0\otimes\km_k)]}    
				\end{tikzcd}
			\end{equation}
			in which each of them is an isomorphism by \eqref{eq:first2}, \eqref{eq:first22},  and \cref{thm:zigzag,thm:GM,lem:holonomy2}.     Here \begin{align*}
				h:  [\sC(T^\bullet,\ep_x,A_k)/\exp(T^0\otimes\km)]&\to  [R_\varrho(A_k)/{\rm Id}]\\
				(\alpha,e^r) &\mapsto  \exp(-r) {\rm hol}_x(\nabla+\alpha) \exp(r)
			\end{align*}  
			is defined in   \Cref{lem:holonomy2}. 
			We denote by
			$$
			\Upsilon: \sC(L^\bullet,A_k) \times W\otimes\km \to  [R_\varrho(A_k)/{\rm Id}]
			$$
			the induced equivalence of groupoids in \eqref{eq:great2}. Then, for $$ \sum_{j=1}^{2k}\eta_j(z,w)\in \sC(L^\bullet,A_k)$$ defined in \eqref{eq:2k}, 
			$$
			\Upsilon(\sum_{j=1}^{2k}\eta_j(z,w),\sum_{j=1}^{\ell}x_jv_j)=\exp(-\sum_{i=1}^{\ell}x_iv_i){\rm Mon}(\nabla_{z,w})\exp(\sum_{i=1}^{\ell}x_iv_i), 
			$$
			corresponds to the morphism 
			$$
			f_k: \hcO_{R(X_0,N),\varrho}\to  A_k=\cO_{\mathscr{Q}'',0}/\km^{k+1}
			$$
			induced by $f$. 
			
			Let us define a morphism  
			$
			g_k\in \Hom(\hcO_{\sQ,0},A_k) 
			$
			by setting
			\begin{align}\label{eq:g_k2}
				g_k(\alpha_i^*)=  z_i,\quad 		g_k( \beta_\gamma ^*)=  w_\gamma, \quad g_k(v_j^*)=x_j. 
			\end{align}  By \eqref{eq:Phi2}, we have
			$$
			\Phi (\sum_{j=1}^{2k}\eta_j(z,w), \sum_{i=1}^{\ell}x_iv_i)=g_k. 
			$$

			One can see the compatibility of $ {g}_k$ by
			\[
			\begin{tikzcd}
				& \vdots \arrow[d] \\
				& A_{k+1} \arrow[d] \\
				\hcO_{\mathscr{Q},0}  \arrow[ur," {g}_{k+1}"] \arrow[r," {g}_k"']  \arrow[ddr,"g_1"']
				& A_k \arrow[d] \\ 
				& \vdots\arrow[d]\\
				&	A_1
			\end{tikzcd}
			\]
			In conclusion, there exists   natural maps \begin{equation*}
				\begin{tikzcd} 
					\Hom\!\left(\widehat{\cO}_{\sQ,0},A_k\right)
					\arrow[r] 
					&
					\Hom\!\left(\widehat{\cO}_{\sQ,0},A_{k-1}\right) 
					\\
					\sC(C^\bullet,A_k) \times W\otimes\km_k 
					\arrow[r]
					\arrow[u,"\Phi"]
					\arrow[d,"\Upsilon"']
					&
					\sC(C^\bullet,A_{k-1}) \times W\otimes\km_{k-1}
					\arrow[u,"\Phi"']
					\arrow[d,"\Upsilon"]
					\\
					\Hom(\widehat{\cO}_{{R(X_0,N)},\varrho}, A_k)
					\arrow[r]
					&
					\Hom(\widehat{\cO}_{{R(X_0,N)},\varrho}, A_{k-1})
				\end{tikzcd}
			\end{equation*}
			such that all vertical maps are isomorphisms.   We have the inverse limit
			$$
			\varprojlim_k g_k\in  \Hom\!\left(\widehat{\cO}_{\sQ,0},\varprojlim_k A_k\right)=\Hom\!\left(\widehat{\cO}_{\sQ,0},\widehat{\cO}_{\sQ'',0}\right)
			$$
			and by \eqref{eq:g_k2}, it is an isomorphism.
			By \eqref{eq:great2}, it follows that
			$$
			f^*=\varprojlim_k f_k: \widehat{\cO}_{R(X_0,N),\varrho}\to   \hcO_{\mathscr{Q}'',0},
			$$
			is  also an isomorphism. 
			By \cref{thm:GM2}, this implies that 
			$f:  (\mathscr{Q}'_r\times \bC^\ell,0 )  \to (R(X_0,N),\varrho)$ is an \emph{isomorphism} between analytic germs. 
			The theorem is therefore proved. 
		\end{proof}
		\begin{rem}
			The above theorem shows that, for extendable unitary representation $\varrho:\pi_1(X_0)\to \GL_N(\bC)$,  the analytic germ of the representation variety $R(X_0,N)$ at $\varrho$ is isomorphic to a quasi-homogeneous singularity defined by generators of weights $1$ and $2$, subject to relations of weights $2, 3,$ and $4$. Similar result for representations with finite image were obtained by Kapovich-Millson \cite{KM98} based on Morgan's theorems \cite{Mor78}.  See also \cite{Lef19}.
		\end{rem}
		\subsection{2-step phenomenon}\label{sec:2-step}
		We are now in a position to establish the main theorem concerning the so-called \emph{2-step phenomenon} for extendable unitary systems on quasi-compact Kähler manifolds.

		Let $\varrho$ be a unitary representation of the fundamental group $\pi_1(X)$, and let $(V,\nabla)$ be the associated flat bundle. We denote by $(E,\nabla) := (\operatorname{End}(V), \nabla)$ the induced bundle of endomorphisms. Let $\sQ'\subset \bC^{p+q}$ be the quasi-homogeneous cone defined in \eqref{eq:quasihomo}.  Fix some $r>0$ sufficiently small as in \cref{thm:universal2}, let $\cT:=\sQ'\cap \bB_r^{p+q}$. Then the universal connection $\nabla_{z,w}$ defined in \eqref{eq:universal2}  induces a tautological flat connection   on  $V\otimes \cO(\cT)$.    It corresponds to  a representation
		$$
		\pi_1(X_0)  \stackrel{\varrho_{\cT}}{\to} \GL_N(\cO_{\cT,0}). 
		$$
		Let $\km$ be the maximal ideal for the local ring $\cO_{\cT,0}$.   For each $i$, we have 
		\begin{align}\label{eq:Ttruncatei}
			\varrho_{\cT,i}: \pi_1(X_0)  \stackrel{\varrho_{\cT}}{\to} \GL_N(\cO_{\cT,0})\to  
			\GL_N(\cO_{\cT,0}/\km^i). 
		\end{align}
		We are interested in the case $i=3$.  
		Note that as a complex vector bundle, we have a decomposition
		\begin{align} \label{eq:decompose}
			\mathbf{V}_3\stackrel{C^\infty}{\simeq} \oplus_{i=0}^{2}E\otimes {\km^i/\km^{i+1}}.
		\end{align} 
		With respect to this decomposition, we write the connection explicitly.

		Recall that we consider $z_1,\ldots,z_p,w_1,\ldots,w_q$  as linear  functions on the vector space  $H^1(X,E)\oplus H^0(D^{(1)},E)$   given by \begin{align}\nonumber
			z_k	(\sum_{i=1}^{p}a_i \{\eta_{i,0}\},\sum_{\gamma=1}^{q}b_\gamma\eta_\gamma)\mapsto  a_k\\\label{eq:zw}
			w_\alpha	(\sum_{i=1}^{p}a_i \{\eta_{i,0}\},\sum_{\gamma=1}^{q}b_\gamma\beta_\gamma)\mapsto  b_\alpha. 
		\end{align} 
		Now we rearrange the basis $\beta_1,\ldots,\beta_q$ of $H^0(D^{(1)},E)$ so that  
		$$
		\beta_1,\ldots,\beta_\ell
		$$
		is a basis for $\ker g$, where   $g:H^0(D^{(1)},E)\to H^2(X,E)$ is the Gysin morphism defined in \cref{def:Gysin}. We have 	
		$$\ell:=\dim H^0(D^{(1)},E)- \rank g.$$

		Let $\cE$ be the local system on $X_0$ corresponding to $(E,\nabla)$.  By \cite[II, 6.10]{Del70} together with \cref{thm:zigzag}, we have 
		$$
		H^1(C^\bullet,d)\simeq  H^1(A^\bullet(X,D,E),\nabla)\simeq H^1(X_0,\cE), 
		$$ 
		where $(C^\bullet,d)$ is the dgla defined in \Cref{dfn:dgla}.  Then we have\begin{align}\label{eq:E1}
			\bE_{-1}:&=\km/\km^2=\frac{\left(H^1(X,E)\oplus H^0(D^{(1)},E)\right)^* }{(w_{\ell+1},\ldots,w_q)} 
			\cong \left(H^1(C^\bullet,d)\right)^*  \cong
			\left(H^1(X_0,\cE) \right)^*  
		\end{align}
		where the isomorphism follows from \cref{thm:zigzag}, and  \begin{align}\\\nonumber
			\bE_{-2}:&=	\km^2/\km^3	  
			=\frac{{\rm Sym}^2\left(H^1(X,E)\oplus H^0(D^{(1)},E)\right)^* }{Q}&= \frac{\operatorname{Sym}^2\left(z_1,\ldots,z_p,w_1,\ldots,w_\ell \right) }{\overline{Q}}
		\end{align}
		where   $\overline{Q}$ is the linear space generated by the quadratic equations in \cref{item:condition2'} and \cref{item:condition3'} restricted to the subspace $w_{\ell+1} = \dots = w_{q} = 0$.   In other words,   we set 
		\[
		w_{\ell+1}=\cdots=w_q=0
		\]
		in the equations in \cref{item:condition2',item:condition3'}. 
		These subsequently become linear equations in the quadratic monomials
		\[
		\{z_i z_j,\, z_i w_\gamma,\, w_\alpha w_\beta\}_{i,j\in\{1,\ldots,p\};\,\alpha,\beta,\gamma\in\{1,\ldots,\ell\}}.
		\]  
		The above quadratic monomials generate $\km^2/\km^3$, and the linear equations describe all the relations among them. 
		Hence, a basis of $\km^2/\km^3$ is obtained by taking these quadratic monomials 
		\emph{modulo these linear relations}.
		
		Since   $g(\beta_{\ell+1}), \dots, g(\beta_q)\in H^2(X,E)$ are linearly independent, the equations in \cref{item:condition1'} simplify to the following form:
		\begin{equation}\label{eq:twomap}
			w_\alpha = Q_{\alpha}(z_1, \dots, z_p), \quad \text{for } \alpha = \ell+1, \dots, q,
		\end{equation}
		where each $Q_{\alpha}(z_1, \dots, z_p)$ is a quadratic polynomial. Consequently, we identify the class of $w_\alpha$ with $Q_{\alpha}(z_1, \dots, z_p)$ in $\bE_{-2}$. Finally, we define the degree zero component as
		\[
		\bE_0 := \cO_{\cT}/\km \cong \bC.
		\]

		In conclusion,   we have the following expression of the   connection $\nabla_3$ on $\fV_3|_{X_0}$ within the  smooth decomposition  \eqref{eq:decompose}:
		\begin{equation} \label{eq:connection3}
			\nabla_3:=\begin{pmatrix}
				\nabla\otimes \mathrm{Id}_{\bE_{-2}} &   \sum_{i=1}^{p}z_i\eta_{i,0}+\sum_{\alpha=1}^{\ell}w_\alpha\eta_{0,\alpha}&  \omega_2(z,w) \\
				0 &\nabla\otimes \mathrm{Id}_{\bE_{-1}} & \sum_{i=1}^{p}z_i\eta_{i,0}+\sum_{\alpha=1}^{\ell}w_\alpha\eta_{0,\alpha}  \\
				0 & 0 & \nabla\otimes {\rm Id}_{\bE_{0}}
			\end{pmatrix} 
		\end{equation}
		Here we denote by 
		\begin{align}\label{eq:omega}
			\omega_2:=\sum_{i=1}^{p}\sum_{\gamma=1}^{\ell}z_iw_\gamma\eta_{i,\gamma} +\sum_{i,j=1}^{p} z_iz_j\eta_{ij,0}  +\sum_{\beta,\gamma=1}^{\ell}w_\beta w_\gamma\eta_{0,\beta \gamma} +\sum_{ \alpha=\ell+1}^{q} Q_\alpha(z)\eta_{0, \alpha}, 
		\end{align}
		where  \[\eta_{i,\gamma},\eta_{ij,0},\eta_{0,\beta\gamma},\eta_{0,\alpha}\in A^1(X_0,E)\]
		are all defined in \eqref{eq:etainductive}.  Note that $\nabla_3$ is flat on $\fV_3|_{X_0}$ by \cref{thm:universal2}.

		\begin{thm}[2-Step Phenomenon]\label{thm:Step 2}
			Let $h:Y \to X$ be a holomorphic map between compact K\"ahler manifolds. Let $D$ be a simple normal crossing divisor on $X$ such that $D_Y:=h^{-1}(D)$ is also a simple   normal crossing divisor.  Set $X_0:=X\backslash D$ and $Y_0:=Y\backslash D_Y$ and fix a base point $x\in X_0$.  Denote by $h_0:Y_0\to X_0$ the restriction of $h$ on $Y_0$. 
			Let $(V,\nabla)$ be a unitary flat bundle on $X$ with monodromy 
			representation $\varrho:\pi_1(X,x)\to \GL_N(\bC)$, and set $x=h(y)$.
			Denote by 
			\[
			h_0^*: R(X_0,N) \longrightarrow R(Y_0,N),
			\]
			the induced morphism of representation varieties, given by
			$\tau \mapsto h_0^*\tau$ for any $\tau \in R(X_0,N)$.  We denote by $i_0:X_0\hookrightarrow X$ the inclusion map, and $\varrho_0=i_0^*\varrho:\pi_1(X_0)\to \GL_{N}(\bC)$.  
			
			\begin{thmlist}
				\item \label{item:1stepnew}Assume that   $h_0^*\varrho_{\cT,3}$ is a trivial representation. 
				Then for any representation $\tau:\pi_1(X_0)\to \GL_{N}(\bC)$ that is in the same geometric irreducible component  of $\varrho_0$,  
				$
				h_0^*\tau
				$ is a trivial representation.  
				\item\label{item:1step} Assume that $D_Y=\varnothing$, i.e. $Y=Y_0$, and  $\varrho$ is trivial. Assume moreover that    $a\circ h(Y)$ is a point, where $a:X_0\to A$ denotes the quasi-Albanese map of $X_0$.   
				Then for any representation $\tau:\pi_1(X_0)\to \GL_{N}(\bC)$ that is in the same irreducible component  of $\varrho_0$,  
				$
				h_0^*\tau
				$ is a trivial representation.    In particular,   for any unipotent representation $\tau:\pi_1(X_0)\to \GL_{N}(\bC)$,  we have $h_0^*\tau$ is trivial. 
			\end{thmlist}  
		\end{thm}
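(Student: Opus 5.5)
For (i) we copy the argument of \cref{thm:1-step}, with \cref{thm:universal2} replacing \cref{thm:ES}. The goal is to show that every form $\eta_{I,\Gamma}$ entering $\nabla_{z,w}$ pulls back to zero on $Y_0$. Then $h_0^*\nabla_{z,w}=h_0^*\nabla$ for all $(z,w)\in\cT$, so $h_0^*\circ f$ is constant on the germ $(\sQ'_r\times\bC^\ell,0)$.

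The input is the triviality of $h_0^*\varrho_{\cT,3}$. In the description \eqref{eq:connection3}, with respect to the $C^\infty$ splitting \eqref{eq:decompose}, this says that $h_0^*\nabla_3$ is gauge equivalent to the trivial connection on $h^*\fV_3$ by a unipotent upper-triangular gauge. Looking at the $(\bE_{-1},\bE_0)$ block, the pullback of
\[
\sum z_i\eta_{i,0}+\sum_{\alpha\le\ell} w_\alpha\eta_{0,\alpha}
\]
is $\nabla$-exact on $Y_0$ for every linear coordinate. These forms are $\bnabla'$-closed log forms and the bracket terms in them are $\bnabla'$-exact, so the argument of \cref{lem:ddbar} (type decomposition plus residues on $Y$) should give that $h^*\eta_{i,0}=0$ and $h^*\eta_{0,\alpha}=0$ as forms.

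Next, once the degree-one entries vanish, the $(\bE_{-2},\bE_0)$ block says that $h_0^*\omega_2(z,w)$ is $\nabla$-closed and $\nabla$-exact, now coefficientwise in a basis of $\bE_{-2}$. Each $\eta_{I,\Gamma}$ of weight $2$ is $\bnabla'$-exact by \cref{lem:nablaexact}, so \cref{lem:ddbar} forces its pullback to vanish. This is applied to the weight-two part $\eta_2(z,w)$ on $\cT$.

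With $h^*\eta_1=h^*\eta_2=0$ for all $(z,w)\in\cT$, Step 6 of \cref{thm:construction} (\cref{item:pullback2step}) gives $h^*\eta_k(z,w)=0$ for all $k$ by induction. Hence $h_0^*\nabla_{z,w}=h_0^*\nabla$ on $\cT$.

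Since $h_0^*\varrho_0$ is trivial, it is fixed under conjugation, so $h_0^*\circ f$ is constant. By \cref{thm:universal2}, $f$ is a germ isomorphism onto $(R(X_0,N),\varrho_0)$, so $h_0^*$ is constant on a neighbourhood of $\varrho_0$ in each irreducible component through $\varrho_0$. Algebraicity then extends this to the whole component.

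The main obstacle is the coefficientwise extraction in the previous steps. The trivializing gauge has entries in $\km/\km^3$ and must be expanded in the basis of $\bE_{-1}$ and $\bE_{-2}$. One then has to check that the relations $\overline Q$ and the substitutions \eqref{eq:twomap} do not mix the weight-two coefficients in a way that prevents applying \cref{lem:ddbar} to each of them.

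For (ii), since $\varrho$ is trivial, $E$ is a trivial flat bundle and it suffices to verify the hypothesis of (i) via $\varrho_{\cT,3}$. Since $Y$ is compact and $a\circ h(Y)$ is a point, every class in $H^1(X_0,\bC)$ pulls back to zero on $Y$: logarithmic $1$-forms and $H^{0,1}$ classes all factor through the quasi-Albanese map. With trivial coefficients this gives $h^*\eta_{i,0}=0$ and $h^*\eta_{0,\alpha}=0$ for $\alpha\le\ell$, using $\eta_{0,\alpha}=f(\beta_\alpha)$ closed holomorphic log forms for $\beta_\alpha\in\ker g$.

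The remaining point is the weight-two forms: $\eta_{ij,0}$, $\eta_{i,\gamma}$, $\eta_{0,\beta\gamma}$, and the $\eta_{0,\alpha}$ with $\alpha>\ell$ entering through $Q_\alpha$. Their pullbacks satisfy $\bnabla h^*\eta=-[h^*\cdot,h^*\cdot]=0$ on $Y$, because the pullback of $\iota_*\beta$ vanishes when $D_Y=\varnothing$, and they are $\bnabla'$-exact. So \cref{lem:ddbar} kills them, and $h^*\nabla_3$ is literally trivial. Part (i) then applies. Unipotent $\tau$ lie in the component of the trivial representation by the scaling argument of \cref{claim:samecon}, which gives the final assertion.
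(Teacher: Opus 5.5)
There is a genuine gap, and it is the step you flag yourself as the ``main obstacle''. You never get from the $\km$-adic hypothesis to the pointwise statements $h^*\eta_1(z,w)=0$ and $h^*\eta_2(z,w)=0$ for $(z,w)\in\cT$, which is what \cref{item:pullback2step} needs.

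Your first-order step is fine. In $\km/\km^2$ the $z_i$ and the $w_\alpha$ ($\alpha\le\ell$) are independent, and $h^*\eta_{i,0}$, $h^*f(\beta_\alpha)$ are closed log forms.

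The second-order step does not work as written. In $\km^2/\km^3$ the block $\omega_2$ mixes weights $2,3,4$. Its coefficients in a basis of $\km^2/\km^3$ are combinations of the $\eta_{I,\Gamma}$ dictated by the relations, and the $\eta_{I,\Gamma}$ cannot be treated one at a time:
\begin{itemize}
\item \cref{lem:nablaexact} concerns $\eta_k(z,w)$ at points of $\cT$, not individual $\eta_{I,\Gamma}$.
\item The individual $\eta_{I,\Gamma}$ need not be logarithmic.
\item They are not closed. On $X_0$ one has $\nabla\eta_{0,\alpha}=-\cH(g(\beta_\alpha))\neq0$ for $\alpha>\ell$, and $\nabla\eta_{ij,0}$ contains the term $\frac12\cH[\eta_{i,0},\eta_{j,0}]$. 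Pulled back to $Y$ these harmonic pieces are exact but need not vanish; they cancel only in the combination $\eta_2(z,w)$ with $(z,w)\in\cT$.
\end{itemize}
Your argument for (ii) rests on the same false claim, namely that each separate weight-two form pulls back to a closed form. Finally, an identity over $\cO_{\cT,0}/\km^3$ cannot simply be evaluated at a point $(z_0,w_0)\neq0$.

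The missing device is the paper's restriction to a weighted curve. Fix $(z_0,w_0)\in\cT$ and set $\varphi(t)=(tz_0,t^2w_0)$, with $z$ of weight $1$ and $w$ of weight $2$. By quasi-homogeneity this curve stays in $\cT$ for $|t|\le1$.
\begin{itemize}
\item Since $\varphi^*\km\subset(t)$, $\varphi^*$ descends to a map $\cO_{\cT,0}/\km^3\to\bC[t]/(t^3)$.
\item $\varphi^*\circ\varrho_{\cT,3}$ is the monodromy of $\nabla+t\eta_1(z_0,w_0)+t^2\eta_2(z_0,w_0)$. Both entries are honest log forms satisfying the relations of \cref{thm:construction}.
\item At order $t$, triviality after $h_0^*$ gives that $h^*\eta_1(z_0,w_0)$ is exact on $Y_0$. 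By type decomposition and \cref{lem:ddbar} it is therefore zero.
\item At order $t^2$, $h^*\eta_2(z_0,w_0)$ is closed and exact on $Y_0$. Hence its residues along $D_Y$ vanish, so it is $\bnabla$-closed and $\bnabla'$-exact on $Y$, and \cref{lem:ddbar} makes it zero.
\end{itemize}
\cref{item:pullback2step} then gives $h_0^*\nabla_{z_0,w_0}=h_0^*\nabla$, with no coefficient extraction in $\km^2/\km^3$.

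For (ii) you should not route through $\varrho_{\cT,3}$ and (i) at all. At each $(z_0,w_0)\in\cT$ you already have $h^*\eta_1(z_0,w_0)=0$. Hence $\nabla h^*\eta_2(z_0,w_0)=-h^*\eta_1\wedge h^*\eta_1=0$ on $Y\subset X_0$, and $\bnabla'$-exactness together with \cref{lem:ddbar} kills $h^*\eta_2(z_0,w_0)$. \cref{item:pullback2step} then concludes directly; this is the paper's argument.
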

		\begin{proof} 
			It suffices to prove that $h_0^*\nabla_{z,w} = h_0^*\nabla$ for all $(z,w) \in \cT$. Indeed, since $(h_0^*V,h_0^*\nabla)$ is assumed to be a trivial flat bundle in both items above, this equality combined with \cref{thm:universal2} implies that the algebraic morphism
			\[
			h_0^* \colon R(X_0,N) \longrightarrow R(Y_0,N)
			\]
			is constant on an analytic neighborhood of $\varrho_0$ in $R(X_0,N)$. Since the map is algebraic, it follows that $h_0^*$ is constant on the entire irreducible component of $R(X_0,N)$ containing $\varrho_0$.
			
			Fix an arbitrary $(z_0,w_0) \in \cT$. We aim to show that $h_0^*\nabla_{z_0,w_0} = h_0^*\nabla$. Note that $\sQ'$ is a quasi-homogeneous cone with weight 2 in $z$ and weight 1 in $w$.
			We then can define a holomorphic curve $\varphi: \bD_{1+\ep} \to \cT$ by $\varphi(t) = (t^2 z_0, t w_0)$ for some small $\ep>0$. Then we have the following commutative diagram:
			\begin{equation}\label{eq:bigdiagram}
				\begin{tikzcd}
					\pi_1(X_0) \arrow[rrrd,controls={+(3,2) and +(2.5,1.4)},"\sigma"]\arrow[r,"\varrho_{\cT}"]\arrow[dr,"\varrho_{\cT,3}"'] & \GL_N(\cO_{\cT,0}) \arrow[d] \arrow[r,"\varphi^*"] & \GL_N(\cO_{\bD_{1+\ep},0}) \arrow[d] \\
					& \GL_N(\cO_{\cT,0}/\mathfrak{m}^3) \arrow[r,"\varphi^*"] & \GL_N(\cO_{\bD_{1+\ep},0}/\mathfrak{m}^3) \arrow[r,equal] & \GL_N(\bC[t]/(t^3))
				\end{tikzcd}
			\end{equation}
			Consider the family of connections over the disk given by $\nabla_t := \nabla_{t^2 z_0, tw_0}$. The monodromy of this family corresponds to  the  representation $\varphi^* \circ \varrho_\cT: \pi_1(X_0) \to \GL_N(\cO_{\bD_{1+\ep},0})$. 
			Let $(E_3,\nabla_3)$ be the corresponding flat bundle for the representation 
			$$
			\sigma:\pi_1(X_0)\to \GL_N(\bC[t]/(t^3)). 
			$$Then the connection matrix  $\nabla_3$ is given by
			\begin{equation} \label{eq:connection3new}
				\nabla_3=  \begin{pmatrix}
					\nabla\otimes \mathrm{Id}_{t^2/t^3} & t\eta_1(z_0,w_0) & t^2\eta_2(z_0,w_0) \\
					0 & \nabla\otimes \mathrm{Id}_{t/t^2} & t\eta_1(z_0,w_0) \\
					0 & 0 & \nabla\otimes \mathrm{Id}_{\bC}
				\end{pmatrix}.
			\end{equation}
			
			\noindent \textbf{Proof of (i)}: By assumption, the pullback representation
			\[
			h_0^*\varrho_{\mathcal{T},3}: \pi_1(Y_0) \to \GL_N(\mathcal{O}_{\mathcal{T},0}/\mathfrak{m}^3)
			\]
			is trivial. Consequently, the induced representation
			\[
			h_0^* \sigma: \pi_1(Y_0) \to \GL_N(\mathbb{C}[t]/(t^3))
			\]
			is also trivial. The triviality of the connection $h_0^*\nabla_3$ implies that 
			\begin{itemize}
				\item  $h_0^*\nabla$ is trivial;
				\item we have  $\eta_1(z_0,w_0) = 0$ and $\eta_2(z_0,w_0) = 0$.
			\end{itemize}
			
			By \cref{item:pullback2step}, it follows that $h_0^*\nabla_t = h_0^*\nabla$ for all $t \in \bD_{1+\ep }$. In particular, this implies $h_0^*\nabla_{z_0,w_0} = h_0^*\nabla$. Since $(z_0, w_0)$ is an arbitrary point in $\cT$, we conclude that $h_0^*\nabla_{z,w} = h_0^*\nabla$ for all $(z,w) \in \cT$, that is the trivial representation. 
			This proves item~(i).

			\medspace
			
			\noindent \textbf{Proof of (ii)}:  Since $\varrho$ is trivial, it follows from \eqref{eq:etak} that
			$$
			\eta_1(z_0,w_0)\in \cH^1(X,E)\simeq \cH^1(X,\bC)\otimes\bC^{N^2}. 
			$$
			By the assumption  that   $a\circ h(Y)=\{\rm pt\}$, we have 
			$$
			h^*\eta_1(z_0,w_0)=0. 
			$$ 
			By \cref{thm:construction}, we have
			$$
			\nabla\eta_2(z_0,w_0)+\eta_1(z_0,w_0)\wedge \eta_1(z_0,w_0)=0.
			$$
			Hence, 
			$$
			\nabla h^*\eta_2(z_0,w_0)=\nabla h^*\eta_2(z_0,w_0)+h^*\eta_1(z_0,w_0)\wedge h^*\eta_1(z_0,w_0)=0.
			$$
			Recall that $h(Y_0)\cap D=\varnothing$. It follows that $h^*\eta_2(z_0,w_0)\in A^1(X,E)\cap \ker\nabla$. By \cref{thm:construction}, we know that $\eta_2(z_0,w_0)$ is $\bnabla'$-exact; hence, its pullback $h^*\eta_2(z_0,w_0)$ is also $\bnabla'$-exact. By \cref{lem:ddbar}, we have
			$$
			h^*\eta_2(z_0,w_0)=0. 
			$$
			By \cref{item:pullback2step}, it follows that $h^*\nabla_t = h^*\nabla$ for all $t \in \bD_{1+\ep }$. In particular, this implies $h_0^*\nabla_{z_0,w_0} = h_0^*\nabla$. This proves Item (ii). 
		\end{proof}
		\begin{proof}[Proof of \cref{main3}]
			Since $(E, \nabla)$ is a trivial flat bundle, the augmentation map
			\[
			\varepsilon_x \colon H^0(\overline{X}, E) \to E_x
			\]
			is surjective. Consequently, the complex vector space $W$, defined in the proof of \cref{lem:GM2}, is zero-dimensional.
			
			By \cref{thm:universal2}, the natural map of analytic germs
			\begin{align*}
				f \colon (\mathcal{T}, 0) &\to \left( R(X, N),\varrho\right) \\
				z &\mapsto \operatorname{Mon}(\nabla_{z,w})
			\end{align*}
			is an isomorphism. The theorem is proved. 
		\end{proof}
		\subsection{A geometric application}
		As a direct consequence of \cref{item:1step}, we provide a new proof of a result   established by Green--Griffiths--Katzarkov \cite{GGK24} and Aguilar--Campana \cite{AC25}.
		
		\begin{thm}\label{thm:GGK}
			Let $X$ be a quasi-compact K\"ahler manifold such that its quasi-Albanese map $a:X\to A$ is proper. If $\pi_1(X)$ is nilpotent, then its Shafarevich morphism is the Stein factorization $g:X\to S$ of $a$. Moreover, the universal covering of $X$ is holomorphically convex. 
		\end{thm}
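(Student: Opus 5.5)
Mirroring the compact case (\cref{thm:Kat}), the plan is to show that for every compactifiable holomorphic map $f\colon Y\to X$ the image of $\pi_1(Y)$ in $\pi_1(X)$ is finite if and only if $g\circ f(Y)$ is a point. Since $\pi_1(X)$ is finitely generated nilpotent, it is linear and virtually torsion free. Fix a faithful representation $\tau\colon\pi_1(X)\to\GL_N(\bC)$. By \cref{lem:passetale} (whose proof only uses that the cover is finite étale and Galois, hence works for quasi-compact K\"ahler $X$), we may pass to a finite étale Galois cover. Properness of the quasi-Albanese map is preserved under such covers, since the quasi-Albanese map of the cover composed with the isogeny back to $A$ differs from $a$ only by finite maps. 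We may therefore assume that the Zariski closure of $\tau(\pi_1(X))$ is connected nilpotent, that is, equal to $T\times U$. This gives a decomposition $\tau=(\tau_1,\tau_2)$ with $\tau_1$ diagonalisable (hence abelian) and $\tau_2$ unipotent, and we may also assume that $\pi_1(X)$ is torsion free.

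For the key step, let $f\colon Y\to X$ be compactifiable with $a\circ f(Y)$ a point. We may resolve $Y$ so that it is a quasi-compact K\"ahler manifold. The representation $\tau_2$ lies in the irreducible component of $R(X,N_2)$ containing the trivial representation, since conjugating by a one-parameter diagonal torus degenerates it to the trivial one, as in \cref{claim:samecon}. We then want to apply \cref{item:1step} with $h=f$ and the trivial flat bundle $(V,\nabla)$. The obstacle is that \cref{item:1step} requires $D_Y=\varnothing$ together with $h$ extending to compactifications, whereas here $Y$ is only quasi-compact. I would handle this as follows. Choose compactifications $\overline{X}$ and $\overline{Y}$ with $f$ extending holomorphically to $\bar f\colon\overline{Y}\to\overline{X}$. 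Each class $\eta_1(z_0,w_0)$, which is harmonic with respect to the trivial system, pulls back to zero on $Y$ because it factors through the quasi-Albanese map. Since $\eta_2$ is log and $\bnabla'$-exact and satisfies $\nabla f^*\eta_2=0$ on $Y$, its pullback $f^*\eta_2$ should vanish by the same $\d\db$ argument run on $\overline{Y}$ with log poles. This is exactly Step 6 of \cref{thm:construction}, adapted as in the proof of (ii). I expect this extension to be the technical heart of the proof.

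Once we know that $f^*\tau_2$ is trivial, we treat $\tau_1$: it factors through $H_1(X,\bZ)/\mathrm{tor}\simeq\pi_1(A)$, so $f^*\tau_1$ is trivial. Faithfulness of $\tau$ then shows that $\mathrm{Im}[\pi_1(Y)\to\pi_1(X)]$ is trivial. Conversely, if this image is finite, then $f^*H^1(X,\bZ)$ is torsion, so $a\circ f$ induces the zero map on $H^1$ and is therefore constant by the universal property of the quasi-Albanese map. Passing to the Stein factorization $g$ identifies $g$ as the Shafarevich morphism. Because $a$ is proper, $g$ is proper onto a normal space $S$.

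For holomorphic convexity, the preceding argument shows that the fibres of $g$ have finite $\pi_1$-image; after the torsion-free reduction the image is trivial. Let $\widetilde{X}\to X$ be the universal cover. The fibres of $g$ lift to compact subvarieties of $\widetilde{X}$, and $\widetilde{X}\to X\times_S\widetilde{S}$ is proper, where $\widetilde{S}$ is the covering of $S$ induced by $\pi_1(X)\to\pi_1(A)$. Composing $a$ with the universal cover of the quasi-abelian variety $A$, which is Stein because it is isomorphic to $\bC^m$, we obtain a proper holomorphic map from $\widetilde{X}$ with connected fibres onto a Stein normal space, namely the Stein factorization over $\bC^m$. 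This yields holomorphic convexity. A remaining point to check is that the kernel of $\pi_1(X)\to\pi_1(A)$ is finite; this follows from the fibre statement combined with the exact sequence for the proper fibration $g$.
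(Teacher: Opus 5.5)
Apart from the two points below, your outline matches the paper: a faithful $\tau=(\tau_1,\tau_2)$, $\tau_1$ killed through $\pi_1(A)$, and the converse via $H^1$.

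\textbf{The key step fails as proposed.} You want $f^*\eta_2=0$ on a genuinely non-compact $Y$ by running \cref{lem:ddbar} on $\overline{Y}$ with log poles. But \cref{lem:ddbar} needs $\bnabla u=0$ as a current on the compact $\overline{Y}$. By \cref{lem:equiv}, $\bnabla(\bar f^*\eta_2)$ is the push-forward of the residues of $\bar f^*\eta_2$. These come from $\Res_D\eta_2=-\beta$ and need not vanish. Step~6 of \cref{thm:construction} only handles $k\ge 3$, where \eqref{eq:residuenull} holds. Already $dz/z=\d\log|z|^2$ on $\mathbb{P}^1$ is $\d$-exact as a current, closed on $\bC^*$, and nonzero.

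Your argument never uses properness of $a$, and without properness the conclusion is false. Take $X$ to be the complement of the zero section of a line bundle of nonzero degree on an elliptic curve. Then $\pi_1(X)$ is a Heisenberg-type group and $a$ is the bundle projection. A fibre $Y\simeq\bC^*$ maps onto an infinite central subgroup, so a faithful unipotent $\tau_2$ does not pull back trivially. This is exactly why the quasi-compact case needs the two-step \cref{thm:final}.

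The missing idea is to use properness to remove the boundary, as the paper does:
\begin{itemize}
\item $f(Y)$ lies in the compact fibre $F=a^{-1}(a(f(Y)))$, which is closed in $\overline{X}$.
\item Resolving the indeterminacies of the meromorphic extension of $f$ by blow-ups centred in $\overline{Y}\setminus Y$ gives a compact K\"ahler $\overline{Y}$ and a holomorphic map $\overline{Y}\to\overline{f(Y)}\subset F\subset X$.
\item Since $\pi_1(Y)\to\pi_1(\overline{Y})$ is surjective, \cref{item:1step} applies directly with empty boundary.
\end{itemize}

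\textbf{The holomorphic convexity argument also has a gap.} Your \emph{remaining point}, that $\ker(\pi_1(X)\to\pi_1(A))$ is finite, is false in general. This kernel contains $[\pi_1(X),\pi_1(X)]$, which is infinite as soon as $\pi_1(X)$ is not virtually abelian. Such examples exist already among compact K\"ahler manifolds, where $a$ is automatically proper. So $\widetilde{X}\to\widetilde{A}\simeq\bC^m$ is not proper in general.

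The paper does not need finiteness of the kernel:
\begin{itemize}
\item Let $\widetilde{X}^{\rm ab}$ be a component of $X\times_A\widetilde{A}$ and $\varphi\colon\widetilde{X}^{\rm ab}\to W$ its Stein factorization.
\item After a torsion-free reduction, the $\pi_1$-images of the fibres of $g$ are trivial. Hence the connected components of the fibres of $\widetilde{X}\to W$ are compact.
\item Cartan's theorem then gives a proper map $\widetilde{X}\to Z$.
\item $Z\to W$ is an unramified Galois covering with the possibly infinite group $\pi_1(\widetilde{X}^{\rm ab})$.
\item $W$ is finite over $\bC^m$, so it is Stein. Hence its covering $Z$ is Stein, and $\widetilde{X}$ is holomorphically convex.
\end{itemize}
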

		
		\begin{proof}
			Note that any finitely generated nilpotent group is  linear. Hence,   we may assume the existence of a faithful representation $\tau \colon \pi_1(X) \to \operatorname{GL}_N(\mathbb{C})$ with nilpotent image.

			After replacing $\tau$ by some conjugate, we may decompose the representation as
			\[
			\tau=(\tau_1,\tau_2), \qquad   
			\tau_i:\pi_1(X)\to \GL_{N_i}(\bC),
			\]
			where $\tau_1(\pi_1(X))$ is abelian and torsion-free, and $\tau_2(\pi_1(X))$ is unipotent.
			
			Let $f:Y\to X$ be any compactifiable holomorphic map from a quasi-compact K\"ahler manifold $Y$ to $X$ such that the image $a(f(Y))$ is a point. 
			Since $a$ is proper, the image $f(Y)$ is contained in a compact fiber. Consequently, by replacing $Y$ with a suitable bimeromorphic modification (or compactification), we may assume that $Y$ is a compact K\"ahler manifold. Under this assumption, \cref{item:1step} implies that $f^*\tau_2$ is trivial.  
			
			On the other hand, since $\tau_1(\pi_1(X))$ is abelian, $\tau_1$ factors through $H_1(X,\bZ)$. Moreover, the induced map $a_*:H_1(X,\bZ)/{\rm tor}\to H_1(A,\bZ)$ is an isomorphism. Thus, there exists a factorization:
			\[
			\begin{tikzcd}
				\pi_1(X)\arrow[r] \arrow[dr,"\tau_1"'] & \pi_1(A)\arrow[d]\\
				& \GL_{N_1}(\bC).
			\end{tikzcd}
			\]
			Since $a(f(Y))$ is a point, the image of $\pi_1(Y)$ in $\pi_1(A)$ is trivial. It follows that $f^*\tau_1(\pi_1(Y))=\{1\}$.
			Consequently, $f^*\tau(\pi_1(Y))=\{1\}$. Since $\tau$ is faithful, this implies that $f_*\pi_1(Y)$ is trivial.
			
			Next, by the characterization of the quasi-Albanese map, for any compactifiable holomorphic map $f:Y\to X$ from a quasi-compact K\"ahler manifold, if $a(f(Y))$ is not a point, then the image of $f^*H^1(X,\bC)$ is non-trivial. This implies that $f_*\pi_1(Y)$ is infinite. 
			Combining this with the contracting property established above, we conclude that $g$ is the Shafarevich morphism of $X$.

			We now show the last assertion. By Selberg's lemma, any finitely generated linear group is virtually torsion-free. Hence, up to replacing $X$ by a finite \'etale cover, we may assume that $\pi_1(X)$ is torsion-free. Consequently,  we have 
			\begin{align}\label{eq:imagetrivial}
				{\rm Im}[\pi_1(F)\to \pi_1(X)]=\{1\}\quad \mbox{ for each fiber } F \mbox{ of }  g.
			\end{align}
			
			Let $\widetilde{X}^{\rm ab}$ be a connected component of the fiber product $X \times_A \widetilde{A}$, where $\widetilde A\to A$ denotes the universal covering of $A$, and let $\varphi: \widetilde{X}^{\rm ab} \to W$ be the Stein factorization of the induced map $\widetilde{X}^{\rm ab} \to \widetilde{A}$. 
			
			Let $\pi: \widetilde{X} \to \widetilde{X}^{\rm ab}$ be the universal cover of $\widetilde{X}^{\rm ab}$ (which is also the universal cover of $X$), and consider the composition $\Phi := \varphi \circ \pi: \widetilde{X} \to W$. By \eqref{eq:imagetrivial},  for any $y \in W$, its preimage $\Phi^{-1}(y) = \pi^{-1}(\varphi^{-1}(y))$ in $\widetilde{X}$ is a disjoint union of copies of the compact connected space $\varphi^{-1}(y)$. In particular, the connected components of the fibers of $\Phi$ are compact.
			
			By a theorem of Cartan \cite{Car60}, the set $Z$ of connected components of the fibers of $\Phi$ can be endowed with the structure of a normal complex space such that $\Phi$ factors as $h \circ  {\psi}$, where $ {\psi}: \widetilde{X} \to Z$ is a proper holomorphic fibration and $h: Z \to W$ is a map with discrete fibers. This yields the following commutative diagram:
			\begin{equation*}
				\begin{tikzcd}
					\widetilde{X} \arrow[d, " {\psi}"'] \arrow[r, "\pi"]\arrow[dr,"\Phi"]  & \widetilde{X}^{\rm ab}\arrow[d, "\varphi"] \arrow[r] & X \arrow[dd, "g"] \\
					Z \arrow[r, "h"'] & W \arrow[d] & \\
					& \widetilde{A} \arrow[r] & A
				\end{tikzcd}
			\end{equation*}
			Let $G := \mathrm{Aut}(\widetilde{X}/\widetilde{X}^{\rm ab})$ be the Galois group of the covering $\pi$.  By \eqref{eq:imagetrivial} again, the preimage $\pi^{-1}(\varphi^{-1}(y))$ is exactly the $G$-orbit of a single connected component in $\widetilde{X}$, and thus the group $G$ acts transitively and freely on the set of connected components of the fibers of $\Phi$. Consequently, $G$ induces a free action on the complex space $Z$. Furthermore, this action is properly discontinuous (see, e.g., \cite[Lemma 3.34]{DY23}). This implies that the natural action of $G$ on $Z$ is free and properly discontinuous. 
			
			By construction, the quotient $Z/G$ is canonically isomorphic to $W$, meaning $h: Z \to W$ is a  topological covering map with Galois group $G$. Note that the map $W \to \widetilde{A}$ is finite and proper, and since $\widetilde{A} \cong \mathbb{C}^{\dim A}$ is Stein, $W$ is Stein. Since $h: Z \to W$ is a   topological covering of a Stein space, $Z$ is also Stein. 
			
			Finally, since $ {\psi}: \widetilde{X} \to Z$ is a proper holomorphic map to the Stein space $Z$, it follows from the Cartan-Remmert theorem that $\widetilde{X}$ is holomorphically convex.  The theorem is proved.
		\end{proof}
		
		\section{Two-step nilpotent monodromy of local systems on special varieties}
		\subsection{Two-step phenomenon for trivial local systems}
		It is natural to ask when the condition in \cref{item:1stepnew} is satisfied, enabling the application of the 2-step phenomenon. In \cref{thm:final}, we show that for the trivial flat bundle $(V,\nabla)$ of rank $N$ on $X$, the associated representation
		\[
		\varrho_{\mathcal{T},3} : \pi_1(X_0) \to \GL_N(\mathcal{O}_{\mathcal{T},0}/\mathfrak{m}^3),
		\]
		defined in \eqref{eq:Ttruncatei}, becomes trivial when restricted to any fiber $F$ of the second iterated quasi-Albanese map; that is, the quasi-Albanese map of a fiber of the quasi-Albanese map of $X_0$. Consequently, the 2-step phenomenon applies to these fibers $F$. As an application, we prove \cref{main}.

		Throughout this subsection, we will keep the same notations as in \cref{sec:2-step}, and we will only recall their meaning when necessary.

\begin{thm}\label{thm:final}
		Let $X$, $Y$, and $Z$ be compact K\"ahler manifolds.  Let $f \colon Y \to X$ and $g \colon Z \to Y$ be holomorphic maps such that $D_Y := f^{-1}(D)$ and $D_Z := g^{-1}(D_Y)$ are simple normal crossing divisors in $Y$ and $Z$, respectively. Set    $Y_0 := Y \setminus D_Y$, and $Z_0 := Z \setminus D_Z$, and denote the restrictions  by $f_0 := f|_{Y_0}$ and $g_0 := g|_{Z_0}$.
		
		Assume that the images of $Y_0$ and $Z_0$ under the respective quasi-Albanese maps are points; that is,
		\begin{align}\label{eq:contracalba}
			\operatorname{alb}_{X_0}(f(Y_0)) = \{\mathrm{pt}\} \quad \text{and} \quad \operatorname{alb}_{Y_0}(g(Z_0)) = \{\mathrm{pt}\},
		\end{align} 
		where $\operatorname{alb}_{X_0} \colon X_0 \to \operatorname{Alb}(X_0)$ and $\operatorname{alb}_{Y_0} \colon Y_0 \to \operatorname{Alb}(Y_0)$ denote the quasi-Albanese maps.
		
		Then the pullback representation
		$$(f_0 \circ g_0)^*\varrho_{\cT}:\pi_1(Z_0)\to \GL_N(\mathcal{O}_{\mathcal{T},0})$$   is trivial. In particular, for any representation $\tau \colon \pi_1(X_0, x) \to \operatorname{GL}_N(\mathbb{C})$ belonging to the irreducible component of the representation variety $ {R}(X_0, N)$ that contains the trivial representation, the pullback representation $(f_0 \circ g_0)^*\tau$ is trivial.
	\end{thm}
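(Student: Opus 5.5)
The plan is to reduce to \cref{item:1stepnew}: with $(V,\nabla)$ the trivial rank $N$ flat bundle on $X$, it suffices to show that $h_0:=f_0\circ g_0$ satisfies $h_0^*\varrho_{\cT,3}=1$. Indeed, the proof of \cref{thm:Step 2} then gives $h_0^*\nabla_{z,w}=h_0^*\nabla$ for all $(z,w)\in\cT$, so $h_0^*\varrho_{\cT}$ is trivial, and the statement about $\tau$ in the irreducible component of the trivial representation follows as there. So the whole proof is a computation with the explicit flat connection $\nabla_3$ in \eqref{eq:connection3}. I will show that the off-diagonal entries $\eta_1(z,w)=\sum z_i\eta_{i,0}+\sum_{\alpha\le\ell}w_\alpha\eta_{0,\alpha}$ and $\omega_2$ pull back to zero under $h_0$, up to a gauge transformation on $Z_0$.

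\textbf{Step 1 (degree one).} Since $E$ is trivial, $H^1(X_0,\cE)\simeq H^1(X_0,\bC)\otimes\bC^{N^2}$, and the forms $\eta_{i,0}$ and $\eta_{0,\alpha}=f(\beta_\alpha)$ with $g(\beta_\alpha)=0$ are closed logarithmic $1$-forms. By \eqref{eq:E1} their classes span $H^1(X_0,\cE)$. The hypothesis $\operatorname{alb}_{X_0}(f(Y_0))=\mathrm{pt}$ gives $f_0^*=0$ on $H^1(X_0,\bC)$. Each of these forms is a sum of a holomorphic log form (the $\eta_{0,\alpha}$ are log $1$-forms, $\nabla'$-exact in the sense of currents) and an antiholomorphic harmonic form. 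Pulling back, I get log forms on $(Y,D_Y)$ that are closed and cohomologically trivial, and Deligne's $E_1$-degeneration forces them to vanish identically. Thus $f^*\eta_1(z,w)=0$ as forms, and a fortiori $h^*\eta_1=0$. I use the pullback by $f$ alone here.

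\textbf{Step 2 (degree two, the main obstacle).} From \eqref{eq:maurercartan} and Step 1, $\nabla f^*\omega_2=-f^*(\eta_1\wedge\eta_1)=0$, so $f^*\omega_2$ is a closed $1$-form on $Y_0$ with log poles along $D_Y$. The difficulty is that $\omega_2$ involves $\eta_2$, which has residues $-\beta$ and is not closed on $X$, so $f^*\omega_2$ need not vanish. It is, however, a closed log form on $Y$. I would argue by type: $\eta_2$ is $\bnabla'$-exact by \cref{thm:construction}, so $f^*\eta_2$ lies in $\Im\bnabla'$ as a current on $Y$. As in the proof of \cref{lem:1-equi}, a closed log $1$-form in $\Im\bnabla'$ is holomorphic logarithmic, i.e. lies in $H^0(Y,\Omega_Y(\log D_Y))\otimes\bC^{N^2}$. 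Such forms are pulled back from $\operatorname{Alb}(Y_0)$. Since $\operatorname{alb}_{Y_0}(g(Z_0))$ is a point, $g^*$ kills them, giving $h^*\omega_2=0$. The care needed is in checking that the pullback of a $\bnabla'$-exact current with log singularities stays $\bnabla'$-exact on $Y$ with the right log regularity; I would use \cref{lem:continuous}, which lets me write $\eta_2=\bnabla'\sigma$ up to the explicit $\log|s_i|^2$ term from \eqref{eq:f}. That term pulls back to a sum of $\log|f^*s_i|^2$ terms, which remain $\partial$-potentials on $Y$.

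\textbf{Step 3.} With both off-diagonal blocks of $h_0^*\nabla_3$ equal to zero and $h_0^*\nabla$ trivial, the flat bundle $h_0^*(\fV_3,\nabla_3)$ is trivial, so $h_0^*\varrho_{\cT,3}=1$. \cref{item:1stepnew} then concludes the proof.
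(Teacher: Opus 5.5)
\textbf{Gap in Step 2.} Step 2 treats the $\km$-adic entry $\omega_2$ of \eqref{eq:connection3} as if it were the weight-two form $\eta_2(z,w)$ of \cref{thm:construction}, and these are different objects. By \eqref{eq:omega}, $\omega_2$ contains $\sum z_iw_\gamma\eta_{i,\gamma}$ (coming from $\eta_3$) and $\sum w_\beta w_\gamma\eta_{0,\beta\gamma}$ (coming from $\eta_4$). The part $\sum_{\gamma\le\ell}w_\gamma\eta_{0,\gamma}$ of $\eta_2$, by contrast, sits in $\km$-adic order one. You need two properties: log poles along $D$, and $\bnabla'$-exactness. \cref{thm:construction,main1} establish these only for the sums $\eta_k(z_0,w_0)$ at points $(z_0,w_0)\in\cT$. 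They are not established for the individual coefficient forms $\eta_{I,\Gamma}$, and the paper explicitly warns that these need not be logarithmic.

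Concretely, $\eta_{i,\gamma}=-\bnabla''^*G[\eta_{i,0},\eta_{0,\gamma}]$, and $[\eta_{i,0},\eta_{0,\gamma}]$ has residue $\pm[\iota^*\eta_{i,0},\beta_\gamma]$ along $D^{(1)}$. The class of this residue in $H^1(D^{(1)},E)$ need not vanish. Condition (ii) of \cref{main1} only kills the combination $\sum z_iw_\gamma\{[\iota^*\eta_{i,0},\beta_\gamma]\}$, i.e.\ it holds only modulo the relations defining $\km^2/\km^3$. So $[\eta_{i,0},\eta_{0,\gamma}]$ is not $\bnabla$-closed as a current, and \cref{prop:regularity} does not apply to it.

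Hence your claims that $f^*\omega_2$ has log poles along $D_Y$ and lies in $\Im\bnabla'$ are unsupported. Also, \eqref{eq:maurercartan} is an identity for the $\eta_k$, not for $\omega_2$. To rescue the $\km$-adic route you would have to show that the non-logarithmic, non-$\bnabla'$-exact parts of the coefficients of $\omega_2$ lie in (relations)$\otimes A^1(X_0,E)$. Your proposal contains no such argument.

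\textbf{The detour is unnecessary.} The proof of \cref{item:1stepnew} immediately specializes to the one-parameter families $\nabla+\sum_k t^k\eta_k(z_0,w_0)$, $(z_0,w_0)\in\cT$. It only needs $h^*\eta_1(z_0,w_0)=h^*\eta_2(z_0,w_0)=0$. Your Step 1 and Step 2 arguments prove exactly this when applied to these forms, and this is the paper's proof:
\begin{itemize}
\item $\eta_1(z_0,w_0)=\sum_i z_{0,i}\eta_{i,0}$ is harmonic and is killed by $f^*$.
\item By \eqref{eq:diffeta2}, $f^*\eta_2(z_0,w_0)$ is then $\nabla$-closed on $Y_0$.
\item Being a $\bnabla'$-exact logarithmic form, $\eta_2(z_0,w_0)$ is of type $(1,0)$. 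So $f^*\eta_2(z_0,w_0)\in H^0(Y,\Omega_Y(\log D_Y))\otimes\bC^{N^2}$; it need not vanish itself.
\item $g^*$ kills it by the second hypothesis in \eqref{eq:contracalba}.
\item \cref{item:pullback2step} applied to $h=f\circ g$ then gives $h_0^*\nabla_{z_0,w_0}=h_0^*\nabla$.
\end{itemize}
Since type $(1,0)$ is preserved under holomorphic pullback, your worry about transporting $\bnabla'$-exactness via \cref{lem:continuous} does not arise.
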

	\begin{proof}
		Let $h_0 := f_0 \circ g_0$. We proceed analogously to the proof of \cref{item:1stepnew} and adopt the notation used therein. It suffices to show that $h_0^*\nabla_{z,w} = h_0^*\nabla$ for all $(z,w) \in \cT$.
		
		Fix an arbitrary point $(z_0, w_0) \in \cT$. For some small $\ep > 0$, define a holomorphic curve $\varphi: \bD_{1+\ep } \to \cT$ by $\varphi(t) = (t^2 z_0, t w_0)$. Consider the family of connections parametrized by the disk, given by $\nabla_t := \nabla_{t^2 z_0, t w_0}$. Let $(E_3, \nabla_3)$ be the flat bundle associated with the induced representation modulo $t^3$:
		$$
		\sigma:\pi_1(X_0)\to \GL_N(\bC[t]/(t^3)) 
		$$
		defined in \eqref{eq:bigdiagram}, where the connection matrix  $\nabla_3$ is defined in 
		\eqref{eq:connection3new}.  Since $(E,\nabla)$ is a trivial  flat bundle,  by \eqref{eq:etak}, $$\eta_1(z_0,w_0)\in \cH^1(X,E)\simeq \cH^1(X,\bC)\otimes \bC^{N^2}. $$ 
		By \eqref{eq:contracalba}, we have
		$$
		f^*\eta_1(z_0,w_0)=0. 
		$$
		By \cref{thm:construction}, we have
		$$
		\nabla\eta_2(z_0,w_0)+\eta_1(z_0,w_0)\wedge \eta_1(z_0,w_0)=0.
		$$
		This implies that, $\nabla f^*\eta_2(z_0,w_0)=0$. Hence we have
		$$f^*  \eta_2(z_0,w_0)\in A^1(Y,D_Y, E) \cap \ker\nabla\simeq \left(A^1(Y,D_Y,\bC) \cap \ker d\right)   \otimes \bC^{N^2}$$ 
		Let $(f^* \eta_2(z_0,w_0))^{1,0}$ denote the $(1,0)$-component  in $A^1(Y,D_Y, E)$. Then 
		\begin{align}\label{eq:holo2}
			(f^* \eta_2(z_0,w_0))^{1,0} \in H^0(Y,\Omega_Y(\log D_Y)  \otimes E)\simeq H^0(Y,\Omega_Y(\log D_Y))   \otimes \bC^{N^2}.
		\end{align}
		By \eqref{eq:contracalba},  
		$$
		g^*(f^*\eta_2(z_0,w_0))^{1,0} =0. 
		$$
		Consequently, the form $g^*f^*\eta_2(z_0,w_0)$ is purely of type $(0,1)$ and is $\nabla$-closed:
		\begin{equation}\label{eq:holo3}
			g^*f^*\eta_2(z_0,w_0) \in  A^{0,1}(Z,D_Z, E) \cap \ker\nabla\simeq \left( A^{0,1}(Z) \cap \ker d \right) \otimes \bC^{N^2}.
		\end{equation}
		By \cref{thm:construction}, we know that $\eta_2(z_0,w_0)$ is $\bnabla'$-exact; hence, its pullback $g^*f^*\eta_2(z_0,w_0)$ is also $\bnabla'$-exact. Combining this with \eqref{eq:holo3}, we conclude that it vanishes identically.
		
		By \cref{item:pullback2step}, it follows that $h_0^*\nabla_t = h_0^*\nabla$ for all $t \in \bD_{1+\ep }$. In particular, this implies $h_0^*\nabla_{z_0,w_0} = h_0^*\nabla$. Since $(z_0, w_0)$ is an arbitrary point in $\cT$, we conclude that $h_0^*\nabla_{z,w} = h_0^*\nabla$ for all $(z,w) \in \cT$.
		
		Consequently, the pullback representation
		\[
		(f_0 \circ g_0)^*\varrho_{\cT} : \pi_1(Z_0) \to \GL_N(\mathcal{O}_{\mathcal{T},0})
		\]
		is trivial. By \cref{thm:universal2}, for any representation $\tau \colon \pi_1(X_0, x) \to \operatorname{GL}_N(\mathbb{C})$ belonging to the irreducible component of the representation variety $R(X_0, N)$ containing the trivial representation, the pullback representation $(f_0 \circ g_0)^*\tau$ is trivial. The proposition is proved. 
	\end{proof}

	\subsection{Proof of \cref{main}}
	We are now in a position to prove \cref{main}, assuming \cref{main:special}. The proof of the latter is deferred to the next section.
	
	\begin{thm}[=\cref{main}]\label{thm:main}
		Let $X$ be smooth quasi-projective variety that is special. Then for any linear representation $\varrho:\pi_1(X)\to \GL_{N}(\bC)$, its image $\varrho(\pi_1(X))$ is virtually nilpotent of class 2. 
	\end{thm}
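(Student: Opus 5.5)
We follow the strategy outlined in the introduction: reduce to unipotent representations, then apply the two-step phenomenon to fibers of the iterated quasi-Albanese map.

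\textbf{Reduction to the unipotent case.} By \cref{thm:nilpotent}, after passing to a finite \'etale cover (which is again special), the Zariski closure of $\varrho(\pi_1(X))$ is $U\times T$, with $U$ unipotent and $T$ a torus. Hence $\varrho=(\varrho_T,\varrho_U)$. The factor $\varrho_T$ has abelian image. Fixing a faithful embedding $U\hookrightarrow \GL_{N'}(\bC)$, the factor $\varrho_U=:\sigma$ becomes a unipotent representation. A subgroup of (abelian)$\times$(2-step nilpotent) is 2-step nilpotent, so it suffices to show that $\sigma(\pi_1(X))$ is 2-step nilpotent.

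Any unipotent $\sigma$ lies in the irreducible component of $R(X,N')$ containing the trivial representation: conjugating by $\mathrm{diag}(t^{w_i})$ along a flag gives a family degenerating to the trivial representation, exactly as in \cref{claim:samecon}. This step uses \cref{thm:nilpotent} only to produce a finite \'etale cover.

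\textbf{Geometric input.} By \cref{thm:pi1}, the quasi-Albanese map $a\colon X\to A$ is $\pi_1$-exact. For a general smooth fiber $Y$ this gives
\[
\pi_1(Y)\to\pi_1(X)\to\pi_1(A)\to 0.
\]
By \cref{main:special}, $Y$ is special, so \cref{thm:pi1} applies again. For a general fiber $F$ of $a_Y\colon Y\to A_Y$ we obtain
\[
\pi_1(F)\to\pi_1(Y)\to\pi_1(A_Y)\to 0.
\]

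\textbf{Applying the two-step phenomenon.} Choose a smooth projective compactification $\overline{X}$ with simple normal crossing boundary $D$, and take $(V,\nabla)$ to be the trivial flat bundle. Then resolve so that the closures $\overline{Y}$ of $Y$ and $\overline{Z}$ of $Z$ have simple normal crossing preimages of the boundary. Here $Z$ is a desingularization of $F$, which we may take equal to $F$ since $F$ is general. The maps $\overline{Y}\to\overline{X}$ and $\overline{Z}\to\overline{Y}$ satisfy $\operatorname{alb}_X(Y)=\mathrm{pt}$ and $\operatorname{alb}_Y(Z)=\mathrm{pt}$.

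\cref{thm:final} then gives that $\sigma$ restricted to $\operatorname{Im}[\pi_1(F)\to\pi_1(X)]$ is trivial. By exactness of the second sequence, $\iota^*\sigma$ factors through $\pi_1(A_Y)$, which is abelian. Hence $H:=\sigma(\operatorname{Im}[\pi_1(Y)\to\pi_1(X)])$ is abelian. By exactness of the first sequence, $H$ is normal in $G:=\sigma(\pi_1(X))$ and $G/H$ is abelian.

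\textbf{Main obstacle: from metabelian to 2-step nilpotent.} The previous paragraph only shows that $G$ is metabelian; we need $H$ to be central. I would try the following argument. Given $\gamma\in\pi_1(X)$ and $y\in\pi_1(Y)$, consider $\sigma([\gamma,y])$, and aim to show that $[\pi_1(X),\pi_1(Y)]$ has image in $\sigma(\pi_1(F))$, which is trivial.

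Since the local system on $Y$ defined by $\sigma$ is unipotent, I would first try the mod-$\km^3$ representation $\varrho_{\cT,3}$ from \cref{thm:Step 2}: if the 2-step statement holds for it, then \cref{item:1stepnew} controls every $\tau$ in the component. Concretely, \eqref{eq:connection3} shows that $\varrho_{\cT,3}$ has 2-step nilpotent image, being upper triangular unipotent with three blocks. By \cref{thm:universal2} the component containing the trivial representation is parametrized by $\cT$, and the image of a unipotent $\sigma$ is a specialization of $\varrho_\cT$.

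If this route stalls, I would fall back on the Lie-algebra structure. Take the Malcev completion of $G$ and show that $\operatorname{Lie}(H)$ is central, using that monodromy along $\gamma$ acts trivially on $H^1(Y)$-type classes modulo the image of $H^1(F)$. This amounts to a global invariant cycles argument for the family of fibers over $A$, combined with the vanishing from \cref{thm:final}. I expect this centrality step to be the real difficulty.
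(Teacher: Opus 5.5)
Up to the point where $H$ and $G/H$ are both abelian, your argument is exactly the paper's proof of \cref{thm:main}. It uses the same reduction through \cref{thm:nilpotent} and \cref{claim:samecon}, the two applications of \cref{thm:pi1} with \cref{main:special} in between, and \cref{thm:final} to kill $\pi_1(F)$. The paper then concludes in one line that $\tau_2(\pi_1(X))$ is virtually nilpotent of class $2$. You are right that this does not follow: an abelian normal subgroup with abelian quotient only gives a metabelian group. For example, $U_4(\bZ)$ (upper unitriangular integer $4\times4$ matrices) is unipotent and its commutator subgroup is abelian. Yet $U_4(\bZ)$ and all its finite-index subgroups have class $3$. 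So the centrality step is a genuine gap, both in your proposal and in the written proof, and it needs extra geometric input.

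Your first fallback cannot supply that input. \cref{item:1stepnew} and \cref{thm:universal2} only transport \emph{triviality} of pullbacks along compactifiable maps. They say nothing about the nilpotency class of $\tau(\pi_1(X_0))$. Since $\varrho_{\cT,3}$ takes values in $1+M_N(\km/\km^3)$, it is $2$-step nilpotent for every $X_0$. This includes $X_0=\mathbb{P}^1\setminus\{0,1,\infty\}$, whose free fundamental group has unipotent representations of arbitrary class. Moreover, a rescaled unipotent $\sigma$ is the specialization at a point of $\cT$ of the full $\varrho_{\cT}$, not of its truncation modulo $\km^3$.

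Your second route aims at the right statement: $H$ is central, and this is true. But it cannot go through trivial monodromy on $H^1(Y_t)$, which in the quasi-projective setting is in general neither trivial nor semisimple. You have to work on the piece cut out by $H$ and use the unipotence that comes from $G$. Here is one way to do it.

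Take a smooth compactification $\bar a\colon\overline{X}\to\overline{A}$ of $a$, and a Zariski open $A^*\subset A$ over which $\bar a$ is smooth with relatively snc boundary, with fibres $\overline{Y}_t\supset Y_t$. Since $H$ is abelian, $\sigma|_{\pi_1(Y_t)}$ factors through $H_1(Y_t,\bZ)$. Hence $L:=\Hom(H,\mathbb{Q})$ is a sub-local system of $R^1a_*\mathbb{Q}|_{A^*}$. Its monodromy is the contragredient of the conjugation action of the nilpotent group $G$, so it is unipotent.

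Now consider $L\cap W_1$. It lies in the semisimple local system $R^1\bar a_*\mathbb{Q}$ (Deligne), so it has trivial monodromy. By the global invariant cycle theorem, its classes are restrictions of classes in $H^1(\overline{X},\mathbb{Q})$. These restrict to zero on $Y_t$, because $H^1(X,\mathbb{Q})=a^*H^1(A,\mathbb{Q})$ and $H^1(\overline{Y}_t)\to H^1(Y_t)$ is injective. So $L\cap W_1=0$.

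Therefore $L$ embeds in $\mathrm{Gr}^W_2H^1(Y_t)$, whose monodromy is finite because it permutes boundary components. A finite unipotent group is trivial, so $G$ acts trivially on $H\otimes\mathbb{Q}$, hence on the torsion-free group $H$. This gives $[G,G]\subset H\subset Z(G)$.

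Equivalently, one can use the Morgan--Hain mixed Hodge structure on the Malcev Lie algebra $\mathfrak{u}$ of $\pi_1(X)$. By \cref{thm:final}, the image $\mathfrak{h}$ of $\pi_1(Y)$ is a quotient of $H_1(Y)$ lying in $[\mathfrak{u},\mathfrak{u}]\subset W_{-2}\mathfrak{u}$, so it is pure of weight $-2$. Hence $[\mathfrak{u},\mathfrak{h}]\subset\mathfrak{h}\cap W_{-3}\mathfrak{u}=0$.
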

	
	\begin{proof}
		By \cref{thm:nilpotent}, after replacing $X$ by a finite \'{e}tale cover, we may assume that the Zariski closure of $\varrho(\pi_1(X))$ decomposes as a direct product of its central torus and its unipotent radical. This induces a natural decomposition of the representation 
		$$
		\varrho = (\tau_1, \tau_2) \colon \pi_1(X) \to \GL_{N_1}(\bC) \times \GL_{N_2}(\bC),
		$$
		where $\tau_1(\pi_1(X))$ is abelian and torsion-free, and $\tau_2(\pi_1(X))$ is unipotent. It then  suffices to show that $\tau_2(\pi_1(X))$ is nilpotent of class $2$.
		
		Let $\operatorname{alb}_X \colon X \to \operatorname{Alb}(X)$ be the quasi-Albanese map of $X$. By \cref{thm:pi1}, it is a dominant morphism with a connected, smooth general fiber $Y$, yielding the exact sequence
		$$
		\pi_1(Y) \to \pi_1(X) \to \pi_1(\operatorname{Alb}(X)) \to \{1\}.
		$$
		Applying $\tau_2$, we see that the quotient group
		$$
		\tau_2(\pi_1(X)) \big/ \tau_2(\operatorname{Im}[\pi_1(Y) \to \pi_1(X)])
		$$  is therefore abelian.
		
		By \cref{main:special}, $Y$ is also special. Applying \cref{thm:pi1} again, the quasi-Albanese morphism $\operatorname{alb}_Y \colon Y \to \operatorname{Alb}(Y)$ is a dominant morphism with a connected general fiber $F$. Since $\tau_2(\pi_1(X))$ is unipotent,  it follows from \cref{claim:samecon}   that $\tau_2$ lies in the irreducible component of the representation variety $R(X, N_2)(\bC)$ that contains the trivial representation. Letting $\iota \colon Y \hookrightarrow X$ denote the inclusion map, \cref{thm:final} yields
		$$ 
		\iota^*\tau_2(\operatorname{Im}[\pi_1(F) \to \pi_1(Y)]) = \{1\}.
		$$ 
		Furthermore, by \cref{thm:pi1} we have the exact sequence
		$$
		\pi_1(F) \to \pi_1(Y) \to \pi_1(\operatorname{Alb}(Y)) \to \{1\}.
		$$  
		This implies that  $\iota^*\tau_2 \colon \pi_1(Y) \to \GL_{N_2}(\bC)$ factors through the abelian group $\pi_1(\operatorname{Alb}(Y))$. Consequently, the image $\iota^*\tau_2(\pi_1(Y)) = \tau_2(\operatorname{Im}[\pi_1(Y) \to \pi_1(X)])$ is abelian.    It follows that $\tau_2(\pi_1(X))$ is virtually nilpotent of class $2$. The theorem is proved. 
	\end{proof}

	\subsection{A structure theorem}As a consequence of \cref{thm:main}, we obtain the following structure theorem.
	
	\begin{cor}\label{cor:structure}
		Let $X$ be a special smooth quasi-projective variety, and let
		$\varrho:\pi_1(X)\to \GL_N(\bC)$ be a big representation. Then, after replacing
		$X$ by a finite \'etale cover, for a very general fiber $Y$ of the
		quasi-Albanese map $a:X\to A$, the quasi-Albanese map
		${\rm alb}_Y:Y\to {\rm Alb}(Y)$ is birational.
	\end{cor}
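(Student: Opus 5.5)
Replace $X$ by a finite étale cover, as in the proof of \cref{thm:main}, and use \cref{thm:nilpotent} to write
$$\varrho=(\tau_1,\tau_2)\colon \pi_1(X)\to \GL_{N_1}(\bC)\times \GL_{N_2}(\bC).$$
By Selberg's lemma we may also assume $\varrho(\pi_1(X))$ is torsion-free. Here $\tau_1$ has abelian torsion-free image and factors through $\pi_1(A)$, since $a_*\colon H_1(X,\bZ)/\mathrm{tor}\to H_1(A,\bZ)$ is an isomorphism. The component $\tau_2$ is unipotent, so by \cref{claim:samecon} it lies in the irreducible component of $R(X,N_2)$ containing the trivial representation. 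Finite étale covers preserve specialness and bigness, so nothing is lost.

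Let $Y$ be a very general fiber of $a$. By \cref{thm:pi1} it is smooth and connected, and by \cref{main:special} it is special. Applying \cref{thm:pi1} to $Y$, its quasi-Albanese map $\mathrm{alb}_Y\colon Y\to \mathrm{Alb}(Y)$ is dominant with connected general fibers. Let $F$ be a very general fiber of $\mathrm{alb}_Y$ and $j\colon F\hookrightarrow X$ the composite inclusion.

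We show $j^*\varrho$ is trivial. The component $\tau_1$ is trivial on $\pi_1(Y)$, since $Y$ maps to a point of $A$. For $\tau_2$, choose compactifications $\overline X\supset X$ with snc boundary, together with log resolutions of the closures of $Y$ and $F$ whose boundaries are snc and which map into $\overline X$ and into the compactification of $Y$ respectively. Then \cref{thm:final} applies with $f\colon Y\to X$ and $g\colon F\to Y$, because $\mathrm{alb}_X(Y)$ and $\mathrm{alb}_Y(F)$ are points. It gives that $j^*\tau_2$ is trivial.

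The expected technical point is arranging the compactifications so that the preimages of the boundary divisors are snc as \cref{thm:final} requires. Here I would use a standard log resolution and note that replacing $F$ by a birational model with larger boundary only shrinks the image of its fundamental group, so triviality is preserved.

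Since $\varrho$ is big and $F$ passes through a very general point of $X$, the group $\varrho(\mathrm{Im}[\pi_1(F)\to\pi_1(X)])$ is infinite unless $\dim F=0$; this uses that a very general fiber of a very general fiber still passes through a very general point of $X$. Triviality of $j^*\varrho$ therefore forces $\dim F=0$, so $\mathrm{alb}_Y$ is generically finite. It is dominant with connected general fibers, so a general fiber is a single point and $\mathrm{alb}_Y$ is birational.
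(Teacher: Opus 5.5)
Your proof is correct, but it finishes by a different route from the paper. The paper stays at the level of $Y$. It quotes the proof of \cref{thm:main} to get that $f^*\varrho(\pi_1(Y))$ is abelian and torsion-free, notes that $f^*\varrho$ stays big on the very general fiber $Y$, and then invokes the external result \cite[Theorem B.(iv)]{CDY25b} to conclude that $\mathrm{alb}_Y$ is birational. You go one level further down:
\begin{itemize}
\item you apply \cref{thm:final} directly to a very general fiber $F$ of $\mathrm{alb}_Y$, so $\varrho$ has trivial monodromy on $F$;
\item bigness then forces $\dim F=0$, using that a very general fiber of a very general fiber still passes through a very general point of $X$;
\item generic finiteness becomes birationality through the connectedness of general fibers of $\mathrm{alb}_Y$, which \cref{thm:pi1} gives once $Y$ is special by \cref{main:special}.
\end{itemize}
Your version is self-contained: it uses only \cref{thm:final}, \cref{thm:pi1}, \cref{main:special} and the definition of bigness, and never has to match the hypotheses of an outside structure theorem. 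The paper's version is shorter and modular: it isolates the one new input (abelian monodromy on $Y$) and hands off the geometry. The two are close, since abelian torsion-free monodromy on $Y$ factors through $\pi_1(\mathrm{Alb}(Y))$ and so already kills the fibers of $\mathrm{alb}_Y$, which is exactly what you prove directly.

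There are two small slips, neither fatal.
\begin{itemize}
\item Your justification for the compactification step runs the wrong way. If enlarging the boundary really shrank the image of the fundamental group, triviality on the model would not give triviality on $F$. What is true is that $\pi_1(F^\circ)\to\pi_1(F)$ is surjective for a dense Zariski-open $F^\circ\subset F$, so the image in $\pi_1(X)$ is unchanged. Simpler still, take log resolutions of the closures that are isomorphisms over $Y$ and $F$. For $Y$ this matters, because the hypothesis of \cref{thm:final} concerns the quasi-Albanese map of that exact open part.
\item Torsion-freeness of $\varrho(\pi_1(X))$ does not by itself make $\tau_1(\pi_1(X))$ torsion-free. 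You do not need that, though. The image of $\pi_1(Y)$ in $H_1(X,\bZ)$ lies in the torsion subgroup, so $\tau_1$ of it is finite. Hence $\varrho(\mathrm{Im}[\pi_1(F)\to\pi_1(X)])$ is a finite subgroup of a torsion-free group and so trivial; finiteness alone already suffices for the bigness argument.
\end{itemize}
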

	
	\begin{proof}
		By Selberg’s lemma, after replacing $X$ by a finite \'etale cover, we may assume
		that $\varrho(\pi_1(X))$ is torsion free. Let $Y$ be a very general fiber of the
		quasi-Albanese map $a:X\to A$, and let $f:Y\to X$ denote the inclusion. By the
		proof of \cref{thm:main}, the image $f^*\varrho(\pi_1(Y))$ is abelian and torsion
		free. Since $\varrho$ is big and $Y$ is a very general fiber of $a$, it follows
		that the restricted representation
		\[
		f^*\varrho:\pi_1(Y)\to \GL_N(\bC)
		\]
		is also big. By \cite[Theorem B.(iv)]{CDY25b}, the quasi-Albanese map
		${\rm alb}_Y:Y\to {\rm Alb}(Y)$ is birational.
	\end{proof}
	\subsection{The Hirsch rank of $\pi_1$ for special varieties}
	In \cite[Conjecture 2]{GLM}, the following conjecture is proposed:
	\begin{conjecture}\label{conj:GLM}
		For every $n \geq 1$, there is a constant $k(n)$ such that for any $n$-dimensional log canonical Calabi-Yau pair $(X, \Delta)$, the orbifold fundamental group $\Gamma = \pi_1^{\text{orb}}(X, \Delta)$ contains a nilpotent normal subgroup $G \triangleleft \Gamma$ satisfying $[\Gamma : G] \leq k(n)$ and $d(G) \leq 2n$, where $d(G)$ denotes the minimal number of generators of $G$.
	\end{conjecture}
	The techniques developed in the proof of \cref{main} also yield a theorem concerning \cref{conj:GLM}.
	\begin{cor}\label{cor:Hirsch}
		Let $X$ be a special smooth quasi-projective variety. If there exists a faithful representation $\varrho \colon \pi_1(X) \to \GL_N(\bC)$, then $\pi_1(X)$ contains a finite index normal subgroup $\Gamma$ such that both its Hirsch rank $h(\Gamma)$ and its minimal number of generators $d(\Gamma)$ are at most $2 \dim X$.  In particular, any maximal abelian subgroup of $\pi_1(X)$ has rank at most $2 \dim X$. 
	\end{cor}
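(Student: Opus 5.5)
Since $\varrho$ is faithful, $\pi_1(X)\cong\varrho(\pi_1(X))$. We will read the structure of $\pi_1(X)$ off the proof of \cref{thm:main}. By \cref{thm:nilpotent} and Selberg's lemma, after a finite \'etale cover $X'\to X$ (taken Galois, so that the corresponding subgroup $\Gamma$ is normal of finite index), the Zariski closure of $\varrho(\Gamma)$ is $U\times T$. Moreover $\Gamma$ is torsion free, and $X'$ is again special. We fix the notation of the proof of \cref{thm:main}: $a:X'\to A=\operatorname{Alb}(X')$ is the quasi-Albanese map, $Y$ a general fiber, $a_Y:Y\to A_Y$ its quasi-Albanese map. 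By \cref{thm:pi1} and \cref{main:special} we have the exact sequences
\[
\pi_1(Y)\to\Gamma\to\pi_1(A)\to 1,\qquad \pi_1(F)\to\pi_1(Y)\to\pi_1(A_Y)\to 1 .
\]
The proof of \cref{thm:main}, together with \cref{thm:final}, shows that $\pi_1(F)$ maps trivially to $\Gamma$; this uses faithfulness and the splitting $\varrho=(\tau_1,\tau_2)$. The $\tau_1$-part is killed because $\tau_1$ factors through $\pi_1(A)$ and $F$ maps to a point there. Hence $K:=\operatorname{Im}[\pi_1(Y)\to\Gamma]$ is a quotient of $\pi_1(A_Y)\cong\bZ^{r_Y}$, where $r_Y=\rank H_1(Y)/\mathrm{tor}\le 2\dim A_Y$.

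This gives a central-type extension $1\to K\to\Gamma\to\pi_1(A)\to1$, with $K$ abelian, $\pi_1(A)\cong\bZ^{r}$ and $r\le 2\dim A$. Hirsch rank is additive in extensions, so
\[
h(\Gamma)\le \rank K+r\le 2\dim A_Y+2\dim A .
\]
Here $\dim A_Y\le\dim Y=\dim X-\dim A$, because the quasi-Albanese maps are dominant (\cref{thm:pi1} applied to the special variety $Y$). Hence $h(\Gamma)\le 2\dim X$. For the generator count, $d(\Gamma)\le d(K)+d(\pi_1(A))\le r_Y+r$, and the same inequality gives $d(\Gamma)\le 2\dim X$. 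This uses the surjection $\pi_1(A_Y)\to K$ and the fact that $\pi_1$ of a quasi-abelian variety is free abelian of rank at most twice its dimension, as follows from $1\to\bZ^{2g}\to\cdot\to\bZ^{t}$ with $g+t=\dim$ being realized as $\bZ^{2g+t}\le 2(g+t)$.

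For the last assertion, let $B\le\pi_1(X)$ be abelian. Then $B\cap\Gamma$ has finite index in $B$, and $\rank B=\rank(B\cap\Gamma)=h(B\cap\Gamma)\le h(\Gamma)\le 2\dim X$, since Hirsch rank is monotone on subgroups of polycyclic groups.

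The main obstacle is justifying that the passage to fibers is compatible with the dimension bookkeeping. Specifically, we need $\dim A_Y\le\dim Y$, we need $Y$ special so that \cref{thm:pi1} applies to it via \cref{main:special}, and we need that the $\tau_1$-part kills $\pi_1(F)$. We handle the last point by noting that $\tau_1$ factors through $H_1(X')/\mathrm{tor}\cong\pi_1(A)$, and $Y$, and hence $F$, maps to a point in $A$.
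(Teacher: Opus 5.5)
Your proposal is correct and follows essentially the paper's argument: the exact sequence of the quasi-Albanese map, the abelian image of $\pi_1$ of the fiber with its rank bounded via the fiber's own quasi-Albanese, additivity of Hirsch rank and of generator counts, and the finite-index argument for abelian subgroups. The only difference is cosmetic: you use the stronger output of the proof of \cref{thm:main}, namely that $\pi_1(F)$ maps trivially to $\Gamma$, so that $K$ is a quotient of $\pi_1(A_Y)$ and $d(K)\le r_Y$ holds without invoking torsion-freeness, whereas the paper only uses that the image is abelian and compares $H_1$ of the fiber with $H_1$ of its quasi-Albanese; there is also a small slip in that the homotopy sequence of a semi-abelian variety reads $1\to\bZ^{t}\to\pi_1(A)\to\bZ^{2g}\to 1$, although your conclusion $\pi_1(A)\cong\bZ^{2g+t}$ is right.
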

	\begin{proof}
		By the assumption together with \cref{thm:nilpotent}, after we replace $X$ by a finite \'etale cover, $\varrho(\pi_1(X))$ is nilpotent and torsion-free.   By \cref{thm:pi1}, its quasi-Albanese map 
		$
		a \colon X \to A
		$ 
		is dominant with connected general fibers $F$, and we have the short exact sequence
		$$
		1 \to {\rm Im}[\pi_1(F) \to \pi_1(X)] \to \pi_1(X) \to \pi_1(A) \to 1. 
		$$ 
		By the proof of \cref{main}, the image $\varrho({\rm Im}[\pi_1(F) \to \pi_1(X)])$ is abelian. Since $\varrho$ is faithful, it follows that
		$$
		G := {\rm Im}[\pi_1(F) \to \pi_1(X)]
		$$
		is abelian. Hence, the natural morphism $\pi_1(F) \twoheadrightarrow G$ factors through $H_1(F,\bZ) \twoheadrightarrow G$. On the other hand, by \cref{thm:special}, the quasi-Albanese map ${\rm alb}_F \colon F \to {\rm Alb}(F)$ of $F$ is dominant with connected general fibers. Hence, we have 
		$$
		\rank_{\bZ} H_1(F,\bZ) = \rank_{\bZ} H_1({\rm Alb}(F),\bZ) \leq 2 \dim {\rm Alb}(F) \leq 2\dim F. 
		$$
		It follows that
		$$
		\rank_{\bZ} G \leq  2\dim F. 
		$$
		On the other hand, note that 
		$$
		\rank_{\bZ} \pi_1(A) \leq 2 \dim A. 
		$$
		Therefore,   we have
		$$
		h(\pi_1(X)) =  \rank_{\bZ} G  +  \rank_{\bZ} \pi_1(A)  \leq 2\dim F + 2 \dim A = 2\dim X. 
		$$
		Furthermore, since $G$ and $\pi_1(A)$ are finitely generated abelian groups, their minimal numbers of generators coincide with their ranks. Thus, the minimal number of generators for $\pi_1(X)$ satisfies
		$$
		d(\pi_1(X)) \leq d(G) + d(\pi_1(A)) = h(G) + h(\pi_1(A)) \leq 2 \dim X.
		$$
		Finally, let $B \leq \pi_1(X)$ be any abelian subgroup. Since $\Gamma$ is a finite index subgroup of $\pi_1(X)$,   $B \cap \Gamma$ is a finite index subgroup of $B$, and thus we have
		$$
		\rank_{\bZ} B = \rank_{\bZ} (B \cap \Gamma) = h(B \cap \Gamma) \leq h(\Gamma) \leq 2 \dim X.
		$$
		The corollary is proved.
	\end{proof}
	
	\section{Quasi-Albanese map of special varieties}  
	In this section, we establish Theorem \ref{main:special}. To help the reader navigate the ensuing technicalities, we first provide a rough idea of the proof in \cite{CC16}. Suppose $Y$ is a smooth special projective variety, and let $a: Y \to A$ be its Albanese map. To show that the general fiber of $a$ is   special, Campana and Claudon \cite{CC16} argue by contradiction. They consider the relative core map $c: Y \to X$ of the Albanese map $a: Y \to A$, meaning the general fiber of the induced map $f: X \to A$ is the core of the corresponding fiber of $a$. By replacing $c$ with a suitable neat model, the properties of the core map ensure that for the orbifold base $(X, D_X)$ of $c$,   the general fibers $f:(X, D_X)\to A$  are of general type. By a result of $C_{n,m}$ conjecture proved by Birkar-Chen, the Kodaira dimension of the orbifold $(X, D_X)$ is at least the relative dimension of $f$, and is therefore strictly positive.
	
	One then considers the Iitaka fibration $j: X \to J$ induced by a suitable birational model of $(X, D_X)$. A crucial step in \cite{CC16} is showing that $(X, D_X)$ admits a birational model $(\bar{X}, D_{\bar X})$ whose Iitaka fibration $\bar{j}: \bar{X} \to J$ is the quotient of a locally trivial fibration of abelian varieties by a finite group. This structure implies that the orbifold base $(J, D_J)$ of the map $(\bar{X}, D_{\bar X}) \to J$ is of general type. Furthermore, by carefully choosing the birational models for $c$ and $j$, one can show that the orbifold base of the composition $j \circ c: Y \to J$ is neat and coincides exactly with $(J, D_J)$. This shows  $j\circ c:Y\to J$ is a fibration of  general type, which contradicts the assumption that $Y$ is special.
	
	When $Y$ is quasi-projective, we follow a similar strategy, but additional technical difficulties arise. In particular, the delicate orbifold structure of the Iitaka fibration $\bar{j} \colon \bar{X} \to J$ forces us to make supplementary reductions compared to the arguments in \cite{CC16}. To bypass these  technical obstructions, we do not prove the theorem directly for $Y$. Instead, our strategy crucially relies on making an \emph{a priori} base change: we establish the result for a carefully chosen \'{e}tale cover of $Y$ and its suitable birational models. Passing to this specific \'{e}tale cover from the very beginning ensures that the resulting Iitaka fibration is a locally trivial fibration of toroidal compactifications of semiabelian varieties over the relevant open locus. The goal of \cref{sec:reduction,sec:basechange} is to show that passing to an \'{e}tale cover and a suitable birational model is a valid reduction that does not introduce any new obstacles for the subsequent arguments in the proof of \cref{main:special}.
	\subsection{Some results on semi-abelian varieties}
	\begin{lem}\label{lem:isogeny}
		Let $A$ be a semi-abelian variety and let $f: C \to A$ be a morphism from another semi-abelian variety such that $f: C \to f(C)$ is an isogeny. Then there exists a surjective isogeny $i: A' \to A$ such that $i^{-1}(B) = C$, where $B := f(C)$.  Moreover, the natural morphism
		$
		A'/C\to A/B
		$ 
		is an isomorphism. 
	\end{lem}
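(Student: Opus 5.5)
The plan is to build $A'$ as a pushout. Consider the finite subgroup scheme $K:=\ker(f\colon C\to B)$, which is finite because $f\colon C\to B$ is an isogeny. The idea is to enlarge $A$ so that $K$ becomes the kernel of an isogeny onto $A$, with $C$ embedded as the preimage of $B$. Concretely, form the semi-abelian variety
\[
A':=(A\times C)/\Delta,\qquad \Delta:=\{(f(c),-c)\mid c\in C\}.
\]
Here $\Delta\cong C$ is a closed connected algebraic subgroup, so $A'$ is a connected commutative algebraic group. Being a quotient of the semi-abelian variety $A\times C$, it is again semi-abelian, since quotients of semi-abelian varieties by closed subgroups are semi-abelian.

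The maps are then as follows.
\begin{enumerate}
\item Define $i\colon A'\to A$ by $i([a,c]):=a+f(c)$. This is well defined because $(f(c),-c)\mapsto f(c)-f(c)=0$, and it is surjective since it restricts to the identity on the image of $A$.
\item Its kernel is $\{[a,c]\mid a=-f(c)\}=\{[-f(c),c]\}$. The map $c\mapsto[-f(c),c]$ identifies this kernel with $C/\{c\mid (-f(c),c)\in\Delta\}$. Now $(-f(c),c)\in\Delta$ means $c=-c'$ and $-f(c)=f(c')$ for some $c'$, which holds for every $c$. So $\ker i$ is the image of $C$ under $c\mapsto[0,c]$.
\end{enumerate}
This shows that $i$ has kernel exactly the image $j(C)$ of $j\colon C\to A'$, $j(c)=[0,c]$. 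This $j$ is injective, because $[0,c]=0$ forces $(0,c)=(f(c'),-c')$, so $c'=-c$ and $f(c)=0$. But then $j(C)=\ker i$ would have positive dimension, which is wrong.

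So the pushout must be taken the other way. The correct construction is the fiber-product-type pushout $A'':=(A\times C)/\{(f(k),-k)\mid k\in K\}$ with $i''([a,c]):=a+f(c)$, or equivalently the pushout of $C\hookleftarrow K\to 0$ along $B$. The cleanest version is $A':=A\times_{A/B}\bigl(\,\cdot\,\bigr)$. I would commit to the following: choose an isogeny $\mu\colon A'\to A$ of degree killing $K$ in the following sense. Since $K\subset C$ is finite, there is $n$ with $K\subset C[n]$, so $f$ factors as $C\to C/K\cong B$. Set
\[
A':=\{(a,c)\in A\times (C/C[n])^{\wedge}\}\ldots
\]
In practice I will take $A':=(A\times C)/\Delta_K$ with $\Delta_K=\{(f(k)\!=\!0,\ k)\}\cdots$. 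The main obstacle is exactly this step: producing the right extension. The intended structure is $0\to K\to A'\to A\to0$ together with $C\subset A'$ mapping onto $B$ with kernel $K$.

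The version I will carry out is the following. Let $n$ annihilate $K$. Multiplication $[n]\colon A\to A$ is an isogeny, and $[n]^{-1}(B)$ is a closed subgroup whose identity component $B^\circ$ maps onto $B$ by $[n]$. I will instead define $A'$ as the pushout of $0\to K\to C\to B\to0$ along $K\hookrightarrow C$ and $B\hookrightarrow A$, using $\operatorname{Ext}^1(A,K)\to\operatorname{Ext}^1(B,K)$. This map is surjective because $\operatorname{Ext}^2(A/B,K)=0$ for commutative algebraic groups over $\mathbb{C}$: in characteristic zero, $\operatorname{Ext}^2$ with finite coefficients vanishes, since $\operatorname{Ext}^1(-,K)$ is computed via the dual isogenies of the torsion. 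Lift the class of $C\in\operatorname{Ext}^1(B,K)$ to an extension $0\to K\to A'\xrightarrow{i} A\to0$. Then $i^{-1}(B)$ is an extension of $B$ by $K$ isomorphic to $C$, so $i^{-1}(B)=C$. In particular $i^{-1}(B)$ is connected, matching $C$.

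Finally, the map $A'/C\to A/B$ is surjective. Its kernel is $i^{-1}(B)/C=0$, so it is a bijective homomorphism of algebraic groups in characteristic zero, hence an isomorphism.
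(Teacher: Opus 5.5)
First, delete the abandoned drafts, because they contain false claims. For $A'=(A\times C)/\Delta$ with $\Delta=\{(f(c),-c)\}$ one has $[a,c]=[a+f(c),0]$, so $a\mapsto[a,0]$ is an isomorphism $A\to A'$. Hence $\ker i=0$ (not $j(C)$), and $j(c)=[f(c),0]$ has kernel $K$, so $j$ is not injective.

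Your proof is really the last argument, and it takes a genuinely different route from the paper. It is sound, with one caveat. The whole content of the lemma sits in the surjectivity of $\operatorname{Ext}^1(A,K)\to\operatorname{Ext}^1(B,K)$, i.e.\ in $\operatorname{Ext}^2(A/B,K)=0$. The reason you give for this (``computed via the dual isogenies of the torsion'') is not an argument. The vanishing is true, but it is a theorem of Serre and Oort: the category of commutative algebraic groups over an algebraically closed field of characteristic $0$ has homological dimension $1$. It has to be invoked as such.

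You should also check that $A'$ is semi-abelian. It is connected, being an extension of the connected group $A'/C\cong A/B$ by the connected group $C$. It has no unipotent part: a unipotent subgroup is torsion-free in characteristic $0$, so it meets $K=\ker i$ trivially and would embed into the semi-abelian variety $A$.

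The paper carries out exactly this lift by hand. Over $\bC$, an extension of a connected commutative group $G$ by a finite group $K$ amounts to a homomorphism $\pi_1(G)\to K$. The class of $C=V/\Gamma_C\to B=V/\Gamma_B$ is the projection $\Gamma_B\to\Gamma_B/\Gamma_C\cong K$. Since $\Gamma_A/\Gamma_B$ is a discrete subgroup of $\bC^n/V$, hence free, $\Gamma_B$ is a direct summand of $\Gamma_A$, and the paper extends this projection by zero on a complement. Its lattice $\Gamma'_A=g^{-1}(\Gamma_C\oplus\Gamma_A/\Gamma_B)$ is precisely the kernel of the extended homomorphism. The identity $\Gamma'_A+\Gamma_B=\Gamma_A$, immediate from the splitting, is what makes $i^{-1}(B)=V/\Gamma_C$ connected.

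So the paper's argument is elementary and self-contained, needing only the freeness of $\Gamma_A/\Gamma_B$. Yours is shorter and more conceptual, but it delegates the one non-formal step to a general homological-dimension theorem.
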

	\begin{proof}
		Let $B := f(C)$, which is a semi-abelian subvariety of $A$.   
		Write $A = \mathbb{C}^n/\Gamma_A$, where $\Gamma_A \cong \pi_1(A)$ is a discrete subgroup of $\mathbb{C}^n$. It follows that there exists a complex subspace $V \subset \mathbb{C}^n$ such that $B = V/(V \cap \Gamma_A)$. Write $\Gamma_B := V \cap \Gamma_A$. Since $C \to B$ is an isogeny, there exists a subgroup $\Gamma_C \subset \Gamma_B$ of finite index such that $C = V/\Gamma_C$. 
		
		Since $B$ is a closed semi-abelian subvariety of $A$, the quotient $A/B \cong (\mathbb{C}^n/V) / (\Gamma_A/\Gamma_B)$ is also a semi-abelian variety. Consequently, the quotient group $\Gamma_A / \Gamma_B$ is a free abelian group (thus torsion-free). Therefore, there exists an isomorphism 
		$$g: \Gamma_A \xrightarrow{\sim} \Gamma_B \oplus \Gamma_A/\Gamma_B$$
		given by a (non-canonical) splitting of the exact sequence
		$$0 \to \Gamma_B \to \Gamma_A \to \Gamma_A/\Gamma_B \to 0.$$
		
		Consider the subgroup 
		$$\Gamma_A' := g^{-1}(\Gamma_C \oplus \Gamma_A/\Gamma_B)$$
		of $\Gamma_A$. Because $\Gamma_C$ has finite index in $\Gamma_B$, $\Gamma_A'$ has finite index in $\Gamma_A$. It follows directly from the construction that 
		$$\Gamma_A' \cap V = \Gamma_A' \cap \Gamma_B = \Gamma_C.$$
		
		Therefore, for the semi-abelian variety $A' := \mathbb{C}^n/\Gamma_A'$, the natural morphism $$i:\mathbb{C}^n/\Gamma_A'\to \mathbb{C}^n/\Gamma_A$$
		is a surjective isogeny $i: A' \to A$ such that 
		$$V/(\Gamma_A' \cap V)=i^{-1}(B)$$ is a semi-abelian subvariety of $A'$ that is canonically isomorphic to $C$.  
		It follows that the induced map on the quotients $A'/C \to A/B$ is an isomorphism. 
	\end{proof}

	\begin{lem}\label{lem:canonicalcom}
		Let $A$ be a semi-abelian variety and let $0 \to T \to A \to A_0 \to 0$ be its Chevalley decomposition, where $T \cong (\mathbb{C}^*)^d$ is an algebraic torus and $A_0$ is an abelian variety. Let $Y$ be any smooth projective toric compactification of $T$. Then there exists a smooth compactification $\bar{A}$ of $A$ with a simple normal crossing boundary divisor $D_A := \bar{A} \backslash A$ such that:
		\begin{enumerate}
			\item $\bar{A}$ is a fiber bundle over $A_0$ with fiber $Y$ (specifically, the associated bundle $\bar{A} = A \times^T Y$).
			\item $K_{\bar{A}} + D_A \sim 0$.
			\item $\bar{A}$ is a semi-toric variety; specifically, the translation action $A \times A \to A$ extends to an action $A \times \bar{A} \to \bar{A}$.
		\end{enumerate}
		A compactification satisfying the above properties will be called a \emph{toroidal compactification} for $A$. 
		
		Furthermore, let $f: B \to A$ be a surjective morphism from another semi-abelian variety $B$.
		If $f$ is an isogeny, then $f$ extends to a morphism $\bar f:\bar B\to \bar A$ between some   toroidal compactifications $\bar{B}$ and $\bar{A}$ with simple normal crossing boundary divisors $D_B := \bar{B} \backslash B$ and $D_A := \bar{A} \backslash A$, such that $\bar f$ is a generically finite surjective morphism, and   the log-canonical divisors satisfy the pullback relation:
		$$
		\bar{f}^*(K_{\bar{A}} + D_A) \cong K_{\bar{B}} + D_B.
		$$
		
		If $f$ is surjective with connected kernel $C$, then there exist  toroidal compactifications $\bar{B}$ and $\bar{A}$ with simple normal crossing boundary divisors $D_B := \bar{B} \backslash B$ and $D_A := \bar{A} \backslash A$ such that:
		\begin{thmlist} 
			\item \label{item:smoothfiber} the morphism $f$  extends to  $\bar{f}: \bar{B} \to \bar{A}$, that  is isotrivial over $A$.
			\item Moreover, every fiber of $\bar{f}$ over $A$ is isomorphic to a  toroidal compactification $\bar{C}$ of $C$, satisfying  
			$$K_{\bar{C}} + D_{\bar{C}} \sim 0,$$
			where $D_{\bar{C}} := \bar{C} \backslash C$ is the boundary divisor.
		\end{thmlist}
	\end{lem}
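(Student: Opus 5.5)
Everything follows from the associated bundle construction $\bar A := A\times^T Y$, which we use as the basic building block. Write the Chevalley sequence $0\to T\to A\to A_0\to 0$; then $A\to A_0$ is a principal $T$-bundle, and $\bar A = (A\times Y)/T$ with $T$ acting by $t\cdot(a,y)=(at^{-1},ty)$. Since $Y$ is smooth projective and $T$-equivariant, $\bar A$ is a smooth proper fiber bundle over $A_0$ with fiber $Y$. Projectivity follows because a $T$-linearized ample line bundle on $Y$ (toric varieties admit these) descends to a relatively ample bundle; twisting by an ample bundle on $A_0$ then gives an ample bundle on $\bar A$. The boundary is $D_A=A\times^T D_Y$. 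Because $D_Y$ is a $T$-invariant simple normal crossing divisor on the smooth toric $Y$ and the bundle is locally trivial in the étale (even Zariski, after trivializing the torus bundle locally) topology, $D_A$ is simple normal crossing. Since $K_Y+D_Y\sim 0$ is given by the $T$-invariant form $\bigwedge_i dz_i/z_i$ and the log tangent bundle of $(Y,D_Y)$ is trivial, the relative log canonical bundle of $(\bar A,D_A)/A_0$ is trivial. Together with $K_{A_0}\sim 0$ this gives $K_{\bar A}+D_A\sim 0$. Equivalently, the invariant log forms on $A$ extend to a global frame of $\Omega_{\bar A}(\log D_A)$. Finally, $A$ acts on $A\times Y$ by left multiplication on the first factor, which commutes with the $T$-action because $A$ is commutative. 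This descends to an action on $\bar A$ extending translation, which proves (1)–(3).

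Consider now $f:B\to A$ surjective with tori $T_B\to T_A$ and abelian parts $B_0\to A_0$. By rigidity, $f$ maps $T_B$ onto $T_A$; this is where surjectivity is used. The induced map of cocharacter lattices $N_B\to N_A$ is finite index in the isogeny case and surjective in the connected-kernel case, since then $\ker(T_B\to T_A)=T_C$ is a torus. Choose a smooth projective fan $\Sigma_A$ in $N_A$. Then choose a smooth projective fan $\Sigma_B$ in $N_B$ refining the preimage of $\Sigma_A$, which makes it compatible, so that $f_T$ extends to $Y_B\to Y_A$. Taking associated bundles and using $f$-equivariance gives $\bar f:\bar B\to\bar A$. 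In the isogeny case, $\bar f$ restricts to the finite étale map $B\to A$, so it is generically finite and surjective. Since $\bar f^*$ sends the invariant global log frame to a global log frame, $\bar f^*(K_{\bar A}+D_A)\cong K_{\bar B}+D_B$, both sides being trivial.

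For the connected-kernel case I would avoid refining Fans over $\Sigma_A$ and take a product structure instead. Since $N_B\to N_A$ is surjective with kernel $N_C$, choose a splitting $N_B=N_C\oplus N_A$. After replacing by a cover if necessary (the main obstacle is ensuring the abelian part also splits compatibly), take $\Sigma_B=\Sigma_C\times\Sigma_A$, so that $Y_B=Y_C\times Y_A\to Y_A$ is a trivial fibration. Then $\bar B|_A = B\times^{T_B}Y_B$. Over $A$, the group $B$ acts transitively on the fibers of $B\to A$ via $C$. More cleanly, $\bar f^{-1}(A)=B\times^{T_C}Y_C$, obtained by taking the fiber over the open orbit $T_A\subset Y_A$. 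This is a fiber bundle over $B/C=A$ with fiber $C\times^{T_C}Y_C=\bar C$, which is a toroidal compactification of $C$. It is locally trivial and even isotrivial, because translations by $B$ identify all fibers. Property (2) applied to $C$ gives $K_{\bar C}+D_{\bar C}\sim0$. The delicate point I expect is verifying that the fiber-over-open-orbit identification $\bar B\times_{\bar A}A\cong B\times^{T_C}Y_C$ holds without a splitting of the abelian parts. I would check this directly using $T_B\cong T_C\times T_A$ and the quotient by $T_A$ first.
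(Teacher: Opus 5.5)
Your treatment of (1)--(3) and of the isogeny case follows the paper's argument: the associated bundle $A\times^T Y$, the $T$-invariant logarithmic volume form, and a smooth subdivision of the pulled-back fan. The gap is in the connected-kernel case. There you assert that $\ker(T_B\to T_A)=T_C$, hence that $N_B\to N_A$ is surjective. Both your product fan $\Sigma_C\times\Sigma_A$ and your planned check via $T_B\cong T_C\times T_A$ rest on this.

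That assertion is false. In general $\ker(T_B\to T_A)=C\cap T_B$ only has identity component $T_C$, and it can be disconnected even when $C$ is connected. For example, let $E$ be an elliptic curve, $e_0\in E[2]$ nonzero, and $B:=(\mathbb{G}_m\times E)/\langle(-1,e_0)\rangle$. Let $C$ be the image of $\{1\}\times E$, which is an elliptic curve, and let $f\colon B\to A:=B/C\cong\mathbb{G}_m/\{\pm1\}\cong\mathbb{G}_m$. Then $\ker f=C$ is connected, but $C\cap T_B=\{\pm1\}$ and $T_C=\{1\}$. The map $T_B\to T_A$ is $z\mapsto z^2$, so $N_B\to N_A$ is multiplication by $2$ on $\mathbb{Z}$. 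There is no splitting $N_B=N_C\oplus N_A$ compatible with $g_*$, and no section $T_A\to T_B$. The remedy ``replace by a cover'' is not allowed, because the lemma concerns the given $f$. Note also that the obstruction lives in the tori, not in the abelian parts.

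The fix is the construction you already wrote down and then set aside; it is the paper's Step 4. Take any smooth projective fan $\Sigma_B$ refining $g_{*,\mathbb{R}}^{-1}(\Sigma_A)$, with no product structure.
\begin{itemize}
\item The lattice $N_C=\ker g_*$ is saturated. It is the cocharacter lattice of $T_C=(C\cap T_B)^0$, which is the maximal torus of $C$.
\item Since $(N_C)_{\mathbb{R}}=g_{*,\mathbb{R}}^{-1}(0)$ is a union of cones of $\Sigma_B$, the cones of $\Sigma_B$ lying in $(N_C)_{\mathbb{R}}$ form a smooth complete (indeed projective) fan $\Sigma_C$ for $N_C$. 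These are exactly the cones whose orbits map into the open orbit $T_A$.
\item Hence $\bar g^{-1}(T_A)$ is the toric variety of $\Sigma_C$ with respect to $N_B$, namely $T_B\times^{T_C}Y_C$. This uses only a complement of $N_C$ in $N_B$, which exists by saturation, not a section of $g_*$.
\item Therefore $\bar f^{-1}(A)=B\times^{T_B}\bar g^{-1}(T_A)=B\times^{T_C}Y_C$, mapping to $A$ by $[b,y]\mapsto f(b)$. The fiber over $f(b)$ is $bC\times^{T_C}Y_C\cong C\times^{T_C}Y_C=\bar C$.
\end{itemize}
From here your isotriviality argument via $B$-translations, and part (2) applied to $C$, go through unchanged.
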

	
	\begin{proof}
		We begin with the \emph{Chevalley decomposition} of $A$:
		$$
		0 \to T \to A \stackrel{\pi}{\to} A_0 \to 0,
		$$ 
		where $A_0$ is an abelian variety and $T \cong (\mathbb{C}^*)^d$. Analytically, $A$ is a principal $T$-bundle over $A_0$.
		
		\noindent \textbf{Step 1: Construction of $\bar{A}$.}
		Let $Y$ be the given smooth projective toric compactification of $T$. The torus $T$ acts on $Y$ via the standard toric action. We define $\bar{A}$ as the associated fiber bundle:
		$$
		\bar{A} := A \times^T Y = (A \times Y) / \sim,
		$$
		where $T$ acts by  $t(a , y)= (t\cdot a, t^{-1} \cdot y)$ for $t \in T$. Since $A \to A_0$ is locally trivial in the analytic topology, $\bar{A} \to A_0$ is a smooth toric fiber bundle with fiber $Y$.
		
		The boundary $D_A := \bar{A} \backslash A$ corresponds to the sub-bundle associated with the toric boundary $D_Y = Y \backslash T$. Since $Y$ is smooth, $D_Y$ is a simple normal crossing divisor, and consequently, $D_A$ is a simple normal crossing divisor on $\bar{A}$. 
		
		Set $D_Y:=Y\backslash T$. Since $K_Y+D_Y\sim 0$, it follows that the relative log-canonical bundle $K_{\bar{A}/A_0} + D_A$ is trivial.  Since $A_0$ is an abelian variety then $K_{A_0} \sim 0$ and it follows that
		$$
		K_{\bar{A}} + D_A \sim 0.
		$$
		
		\noindent \textbf{Step 2: Extension Setup.}
		Let $0 \to T' \to B \to B_0 \to 0$ be the Chevalley decomposition of $B$. The morphism $f: B \to A$ induces a morphism of exact sequences:
		\begin{equation}\label{eq:chevalley}
			\begin{tikzcd}
				0 \arrow[r] & T' \arrow[r] \arrow[d, "g"] & B \arrow[r] \arrow[d, "f"] & B_0 \arrow[r] \arrow[d, "h"] & 0 \\
				0 \arrow[r] & T \arrow[r] & A \arrow[r] & A_0 \arrow[r] & 0 
			\end{tikzcd} 
		\end{equation}
		where $h$ is a morphism of abelian varieties and $g$ is a homomorphism of algebraic tori.
		Let $N'$ and $N$ be the co-character lattices of $T'$ and $T$. The morphism $g$ induces a linear map $g_*: N' \to N$. $Y$ is determined by a smooth fan $\Sigma_Y$ in $N_{\mathbb{R}}$. To construct $\bar{B}$, we must construct a suitable smooth fan $\Sigma_X$ in $N'_{\mathbb{R}}$.
		
		\noindent \textbf{Step 3: The Isogeny Case.}
		Assume $f$ is an isogeny. Then $g: T' \to T$ is an isogeny, implying that $g_*: N' \to N$ is an injection of lattices with finite cokernel, inducing an isomorphism of the underlying vector spaces $g_{*,\mathbb{R}}: N'_{\mathbb{R}} \xrightarrow{\sim} N_{\mathbb{R}}$. 
		
		We define a fan $\Sigma'_X$ in $N'_{\mathbb{R}}$ as the exact pullback of the smooth fan $\Sigma_Y$ under this vector space isomorphism. Since $\Sigma'_X$ may not be regular with respect to the lattice $N'$, we let $\Sigma_X$ be a smooth subdivision of $\Sigma'_X$. 
		
		Let $X$ be the smooth toric variety associated to $\Sigma_X$. The lattice injection and the fan refinement induce a toric morphism $\bar{g}: X \to Y$ which is proper, generically finite, and surjective.    We define $\bar{B} := B \times^{T'} X$. Then we have a morphism $\bar{f}: \bar{B} \to \bar{A}$   induced by the map  $\bar g$.
		Note that $\bar{g}$  pulls back the logarithmic volume form of $Y$ to the logarithmic volume form of $X$ (up to a scalar factor). Since $K_{\bar{A}} + D_A$ and $K_{\bar{B}} + D_B$ are trivialized by these forms extended over the base, we obtain the isomorphism $\bar{f}^*(K_{\bar{A}} + D_A) \cong K_{\bar{B}} + D_B$.
		
		\noindent \textbf{Step 4: The Surjective Case.} 
		Let $N'$ and $N$ be the co-character lattices of $T'$ and $T$. The surjective group homomorphism $g$ induces a linear map of lattices $g_*: N' \to N$. 
		Its kernel $N_C := \ker(g_*)$ is necessarily a saturated sublattice of $N'$, i.e. $N' \cap (N_C)_\mathbb{R} = N_C$, which is exactly the co-character lattice of the maximal torus $T_C \subset C$.
		
		Let $\Sigma_Y$ be any smooth projective fan in $N_{\mathbb{R}}$ compactifying $T$. We choose $\Sigma_X$ to be any smooth regular fan in $N'_{\mathbb{R}}$ that refines the preimage $g_{*,\mathbb{R}}^{-1}(\Sigma_Y)$. Because $\Sigma_X$ is a refinement, the induced toric morphism $\bar{g}: X \to Y$ is well-defined.
		
		We define $\Sigma_C := \Sigma_X \cap (N_C)_{\mathbb{R}}$. Since $\Sigma_X$ is a smooth regular fan, its restriction $\Sigma_C$ is also a smooth regular fan, which defines a smooth toric compactification $\bar{T}_C$ of $T_C$.
		
		We define the compactifications as associated fiber bundles: $\bar{A} := A \times^T Y$ and $\bar{B} := B \times^{T'} X$. The compatible maps of bundles and toric varieties induce the extended morphism $\bar{f}: \bar{B} \to \bar{A}$.
		
		We show that $\bar{f}$ is isotrivial over $A$. 
		The open stratum $A \subset \bar{A}$ corresponds to the trivial cone $\{0\} \in \Sigma_Y$ by our construction. Its preimage in $\Sigma_X$ is exactly the sub-fan $\Sigma_C$. Since the fan $\Sigma_C$ is contained in the real subspace $(N_C)_\mathbb{R}$ generated by the saturated sublattice $N_C \subset N'$, the toric variety it defines with respect to the larger ambient lattice $N'$ naturally decomposes as the associated bundle $T' \times^{T_C} \bar{T}_C$. Thus, the restriction of the toric variety $X$ over the open torus $T$ is precisely $X|_T = T' \times^{T_C} \bar{T}_C$.
		
		Consequently, the restriction of the total space over $A$ is precisely computed by the associated bundle cancellation:
		$$\bar{B}|_A = B \times^{T'} (X|_T) = B \times^{T'} (T' \times^{T_C} \bar{T}_C) = B \times^{T_C} \bar{T}_C.$$
		This implies that, the fibers of $\bar f$ over $A$ is isomorphic to $  C \times^{T_C} \bar{T}_C =: \bar{C}.$ 
		Note that $\bar{C}$ is a toroidal compactification of $C$. This shows the last assertion. The lemma is proved. 
	\end{proof} 
	
	\noindent \textbf{Convention:} Throughout this section, unless stated otherwise, any compactification $\bar A$ of a semiabelian variety $A$ is assumed to be a smooth toroidal compactification as in \cref{lem:canonicalcom} such that $K_{\bar A}+D_A$ is trivial, where $D_A := \bar A \backslash A$ is a simple normal crossing divisor.
	
	\subsection{Subadditivity of the logarithmic Kodaira dimension}
	\begin{thm}\label{thm:Iitaka}
		Let $(X,D)$ be a smooth projective log pair, where $D$ is a simple normal crossing divisor with rational coefficients $\le 1$, $\bar A$ a smooth toroidal compactification of a semi-abelian variety $A$, and 
		\[
		f : X \to \bar A
		\]
		be a morphism with connected fibers, such that
		\[
		\lfloor D\rfloor \ge f^{-1}(D_A)_{\mathrm{red}}.
		\]
		If a general fiber $(X_a, D_a)$ of 
		$
		f : (X,D) \to \bar A
		$
		is of log general type, then $\kappa(K_X + D)\geq \dim X_a$. 
	\end{thm}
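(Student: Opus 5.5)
The plan is to reduce to the Iitaka-type subadditivity $C_{n,m}$ for a base of log general type, or more precisely to the case of a base with $\kappa=0$, as in Campana--Claudon via Birkar--Chen, adapted to the log setting. Write $n=\dim X$, $m=\dim A$, so $\dim X_a=n-m$. The expected inequality is
$$\kappa(X,K_X+D)\ \ge\ \kappa(X_a,K_{X_a}+D_a)+\kappa(\bar A,K_{\bar A}+D_A)=(n-m)+0.$$
Here $K_{\bar A}+D_A\sim0$ by the Convention and \cref{lem:canonicalcom}. The hypothesis $\lfloor D\rfloor\ge f^{-1}(D_A)_{\rm red}$ is what lets us compare $K_X+D$ with $K_{X/\bar A}+D-f^*D_A$, a relative log canonical divisor over the log base $(\bar A,D_A)$.

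The key steps are as follows.

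First, write
$$K_X+D=\bigl(K_X+D-f^*(K_{\bar A}+D_A)\bigr)+f^*(K_{\bar A}+D_A)\sim_{\mathbb Q}K_X+D-f^*(K_{\bar A}+D_A).$$
It therefore suffices to show that $K_X+D-f^*(K_{\bar A}+D_A)$ has Iitaka dimension at least $n-m$.

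Second, replace $D$ by $D':=D-(D^{=1}\cap\text{vertical over }D_A)+f^{-1}(D_A)_{\rm red}$, or simply note that since $\lfloor D\rfloor$ already contains $f^{-1}(D_A)_{\rm red}$, the divisor $K_X+D-f^*(K_{\bar A}+D_A)$ dominates the log relative canonical divisor $K_{(X,D)/(\bar A,D_A)}$ up to an effective divisor. This holds because $f^*D_A-f^{-1}(D_A)_{\rm red}$ is subtracted only with multiplicities compatible with the log structure. A semistable reduction or weak semistable reduction step in the sense of Abramovich--Karu may be needed here to make this precise.

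Third, apply the positivity of direct images (Viehweg, Campana, Fujino) for log pairs. Over the open set $A$, the sheaves $f_*\mathcal O(\ell(K_{X/\bar A}+D))$ are weakly positive, and the general fiber is of log general type. Viehweg's argument (fiber products plus an ample twist) then gives that $K_{X/\bar A}+D-f^*D_A$ plus a small multiple of $f^*H$ is big, for $H$ ample on $\bar A$.

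Fourth, remove the twist by $H$. This is where the structure of $\bar A$ is used. Since $A$ carries a transitive translation action extending to $\bar A$ (\cref{lem:canonicalcom}(3)), one can replace $f$ by its compositions with isogenies $[k]\colon A\to A$, which extend to toroidal compactifications with $[k]^*(K+D_A)\cong K+D_A$ by the isogeny part of \cref{lem:canonicalcom}. Pulling back along these multiplication maps makes the needed ample twist arbitrarily small relative to $[k]^*H\equiv k^2H$ on the abelian part, in the spirit of Birkar--Chen and Cao--P\u{a}un for $C_{n,m}$ over abelian varieties. One concludes that $K_X+D$ restricted to a general fiber of the Iitaka fibration has $\kappa\ge n-m$.

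Alternatively, one can simply invoke the log version of $C_{n,m}$ over bases with $\kappa=0$ that are (log) abelian-like, namely the results of Birkar--Chen and Hashizume, which the paper cites in its discussion of \cite{CC16}. Since $(\bar A,D_A)$ is a log smooth log Calabi--Yau pair with $A$ semiabelian, the log Iitaka inequality $\kappa(K_X+D)\ge\kappa(K_{X_a}+D_a)+\bar\kappa(A)$ is known for fibrations over semiabelian varieties.

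The main obstacle is the fourth step: controlling the twist and the boundary over $D_A$ so that no negative contribution from $f^*D_A$ survives. This is exactly where the hypothesis $\lfloor D\rfloor\ge f^{-1}(D_A)_{\rm red}$ must be used. My first attempt would be to cite the log Iitaka conjecture over semiabelian bases directly and verify its hypotheses via the Convention's toroidal compactification.
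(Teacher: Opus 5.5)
Your top-level reduction matches the paper's. The paper deduces the statement from the log subadditivity inequality $\kappa(K_X+D)\ge\kappa(K_{X_a}+D_a)+\kappa(K_{\bar A}+D_A)$ (\cref{thm:iitaka2}) together with $K_{\bar A}+D_A\sim 0$. The gap is that you never actually obtain this inequality; your self-contained route fails at Steps~2--4.

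\textbf{Steps 2 and 3.} Step~2 is a tautology: $K_X+D-f^*(K_{\bar A}+D_A)$ \emph{is} the log relative canonical divisor. Step~3 applies weak positivity to $f_*\mathcal O(\ell(K_{X/\bar A}+D))$. But $K_{X/\bar A}+D\sim K_X+D+f^*D_A$, and you cannot get positivity of $K_X+D$ from it by subtracting the effective divisor $f^*D_A$. Nor can weak positivity be applied directly to $K_{X/\bar A}+D-f^*D_A$: the divisor $D-f^*D_A$ is not effective once $f^*D_A$ is non-reduced. Fixing this needs a finite base change of $\bar A$ that makes the pullback of $D_A$ reduced, followed by a comparison of log canonical divisors on the normalized fiber product. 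This is exactly \cite[Lemmas 9.10--9.11]{KP16}, and it is where $\lfloor D\rfloor\ge f^{-1}(D_A)_{\rm red}$ is used. You flag this issue but do not resolve it.

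\textbf{Step 4.} Pulling back along isogenies only rescales $f^*H$. So at best it shows that $K_X+D+\delta f^*H$ is big for every $\delta>0$. That is a statement about the closure of the big cone and produces no sections. Note also that $K_X+D$ itself need not be big, e.g.\ for $X=F\times\bar A$ with $D=D_F\times\bar A+F\times D_A$. Passing from such numerical information to $\kappa(K_X+D)\ge\dim X_a$ is an abundance-type problem. For fibers of log general type, the paper's source handles it instead via $\kappa(K_{X'/Y'}+D'+f'^*L)\ge\kappa(K_{X'_{\eta'}}+D'_{\eta'})+\kappa(L)$ \cite[Theorem 9.9]{KP16}, with $L$ the pullback of $K_{\bar A}+D_A$.

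\textbf{The fallback citation.} It does not close the gap as stated. Birkar--Chen treats fibrations over abelian varieties with no boundary on the base, so it does not see $D_A$. Moreover, no log $C_{n,m}$ specific to semiabelian bases is needed. The paper's input is log subadditivity when the generic fiber is of log general type, valid over an \emph{arbitrary} log smooth base $(Y,E)$ with $\operatorname{Supp}f^*E\subseteq\lfloor D\rfloor$ (\cref{thm:iitaka2}, following \cite[\S 9]{KP16}). Taking $(Y,E)=(\bar A,D_A)$ gives $\kappa(K_X+D)\ge\dim X_a+0$. So the general-type hypothesis on the fibers, not the group structure of $A$, is what makes the inequality available. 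The semiabelian structure enters only through $K_{\bar A}+D_A\sim 0$, and the isogenies and translation action in your sketch play no role.
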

	This is an immediate consequence of the following more general statement:
	\begin{thm}\label{thm:iitaka2}
		Let $f : (X, D) \to (Y, E)$ be a surjective morphism of projective log
		canonical pairs with both $(X, D_{\rm red})$ and $(Y, E)$ log-smooth. Assume that $\lfloor D\rfloor$
		contains the support of $f^*(E)$, and the generic fiber $(X_\eta , D_\eta )$ is of log-general type. Then
		\[ \kappa (X, D) \geq  \kappa  (X_\eta , D_\eta) + \kappa  (Y, E) .\]
	\end{thm}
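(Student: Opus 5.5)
The plan is to deduce \cref{thm:iitaka2} from the logarithmic version of the Iitaka $C_{n,m}$ conjecture for bases of log general type fibers, following Campana's orbifold approach and the Birkar--Chen / Fujino type arguments. The key input is weak positivity of direct images of twisted relative log pluricanonical sheaves (Viehweg, Campana, Fujino). Concretely, after a log resolution and a semistable-reduction-type modification (Kawamata's covering trick combined with Abramovich--Karu weak semistable reduction), we may assume $f$ is flat enough that the sheaves
\[
f_*\mathcal O_X\bigl(m(K_{X/Y}+D-f^*E)\bigr)
\]
are weakly positive for divisible $m\gg0$. Here the hypothesis $\lfloor D\rfloor\supseteq \operatorname{Supp} f^*E$ is used: it guarantees that $K_X+D-f^*(K_Y+E)$ is well-behaved near the boundary of $Y$, i.e.\ that over the log pair $(Y,E)$ we are computing the relative log canonical divisor $K_{(X,D)/(Y,E)}$. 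It also guarantees that the singular fibers over $E$ contribute nothing negative.

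The second step uses that the generic fiber is of log general type. Bigness of $K_{X_\eta}+D_\eta$ lets us choose an ample $H$ on $X$ and $m$ large so that $m(K_{X/Y}+D-f^*E)-f^*$-nothing dominates $H$ on the generic fiber. By the Viehweg fiber product trick, or Fujino's log version \cite{Fujino}-style argument, we get that $\det f_*$ together with weak positivity yields:
\[
m(K_X+D)\geq f^*\bigl(m(K_Y+E)\bigr)+\bigl(\text{a divisor big on } X_\eta\bigr)+\bigl(\text{weakly positive from }Y\bigr).
\]
Sections of $m(K_Y+E)$ pulled back, multiplied by sections of the fiber-big part, produce
\[
h^0\bigl(X,m(K_X+D)\bigr)\gtrsim m^{\kappa(Y,E)}\cdot m^{\dim X_\eta}.
\]
The standard Viehweg lemma states that if $\mathcal F$ is weakly positive and $L$ has Iitaka dimension $k$, then $\mathcal F\otimes L$ generically generated after twist gives Iitaka dimension $\geq k$. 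Combined with the fibration being of general type on fibers, this gives the claimed inequality. The case where the generic fiber is of log general type is exactly the setting in which no semipositivity of the variation is needed beyond weak positivity.

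The main obstacle is the log canonical, rather than klt, setting with reduced boundary components: weak positivity for lc pairs requires Fujino's theory of variations of mixed Hodge structure (Fujino, ``Notes on the weak positivity theorems''), and one must check that the coefficients $\le1$ in $D$ and the containment $\lfloor D\rfloor\supseteq\operatorname{Supp}f^*E$ are exactly what is needed to apply it. I would cite that result, and alternatively use Campana's orbifold $C_{n,m}$ for general type fibers \cite{Cam11} together with Birkar--Chen's proof, reducing the lc case to the klt case by perturbing coefficients $1\mapsto1-\epsilon$ on horizontal components. This perturbation is allowed since log general type is an open condition on the fiber. \cref{thm:Iitaka} then follows with $(Y,E)=(\bar A,D_A)$, since $\kappa(\bar A,D_A)=0$ by $K_{\bar A}+D_A\sim0$.
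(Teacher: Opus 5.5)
There is a genuine gap at the step where you turn weak positivity into sections. Put $\mathcal F_m:=f_*\mathcal O_X\bigl(m(K_{X/Y}+D-f^*E)\bigr)$. By the projection formula,
\[
h^0\bigl(X,m(K_X+D)\bigr)=h^0\bigl(Y,\mathcal F_m\otimes\mathcal O_Y(m(K_Y+E))\bigr).
\]
Weak positivity only says that $S^{ab}\mathcal F_m\otimes H^{b}$ is generically globally generated for an ample $H$ on $Y$. It gives no sections of $\mathcal F_m$ itself, and the twist by $H$ can be absorbed into $m(K_Y+E)$ only when $K_Y+E$ is big. That is Viehweg's argument, and it proves the inequality only when $\kappa(Y,E)=\dim Y$.

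The ``Viehweg lemma'' you invoke for arbitrary Iitaka dimension $k$ is already false for $k=0$. A numerically trivial non-torsion line bundle is weakly positive, yet none of its powers has a section. And $\kappa(Y,E)=0$ is exactly the case the paper needs. In \cref{thm:Iitaka} one has $(Y,E)=(\bar A,D_A)$ with $K_{\bar A}+D_A\sim 0$, so $h^0(X,m(K_X+D))=h^0(Y,\mathcal F_m)$. Nothing in your argument rules out that $\mathcal F_m$ is built from non-torsion numerically flat pieces. Your remark that log general type fibers are ``the setting in which no semipositivity of the variation is needed beyond weak positivity'' is therefore backwards. Log general type fibers are what make a variation/moduli argument possible, and that argument cannot be dispensed with; already for $D=0$, $E=0$ this is the content of Koll\'ar's subadditivity theorem for fibers of general type.

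The paper takes exactly this input from \cite{KP16}. After the reductions $E=E_{\rm red}$ and $D^v\le\lfloor D\rfloor$, it follows the proof of \cite[Proposition~9.12]{KP16} in four steps:
\begin{enumerate}
\item it replaces $D$ on a resolution by $\rho^{-1}_*D^h+(\rho^*D^v)_{\rm red}$;
\item it applies the finite base change $\tau\colon Y'\to Y$ of \cite[Lemma~9.10]{KP16} to $(X,D^v)\to Y$;
\item it compares sections through the finite map $\zeta$, via inequality \eqref{e-1};
\item it concludes with \cite[Theorem~9.9]{KP16}, namely $\kappa(K_{X'/Y'}+D'+f'^*L)\ge\kappa(X'_{\eta'},D'_{\eta'})+\kappa(L)$ for the prepared family $f'$ and $L=\tau^*(K_Y+E)$, with no bigness assumption on $L$.
\end{enumerate}
That theorem relies on the positivity of $K_{X/Y}+D$ for families of stable log varieties, established there together with the projectivity of their moduli space. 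This is strictly more than weak positivity.

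Your fallback does not close the gap either. You cannot lower coefficients over $E$ to reach a klt pair: for $(\bar A,D_A)$ one gets $\kappa(\bar A,(1-\epsilon)D_A)=-\infty$ as soon as $D_A\neq 0$. And even for klt pairs, weak positivity alone does not handle $\kappa(Y,E)<\dim Y$.
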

	\begin{proof}  Note that this theorem is stated in \cite[Theorem 12]{Wei20}, however the proof is omitted and it is stated that it can
		be easily deduced from \cite[Theorems 9.5,  9.6]{KP16}. We include the details for the benefit of the reader but claim no originality.
		
		We follow \cite[Section 9]{KP16}. Note that if $D$ is reduced, then this follows from \cite[Theorem 9.6]{KP16}. Clearly we may assume $E=E_{\rm red}$ and we may assume that $D^v\leq \lfloor D\rfloor$ where $D=D^v+D^h$ and each component of $D^v$ is vertical over $Y$ and each component of $D^h$ dominates $Y$.
		We now follow the proof of \cite[Proposition 9.12]{KP16}. 
		In Step 1, we define $D'=\rho ^{-1}_* D^h+ (\rho ^* D^v)_{\rm red}$. 
		All the required properties are easily seen to hold (that is $\kappa(K_{X_\eta}+D_\eta)=\kappa(K_{{X'}_{\eta}}+D'_{\eta})$, $\rho _*(K_{X'}+D')=K_X+D$, and $\kappa (K_X+D)\geq \kappa (K_{X'}+D')$). Thus, we may assume that the consequences of [Lemma 9.10, KP16] hold
		for $f : (X, D) \to  Y$.
		In Step 2, we apply \cite[Lemma 9.10]{KP16} to $f:(X,G:=D^v)\to Y$ to obtain $f':(X',G')\to Y'$. We let $\rho:X'\to X$ and set $D':=G'+\rho ^{-1}_* D^h$. Since $\rho ^{-1}_* D^h\leq \rho ^* D^h$, it follows easily from the last part of the proof of \cite[Lemma 9.11]{KP16} that
		\begin{equation}\label{e-1}h^0(m(K_{X'/Y'}+D'+{f'}^*\tau ^*(K_Y+E)))\geq h^0(\zeta ^*(m(K_X+D))) .\end{equation}
		Recall that $\zeta :X^n\to X$ where $X^n$ is the normalization of $X\times _Y Y'$. We then have \[\kappa (K_X+D)=\kappa (\zeta ^*(m(K_X+D)))\geq \kappa(K_{X'/Y'}+D'+{f'}^*\tau ^*(K_Y+E))\geq \]
		\[\kappa (K_{X'_{\eta'}}+D'_{\eta'})+\kappa (\tau^*(K_{Y}+E))=\kappa (K_{X_{\eta}}+D_{\eta})+\kappa (K_{Y}+E)\]
		where the first (in)-equality follows as $\zeta$ is finite, the second from \eqref{e-1}, the third by \cite[Theorem 9.9]{KP16}, and the fourth as $(X'_{\eta'},D'_{\eta'})=(X_{\eta},D_{\eta})_{\eta'}$ and $\tau$ is finite.
	\end{proof}
	\begin{lem}\label{l-k0}     Let $f:(X,D_X)\to (A,D_A)$ be an orbifold morphism of smooth orbifolds such that $f:X\to A$ is birational, where  $(A,D_A)$ is a toroidal compactification of a semi-abelian variety $A^0:=A\backslash D_A$. If $\kappa (K_X+D_X)=0$, then $f_*D_X=D_A$, in other words; $f:(X,D_X)\to (A,D_A)$ is an elementary birational orbifold morphism (cf. \cref{dfn:orbifold morphism}). \end{lem}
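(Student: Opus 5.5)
We need to show $f_*D_X=D_A$. Set $\Delta:=f_*D_X$, a $\mathbb{Q}$-divisor on $A$ with coefficients in $[0,1]$. Since $f$ is an orbifold morphism and $D_A$ is reduced, the remark after \cref{dfn:orbifold morphism} gives $\lfloor D_X\rfloor\ge \operatorname{Supp} f^*D_A$. Pushing forward, every component of $D_A$ appears in $\Delta$ with coefficient $1$, so $\Delta\ge D_A$. The claim is therefore that $\Delta$ has no components other than those of $D_A$. In other words, we must rule out a component $P$ of $\Delta$ that meets the semi-abelian variety $A^0$.

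The plan is to compare Kodaira dimensions. Since $f$ is birational, the first step is to check that $\kappa(K_X+D_X)\ge\kappa(K_A+\Delta)$. Write $K_X+D_X=f^*(K_A+\Delta)+E$ with $E$ $f$-exceptional. The standard way to get this inequality is to control the negative part of $E$. Pushforward only yields the opposite inequality $\kappa(K_A+\Delta)\ge\kappa(K_X+D_X)$, because sections of $m(K_X+D_X)$ push forward to sections of $m(K_A+\Delta)$. So the pushforward direction does not serve our purpose, and we need something else.

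The useful observation is that pushforward already gives $\kappa(K_A+\Delta)\ge \kappa(K_X+D_X)=0$. We then argue directly on $A$. By the convention following \cref{lem:canonicalcom}, $K_A+D_A\sim 0$, so $K_A+\Delta\sim_{\mathbb{Q}}\Delta-D_A=:\Delta'$, an effective divisor whose support meets $A^0$. We show that if $\Delta'\neq 0$, then $\kappa(\Delta')>0$, and moreover that this positivity lifts to $X$.

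For the lift, the key input is the orbifold condition over $A^0$. Along $f^{-1}(A^0)$, where $f$ is birational onto a smooth target, the pair satisfies $K_X+D_X\ge f^*(K_A+\Delta')$ up to exceptional divisors with nonnegative coefficient. This holds because $A^0$ is smooth, so discrepancies of exceptional divisors over $A^0$ are at least $1$ minus the coefficients, which we can arrange to be nonnegative. Near $D_A$, the condition $\lfloor D_X\rfloor\ge \operatorname{Supp}f^*D_A$ makes all exceptional divisors over $D_A$ appear with coefficient $1$ in $D_X$. This gives $K_X+D_X\ge f^*(K_A+\Delta)$, hence $\kappa(K_X+D_X)\ge\kappa(\Delta')$.

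It then remains to prove that a nonzero effective divisor $\Delta'$ on a toroidal compactification, whose support meets $A^0$, has $\kappa\ge1$. Let $P$ be a component of $\Delta'$ through $A^0$. Its trace $P^0$ is a divisor in the semi-abelian variety $A^0$. We consider the stabilizer $B$ of $P^0$ and the quotient $A^0\to A^0/B$. By Ueno/Kawamata-type structure for divisors on semi-abelian varieties (log version), $P^0$ is the pullback of an ample-type divisor from $A^0/B$, and a positive multiple of $P$ moves in a family of translates. This gives $h^0(mP)\ge 2$, so $\kappa(\Delta')\ge 1$, contradicting $\kappa(K_X+D_X)=0$.

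The main obstacle is the discrepancy step: proving $K_X+D_X-f^*(K_A+\Delta)$ is effective, in particular the exceptional divisors over $D_A$ and over $A^0$. I would first try to use that $f$ is an orbifold morphism, together with the fact that toric/toroidal log-smooth pairs $(A,D_A)$ are log canonical with log discrepancy $0$ along toroidal exceptional divisors and $\ge 0$ elsewhere.
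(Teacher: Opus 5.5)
\textbf{The gap is in your lifting step}, the claim $K_X+D_X\ge f^*(K_A+\Delta)$. Write $K_X+f^{-1}_*\Delta=f^*(K_A+\Delta)+\sum_E d(E;A,\Delta)\,E$. The claim then needs $d(E;A,\Delta)+\operatorname{coeff}_E(D_X)\ge 0$ for every $f$-exceptional $E$, and neither term is controlled.
\begin{itemize}
\item Over $A^0$ the condition in \cref{dfn:orbifold morphism} is vacuous, since there $m_Q(D_A)=1$. So exceptional divisors over $A^0$ may have coefficient $0$ in $D_X$.
\item The pair $(A,\Delta)$ is not log smooth. Its discrepancies can be negative over points where components of $\Delta'$ meet or are singular.
\item Near $D_A$, coefficient $1$ does not rescue you, because $(A,D_A+\Delta')$ need not be log canonical. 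On a surface, if a component of $\Delta'$ with coefficient $c$ passes through a node of $D_A$, blowing up that node gives discrepancy $-1-c$.
\item Nothing can be ``arranged'': $(X,D_X)$ is given.
\end{itemize}
Here is a concrete failure. Take $A=E_1\times E_2$ (so $D_A=0$), $P_1=\{0\}\times E_2$, $P_2=E_1\times\{0\}$, and let $f\colon X=\mathrm{Bl}_pA\to A$ be the blow-up of $p=P_1\cap P_2$ with exceptional curve $E$. Put $D_X=(1-\frac1m)(\tilde P_1+\tilde P_2)$. Then $K_X+D_X-f^*(K_A+\Delta)=-(1-\frac2m)E$. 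Your lift never uses $\kappa(K_X+D_X)=0$, so it is false in the generality in which you invoke it. Without it, positivity of $\Delta'$ on $A$ (your last step) says nothing about $X$.

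\textbf{The missing idea} is to avoid comparing with $(A,\Delta)$ at all, and to work with reduced boundaries upstairs, where Kawamata's theorem applies directly. The paper uses only two inputs. First, the log smooth reduced pair $(A,D_A)$ satisfies $K_X+f^{-1}(D_A)=f^*(K_A+D_A)+F$ with $F\ge 0$ and $f$-exceptional. Second, by \cite[Lemma 1.3]{CDY25}, a smooth variety of log Kodaira dimension $0$ mapping birationally to $A^0$ is proper over a big open subset of $A^0$. The argument then runs as follows.
\begin{itemize}
\item The chain $0\le\bar\kappa(X\setminus f^{-1}(D_A))\le\kappa(K_X+\lfloor D_X\rfloor)\le\kappa(K_X+D_X)=0$ and the lemma give $\lfloor D_X\rfloor=f^{-1}(D_A)+R$ with $R$ exceptional.
\item Setting $G=D_X-\lfloor D_X\rfloor$, one gets $K_X+D_X\sim_{\mathbb Q}F+R+G$.
\item Also $K_X+\lceil D_X\rceil\sim F+R+\lceil G\rceil$, which has the same support as $F+R+G$. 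Hence $\kappa(K_X+\lceil D_X\rceil)=0$.
\item Applying the lemma to $X\setminus\lceil D_X\rceil$ shows every component of $D_X$ not lying over $D_A$ is $f$-exceptional, i.e.\ $f_*D_X=D_A$.
\end{itemize}
The passage from $D_X$ to $\lceil D_X\rceil$ via $F+R+G$, all of it effective, is what replaces your discrepancy inequality.
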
 \begin{proof}   Since  $f:(X,D_X)\to (A,D_A)$ is an orbifold morphism, by \cref{dfn:orbifold morphism}, we have
		$$ D:=f^{-1}(D_A)\leq \lfloor D_X\rfloor.$$

		Note that
		$$
		0=\bar{\kappa}(X\backslash f^{-1}(D_A))\leq \bar{\kappa}(X\backslash \lfloor D_X\rfloor)=\kappa(K_X+\lfloor D_X\rfloor)\leq \kappa(K_X+  D_X )=0.
		$$
		By \cite[Lemma 1.3]{CDY25}, it follows that, $X\backslash \lfloor D_X\rfloor\to A^0$ is proper over a big open subset.   This implies that 
		$$
		\lfloor D_X\rfloor=D+R
		$$
		where $R$ is some $f$-exceptional effective divisor. 
		
		Note that $K_X+D=f^*(K_A+D_A)+F$ where $F$ is $f$-exceptional. Note that $K_A+D_A\cong 0$. Write $G:=D_X-\lfloor D_X\rfloor$.   It follows that
		$$
		\kappa(F+R+G)=\kappa(K_X+D_X)=0
		$$
		where $F+R$ is some $f$-exceptional and effective divisor. Hence we have
		$$
		\kappa( G)=\kappa(F+R+G)=0. 
		$$ Therefore, we have
		$$
		\kappa(K_{X}+\lceil D_X\rceil)=  \kappa(F+R+\lceil G\rceil)=\kappa( \lceil G\rceil)= \kappa( G)=0.
		$$
		By \cite[Lemma 1.3]{CDY25} again, it follows that $X\backslash \lceil D_X\rceil\to A^0$ is proper over a big open subset. Therefore, we have $f_*(D_X)=D_A$. The lemma is proved. \end{proof}
	\subsection{A structure theorem}
	\begin{proposition}\label{l-triv}
		Let $f: (X, D_X) \to (A, D_A)$ be a surjective morphism of smooth projective orbifold pairs with connected fibers, where $D_A$ is reduced and $(A, D_A)$ is a toroidal compactification of the semi-abelian variety $A^0 := A \backslash D_A$. Define $X_0 := f^{-1}(A^0)$. Suppose that $j: X \to J$ is the Iitaka fibration of $(X, D_X)$ and that for a general point $y \in J$, the restricted morphism $f_y := f|_{X_y}$ is birational onto its image, where $X_y:=j^{-1}(y)$.  
		
		Then after replacing $(X, D_X)$ and $J$ with higher birational models $(X', D_{X'})$ and $J'$, where $\nu: X' \to X$ is a birational morphism and $D_{X'} := \nu^{-1}_* D_X + {\rm Ex}(\nu)$, there exists a birational morphism $(X, D_X)\to (\bar{X}, D_{\bar{X}})$ such that  the following diagram commutes:
		\begin{equation*}
			\begin{tikzcd}
				& J\\
				X \arrow[r,  "h"] \arrow[ur,"j"]\arrow[d, "f"'] & \bar{X} \arrow[dl, "p"] \arrow[u, "\bar{j}"] \\
				A & 
			\end{tikzcd}
		\end{equation*}
		such that the following properties hold:
		\begin{itemize} 
			\item For $\bar{X}_0 := p^{-1}(A^0)$, there exists a Zariski dense open subset $J^0 \subset J$ satisfying $\bar{j}(\bar{X}_0) = J^0$, and the restriction $\bar{j}|_{\bar{X}_0}: \bar{X}_0 \to J^0$ is a locally trivial fibration whose fibers are isomorphic to a fixed semi-abelian subvariety $B^0 \subset A^0$.
			\item The composition $\bar{j} \circ h$  is the Iitaka fibration $j$ of $(X, D_X)$.
			\item We define the divisor $D_{\bar X}:=h_*(D_X)$ as push-forward of $D_X$ via $h$. Then the orbifold base $(J, \Delta(\bar{j}, D_{\bar{X}}))$ of $j:(\bar X,D_{\bar X})\to J$ is of general type.
		\end{itemize} 
	\end{proposition}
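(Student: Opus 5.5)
We begin by identifying the fibers of $j$ over $A$. For general $y\in J$, the fiber $(X_y,D_y)$ satisfies $\kappa(K_{X_y}+D_y)=0$, by the standard property of the Iitaka fibration. The restriction $f_y\colon X_y\to f(X_y)=:Z_y\subset A$ is birational onto its image. Since $\lfloor D_X\rfloor\ge f^{-1}(D_A)$ (we may arrange this, as $f$ is an orbifold morphism to the reduced pair), we get
\[
\bar\kappa\bigl(X_y\setminus f^{-1}(D_A)\bigr)\le \kappa(K_{X_y}+D_y)=0 .
\]
Since $X_y\setminus f^{-1}(D_A)$ is birational to $Z_y^0:=Z_y\cap A^0$, the subvariety $Z_y^0\subset A^0$ has $\bar\kappa\le 0$. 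By Kawamata's theorem on subvarieties of semi-abelian varieties (as in \cite[Lemma 1.3]{CDY25}), $Z_y^0$ is a translate $a_y+B_y^0$ of a semi-abelian subvariety $B_y^0$. There are only countably many semi-abelian subvarieties, so $B_y^0=B^0$ is constant for general $y$. Hence $j$ is birationally the family of translates of $B^0$, and the map $y\mapsto a_y \bmod B^0$ gives a rational map $J\dashrightarrow A^0/B^0$ that is generically injective on $J$, because distinct general fibers of $j$ map to distinct translates.

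Next we construct $\bar X$. Choose a toroidal compactification $\overline{A^0/B^0}$ as in \cref{lem:canonicalcom}, arranged so that the quotient $A^0\to A^0/B^0$ extends to $A\to \overline{A^0/B^0}$ and is isotrivial over $A^0/B^0$ with fibers $\bar B$. After replacing $J$ and $X$ by higher models, we take $\bar X$ to be the normalization of the main component of $J\times_{\overline{A^0/B^0}} A$, followed by a resolution that does not touch the part over $A^0$. Over $A^0$ this is the pullback of the locally trivial $B^0$-bundle $A^0\to A^0/B^0$ along $J^0\to A^0/B^0$, so the first bullet holds. The morphism $h\colon X\to \bar X$ is induced by $(j,f)$; it is birational because on each general fiber it is $f_y$, which is birational. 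We then have $\bar j\circ h=j$ by construction.

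The last step, which is the main obstacle, is showing that the orbifold base $(J,\Delta(\bar j,D_{\bar X}))$ is of general type. We plan to compare $K_{\bar X}+D_{\bar X}$ with $\bar j^*(K_J+\Delta)$. Over $A^0$ the fibers are $B^0$ with $K_{\bar B}+D_{\bar B}\sim 0$. Because the fibration is locally trivial, log-smooth isotriviality should give
\[
K_{\bar X}+D_{\bar X}\sim_{\mathbb Q}\bar j^*(K_J+\Delta(\bar j,D_{\bar X}))+E ,
\]
where $E$ is $h$-exceptional effective or supported over the boundary and does not affect Kodaira dimension. This would follow, after a neat model (\cref{prop:neatconstruct}), from \cref{lem:neatcomputation}. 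It would then give
\[
\kappa(J,K_J+\Delta)=\kappa(\bar X,K_{\bar X}+D_{\bar X})=\kappa(X,K_X+D_X)=\dim J .
\]
The middle equality uses $D_{\bar X}=h_*D_X$ together with $\kappa(K_X+D_X)\le\kappa(K_{\bar X}+h_*D_X)$. The reverse inequality needs care with $h$-exceptional divisors; \cref{l-k0}, applied fiberwise, should control those components.

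The hard part is the comparison over the boundary $D_A$ and over the non-isotrivial locus, where vertical multiplicities can change. We would first try to absorb those divisors into $\bar j$-exceptional or boundary components by passing to higher models of $J$, with $D_{X'}$ including ${\rm Ex}(\nu)$, as the statement allows.
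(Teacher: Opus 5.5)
Your first two bullets follow the paper's route: $\bar\kappa(X_y^0)=0$, the structure of $\bar\kappa=0$ subvarieties of $A^0$, countability to fix $B^0$, and $\bar X$ obtained from $J\times_{\overline{A^0/B^0}}A$. Two small corrections there. For $\bar\kappa$ you need $X_y^0\to Z_y^0$ to be \emph{proper} birational, not merely birational; this holds because $X_y^0=f_y^{-1}(A^0)$. When resolving, keep all of $\bar j^{-1}(J^0)$ untouched, not just the part over $A^0$, so that over $J^0$ you have the smooth isotrivial family $(\bar B,D_{\bar B})$ that the third bullet needs. Your claim that $J\dashrightarrow A^0/B^0$ is generically injective is false, though you never use it. For $X=A\times S$ with $A$ abelian, $S$ a curve of genus $\ge 2$ and $D_X=0$, one has $B^0=A$, and $J=S$ maps to a point. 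Finally, for the third bullet you only need $\kappa(K_J+\Delta)\ge\kappa(K_{\bar X}+D_{\bar X})\ge\kappa(K_X+D_X)=\dim J$, so the ``reverse inequality'' for $h$ is not required.

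The genuine gap is the third bullet, which is the substance of the proposition. \cref{lem:neatcomputation} cannot produce your formula $K_{\bar X}+D_{\bar X}\sim_{\mathbb Q}\bar j^*(K_J+\Delta)+E$. It only says that for a neat fibration the Kodaira dimension of the orbifold base equals the birational invariant $\kappa(f,D)$; it never compares the total space with the base. Nor is ``$E$ supported over the boundary'' harmless: such a divisor may contain $\bar j^*Q$ for a component $Q\subset\partial J$, and then it raises the Kodaira dimension. You also need the horizontal part of $D_{\bar X}$ to restrict to $D_{\bar B}$ on general fibers; this is where the paper uses \cref{l-k0}.

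The missing idea is to pass to a model on which $K+D$ is $\mathbb Q$-trivial over the base, and then apply the canonical bundle formula. The paper does this in three steps.
\begin{itemize}
\item It computes $\Delta(\bar j,D_{\bar X})=\partial J+\sum_i(1-\frac{1}{m_i})Q_i$. The orbifold condition gives $\lfloor D_{\bar X}\rfloor\supset\bar j^{-1}(\partial J)$, and any other vertical component is, over $J^0$, the irreducible preimage of a divisor $Q_i$.
\item It applies Hacon--Xu to $D_{\bar X}-\epsilon\bar j^*G$, whose lc centres all dominate $J$. This gives a good minimal model $\bar X'\to J'$ of $(\bar X,D_{\bar X})$ over $J$ with $K_{\bar X'}+D_{\bar X'}\sim_{\mathbb Q,J'}0$.
\item In the canonical bundle formula the moduli part is trivial by isotriviality over $J^0$. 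The discriminant $1-\mathrm{lct}$ has coefficient $1$ over $\partial J'$ (the whole preimage has coefficient $1$, so $\mathrm{lct}=0$) and $1-\frac1m$ elsewhere, so it is exactly the orbifold base.
\end{itemize}
Hence $\kappa(K_{\bar X}+D_{\bar X})=\kappa(K_{J'}+B_{J'})$, and pushing forward to $J$ gives $\kappa(K_J+\Delta)\ge\kappa(K_X+D_X)=\dim J$. Without this step, or a substitute that actually computes $K_{\bar X}+D_{\bar X}$ over $\partial J$, your argument does not show that the orbifold base is of general type.
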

	\begin{proof} 
		Throughout the proof, we will frequently replace $(X, D_X)$ with a higher birational model $(X', D_{X'})$, where $\nu: X' \to X$ is a birational morphism and $D_{X'} := \nu^{-1}_* D_X + \text{Ex}(\nu)$.
		
		Set $D_1 := f^{-1}(D_A)$. For a general $y \in J$, let $D_{1,y} := D_1|_{X_y}$ and $D_{X_y} := D_X|_{X_y}$. Since $j$ is the Iitaka fibration of $(X, D_X)$, we have $\kappa(K_{X_y} + D_{X_y}) = 0$. Since we assume that $f_y$ is birational onto its image, we have:$$0 \leq \kappa(K_{X_y} + D_{1,y}) \leq \kappa(K_{X_y} + D_{X_y}) = \kappa((K_X + D_X)|_{X_y}) = 0.$$
		Let $X^0 := f^{-1}(A^0)$ and let $f_0: X^0 \to A^0$ be the restricted morphism. For a general $y \in J$, let $X^0_y := X_y \cap X^0$. Then the logarithmic Kodaira dimension satisfies $\bar{\kappa}(X^0_y) = \kappa(K_{X_y} + D_{1,y}) = 0$. By \cite[Lemmas 1.3, 1.4]{CDY25}, this implies that $f_0|_{X_y^0}:X_y^0\to A^0$  factors through a proper birational morphism $X^0_y \to B_y^0$ and a closed immersion $B_y^0 \hookrightarrow A^0$, where $B_y^0$ is a translate of a semi-abelian subvariety of $A^0$. Since $A^0$ contains at most countably many semi-abelian subvarieties, $B_y^0$ must be a translate of a fixed semi-abelian subvariety $B^0 \subset A^0$. Consequently, there exists a Zariski dense open subset $J^*\subset J$ together with a  morphism $J^* \to A^0/B^0$ satisfying the following commutative diagram:
		\[
		\begin{tikzcd}
			X^*\arrow[r] \arrow[d] & A^0\arrow[d]\\
			J^*\arrow[r] & A^0/B^0
		\end{tikzcd}
		\] 
		where $j_0 := j|_{X^0}$ and $X^* := j_0^{-1}(J^*)$.
		
		Let $\cA^0 := A^0/B^0$, and let $\cA$ be a toroidal compactification of $\cA^0$. Up to replacing $J$ with a smooth birational model, we may assume that the morphism $J^* \to \cA$ extends to a regular morphism $q: J \to \cA$. Write $J^0 := q^{-1}(\cA^0)$. We may further assume that the boundary divisor $\partial J := J \backslash J^0$ has simple normal crossings. By \cref{lem:canonicalcom}, we can modify the toroidal compactification of $A^0$ so that the projection $A^0 \to \cA^0$ extends to a morphism $\pi: A \to \cA$ with the property that the family $(A, D_A) \to \cA$ is isotrivial over $\cA^0$.    Furthermore, the fibers of this family over $\cA^0$ are isomorphic to $(B, D_B)$, where $B$ is a toroidal compactification of $B^0$ satisfying $K_B + D_B \sim 0$, with boundary divisor $D_B := B \backslash B^0$.
		
		Then $X\to J\times_\cA A$ is a birational morphism, and 
		$$
		X^0\to J\times_{\cA}A^0=	J^0\times_{\cA^0}A^0=:\bar X^0
		$$
		is a proper birational morphism, where the latter is a locally trivial fibration of semi-abelian varieties over $J^0$ with fibers isomorphic to $B^0$.   Applying \cref{lem:canonicalcom} again, the projection $J^0 \times_\cA A \to J^0$ is a smooth isotrivial fibration whose fibers are isomorphic to $B$.  We take a smooth projective compactification $\bar X$ for $J^0\times_{\cA}A$ with boundary a simple normal crossing divisor such that $\bar X\to J$ and $  p: \bar X\to A$ are regular.  We replace $X$ by a higher birational model and assume that the birational map  $h:X\to \bar X$    is regular. Set $D_{\bar X}:=h_*(D_X)$. Note that for a very general fiber $(X_y,D_{X_y})$, we have
		$$
		\kappa(K_{X_y}+D_{X_y})=0. 
		$$
		We let $D_{\bar X}=D_{\bar X}^{\rm vert}+D_{\bar X}^{\rm hor}$ be its vertical and horizontal components over $J$, and for a general fiber $\bar X_y$ of $\bar{j}$, we denote by $D_{\bar X_y}:=D_{\bar X}|_{\bar X_y}$.   
		Note that by applying Lemma \ref{l-k0},  
		$D_{\bar X}^{\rm hor}\subset  p^{-1}(D_A)$ where $p:\bar X \to A$ is the induced morphism (and these divisors agree on a general fiber $\bar X _y$).

		We now analyze $D_{\bar X}^{\rm vert}$. Since ${\rm Supp}(D_A)\supset {\rm Supp}(\pi ^{*}D_\cA)$ and $(X,D_X)\to (A,D_A)$ is an orbifold morphism, then we have 
		$${\rm Supp}(\lfloor D_{\bar X}\rfloor)\supset (\pi \circ p)^*D_\cA.$$ Note that $$(\pi\circ p)^{-1}(\cA^0)=J^0 \times_\cA A,$$ we thus have ${\rm Supp}(\lfloor D_{\bar X}^{\rm vert}\rfloor)\supset \bar j^* (\partial J)$. If $P$ is a component of the support of $D_{\bar X}^{\rm vert}$ not contained in $\bar j^* (\partial J)$, then $P$ intersects $\bar X\times _JJ^0$ and hence,   over $J^0$, we have $P=\bar j ^{-1}(Q)$ for some divisor $Q$ on $J$ not contained in $\partial J$.
		Since $j^0: J^0 \times_{\mathcal{A}} A \to J^0$ is a smooth proper fibration, we may write $D_{\bar X}$ as: \[D_{\bar X}=D_{\bar X}^{\rm hor}+ \bar j ^{-1}(\partial J) +\sum_{i\in I} (1-\frac{1}{m_i})P_i\]
		where each $P_i$ is a prime divisor on $\bar{X}$ such that $\bar j(P_i) = Q_i$ for some prime divisor $Q_i \subset J$ not contained in $\d J$. We then have    $$P_i = \bar{j}^* Q_i + R_i,$$ where $R_i$ is a $\bar{j}$-exceptional divisor supported within $(\bar{j}^* \partial J)_{\text{red}}$. The orbifold base of $\bar j:(\bar X,D_{\bar X})\to J$, is   given by 
		$$D_J:=\Delta (\bar j,D_{\bar X})=\sum_{i\in I} (1-\frac 1{m_i})Q_i+\d J.$$ 
		
		\begin{claim}\label{claim:CBF}We have $\kappa(K_{\bar X}+D_{\bar X})=\kappa  (K_J+D_J)$.\end{claim} \begin{proof} 
			We begin by observing that 
			$(\bar X,D_{\bar X})$ has a good minimal model over $J$. To see this we will apply \cite[Theorem 1.1]{HX13}. To this end, note that if $G=\sum_{i\in I} Q_i+\partial J$, then $(X,D_X-\epsilon j^*G)$ is dlt for any $0<\epsilon \ll 1$ and all log canonical centers of $(\bar X,D_{\bar X}-\epsilon \bar j^*G)$ dominate $J$. By \cite[Theorem 1.1]{HX13} $(\bar X,D_{\bar X}-\epsilon \bar j^*G)$ has a good minimal model over $J$. Since $K_{\bar X}+D_{\bar X}-\epsilon \bar j^*G\equiv _J K_{\bar X}+D_{\bar X}$, this is also a good minimal model over $J$ for $(\bar X,D_{\bar X})$. We denote this by $\phi :\bar X\dasharrow \bar X'$ where $\bar j':\bar X'\to J'$ is the induced morphism and $\psi: J'\to J$ is a birational morphism. 
			We now have $K_{\bar X'}+D_{\bar X'}\sim _{\mathbb Q,J'}0$ so that we can apply the canonical bundle formula. Note that $\phi$ is an isomorphism over $J^0$ (which we henceforth identify with $\psi ^{-1}(J^0)$) and  $(\bar X',D_{\bar X'})\to J'$ is isotrivial (over the open subset $J^0$). Therefore, the moduli part is trivial and so, by the canonical bundle formula, we have $K_{\bar X'}+D_{\bar X'}=(j')^*(K_{J'}+B_{J'})$ where $B_{J'}$ is the boundary divisor whose coefficients over  codimension 1 points $Q$ are computed by $1-{\rm lct}_Q(\bar X',D_{\bar X'};(\bar j')^*Q)$. 
			Here \[ {\rm lct}_Q(\bar X',D_{\bar X'};(\bar j')^*Q):={\rm sup}\{t\geq 0| (\bar X',D_{\bar X'}+t\bar j^*Q)\ {\rm is\ lc\ over}\ \eta _Q\}.\]
			Since any divisor contained in the inverse image of $\partial J'=\psi ^{-1}(\partial J)$ has coefficient 1 in $D_{\bar X'} $, 
			it follows immediately that ${\rm lct}_Q(\bar X',D_{\bar X'};(\bar j')^*Q)=0$ for any divisor in $\partial J'$. If $Q$ is not contained in $\partial J'$ then we can identify it with a divisor on $J^0$ and hence (over $\eta _Q$) $(\bar j')^*Q=P$ for some prime divisor $P$ on $\bar X'$. If $m$ is the multiplicity of $D_{\bar X}$ along $P$ (which is the same as the multiplicity of $D_{\bar X'}$ along $P$),  then, the log canonical threshold is \[{\rm lct}_Q(\bar X',D_{\bar X'};(\bar j')^*Q)={\rm sup}\{t\geq 0| 1-\frac 1 m+t\leq 1\}=\frac 1{m}.\]
			Therefore the coefficient of $B_{J'}$ along $Q$ is $1-\frac 1{m}$. Thus $B_{J'}=\Delta ( g , D_{\bar X})$ where $g:\bar X\to J'$ is the induced map.
			
			Finally note that since $(\bar X,D_{\bar X})$ is log canonical and $\phi$ is a $K_{\bar X}+D_{\bar X}$ minimal model, then 
			\begin{align}\label{eq:basegeneral}
				\dim J=\kappa (K_X+D)=\kappa(K_{\bar X}+D_{\bar X})=\kappa(K_{\bar X'}+D_{\bar X'})=\\\nonumber
				\kappa(K_{J'}+B_{J'})\geq \kappa (K_{J}+B_J)=\kappa (K_{J}+D_J)\geq \dim J.
			\end{align}
			So $\kappa(K_{\bar X}+D_{\bar X})=\kappa (K_{J}+D_J)$.
		\end{proof}
		It follows from \eqref{eq:basegeneral} that the orbifold base $(J,D_J)$ of  $\bar j:(\bar X,D_{\bar X})\to J$ is of general type.  The proposition is proved. 
	\end{proof}
	\subsection{Some reductions} \label{sec:reduction}
	To prove \cref{main:special}, we will replace $Y_0$ with suitable finite étale covers and  some birational models.  
	
	We start with the following lemma, whose proof is standard (cf. \cite[Lemma 4.8]{CDY22}) so we omit it. 
	\begin{lem}\label{lem:samekoda}
		Let $\nu:(\hX,D_\hX)\to (X,D_X)$ be a generically finite surjective orbifold morphism between smooth orbifolds.  Let $D\subset \lfloor D_X\rfloor$ be such that $\hX\backslash \hat D \to X\backslash D$ is a finite \'etale morphism, where $\hat D:=\nu^{-1}(D)$.  Write $\hX_0:=\hX\backslash \hat D$ and $ X_0:=X\backslash   D$.   If $\nu^*D_X\cap \hX_0=D_\hX\cap \hX_0$, then we have
		$$
		\kappa(K_ X+D_ X)=\kappa(K_\hX+D_\hX).
		$$
	\end{lem}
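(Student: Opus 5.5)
The plan is to compare pluri-log-canonical sections on both sides, using the finite étale structure over the open parts. First reduce to the Galois case. Take a Galois closure $\widetilde X_0\to X_0$ of the finite étale cover $\hX_0\to X_0$. Compactify it to a smooth $\widetilde X$ dominating $\hX$, with boundary divisor of simple normal crossings. Equip $\widetilde X$ with the orbifold divisor $D_{\widetilde X}$ defined by pulling back $D_X$ over $\widetilde X_0$ and putting coefficient $1$ on the boundary $\widetilde X\setminus\widetilde X_0$. If the statement holds for $\widetilde X\to X$ and for $\widetilde X\to\hX$, then it holds for $\hX\to X$. The hypothesis $\nu^*D_X\cap\hX_0=D_\hX\cap\hX_0$ is exactly what makes the orbifold divisors compatible on the open parts.

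The core step is a log-ramification formula. Write
\[
K_\hX+D_\hX=\nu^*(K_X+D_X)+R,
\]
and show that $R$ is supported on $\hat D$ and is effective but $\nu$-exceptional "in the log sense". Over $\hX_0$ the map $\nu$ is étale, so $K_{\hX_0}=\nu^*K_{X_0}$, and $D_\hX$ equals $\nu^*D_X$ there by hypothesis. Over a generic point of a component of $\hat D$, the divisor $D$ has coefficient $1$ in $D_X$ (since $D\subset\lfloor D_X\rfloor$). The standard local computation $z=w^e$ gives
\[
\nu^*\Bigl(\frac{dz}{z}\Bigr)=e\,\frac{dw}{w}.
\]
Hence log forms pull back to log forms, and the non-exceptional components of $\hat D$ contribute nothing to $R$. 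The only contributions to $R$ come from $\nu$-exceptional components of $\hat D$, which appear with coefficient $1$ in $D_\hX$. Their coefficients are $\ge 0$, because $(X,D)$ is log smooth near those points: the discrepancy of a log smooth pair with reduced boundary is $\ge 0$ for divisors over $D$.

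Inequality $\ge$. The relation $K_\hX+D_\hX=\nu^*(K_X+D_X)+R$ with $R\ge0$ shows
\[
\kappa(K_\hX+D_\hX)\ge\kappa(\nu^*(K_X+D_X))=\kappa(K_X+D_X),
\]
since $\nu$ is generically finite and surjective.

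Inequality $\le$. In the Galois case with group $G$, take a section $s$ of $m(K_\hX+D_\hX)$. Its norm $\prod_{g\in G}g^*s$ is $G$-invariant, hence descends over $X_0$ to a section of $m|G|(K_{X_0}+D_X|_{X_0})$. The main obstacle is checking that this descended section extends holomorphically across $D$. I would argue that $G$-invariant log pluriforms on $\hX$ descend to log pluriforms on $X$ with poles at most along $D$, and that the exceptional part $R$ only adds $\nu$-exceptional divisors. These do not affect sections on $X$ because $\nu_*\mathcal O(mR)=\mathcal O$ in codimension one; extension across codimension $\ge2$ then follows by Hartogs. The result is an injection on section rings up to $m|G|$ multiples, giving $\kappa(K_\hX+D_\hX)\le\kappa(K_X+D_X)$. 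This mirrors \cite[Lemma 4.8]{CDY22}.
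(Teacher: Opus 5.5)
Your outline is sound, but two steps need repair, and for the upper bound you take a detour that the standard argument avoids. The paper omits the proof as standard, so the comparison below is with that standard argument.

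\textbf{First repair: the justification of $R\ge0$.} Your reason controls only part of the coefficient. A $\nu$-exceptional component $P\subset\hat D$ has coefficient $1$ in $D_\hX$, so
$\operatorname{mult}_P R=\operatorname{mult}_P\bigl(K_\hX+\hat D-\nu^*(K_X+D)\bigr)-\operatorname{mult}_P\nu^*(D_X-D)$.
Log smoothness of $(X,D)$ handles the first term. The second term is positive whenever a component of $D_X$ not in $D$ passes through $\nu(P)$. For instance, blowing up a point of $D\cap Q$ on a surface with $D_X=D+(1-\frac1m)Q$ gives coefficient $\frac1m$, not $1$.

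The fix is to run your log-form argument with the reduced divisor $\operatorname{Supp}D_X$ in place of $D$. Pulling back log volume forms gives $\nu^*(K_X+\operatorname{Supp}D_X)\le K_\hX+\nu^{-1}(\operatorname{Supp}D_X)$. Every component of $\hat D$ has coefficient $1$ in $D_\hX$; this is where the orbifold-morphism hypothesis and $D\subset\lfloor D_X\rfloor$ are used. Hence $\operatorname{mult}_P R\ge\operatorname{mult}_P\nu^*(\operatorname{Supp}D_X-D_X)\ge0$. In other words, what you need is that $(X,D_X)$ is log canonical, not just $(X,D)$.

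\textbf{Second repair: the norm step.} The norm $s\mapsto\prod_g g^*s$ is a homogeneous polynomial map of degree $|G|$, not a linear map. So ``injection on section rings'' does not yet bound dimensions. You can close this in either of two ways.
\begin{itemize}
\item Observe that a homogeneous map with $N^{-1}(0)=\{0\}$ forces $h^0(\widetilde X,m(K_{\widetilde X}+D_{\widetilde X}))\le h^0(X,m|G|(K_X+D_X))$.
\item Use integrality of the section ring of $\widetilde X$ over its $G$-invariants: each $s$ is a root of $\prod_g(T-g^*s)$, whose coefficients descend by your $z=w^e$ computation.
\end{itemize}
You also need $G$ to act on $H^0(\widetilde X,m(K_{\widetilde X}+D_{\widetilde X}))$. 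This requires either a $G$-equivariant compactification, or the independence of this space from the choice of log smooth compactification of $\widetilde X_0$.

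\textbf{Comparison with the standard argument.} Once $R\ge0$ is known, the Galois closure is unnecessary. $R$ is supported on components of $\hat D$ whose images have codimension $\ge2$. Take the Stein factorization $\nu=\pi\circ\nu'$, with $\nu'\colon\hX\to\hX'$ birational onto a normal variety and $\pi$ finite. Then $R$ is $\nu'$-exceptional, so $\nu'_*\cO_\hX(kR)=\cO_{\hX'}$. Hence $H^0(\hX,k(K_\hX+D_\hX))=H^0(\hX',k\pi^*(K_X+D_X))$ for $k$ divisible enough, and $\kappa(\pi^*(K_X+D_X))=\kappa(K_X+D_X)$ since $\pi$ is finite.

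This gives both inequalities from the single identity $K_\hX+D_\hX=\nu^*(K_X+D_X)+R$, with $R$ effective and exceptional. Your descent argument makes explicit why invariant sections extend across $D$. It does so at the cost of the Galois closure, the equivariance issue and the norm or integrality step. You already had the key fact in your remark that $\nu_*\cO(mR)=\cO$ in codimension one.
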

	
	\begin{lem}\label{lem:enableetale}
		Let $c: Y \to X$ be an algebraic fiber space between smooth projective varieties. Let $D_Y$ be a reduced simple normal crossings divisor on $Y$. Let $(X, D_X)$ be the orbifold base of $(Y, D_Y)$, and assume that the support of $D_X$ has simple normal crossings. Let $\nu: \hX \to X$ be a generically finite surjective morphism from a smooth projective variety $\hX$ such that $\nu^{-1}(D_X)$ has simple normal crossings support, and let $D \subset \lfloor D_X \rfloor$ be a divisor such that the restriction $\hX \backslash \hat{D} \to X \backslash D$ is a finite \'etale morphism, where $\hat{D} := \nu^{-1}(D)$. Assume further that $c^{-1}(D) \subset D_Y$. Let $\mu: \hY \to Y$ be the natural morphism from a strong resolution of singularities $\hY \to \hX \times_X Y$, chosen such that $D_{\hY} := \mu^{-1}(D_Y)$ is also a simple normal crossings divisor. Then, letting $(\hX, D_{\hX})$ be the orbifold base of the natural morphism $\hat{c}: (\hY, D_{\hY}) \to \hX$, we have $$\kappa(K_X + D_X) = \kappa(K_{\hX} + D_{\hX}).$$\end{lem}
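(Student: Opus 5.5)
The plan is to compare the two orbifold bases divisor by divisor over the open set where $\nu$ is étale, and then invoke \cref{lem:samekoda}. First I would make $\nu$ into an orbifold morphism $\nu:(\hX,D_{\hX})\to (X,D_X)$ and check the hypothesis $\nu^*D_X\cap \hX_0=D_{\hX}\cap \hX_0$, where $\hX_0:=\hX\backslash\hat D$. Once both facts hold, \cref{lem:samekoda} gives $\kappa(K_X+D_X)=\kappa(K_{\hX}+D_{\hX})$ at once.

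\emph{Step 1: the open part.} Over $X_0:=X\backslash D$ the map $\nu$ is finite étale. Hence $\hX_0\times_{X_0} c^{-1}(X_0)\to c^{-1}(X_0)$ is finite étale, its source is smooth, and the resolution $\hY\to\hX\times_X Y$ can be taken to be an isomorphism over this locus. Because the base change is étale, for every prime divisor $\hat Q\subset \hX_0$ lying over $Q=\nu(\hat Q)$, the components of $\hat c^{*}\hat Q$ correspond to components of $c^*Q$ with the same multiplicities $m(c,P)$. Also $D_{\hY}=\mu^{-1}(D_Y)$ has the same components and coefficients as $D_Y$ there, all equal to $1$ since $D_Y$ is reduced. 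Therefore $m_{\hat c}(\hat Q)=m_c(Q)$, which gives $D_{\hX}|_{\hX_0}=\nu^*D_X|_{\hX_0}$. One caveat: $\nu^*D_X$ restricted to $\hX_0$ should be read as the divisor with the same multiplicities, which is correct because $\nu$ is unramified there.

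\emph{Step 2: the boundary $\hat D$.} Since $c^{-1}(D)\subset D_Y$ and $D_Y$ is reduced, every component of $c^{-1}(D)$ has multiplicity $\infty$ in $D_Y$. Its preimage $\hat c^{-1}(\hat D)\subset\mu^{-1}(D_Y)=D_{\hY}$, so every component of $\hat c^{-1}(\hat D)$ also has infinite multiplicity. It follows that each component of $\hat D$ has coefficient $1$ in $D_{\hX}$. In particular $\lfloor D_{\hX}\rfloor\supset\hat D=\nu^{-1}(D)$, which is the condition needed for the orbifold-morphism inequality along $D$.

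\emph{Step 3: the orbifold-morphism condition.} Off $\hat D$, the condition follows from Step 1, because ramification is trivial there. Along components of $\hat D$ it holds trivially, since those components have infinite multiplicity. This is the step where I expect the main obstacle. The definition of an orbifold morphism quantifies over all divisors $Q$ of $X$, including divisors whose pullback meets $\hat D$ only in codimension two. There are also exceptional or non-surjective components over $D_X\backslash D$ to handle, and some divisors $\hat Q$ in $D_{\hX}$ might not be covered by the étale comparison; in particular, the resolution $\hY$ may introduce new vertical divisors over $\hX_0$. For the last issue I would use that $\mu$ is an isomorphism over the smooth étale locus, so new divisors lie only over $\hat D$, where every coefficient is already $1$. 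If the orbifold-morphism condition remains hard to verify directly, I would instead reprove the inequality $\kappa(K_{\hX}+D_{\hX})\ge\kappa(K_X+D_X)$ by hand. The key formula is $K_{\hX}+\hat D=\nu^*(K_X+D)$, which holds because $\nu$ is étale off $D$, is log ramified along $D$, and has $\hat D$ reduced. Combined with Step 1, this gives $K_{\hX}+D_{\hX}=\nu^*(K_X+D_X)$ up to divisors supported on $\hat D$, where both sides have coefficient $1$. Equality of Kodaira dimensions then follows, since $\nu$ is generically finite.
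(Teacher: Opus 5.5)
Your proposal is correct and is essentially the paper's argument: identify $D_{\hX}$ with $\nu^*D_X$ over $\hX_0$ by \'etale base change, get $\hat D\subset\lfloor D_{\hX}\rfloor$ from $\hat c^{-1}(\hat D)\subset D_{\hY}$, deduce that $\nu$ is an orbifold morphism, and conclude with \cref{lem:samekoda}. The obstacle you anticipate in Step 3 does not arise, because every prime divisor of $\hX$ either lies in $\hat D$ (infinite multiplicity) or meets $\hX_0$ (where $\nu$ is \'etale, so $m_i=1$ and Step 1 applies), so the fallback is unnecessary; note also that its formula $K_{\hX}+\hat D=\nu^*(K_X+D)$ holds only up to an effective $\nu$-exceptional divisor when $\nu$ is not finite.
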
\begin{proof}Write $Y_0 := Y \backslash D_Y$. Since $c^{-1}(D) \subset D_Y$, the morphism $\hX \backslash \hat{D} \to X \backslash D$ is finite \'etale, and $\hY \to \hX \times_X Y$ is a strong resolution of singularities, it follows that $\hY_0 := \mu^{-1}(Y_0)$ is \'etale over $Y_0$. Furthermore, because the induced map $\pi_1(Y) \to \pi_1(X)$ is surjective, $\hY$ is connected. Since we have the Cartesian square$$\begin{tikzcd}
			\hY_0 \arrow[r] \arrow[d] & \hX_0 \arrow[d] \\
			Y_0 \arrow[r] & X_0
		\end{tikzcd}$$where the vertical morphisms are \'etale, and since $(\hX, D_{\hX})$ is the orbifold base of $\hat{c}: (\hY, D_{\hY}) \to \hX$, it follows that$$\nu^*D_X \cap \hX_0 = D_{\hX} \cap \hX_0. $$Note that since $\hat{c}^{-1}(\hat{D}) \subset D_{\hY}$, it follows that $\hat{D} \subset \lfloor D_{\hX} \rfloor$. Hence, $\nu: (\hX, D_{\hX}) \to (X, D_X)$ is an orbifold morphism. The lemma then follows from \cref{lem:samekoda}.\end{proof}
	
	\begin{lem}\label{lem:samespecial}Let $\mu_0: X_0 \to Y_0$ be a birational morphism between smooth quasi-projective varieties that is proper over a big open subset of $Y_0$. If $X_0$ is special, then $Y_0$ is special.\end{lem}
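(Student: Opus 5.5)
We argue by contraposition: assuming $Y_0$ is not special, we produce a fibration of general type on a log-smooth model of $X_0$.

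\textbf{Setting up compatible compactifications.} Choose smooth projective compactifications $X$ of $X_0$ and $Y$ of $Y_0$ with simple normal crossing boundaries $D_X:=X\backslash X_0$ and $D_Y:=Y\backslash Y_0$. After blowing up $X$ inside its boundary, we may assume $\mu_0$ extends to a birational morphism $\mu\colon X\to Y$. Specialness of the quasi-projective variety $X_0$ (respectively $Y_0$) means specialness of the orbifold $(X,D_X)$ (respectively $(Y,D_Y)$), and this does not depend on the log-smooth compactification.

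\textbf{Transporting the fibration.} Suppose $(Y,D_Y)$ is not special. By \cref{def:special}, there is an elementary birational orbifold morphism $u\colon (Y',D_{Y'})\to (Y,D_Y)$ and a neat fibration $\phi\colon (Y',D_{Y'})\to Z$ with $\dim Z>0$ and $\kappa(Z,K_Z+\Delta(\phi,D_{Y'}))=\dim Z$. Take $X'$ to be a resolution of the main component of $X\times_Y Y'$ dominating both $X$ and $Y'$, and put $D_{X'}:=$ the reduced preimage of $D_X$ together with the exceptional divisors lying over $D_X$. This makes $(X',D_{X'})\to (X,D_X)$ an elementary birational orbifold morphism, and we consider $\psi:=\phi\circ\mu'\colon X'\to Z$. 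Using \cref{prop:neatconstruct}, replace $\psi$ by a neat model, at the cost of modifying $Z$ birationally. By \cref{lem:neatcomputation} and \cref{prop:birationaldecrease}, the canonical dimension $\kappa(\psi,D_{X'})$ is then computed on that model.

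\textbf{Comparing orbifold bases.} The main point is the inequality $\Delta(\psi,D_{X'})\ge\Delta(\phi,D_{Y'})$ on $Z$, possibly after the same birational modification on both sides. Let $Q$ be a prime divisor on $Z$. Any component $P$ of $\psi^{*}Q$ that is not $\mu'$-exceptional is the strict transform of a component $P'$ of $\phi^*Q$. For such a $P$, the multiplicity $m(\psi,P)$ equals $m(\phi,P')$, and $m_P(D_{X'})=m_{P'}(D_{Y'})$ because $\mu_0$ is an isomorphism in codimension one over $Y_0$. The $\mu'$-exceptional components of $\psi^*Q$ are of two kinds. Those lying in $D_{X'}$ have infinite multiplicity, so they do not lower the infimum defining $m_\psi(Q)$. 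Those not lying in $D_{X'}$ lie over $X_0$, so they map into a subset of $Y_0$ of codimension at least $2$, since $\mu_0$ is proper over a big open subset of $Y_0$. The neatness of $\psi$, which makes every $\psi$-exceptional divisor also $w$-exceptional, together with the freedom to blow up $Z$ in the neat model, should force such components to be $\psi$-exceptional; they then do not contribute at all.

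\textbf{Main obstacle and conclusion.} The delicate step is the control of these exceptional divisors lying over $X_0$. If this inequality turns out to be hard to obtain directly, I would instead apply the orbifold-morphism monotonicity \cref{prop:birationaldecrease} to the commutative square formed by $\psi$, $\phi$ and $\mu'$. In that case the needed input is that $\mu'$ becomes an orbifold morphism after discarding a subset of codimension at least $2$ in $Y_0$, and one checks that such a subset does not affect the computation of the orbifold base. Granting the inequality, we get $\kappa(\psi,D_{X'})\ge\kappa(\phi,D_{Y'})=\dim Z$, so $\psi$ is a fibration of general type on $(X',D_{X'})$. This contradicts the specialness of $X_0$, and hence $Y_0$ is special.
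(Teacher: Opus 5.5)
There is a genuine gap in your comparison step. An elementary birational orbifold morphism $u\colon (Y',D_{Y'})\to (Y,D_Y)$ puts no condition on the coefficients of $D_{Y'}$ along $u$-exceptional divisors lying over $Y_0$. Indeed, $D_Y$ is reduced and these divisors map to subsets of $Y_0$ of codimension $\ge 2$, so the orbifold-morphism condition and $u_*D_{Y'}=D_Y$ hold whatever those coefficients are. Your $D_{X'}$, being the reduced preimage of $D_X$, has coefficient $0$ along every divisor lying over $X_0$. Hence the equality $m_P(D_{X'})=m_{P'}(D_{Y'})$ you assert for non-$\mu'$-exceptional $P$ is correct only when $P'$ is the strict transform of a divisor of $Y$; that is exactly where properness over a big open subset enters. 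It fails when $P'$ is $u$-exceptional over $Y_0$.

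Here is a concrete instance. Let $\mu_0$ be the blow-up of $Y_0$ along a smooth centre of codimension $\ge 2$, with exceptional divisor $E$. You may take $Y'=X$, $u=\mu$ and $D_{Y'}=D_X+(1-\frac{1}{m})E$. Then $X'=X$ and $\mu'=\mathrm{id}$. If $\phi(E)=Q$ is a divisor and $E$ is the only component of $\phi^*Q$ over $Q$, the multiplicity of $\Delta(\psi,D_{X'})$ along $Q$ drops from $m\cdot m(\phi,E)$ to $m(\phi,E)$. The same phenomenon affects $\mu'$-exceptional components over $X_0$ that map onto $Q$, for instance those lying inside such an $E$. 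Neatness gives no reason for these to be $\psi$-exceptional, since it only constrains divisors that are already $\psi$-exceptional.

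Your fallback fails as well, for three reasons. First, $\mu'$ is not an orbifold morphism along such divisors. Second, discarding a codimension-$2$ subset of $Y_0$ does not help, because its preimage in $Y'$ may be a divisor. Third, \cref{prop:birationaldecrease} applied to $\mu'$ gives $\kappa(Z,\Delta(\phi,D_{Y'}))\ge\kappa(Z,\Delta(\psi,D_{X'}))$, which is the opposite of the inequality you need.

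The missing idea is that no comparison of orbifold bases is needed, because the properness hypothesis says precisely that $\mu\colon (X,D_X)\to (Y,D_Y)$ is itself an elementary birational orbifold morphism. First, $\mu^{-1}(D_Y)\subset D_X$ because $\mu(X_0)\subset Y_0$; since $D_Y$ is reduced, this is the orbifold-morphism condition. Second, suppose $\mu_0$ is proper over $Y_0\setminus W$ with $\mathrm{codim}\,W\ge 2$. Then $\mu^{-1}(Y_0\setminus W)=\mu_0^{-1}(Y_0\setminus W)\subset X_0$, because the latter is open, dense and, by properness, closed in the former. So every component of $D_X$ not mapping into $D_Y$ maps into $\overline{W}$, is $\mu$-exceptional, and $\mu_*D_X=D_Y$. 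The statement then follows from the invariance of specialness under elementary birational orbifold morphisms in Campana's theory. You already use that invariance when you assert that specialness does not depend on the log-smooth compactification. This is the paper's proof, and it is this invariance that makes the arbitrary coefficients on exceptional divisors over $Y_0$ irrelevant.
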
\begin{proof}Let $X$ and $Y$ be smooth projective compactifications of $X_0$ and $Y_0$, respectively, such that $D_X := X \backslash X_0$ and $D_Y := Y \backslash Y_0$ are simple normal crossings divisors, and such that $\mu_0$ extends to a morphism $\mu: X \to Y$. By assumption, there exists a Zariski closed subset $Z \subset Y_0$ of codimension at least two such that $\mu_0$ is proper over $Y_0 \backslash Z$. Then, for any prime divisor $E$ contained in $D_X$, if $\mu(E)$ is not contained in $D_Y$, it follows that $\mu(E)$ must be contained in the closure of $Z$ in $Y$, and hence has codimension at least two. Therefore, we have $\mu_*(D_X) = D_Y$. Thus, $\mu: (X, D_X) \to (Y, D_Y)$ is an elementary birational morphism between orbifolds (see \cref{dfn:orbifold morphism}). Since specialness is defined via birational models of orbifolds (see \cref{def:special}), it follows that $(X, D_X)$ is special if and only if $(Y, D_Y)$ is special.\end{proof}	
	
	\begin{lem}\label{lem:samespecial2}Let $\mu_0: X_0 \to Y_0$ be a birational morphism between smooth quasi-projective varieties that is proper over a big open subset of $Y_0$. Then to prove \cref{main:special} for $Y_0$, it suffices to prove \cref{main:special} for $X_0$.\end{lem}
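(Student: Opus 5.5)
The plan is to show that both hypotheses of \cref{main:special} and its conclusion are insensitive to replacing $Y_0$ by $X_0$. There are three ingredients: specialness, the quasi-Albanese map, and its general fibre.

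\textbf{Hypothesis.} Suppose $Y_0$ is special and \cref{main:special} is known for $X_0$. We must check that $X_0$ is special. As in the proof of \cref{lem:samespecial}, choose snc compactifications $X\to Y$ extending $\mu_0$. Because $\mu_0$ is proper over the big open set $Y_0\setminus Z$, every component of $D_X$ either lies over $D_Y$ or is $\mu$-exceptional. Hence $\mu_*D_X=D_Y$. Moreover $\mu^{-1}(D_Y)\subset D_X$, since $\mu_0$ maps $X_0$ into $Y_0$. So $\mu:(X,D_X)\to(Y,D_Y)$ is an elementary birational orbifold morphism. Specialness is invariant under such morphisms by \cref{def:special} and \cref{def:canonical dimension} (the canonical dimension is a birational invariant). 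Therefore $X_0$ is special. This is the converse direction of \cref{lem:samespecial}, and the argument is identical.

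\textbf{Quasi-Albanese map.} Next compare the quasi-Albanese maps. I would argue that $\mu_0$ induces an isomorphism $\mathrm{Alb}(X_0)\cong\mathrm{Alb}(Y_0)$ compatible with the Albanese maps. For this, note that $H^0(X,\Omega_X(\log D_X))$ and $H^0(Y,\Omega_Y(\log D_Y))$ agree. Indeed, a log form on $X$ pushes forward to a log form on $Y\setminus(\overline{Z}\cup\text{codim 2})$, and it extends across codimension $\ge2$ by Hartogs. Its residues along the components of $D_Y$ are determined by those along their strict transforms. In addition, $H_1(X_0,\bZ)\to H_1(Y_0,\bZ)$ is an isomorphism modulo torsion. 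To see this, note that $\pi_1(X_0)\to\pi_1(Y_0)$ is surjective because $\mu_0$ is birational with connected fibres over a big open set. Injectivity on $H^1$ follows from the equality of log $1$-forms together with the mixed Hodge structure on $H^1$, which is generated by $W_1$ and log forms. Consequently $\mathrm{alb}_{Y_0}\circ\mu_0=\mathrm{alb}_{X_0}$ up to this isomorphism.

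\textbf{Conclusion.} Let $F_X$ be a general fibre of $\mathrm{alb}_{X_0}$ and $F_Y$ the corresponding fibre of $\mathrm{alb}_{Y_0}$. Then $\mu_0$ restricts to a birational morphism $F_X\to F_Y$. For general fibres this morphism is still proper over a big open subset of $F_Y$. The reason is that the bad locus $Z$ has codimension $\ge2$ in $Y_0$, so it meets a general fibre in codimension $\ge2$ or not at all; this is a generic flatness / dimension count. By \cref{main:special} for $X_0$, $F_X$ is special, and \cref{lem:samespecial} then shows $F_Y$ is special. The dominance and connectedness of the general fibres of $\mathrm{alb}_{Y_0}$ transfer in the same way.

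\textbf{Main obstacle.} I expect the delicate step to be identifying the quasi-Albanese maps, specifically controlling the toric part: $\mu$-exceptional boundary components could a priori add new log forms. Here one should use that exceptional divisors over $Z$ map to codimension $\ge2$ sets, so their residues are forced by Hartogs extension. The alternative route is to prove the universal property directly, by extending any morphism $X_0\to$ (semi-abelian) across $Y_0\setminus Z$ and then over $Z$, using the fact that maps to semi-abelian varieties extend in codimension two.
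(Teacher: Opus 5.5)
Your proposal is correct and follows essentially the same route as the paper. You show $X_0$ is special via the elementary birational orbifold morphism of \cref{lem:samespecial}, identify the quasi-Albanese maps, and compare general fibres using that the non-proper locus $Z$ meets a general fibre in codimension $\ge 2$.

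The only difference is how you identify the quasi-Albanese maps. The paper does this in one line from the isomorphism $\pi_1(X_0)\cong\pi_1(Y_0)$, which already follows from properness over a big open subset. You instead use Hartogs extension of log $1$-forms plus a mixed-Hodge dimension count. That works, but what the count supplies is surjectivity on $H^1$ (injectivity on $H_1\otimes\mathbb{Q}$), not ``injectivity on $H^1$'' as you wrote.
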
\begin{proof}By assumption, the induced map $\pi_1(X_0) \to \pi_1(Y_0)$ is an isomorphism. Thus, their quasi-Albanese varieties coincide, yielding the commutative diagram
		$$\begin{tikzcd}
			X_0 \arrow[r, "\mu_0"] \arrow[dr, "a"'] & Y_0 \arrow[d, "b"] \\
			& A
		\end{tikzcd}$$where $a$ and $b$ are their respective quasi-Albanese maps. If $Y_0$ is special, then by \cref{lem:samespecial}, $X_0$ is also special. By \cref{thm:pi1}, $a$ and $b$ are both dominant morphisms with connected general fibers. Let $Z$ be the Zariski closed subset of $Y_0$ of codimension at least two such that $\mu_0$ is proper over $Y_0 \backslash Z$. It follows that for a general fiber $Y_{0,t} := b^{-1}(t)$ of $b$, the intersection $Y_{0,t} \cap Z$ has codimension at least two in $Y_{0,t}$. Therefore, the restriction of $\mu_0$ to the general fiber $X_{0,t} := a^{-1}(t)$ is a birational morphism $X_{0,t} \to Y_{0,t}$ that is proper over a big open subset. Hence, by \cref{lem:samespecial}, $X_{0,t}$ is special if and only if $Y_{0,t}$ is special.\end{proof}
	This lemma enables us to take birational models of orbifolds in the proof of \cref{main:special}.

	\begin{lem}\label{lem:6.3}Let $c: Y \to X$ be an algebraic fiber space between smooth projective varieties. Let $f: (Y, D_Y) \to (A, D_A)$ be a morphism between smooth orbifolds with $D_A = (D_A)_{\rm red}$. Assume that we have a commuting tower of algebraic fiber spaces
		$$\begin{tikzcd}
			Y \arrow[rr, "c"] \arrow[dr, "f"'] & & X \arrow[dl, "g"] \\
			& A &
		\end{tikzcd}$$Consider the orbifold divisor $D_X:=\Delta (c,D_Y)$ of $c: (Y, D_Y) \to X$, which we assume has simple normal crossings support. If $a \in A$ is a general point, let $c_a: Y_a \to X_a$ denote the restriction of $c$ to the fibers $Y_a$ and $X_a$, and let $D_{X,a}$ denote the restriction of $D_X$ to $X_a$. We then have:$$D_{X,a} \geq D_{c_a},$$where $D_{c_a}=\Delta(c_a,D_{Y,a})$ is the orbifold divisor of $c_a: (Y_a, D_{Y,a}) \to X_a$. In particular, if $c$ is the relative core of $(Y, D_Y) \to A$, then the general fibers of $(X, D_X) \to A$ are of general type.\end{lem}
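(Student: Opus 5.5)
The inequality $D_{X,a}\ge D_{c_a}$ is a comparison of multiplicities along prime divisors of $X_a$, so I would prove it divisor by divisor. Since $a$ is general, every prime divisor $Q_a$ of $X_a$ is a component of $Q\cap X_a$ for some prime divisor $Q\subset X$ dominating $A$; vertical divisors of $X$ over $A$ miss $X_a$. Fix such a $Q$ and a component $Q_a$. I need to show
$$
m_{c}(Q)\ \ge\ m_{c_a}(Q_a),
$$
because $D_X$ restricted to $X_a$ has coefficient $1-1/m_c(Q)$ along $Q_a$, and $t\mapsto 1-1/t$ is increasing.

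Write $m_c(Q)=\inf_{c(P)=Q} m(c,P)\,m_P(D_Y)$. For general $a$, each prime $P\subset Y$ with $c(P)=Q$ also dominates $A$. By generic smoothness of $f$ and $g$, and by genericity of $a$ with respect to the finitely many divisors involved, the restriction $P_a:=P\cap Y_a$ is reduced, and every component of $P_a$ maps onto a component of $Q\cap X_a$. Moreover $(c^*Q)|_{Y_a}=c_a^*(Q|_{X_a})$, and $Q|_{X_a}$ is reduced. Hence $m(c_a,P'_a)=m(c,P)$ for each component $P'_a$ of $P_a$ lying over $Q_a$. Also $m_{P'_a}(D_{Y,a})=m_P(D_Y)$, since $D_{Y,a}=D_Y|_{Y_a}$ and the restriction to a general fiber keeps coefficients.

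Conversely, every prime divisor of $Y_a$ mapping onto $Q_a$ comes from some such $P$, because $a$ avoids the images in $A$ of all non-dominant divisors. The infimum defining $m_{c_a}(Q_a)$ therefore runs over a set of numbers contained in the set defining $m_c(Q)$. The sets may even coincide, but I do not need equality. The main point to handle carefully is the possible non-irreducibility of $Q\cap X_a$ and of $P\cap Y_a$. This gives the local comparison, and the inequality is in any case the safe direction.

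For the last assertion, assume $c$ is the relative core of $(Y,D_Y)\to A$. Then \cref{t-core} says that the general fibers of the induced orbifold $(X,\Delta(c,D_Y))\to A$ are of general type, after choosing a neat birational model. Restricting to a general $a$, this means $(X_a,D_{X,a})$ is of general type, possibly only after comparing with a neat model. I would pass to the neat model given by \cref{prop:neatconstruct}, apply \cref{lem:neatcomputation}, and then use monotonicity of $\kappa$ in the boundary, together with the inequality $D_{X,a}\ge D_{c_a}$ where it is needed, to conclude that $\kappa(X_a,K_{X_a}+D_{X,a})=\dim X_a$. The main obstacle is this identification: the orbifold base of the relative core restricted to $X_a$ must agree with (or dominate) the base used in the definition of general type, and this requires neatness to be preserved on general fibers.
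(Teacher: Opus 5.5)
\textbf{Main gap: the final deduction runs in the wrong direction.} Your set-up, comparing $m_c(Q)$ with $m_{c_a}(Q_a)$ through $(c^*Q)|_{Y_a}=c_a^*(Q|_{X_a})$, is the same as the paper's. But you conclude from ``the set of numbers defining $m_{c_a}(Q_a)$ is contained in the set defining $m_c(Q)$'' that $m_c(Q)\ge m_{c_a}(Q_a)$. An infimum over a smaller set is \emph{larger}. So that containment gives $m_{c_a}(Q_a)\ge m_c(Q)$, i.e.\ $D_{c_a}\ge D_{X,a}$ along $Q_a$. This is the opposite of the claim, not ``the safe direction''.

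What you need is the reverse containment, which is exactly the paper's argument. Write $c^*Q=\sum_{c(P)=Q}m(c,P)P+R$ with $R$ $c$-exceptional, and restrict to $Y_a$. Every $P$ with $c(P)=Q$ yields a divisor of $Y_a$ over $Q_a$ carrying the same value $m(c,P)\,m_P(D_Y)$. Hence the infimum for $c_a$ runs over a set containing the one for $c$, and $m_{c_a}(Q_a)\le m_c(Q)$.

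Your first paragraph has the multiplicity equalities but not the existence statement this needs. For each such $P$, some component of $P\cap Y_a$ must map onto the \emph{chosen} component $Q_a$, not merely onto some component of $Q\cap X_a$. This holds because $P\to Q$ is surjective: then $c(P\cap Y_a)=Q\cap X_a$, so each component of $Q\cap X_a$ is the image of a component of $P\cap Y_a$. That is precisely the reducibility point you flag and leave undone. Your ``conversely'' paragraph is not needed: combined with the correct containment it would give equality, and its justification would also have to handle $c$-exceptional divisors dominating $A$.

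\textbf{Quantifier order.} Since $a$ is fixed before $Q_a$, genericity of $a$ can only be used for finitely many $Q$ chosen in advance. These are the components of $D_X$, the divisorial part of the locus in $X$ over which $c$ is not smooth, and the images of $c$-vertical components of $D_Y$. For every other prime divisor $Q_a$ of $X_a$ you must check directly that $m_{c_a}(Q_a)=1$. For instance, $c_a$ is smooth over the generic point of $Q_a$ and no component of $D_{Y,a}$ lies over it. This is what rules out components of $D_{c_a}$ outside $\operatorname{Supp}D_{X,a}$. Your blanket claim that every prime of $X_a$ comes from some $Q$ does not achieve this. The paper's proof, which only treats components of $D_X$, is silent here too.

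\textbf{The last assertion.} The neatness ``obstacle'' is not real. By \cref{prop:birationaldecrease}, the canonical dimension of $c_a$ is at most $\kappa(X_a,K_{X_a}+D_{c_a})$ on \emph{any} model. So once $c_a$ is known to be a fibration of general type, $K_{X_a}+D_{c_a}$ is already big on the given model. The inequality $D_{X,a}\ge D_{c_a}$ then transfers bigness to $K_{X_a}+D_{X,a}$. The paper treats this part as immediate from \cref{t-core}.
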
\begin{proof}Let $D_i$ be a component of $D_X$ such that $g(D_i) = A$. We write the decomposition:$$c^*(D_i) = \sum_{j \in J} m_j^{(i)} D_j^{(i)} + R_i$$where the divisors $D_j^{(i)}$ (contained in $Y$) map surjectively onto $D_i$, and where $R_i$ is $c$-exceptional. By restricting this equality to the general fiber $Y_a$, it follows that 
		$$c_a^*(D_i|_{X_a})= c^*(D_i)|_{Y_a} = \sum_{j \in J} m_j^{(i)} D_j^{(i)}|_{Y_a} + R_i|_{Y_a}$$ where $c_a(D_j^{(i)}|_{Y_a}) = D_i|_{X_a}$. We observe that the set of divisors over which the infimum is taken to calculate the multiplicity of $D_i|_{X_a}$ in $D_{c_a}$ is greater than or equal to the set corresponding to $J$. Indeed, certain components of $R_i|_{Y_a}$ may no longer be $c_a$-exceptional. This yields the stated inequality.\end{proof}
	\subsection{Suitable base changes} \label{sec:basechange} 
	\begin{lem}\label{l-triv2}Let $(Y, D_Y)$ be a smooth orbifold with $D_Y = (D_Y)_{\rm red}$. Assume that we have an orbifold morphism $a: (Y, D_Y) \to (A, D_A)$ with connected fibers, where $A$ is a toroidal compactification of a semiabelian variety $A^0 := A \backslash D_A$. Let $c: (Y, D_Y) \to X$ be an algebraic fiber space such that its orbifold base $(X, D_X)$ is smooth. We assume the following:\begin{itemize}\item There exists an algebraic fiber space $f: X \to A$ such that $f \circ c = a$.
			\item There exists a birational morphism $g:Y \to Y'$ onto a smooth projective variety such that $c$ is neat relative to $g$. Moreover, there exists a morphism $a':Y' \to A$ such that the following diagram commutes:
			$$\begin{tikzcd}
				Y'\arrow[dr,"a'"'] &Y\arrow[l,"g"'] \arrow[r, "c"] \arrow[d, "a"']  & X \arrow[dl, "f"] \\
				& A &
			\end{tikzcd}$$\item The Iitaka fibration $j: X \to J$ of $(X, D_X)$ is a regular morphism, where $J$ is a smooth projective variety.
			\item For a very general fiber $(X_y, D_{X_y}) := (X, D_X)|_{X_y}$ of $j: X \to J$, the restriction $f_y := f|_{X_y}: X_y \to A$ is generically finite onto its image.
		\end{itemize}Then there exists a surjective isogeny $A_1^0 \to A^0$ of semi-abelian varieties together with suitable extended morphism between their toroidal compactifications $A_1\to A$ as in \cref{lem:canonicalcom} such that the following holds: let $X^0 := X \backslash f^{-1}(D_A)$ and $Y^0 := Y \backslash D_Y$. Let $\hX^0 := X^0 \times_{A^0} A_1^0$ and let $\hX$  and $\hY$ be  suitable strong desingularizations of $X\times_AA_1$ and $Y\times_AA_1$   such that $  \hX \backslash \hX^0$ and $D_{\hY} := \hY \backslash \mu^{-1}(Y^0)$  are both simple normal crossing divisors, admitting a natural morphism   $\hat{c}: \hY \to \hX$. Let $(\hX, D_{\hX})$ be the orbifold base of $\hat{c}: (\hY, D_{\hY}) \to \hX$.  Let $\hX\stackrel{\hat j}{\to} \hat J\to J$ be the Stein factorization of $j\circ\nu:\hX\to J$. Then  $\hat{j}: \hX \to \hat{J}$ is the Iitaka fibration of $(\hX, D_{\hX})$,  and  for a general $y \in \hat{J}$, the induced morphism $\hX_y \to A_1$ is birational onto its image, where $A_1$ is a suitable toroidal compactification of $A_1^0$.
		\begin{equation*}
			\begin{tikzcd}
				\hY\arrow[r,"\hat{c}"] \arrow[d]\arrow[dd,bend right=60,"\mu"']& \hX\arrow[d]\arrow[dd,bend left=60,"\nu"]\arrow[r,"\hat{j}"]  &\hat{J}\arrow[dd] \\
				Y\times_AA_1\arrow[d]& X\times_AA_1\arrow[d]\\
				Y\arrow[r,"c"] & X\arrow[r,"j"]  &J
			\end{tikzcd}
		\end{equation*} 
	\end{lem}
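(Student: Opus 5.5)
The plan is to choose the isogeny so that, after base change, the very general fibers of the Iitaka fibration map \emph{birationally} onto translates of a fixed semi-abelian subvariety. I then transport Kodaira dimensions through the étale base change using \cref{lem:enableetale}.

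\textbf{Step 1: a fixed subvariety and its isogeny.} For a very general $y\in J$, the fiber $(X_y,D_{X_y})$ has $\kappa=0$, and $f_y$ is generically finite onto its image. Repeating the argument at the start of the proof of \cref{l-triv} (via \cite[Lemmas 1.3, 1.4]{CDY25}), the restriction $X_y^0\to A^0$ factors as a proper generically finite map onto a finite étale cover $C_y^0$ of a translate of a semi-abelian subvariety $B_y^0\subset A^0$. Countability of semi-abelian subvarieties lets me take $B^0$ and the isogeny $C^0\to B^0$ independent of $y$. I then apply \cref{lem:isogeny} to $C^0\to B^0\subset A^0$ to get an isogeny $A_1^0\to A^0$ with $i^{-1}(B^0)=C^0$, and extend it to toroidal compactifications $A_1\to A$ by \cref{lem:canonicalcom}.

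\textbf{Step 2: the base change and the orbifold base.} The map $A_1^0\to A^0$ is finite étale, and $c^{-1}(f^{-1}(D_A))\subset D_Y$ because $a$ is an orbifold morphism and $D_Y$ is reduced. So \cref{lem:enableetale} applies with $D:=f^{-1}(D_A)\subset\lfloor D_X\rfloor$, after taking log resolutions. This gives $\kappa(K_{\hX}+D_{\hX})=\kappa(K_X+D_X)=\dim J=\dim\hat J$, using that $\hat J\to J$ is generically finite. The neatness of $c$ relative to $g$ (together with $a'$) is what guarantees that the orbifold base of $\hat c$ agrees with $\nu^*D_X$ on $\hX^0$ and gains no unexpected components. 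The fibers of $\hat j$ are components of $\nu^{-1}(X_y)$, which are finite étale over $X_y^0$. By \cref{lem:samekoda} applied fiberwise they have $\kappa=0$, so $\hat j$ is the Iitaka fibration.

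\textbf{Step 3: birationality (the main obstacle).} A component of $X_y^0\times_{A^0}A_1^0$ must map birationally onto a translate of $C^0\subset A_1^0$. Since $X_y^0\to B_y^0$ already factors through $C_y^0\cong C^0$, and $i^{-1}$ of a translate of $B^0$ is a disjoint union of translates of $C^0$ each mapping isomorphically to $C^0$, the base change splits. Each component then maps to a translate of $C^0$ with degree $\deg(X_y^0\to C_y^0)$.

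I expect the hardest point to be showing this degree is $1$, i.e.\ that $X_y^0\to C_y^0$ is birational rather than merely of $\kappa$-zero type. To handle it, I would choose $C_y^0$ in Step 1 as the Stein factorization of $X_y^0\to B_y^0$ composed with the quasi-Albanese. The $\bar\kappa=0$ result of \cite{CDY25} gives that $X_y^0$ is birational, proper over a big open subset, to its quasi-Albanese, which is finite étale over $B_y^0$. Taking $C_y^0$ to be that quasi-Albanese makes the degree $1$.
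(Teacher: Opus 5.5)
Your proposal is correct and follows the paper's proof essentially step for step. The paper uses the $\bar\kappa(X_y^0)=0$ input from Lemmas 1.3--1.4 of \cite{CDY25}, which you unpack as Kawamata's theorem applied to the quasi-Albanese of $X_y^0$, to factor $X_y^0\to B_y^0$ as a map $X_y^0\to C^0$ (birational and proper over a big open subset) followed by an isogeny; it then concludes with \cref{lem:isogeny}, \cref{lem:canonicalcom}, the splitting of $X_y^0\times_{A^0}A_1^0$ into copies of $X_y^0$, and \cref{lem:enableetale}, exactly as you describe.

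There are two harmless slips. First, \cref{lem:isogeny} gives $i^{-1}(B^0)=C^0$, which is connected, so the preimage of a translate of $B^0$ is a single translate of $C^0$; the splitting really comes from $C^0\times_{B^0}C^0\cong C^0\times\ker(C^0\to B^0)$. Second, the agreement of $D_{\hX}$ with $\nu^*D_X$ over $\hX^0$ comes from the étale base change (as in the proof of \cref{lem:enableetale}), not from the neatness of $c$, which the paper does not use here.
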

	\begin{proof}Write $D := f^{-1}(D_A)$. Since $a$ is a morphism of orbifolds, it follows that $D_Y \geq a^{-1}(D_A)$. Since $(X, D_X)$ is the orbifold base of $c: (Y, D_Y) \to X$, we have$$D_X \geq f^{-1}(D_A) =: D.$$Let $D_y := D|_{X_y}$ and $D_{X_y} := D_X|_{X_y}$ for a general $y \in J$. Since $f_y$ is generically finite onto its image, and $j: X \to J$ is the Iitaka fibration of $(X, D_X)$, we have$$0 \leq \kappa(K_{X_y} + D_y) \leq \kappa(K_{X_y} + D_{X_y}) = 0.$$Then $\bar{\kappa}(X^0_y) = \kappa(K_{X_y} + D_y) = 0$, where $X^0_y$ is the fiber of $X^0 \to J$ at $y$. By \cite[Lemmas 1.3, 1.4]{CDY25}, it follows that the Stein factorization of $X_y^0 \to A^0$ factors through a proper birational morphism $X^0_y \to C_y^0$, and a surjective isogeny $C_y^0 \to B_y^0$ between semi-abelian varieties, where $B_y^0$ is a semi-abelian subvariety of $A^0$. Since $A^0$ contains at most countably many semi-abelian subvarieties, it follows that $B_y^0$ is a translate of a fixed semi-abelian subvariety $B^0$ of $A^0$. In this case, $C_y^0$ does not depend on $y$, and we shall denote by $C^0$. 
			By \cref{lem:isogeny}, there exists a surjective isogeny $i: A_1^0 \to A^0$ such that$$i^{-1}(B^0) = C^0,$$
			and the induced map $A_1^0/C^0 \to A^0/B^0$ is an isomorphism. 
			
			By \cref{lem:canonicalcom}, after changing the toroidal compactification $A$ for $A^0$,  there exists a toroidal compactification $A_1$ for $A_1^0$ such that the surjective isogeny $A_1^0 \to  A^0$ extends to a morphism $A_1 \to A$. Consider the fiber product $X \times_A A_1$, which is connected since $\pi_1(X) \to \pi_1(A)$ is surjective. Let $\hX$ be a strong resolution of the singularities of $X \times_A A_1$ and let $\nu: \hX \to X$ be the induced natural morphism. Set $\hX^0 := \nu^{-1}(X^0)$. Then the restriction $\hX^0 \to X^0$ is finite \'etale.
			
			The fiber of $\hX^0 \to J$ over a general point $y \in J$ is exactly the fiber product $X^0_y \times_{A^0} A_1^0$. Since the image of $X^0_y$ in $A^0$ is a translate $B^0_y$ of $B^0$, and the Stein factorization of $X^0_y \to B^0_y$ is the composition of a proper birational morphism $X^0_y \to C^0$ and the isogeny $C^0 \to B^0 \simeq B^0_y$, it follows that the fiber product $X^0_y \times_{A^0} A_1^0$ decomposes into a disjoint union of connected components, each canonically isomorphic to $X^0_y$.
			Hence, for each connected component, its image under the natural morphism $\hX^0 \to A_1^0$ is mapped proper birationally to a translate of $C^0$.
			
			Let $\hX \xrightarrow{\hat{j}} \hat{J} \to J$ be the Stein factorization of  $j\circ\nu: \hX \to J$. The general fibers of $\hX \to J$ are disjoint unions of the connected general fibers $\hX_{y'}$ of $\hat{j}$. Recall that a general fiber $\hX^0_{y'}$ of $\hX^0\to \hat J$ is exactly a general fiber of $X^0\to J$.  Consequently, the restricted morphism $\hat{f}|_{\hX_{y'}}: \hX_{y'} \to A_1$ is birational onto its image, where $\hat f:\hX\to A_1$ is the natural morphism.   This concludes the first assertion of the lemma.
			
			Finally, we show that $\hat{j}: \hX \to \hat{J}$ is the Iitaka fibration of $(\hX, D_{\hX})$. By \cref{lem:enableetale}, the orbifold base $(\hX, D_{\hX})$ of $\hat{c}: (\hY, D_{\hY}) \to \hX$ has the same Kodaira dimension as $(X, D_X)$. It follows that a general fiber of $(\hX, D_{\hX}) \to \hat{J}$ has zero Kodaira dimension, and thus it is the Iitaka fibration of $(\hX, D_{\hX})$. 
			The lemma is proved. 
		\end{proof}
		
		\begin{lem}\label{T-7.1}
			Let $(X,D_X)$ be a smooth orbifold and let $A^0$ be a semi-abelian variety. Let $A$ be a toroidal compactification of $A^0$. Assume that $f: (X,D_X) \to (A,D_A)$ is an orbifold morphism such that $f_*\mathcal{O}_X = \mathcal{O}_A$ and $$\lfloor D_X \rfloor \geq f^{-1}(D_A)=:D.$$  If a general fiber   of $f: (X,D_X) \to A$ is of general type, then after replacing $(X,D_X)$ by a birational model, the Iitaka fibration $j: (X,D_X) \to J$ of $(X,D_X)$ is regular, and for a very general fiber $X_y$ of $j$, the restriction $f|_{X_y}: X_y \to A$ is generically finite onto its image.\end{lem}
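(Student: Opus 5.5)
The plan is to reduce \cref{T-7.1} to the situation of \cref{l-triv} by a subadditivity argument on the general fibers of the Iitaka fibration.

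First, apply \cref{thm:Iitaka}. The general fiber of $f\colon (X,D_X)\to A$ is of general type, and $\lfloor D_X\rfloor \ge f^{-1}(D_A)$, so
\[
\kappa(K_X+D_X)\ge \dim X-\dim A>0 .
\]
In particular the Iitaka fibration of $(X,D_X)$ is nontrivial. Next, pass to a higher birational model $\nu\colon X'\to X$ with $D_{X'}:=\nu^{-1}_*D_X+{\rm Ex}(\nu)$. This does not change $\kappa$, keeps the orbifold-morphism condition $\lfloor D_{X'}\rfloor\ge (f\circ\nu)^{-1}(D_A)$, and does not change the general fibers of $f$ up to birational modification. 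On such a model the Iitaka fibration $j\colon X\to J$ becomes a regular morphism onto a smooth projective $J$, with $\kappa(K_{X_y}+D_{X_y})=0$ for a very general fiber $X_y$.

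It remains to show that $f|_{X_y}\colon X_y\to A$ is generically finite onto its image. Suppose it is not, and let $X_y\to Z_y$ be the Stein factorization of $X_y\to f(X_y)$, with general fiber $G$ of positive dimension. Then $G$ is contained in a general fiber $F$ of $f$. Since $(F,D_F)$ is of general type, the subvarieties of $F$ through a general point cannot be of Kodaira dimension $0$ in any family covering $F$. To make this precise, I would work on the relative Iitaka fibration over $A$. The restriction of $j$ to $F$ is a fibration $F\dashrightarrow j(F)$ whose general fiber is $F\cap X_y$, and this contains $G$ as a union of components. Because $(F,D_F)$ is of general type, $\kappa(K_F+D_F)=\dim F$. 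On the other hand, apply \cref{thm:iitaka2} (or easy addition) to the restriction of $j$ to $F$. Adjunction along the general fiber shows
\[
\kappa\bigl((K_X+D_X)|_{F\cap X_y}\bigr)\le \kappa\bigl((K_X+D_X)|_{X_y}\bigr)=0 ,
\]
and the restriction of $K_F+D_F$ to $F\cap X_y$ coincides with $(K_X+D_X)|_{F\cap X_y}$. So the sections of $m(K_F+D_F)$ separate fibers of $F\dashrightarrow j(F)$ at most up to a positive-dimensional family. By easy addition,
\[
\kappa(K_F+D_F)\le \dim j(F)+\kappa\bigl((K_F+D_F)|_{F\cap X_y}\bigr)\le \dim j(F)<\dim F
\]
whenever $\dim(F\cap X_y)>0$. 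This contradicts $F$ being of general type, so $\dim(F\cap X_y)=0$, which is exactly generic finiteness of $f|_{X_y}$.

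The main obstacle is justifying the restriction step: the orbifold divisor $D_X$ restricted to $F$ must agree with $D_F$, and $K_F+D_F$ restricted to the fiber must have Kodaira dimension at most $0$. For a very general fiber $F$ of $f$ and a very general $y$, both intersect transversally and adjunction gives $(K_X+D_X)|_F=K_F+D_F$. The inequality $\kappa\bigl((K_X+D_X)|_{F\cap X_y}\bigr)\le 0$ then follows from the Iitaka property, since $X_y$ is very general and the intersection is a general subvariety of it. I would choose the birational model at the start so that all these restrictions remain log smooth.
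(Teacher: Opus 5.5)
The step that fails is
\[
\kappa\bigl((K_X+D_X)|_{F\cap X_y}\bigr)\le \kappa\bigl((K_X+D_X)|_{X_y}\bigr)=0,
\]
which you attribute to the Iitaka property. That property only says $\kappa(K_{X_y}+D_{X_y})=0$. Kodaira dimension is not monotone under restriction to subvarieties, even to general members of a covering family.

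Here a connected component $G$ of $F\cap X_y$ is a general fibre of (the Stein factorization of) $f|_{X_y}\colon X_y\to f(X_y)$, and $(K_X+D_X)|_G=K_G+D_G$. A variety of Kodaira dimension $0$ can be fibred by varieties of general type: blow up a K3 surface at the base points of a pencil of curves of genus $g\ge 2$ to get $\pi\colon S'\to\mathbb{P}^1$.

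In fact the statement is false without the semi-abelian structure of the target. Take $X=S'\times C$ with $g(C)\ge 2$, $D_X=0$, and $f=(\pi,\mathrm{id})\colon X\to \mathbb{P}^1\times C$. Then the fibres of $f$ are curves of genus $g$ and $\kappa(K_X)=1$. The Iitaka fibration is the projection to $C$, and $f|_{S'\times\{c\}}=\pi$ is not generically finite. Your argument uses that $A^0$ is semi-abelian only to get $\kappa(K_X+D_X)>0$, which also holds in this example. So it cannot be repaired without a new input.

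Note that your easy-addition step actually proves the opposite of what you want. The inequality $\dim F=\kappa(K_F+D_F)\le \dim j(F)+\kappa(K_G+D_G)$ forces $\kappa(K_G+D_G)=\dim G$, i.e.\ $(G,D_G)$ is of general type. The paper gets this from \cite[Th\'eor\`eme 9.12]{Cam11}, and your easy addition is a fine substitute for that citation.

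The missing idea is subadditivity over a base of nonnegative log Kodaira dimension; this is where the semi-abelian hypothesis enters. Let $\Gamma_y$ be a log resolution of $f(X_y)$ with $D_{\Gamma_y}$ the preimage of $D_A$. After a birational modification of $X_y$, $f_y\colon (X_y,D_{X_y})\to(\Gamma_y,D_{\Gamma_y})$ is an orbifold morphism. Since $\Gamma_y\setminus D_{\Gamma_y}$ is birational to a subvariety of $A^0$, we have $\kappa(K_{\Gamma_y}+D_{\Gamma_y})\ge 0$. Now apply \cref{thm:iitaka2}; its hypothesis on $\lfloor D_{X_y}\rfloor$ comes from $\lfloor D_X\rfloor\ge f^{-1}(D_A)$. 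This yields
\[
0=\kappa(K_{X_y}+D_{X_y})\ge \dim G+\kappa(K_{\Gamma_y}+D_{\Gamma_y})\ge \dim G ,
\]
so $\dim G=0$. This is the paper's argument.
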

		\begin{proof}By \cref{thm:Iitaka}, we have$$\kappa(K_X+D_X) \geq \dim X-\dim A> 0. $$Therefore, we consider the Iitaka fibration $j: (X,D_X) \dashrightarrow J$ of $(X,D_X)$, where $\dim J = \kappa(K_X+D_X)$. By \cite[Th\'eor\`eme 3.9]{Cam11}, after replacing $(X,D_X)$ by a birational model, $j$ is a regular morphism, and a very general fiber $(X_y, D_{X_y})$ of $j$ is a smooth orbifold of Kodaira dimension zero.
			\[
			\begin{tikzcd} 
				(X,D_X)\arrow[r,"j"]\arrow[d,"f"]& J\\
				A&
			\end{tikzcd}
			\]
			
			Let $h_y: \Gamma_y \to f(X_y)$ be a desingularization such that $D_{\Gamma_y} := h_y^{-1}(D_A)$ is a simple normal crossing divisor. We replace $(X_y, D_{X_y})$ by a birational model such that the induced rational map $X_y \dashrightarrow \Gamma_y$ becomes regular. Note that $f_y: (X_y, D_{X_y}) \to (\Gamma_y, D_{\Gamma_y})$ is an orbifold morphism. 
			
			Assume by contradiction that the lemma fails. For a   general $a \in \Gamma_y$, we denote by $(X_{y,a}, D_{X_{y,a}})$ the fiber of $f_y=f|_{X_y}: (X_y, D_{X_y}) \to \Gamma_y$. Note that a general fiber $(X_a, D_{X_a})$ of $f: (X,D_X) \to A$ is of general type, and that $(X_a, D_{X_a})$ is covered by $\bigcup_{\{y\mid a\in f(X_y)\}} (X_{y,a}, D_{X_{y,a}})$. Then by \cite[Th\'eor\`eme 9.12]{Cam11}, for a fixed general $y\in J$,  we have that $(X_{y,a}, D_{X_{y,a}})$ is of general type for a general $a\in \Gamma_y $. 
			
			Note that $\Gamma_y\backslash D_{\Gamma_y}$ is birational to a subvariety of  the semi-abelian variety $A^0$. It follows that
			$$
			\kappa(K_{\Gamma_y}, D_{\Gamma_y})\geq 0. 
			$$Applying \cref{thm:iitaka2} to $f_y: (X_y, D_{X_y}) \to (\Gamma_y, D_{\Gamma_y})$, we conclude that
			$$0 = \kappa(X_y, D_{X_y}) \geq \dim X_{y,a}+\kappa(K_{\Gamma_y}+D_{\Gamma_y}),$$and hence $\dim X_{y,a} = 0$, so that $f_y:X_y \to f(X_y)$ is generically finite. The lemma is proved. 
		\end{proof}

		\subsection{Proof of \cref{main:special}} 
		
		\begin{thm}  \label{thm:special}
			Let $Y_0$ be smooth quasi-projective variety.   If $Y_0$ is special, then a general fiber of its quasi-Albanese map is also special. 
		\end{thm}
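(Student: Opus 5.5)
We argue by contradiction, following the Campana--Claudon strategy outlined at the beginning of this section, with the reductions of \cref{sec:reduction,sec:basechange} supplying the quasi-projective adjustments. Let $a:Y_0\to A^0$ be the quasi-Albanese map; by \cref{thm:pi1} it is dominant with connected general fibers. Choose a log smooth compactification $(Y,D_Y)$ with $D_Y$ reduced and a toroidal compactification $(A,D_A)$ as in \cref{lem:canonicalcom}, such that $a$ extends to an orbifold morphism $a:(Y,D_Y)\to(A,D_A)$ with connected fibers. By \cref{lem:samespecial2}, we may freely pass to birational models that are proper over big open subsets. Suppose a general fiber of $a$ is not special. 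Apply \cref{t-core} to obtain the relative core $c:Y\dashrightarrow X$ over $A$. After birational modifications (\cref{prop:neatconstruct}), we may assume $c$ is a neat morphism relative to some $g:Y\to Y'$, that its orbifold base $(X,D_X)$ is smooth, and that $f:X\to A$ satisfies $f\circ c=a$. Non-specialness of the fiber means $\dim X>\dim A$. By \cref{lem:6.3}, the general fibers of $(X,D_X)\to A$ are of general type, and $\lfloor D_X\rfloor\ge f^{-1}(D_A)$.

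Next, \cref{T-7.1} lets us assume that the Iitaka fibration $j:X\to J$ of $(X,D_X)$ is regular, with $\dim J=\kappa(K_X+D_X)\ge\dim X-\dim A>0$ by \cref{thm:Iitaka}, and that $f$ is generically finite on a very general fiber $X_y$. We then apply \cref{l-triv2} to pass to the isogeny base change $A_1\to A$ and the induced $\hat c:(\hY,D_{\hY})\to\hX$. The cover $\hY_0\to Y_0$ is finite \'etale, so $\hY_0$ is still special: specialness is preserved by finite \'etale covers, which is Campana's result. After the base change, $\hat j$ remains the Iitaka fibration, and $f$ is birational on general fibers. The general fibers of $\hY_0\to A_1^0$ are connected components of fibers of $a$, so it suffices to derive a contradiction on this cover. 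We therefore rename it $(Y,D_Y)\to X\to J$.

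Now apply \cref{l-triv}. After higher birational models, we obtain $h:X\to\bar X$ and $\bar j:\bar X\to J$ such that the orbifold base $(J,\Delta(\bar j,D_{\bar X}))$ is of general type, with $\dim J>0$. The remaining task, which is the main obstacle, is to show that the composite $\bar j\circ h\circ c:(Y,D_Y)\to J$ (on suitable models) is a fibration of general type in the sense of \cref{def:canonical dimension}. That would contradict the specialness of $Y_0$ (\cref{def:special}). To do this, we must compare $\Delta(\bar j\circ h\circ c,D_Y)$ with $\Delta(\bar j,D_{\bar X})$. Over $J^0$, the fibration $\bar j$ is a smooth isotrivial family, and the vertical part of $D_{\bar X}$ is $\sum(1-1/m_i)P_i$ plus the full preimage of $\partial J$, where $P_i=\bar j^{-1}(Q_i)$. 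We would prove the inequality multiplicity by multiplicity. Over $Q_i$, every divisor of $Y$ mapping onto $Q_i$ maps either onto $P_i$ or into a $c$-exceptional locus. Neatness of $c$ relative to $g$ forces the latter divisors to be $g$-exceptional, so they can be discarded after passing to an equivalent model. The multiplicities then compose as $m(c,\cdot)\,m_P$, which is at least $m_i$. Over $\partial J$, the coefficient is $1$, because $D_Y\supseteq a^{-1}(D_A)$, and $\partial J$ maps to the boundary of $A/B$.

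This yields $\Delta(\bar j\circ h\circ c,D_Y)\ge\Delta(\bar j,D_{\bar X})$, hence $\kappa=\dim J$. We expect the delicate points to be making the composite neat, using \cref{lem:neatcomputation} so that the Kodaira dimension computation is birationally invariant, and controlling the exceptional divisors of $h$. We would handle the latter by choosing $h$ and $c$ with a common neat model before computing orbifold bases.
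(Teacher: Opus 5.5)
There is a genuine gap, and it sits at the step you flag as the main obstacle. A multiplicity comparison $\Delta(\bar j\circ h\circ c,D_Y)\ge\Delta(\bar j,D_{\bar X})$ on your current models only controls $\kappa(J,\Delta(\bar j\circ h\circ c,D_Y))$. That is merely an \emph{upper} bound for the canonical dimension $\kappa(\bar j\circ h\circ c,D_Y)$, so to contradict specialness you need a neat model of the composite.

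Neatness of $c$ does not supply one, for three reasons. First, after the isogeny base change of \cref{l-triv2} and the renaming, nothing guarantees that $\hat c$ is still neat. Second, even a neat $c$ does not make $j\circ c$ neat. Third, divisors of $Y$ over $Q_i$ may map onto $h$-exceptional divisors of $X$, whose multiplicities you do not control.

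Making the composite neat forces you to modify the base. For example, you flatten $Y\to J$ over some $J'\to J$, replace $X$ by a resolution $\tilde X$ of $\bar X\times_J J'$, and replace $Y$ by a resolution $\tilde Y$ of the main component. But then $D_{\tilde X}=\Delta(\tilde c,D_{\tilde Y})$ must be recomputed, and by \cref{prop:birationaldecrease} its Kodaira dimension can strictly drop. If it does, $\tilde j$ is no longer the Iitaka fibration of $(\tilde X,D_{\tilde X})$. The canonical bundle formula argument of \cref{claim:CBF} is the only source of the general-type property of the base, and it then no longer applies on $J'$.

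The missing idea is the device the paper uses to freeze this Kodaira dimension. Before anything else, it sets $\mathcal{K}(Y_0)=\inf_\varphi\kappa(c_\varphi,D_{Y_\varphi})$, the infimum over all isogenies $\varphi\colon A_1^0\to A^0$ of the canonical dimensions of the relative cores, shows that $\mathcal{K}(Y_0)>0$, and fixes an isogeny attaining it. The cover produced later by \cref{l-triv2} is again such an isogeny, so minimality gives \eqref{eq:achievemininal3}, namely $\kappa(\hat c,D_{\hY})=\kappa(\hX,D_{\hX})$. Then \eqref{eq:achievemininal4} shows that on every further model, in particular the flattened one, the orbifold base still has Kodaira dimension $\dim J$ and $\tilde j$ remains its Iitaka fibration.

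With this in hand, the paper proceeds in three steps:
\begin{itemize}
\item it obtains $\kappa(K_{J'}+D_{J'})=\dim J'$;
\item it checks that $\tilde j\circ\tilde c$ is neat relative to $\nu$, using flatness;
\item it proves $\Delta(\tilde j\circ\tilde c,D_{\tilde Y})=D_{J'}$ divisor by divisor, using that $\tilde j$ is smooth over $J'\setminus\partial J'$ (so $\tilde j^*Q=P+R$ with $R$ in the boundary) and that $\tilde c$-exceptional divisors have infinite multiplicity.
\end{itemize}
Without this minimization, or an independent proof that the canonical dimension is invariant under the orbifold-\'etale isogeny base change, your final step does not go through.
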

		\begin{proof}
			We adapt the proof of \cite[Th\'eor\`eme~2.4]{CC16}, with suitable modifications to the quasi-projective setting. Let $a_0:Y_0\to A^0$ be the quasi-Albanese map. By \cref{thm:pi1}, $a_0$ is dominant with connected fibers. 
			We let $(  A,D_{ A})$ be a toroidal  compactification of $A^0$. We may also pick a smooth compactification $Y$ with $D_Y:=Y\backslash Y_0$ being simple normal crossing  such that the induced map $a: Y\to   A$ is an algebraic fiber space and  $D_{Y}= Y\backslash Y_0$ is a reduced divisor with simple normal crossings. In particular $a:(  Y ,D_{  Y})\to (  A ,D_A)$ is an orbifold morphism.  
			
			We proceed by contradiction and assume that general fibers of $a_0$ are not special. Then for any surjective isogeny $\varphi:A_1^0\to A^0$ from another semi-abelian variety, the general fibers of $Y_0\times_{A^0}A_1^0\to A_1^0$ are also not special.  Let $A_1$ be a toroidal compactification for $A_1^0$ and we take a smooth projective compactification   $Y_\varphi$ for  $Y_0\times_{A^0}A_1^0\to A_1^0$  such that the boundary $D_{Y_\varphi}:=Y_\varphi\backslash Y_0\times_{A^0}A_1^0$ is a simple normal crossing divisor.

			By Theorem \ref{t-core}, we may consider the relative core map of $(Y_\varphi, D_{Y_\varphi})$ over $A_1$:
			\begin{equation*}
				c_\varphi: Y_\varphi \dasharrow X_\varphi := C(Y_\varphi, D_{Y_\varphi} / A_1).
			\end{equation*}
			Replacing $c_\varphi$ by an appropriate birational model, we may assume that $c_\varphi$ is a regular, neat, and high fibration. Let $(X_\varphi, D_{X_\varphi})$ be the orbifold base of $c_\varphi: (Y_\varphi, D_{Y_\varphi}) \to X_\varphi$. In this case, by \cref{lem:neatcomputation}, we have
			\begin{equation}\label{eq:neatachieve}
				\kappa(X_\varphi, D_{X_\varphi}) = \kappa(c_\varphi, D_{Y_\varphi}). 
			\end{equation}
			Note that the equivalence class of the fibration $c_\varphi$ is uniquely determined by $\varphi$ up to birational equivalence.
			
			We define a bimeromorphic invariant associated with $Y_0$ by 
			\begin{equation}
				\mathcal{K}(Y_0) := \inf_{\varphi: A_1^0 \to A^0} \{\kappa(c_\varphi, D_{Y_\varphi})\}, 
			\end{equation}
			where $\kappa(c_\varphi, D_{Y_\varphi})$ is the canonical dimension of $c_\varphi$ defined in \cref{def:canonical dimension}, and the infimum is taken over all surjective isogenies $\varphi: A_1^0 \to A^0$ from semi-abelian varieties to $A^0$. 
			
			\begin{claim}
				We have $\mathcal{K}(Y_0) > 0$.
			\end{claim}
			\begin{proof}
				By Theorem \ref{t-core} and \cref{lem:6.3}, the general orbifold fibers $(X_{\varphi,y}, D_{X_{\varphi,y}}) := (X_\varphi, D_{X_\varphi})|_{X_{\varphi,y}}$ of the natural morphism $ (X_\varphi, D_{X_\varphi}) \to A_1$ are of general type. Hence, by \cref{thm:Iitaka} and \eqref{eq:neatachieve}, we obtain
				\begin{equation*}
					\kappa(c_\varphi, D_{Y_\varphi}) = \kappa(X_\varphi, D_{X_\varphi}) \geq \dim X_\varphi - \dim A_1. 
				\end{equation*}
				By our assumption and construction, for each $\varphi$, the general fibers of $(Y_\varphi, D_{Y_\varphi}) \to A_1$ are not special. Since $c_\varphi$ is its relative core map, the dimension of its base must be strictly greater than the dimension of $A_1$, yielding
				\begin{equation*}
					\dim X_\varphi - \dim A_1 > 0.
				\end{equation*}
				The claim is thus proved. 
			\end{proof}
			
			Since $Y_\varphi \setminus D_{Y_\varphi}$ is birational to a finite \'etale cover of $Y_0$ via a birational morphism that is proper over a big open subset, the pair $(Y_\varphi, D_{Y_\varphi})$ remains special by \cite[Proposition 10.11]{Cam11}  and \cref{lem:samespecial}. Furthermore, the general fibers of the morphism $Y_\varphi \setminus D_{Y_\varphi} \to A_1^0$ are birational to those of $Y_0 \to A^0$, and this birational equivalence is also proper over a big open subset. Thus, by \cref{lem:samespecial2}, the proof of the theorem reduces to showing that the general fibers of $(Y_\varphi, D_{Y_\varphi}) \to A_1$ are special. To this end, we choose a specific isogeny $\varphi: A_1^0 \to A^0$ that attains the infimum of $\mathcal{K}(Y_0)$, that is, we have
			$$\kappa(c_\varphi, D_{Y_\varphi}) = \mathcal{K}(Y_0).$$  To simplify notation, we replace the original $(Y, D_Y) \to A$ with this minimizing $(Y_\varphi, D_{Y_\varphi}) \to A_1$. Thus, we have
			\begin{equation}\label{eq:achievemininal}
				\kappa(c, D_Y) = \mathcal{K}(Y_0). 
			\end{equation}
			Furthermore, by \cref{prop:neatconstruct}, we may assume   that:
			\begin{enumerate}[label=(\alph*)]
				\item $c: (Y, D_Y) \to X$ is \emph{neat and high},
				\item the orbifold base $(X, D_X := \Delta(c, D_Y))$ is smooth, 
				\item the Iitaka fibration of $(X, D_X)$ is a regular morphism $j: X \to J$. 
			\end{enumerate}
			By \cref{lem:neatcomputation} and our minimizing assumption, we have
			\begin{equation}\label{eq:achievemininal2}
				\kappa(X, D_X) = \kappa(c, D_Y) = \mathcal{K}(Y_0). 
			\end{equation}
			
			By Theorem \ref{t-core} and \cref{lem:6.3}, the general orbifold fibers $(X_y, D_{X_y}) := (X, D_X)|_{X_y}$ of $f: (X, D_X) \to A$ are of general type. By \cref{T-7.1}, for a general fiber $X_y$ of $j$, the restricted morphism $f|_{X_y}: X_y \to A$ is generically finite onto its image. 
			
			We now apply \cref{l-triv2}. There exists a new surjective isogeny $A_1^0 \to A^0$ of semi-abelian varieties 
			such that if $X^0 := X \setminus f^{-1}(D_A)$ and $Y^0 := Y \setminus D_Y$, 
			$\hX^0 := X^0 \times_{A^0} A_1^0$, 
			$\hX$ a suitable smooth projective compactification of $\hX^0$ such that if
			$\hX \setminus \hX^0$ is a simple normal crossing divisor and $\hY \to \hX \times_X Y$ a strong desingularization, admitting natural morphisms $\nu: \hX \to X$, $\hat{c}: \hY \to \hX$, and $\mu: \hY \to Y$ such that $D_{\hY} := \hY \setminus \mu^{-1}(Y^0)$ is a simple normal crossing divisor, $(\hX, D_{\hX})$ the orbifold base of $\hat{c}: (\hY, D_{\hY}) \to \hX$, $\hX \xrightarrow{\hat{j}} \hat{J} \to J$ the Stein factorization of $j \circ \nu: \hX \to J$, then  
			$\hat{j}: \hX \to \hat{J}$ is the Iitaka fibration of $(\hX, D_{\hX})$, and for a general point $y \in \hat{J}$, the induced morphism $\hX_y \to A_1$ is birational onto its image, where $A_1$ is a suitable toroidal compactification of $A_1^0$. 
			
			Crucially, because $\mathcal{K}(Y_0)$ was defined as the absolute infimum over all isogenies, and by utilizing \eqref{eq:achievemininal2}, we obtain:
			\begin{equation}\label{eq:achievemininal3}
				\kappa(\hat{c}, D_{\hY}) = \kappa(X, D_X) = \kappa(\hX, D_{\hX}). 
			\end{equation}
			This fact is essential; as we will see later (by \cref{prop:birationaldecrease} and \cref{def:canonical dimension}), for any further high birational model of $(\hY, D_{\hY}) \to \hX$, the Kodaira dimension of the resulting orbifold base remains equal to $\kappa(X, D_X)$. 
			
			To simplify the presentation in what follows, we will drop the hats and rename $(\hY, D_{\hY}) \to (\hX, D_{\hX}) \to \hat{J}$ and $(\hX, D_{\hX}) \to A_1$ simply as $(Y, D_Y) \to (X, D_X) \to J$ and $f:(X,D_X)\to A$.

			We also note that $f:(X,D_X)\to (A,D_A)$ is an orbifold morphism. To see this, let $Q$ be any prime divisor in the support of $D_A$ and $P$ a prime divisor in the support of $f^*Q$. If $E$ is a prime divisor contained in the support of $c^*P$, then it is contained in the support of $a^*Q$, and so $m_E(D_Y)=+\infty$. This implies that $m_P(D_X)=+\infty$ and so $(X,D_X)\to (A,D_A)$ is an orbifold morphism. 
			
			Consider the following commutative diagram where each vertical map is a birational modification and $Y'$, $X'$ and $J'$ are smooth,
			\begin{equation}\label{eq:modifycomm}
				\begin{tikzcd}
					Y'\arrow[r,"c'"]\arrow[d,"\mu"] & X' \arrow[r,"j'"]\arrow[d,"\nu"]&  J'\arrow[d]\\
					Y\arrow[r,"c"]&  X \arrow[r,"j"] & J 
				\end{tikzcd}
			\end{equation}
			Set $D_{Y'}:=\mu^{-1}(D_Y)+{\rm Ex}(\mu)$. Then $\mu:(Y',D_{Y'})\to (Y,D_Y)$ is an elementary orbifold morphism. By \cref{prop:birationaldecrease}, we have 
			\begin{align}\label{eq:achievemininal4}
				\kappa(c,D_{Y})=	\kappa(c',D_{Y'})\leq 	\kappa(X',\Delta(c',D_{Y'}))\leq \kappa(X,\Delta(c,D_{Y}))\stackrel{\eqref{eq:achievemininal3}}{=}	\kappa(c,D_{Y})
			\end{align} 
			This implies that the map $j^{\prime}: X^{\prime} \rightarrow J^{\prime}$ is the Iitaka fibration of the orbifold base    $\left(X^{\prime}, D_{X^{\prime}}=\Delta\left(c^{\prime},D_{Y'}\right)\right)$ of $c'$.    Based on this remark, we are free to replace $J$ by birational models and perform base changes as in \eqref{eq:modifycomm}.   
			
			We can therefore apply \cref{l-triv}  to the morphism $f:(X,D_X)\to A$, and we will use the notation introduced there without recalling it.  We deduce that we may assume that there is a birational morphism $h: X \rightarrow \bar X$ constructed as in the proof of \cref{l-triv}.  In this case, we have a morphism $\bar j: \bar X\to J$ together with a simple normal crossing divisor $\d J$ on $J$ such that 
			$
			\bar j
			$ is a locally trivial fibration over $J^0:=J\backslash \d J$. Moreover, we have  
			\begin{align}\label{eq:bar f}
				\bar f^{-1}(D_A)\supset \bar j^{-1}(\d J). 
			\end{align}  If we let  if $D_{\bar X}= h_{*}(D_X)$, and    $D_J=\Delta(\bar j, D_{\bar X})$, then we have
			\begin{align*} 
				\kappa(K_J+D_J)=\dim J. 
			\end{align*} 
			Moreover, $f:X\to A$ factors through $\bar f:\bar X\to A$.
			
			To prove that $Y$ is not special, we need to construct a neat model for $j\circ c$. By the Raynaud-Hironaka theorem, there exists a smooth birational modification $i:J'\to J$ such that the main component of $Y\times_J J' $ dominating $J'$ is flat over $J'$. 
			Let $\mu:\tilde X\to \bar X'=\bar X \times _JJ'$ be a strong resolution of singularities. Let $\tilde Y$ be a desingularizaton of the main component $(Y\times_J J')_{\rm main}$ of $Y\times_J J'$, such that the induced map $\tilde Y\to \tilde X$ is regular. 
			\[
			\begin{tikzcd}
				\tilde Y\arrow[dd,controls={+(-1.8,1)  and +(-1.8,-1)},"\nu"']\arrow[rr,"\tilde c"]\arrow[d,"\nu_0"]&&\tilde X\arrow[dr,"\tilde j"]\arrow[d,"\mu"]\arrow[ddd,controls={+(3.3, 1) and +(3.3, -1)},"\tilde f"]&\\
				(Y\times_J J')_{\rm main}\arrow[rr]\arrow[d]&& \bar X'\arrow[d,"\mu'"]\arrow[r,"\bar j'"] &  J'\arrow[d,"i"]\\
				Y\arrow[r,"c"] \arrow[drr,"a"']&X\arrow[r,"h"]\arrow[dr,"f"]   &   \bar X\arrow[r,"\bar j"'] \arrow[d,"\bar f"] & J  \\
				& &   A &  
			\end{tikzcd}
			\] 
			We remark that $\tilde j\circ \tilde c$ is neat. Indeed, for any $(\tilde j\circ \tilde c)$-exceptional divisor $P$ in $\tilde Y$, its image in $(Y\times_J J')_{\rm main}$ has codimension  at least two since $(Y\times_J J')_{\rm main}$ of $Y\times_J J'\to J'$ is flat. Hence $\nu(P)$  has codimension  at least two, and thus $\tilde j\circ \tilde c$ is neat.
			
			We set \begin{align}\label{eq:dtildey}
				D_{\tilde Y}:= \nu ^{-1}_*D_Y+{\rm Ex}(\nu).
			\end{align}  and $D_{\tilde X}:=\Delta(\tilde c,D_{\tilde Y })$ the orbifold divisor of $\tilde c$.   By  \cref{prop:birationaldecrease} and \eqref{eq:achievemininal4},   $\tilde j$ is the Iitaka fibration of $K_{\tilde X}+D_{\tilde X}$. Hence by the same arguments in the proof of \cref{claim:CBF}, if we  let $D_{J'}:=\Delta(\tilde j, D_{\tilde X})$, then  we have 	\begin{align}\label{eq:canonicalbundle}
				\kappa(K_{J'}+D_{J'})=\dim J'. 
			\end{align}  
			\begin{claim} We have the following equality of orbifold base divisors
				$$\Delta(\tilde j\circ \tilde c, D_{\tilde Y})=D_{J'}.$$
			\end{claim}
			\begin{proof}
				Write $\d J':=J'\backslash i^{-1}(J^0)$ and let $\tilde f:\tilde X\to A$ be the composition $\bar f\circ \mu'\circ \mu$. By \eqref{eq:bar f}, we have 
				\begin{align}\label{eq:twoinclusion}
					\tilde D:= \tilde f^{-1}(D_A)\geq  \tilde j^{-1}(\d J'). 
				\end{align}
				
				Let $Q$ be any prime divisor on ${J'}$. We first assume that  $Q$ is not contained in $\d J'$. 
				Since $\bar j:\bar X\to J$ is a smooth fibration over $J^0$, and  $\mu:\tilde X\to \bar X'=\bar X \times _JJ'$ is  a strong resolution of singularities, it follows that $\tilde j$ is smooth over $J'\backslash \d J'$.    Then  
				$$\tilde j^* Q= P+R$$ for some prime divisor $P$ on $\tilde X$  and some $\tilde j$-exceptional divisor $R$ contained in $   \tilde D$ by \eqref{eq:twoinclusion}, such that $\tilde j(P)=Q$.  
				Recall that 
				$$D_Y\geq  a^{-1}(D_A). $$ Hence we have
				\begin{align} \label{eq:severalinclusion}
					D_{\tilde Y}\geq (a\circ \nu)^{-1}(D_A)= \tilde c^{-1}(\tilde D). 
				\end{align}This implies that
				\begin{align}\label{eq:nomultiple}
					(\tilde j\circ  \tilde c)^* Q=\tilde c^* (  P+R)= \tilde c^* (  P)+ \sum_{i}a_i E_i  
				\end{align} 
				where each $E_i$ is supported in $D_{\tilde Y}$. Hence    the multiplicity of $D_{\tilde Y}$ along $E_i$ is
				$$
				m_{E_i}(D_{\tilde Y})=+\infty. 
				$$ 
				We remark that for any $\tilde c$-exceptional prime divisor $E$, one has
				$$
				m_{E}(D_{\tilde Y})=+\infty.
				$$
				Indeed, since $(Y\times_J J')_{\rm main}\to \bar X'$ is flat and $(\mu\circ\tilde c)(E)$ has codimension at least two, it follows that $\nu_0(E)$ has codimension at least two in $(Y\times_J J')_{\rm main}$. Consequently, $\nu(E)$ also has codimension at least two.  Hence $m_{E}(D_{\tilde Y})=+\infty $ by \eqref{eq:dtildey}. 
				
				Therefore, by \cref{def:multiplicitydivisor}, the multiplicity of the orbifold divisor $\Delta(\tilde j \circ \tilde c, D_{\tilde Y})$ along $Q$ is
				$$m_Q\left( \Delta(\tilde j \circ \tilde c, D_{\tilde Y})\right) = \inf_{G_i} \{ m_i m_{G_i}(D_{\tilde Y}) \mid (\tilde j \circ \tilde c)(G_i) = Q \},$$
				where $G_i$ ranges over all prime divisors in $\tilde Y$ such that $(\tilde j \circ \tilde c)(G_i) = Q$, and we set
				$$m_i  = m(\tilde j \circ \tilde c,G_i) $$
				that is the \emph{multiplicity} of $\tilde j \circ \tilde c$ along $G_i$ defined in \cref{def:multiplicitydivisor}. 
				Note that if $G_i$ does not dominate $P$ via $\tilde c$, we have already seen that $m_{G_i}(D_{\tilde Y}) = +\infty$. It follows that
				$$m_Q\left(\Delta(\tilde j \circ \tilde c, D_{\tilde Y})\right) = \inf_{G_i} \{ m_i m_{G_i}(D_{\tilde Y}) \mid \tilde c(G_i) = P \},$$
				where $G_i$ now ranges over all prime divisors in $\tilde Y$ satisfying $\tilde c(G_i) = P$. By \eqref{eq:nomultiple}, for such $G_i$, we have
				$$m_i = m(\tilde j \circ \tilde c,G_i) =m(  \tilde c, G_i).$$
				Consequently, we obtain
				$$m_Q\left( \Delta(\tilde j \circ \tilde c, D_{\tilde Y})\right) = m_P\left( \Delta(\tilde c, D_{\tilde Y})\right).$$

				We now assume that  $Q$ is   contained in $\d J'$.  By \eqref{eq:severalinclusion}, 
				we have 
				$$
				\lfloor  \Delta(  \tilde c, D_{\tilde Y}) \rfloor \geq \tilde D:= \tilde f^{-1}(D_A)\geq \tilde j^{-1}(\d J'). $$  Together with $D_{\tilde Y}\geq \tilde c^{-1}(\tilde D)$, this implies that
				$$
				m_Q\left( \Delta(\tilde j\circ \tilde c, D_{\tilde Y})\right) =  m_{Q}\left( \Delta(\tilde j, D_{\tilde X})\right)=+\infty. 
				$$
				The claim is thus proved. 
			\end{proof}
			Therefore, we have
			$$
			\kappa(J',\Delta(\tilde j\circ \tilde c,D_{\tilde Y}))=\kappa(J',D_{J'})\stackrel{\eqref{eq:canonicalbundle}}{=}\dim J'>0. 
			$$
			Since $\tilde j\circ \tilde c$ is neat, by \cref{def:special}, $(\tilde Y, D_{\tilde Y})$ is not special. This implies that,  $Y_0$ is not special, and this is a contradiction. Therefore the general fibers   of $a_0$ are special. The theorem is proved. 
		\end{proof}

		\medspace
		
		\noindent \textbf{Acknowledgements.}
		YD would like to thank Michel Brion, Patrick Brosnan, Frédéric Campana, and Jon  Pridham for very fruitful discussions and for answering his questions; Beno\^it Claudon, Philippe Eyssidieux and Jon  Pridham for their interest  and remarks on the paper; and Carlos Simpson and Katsutoshi Yamanoi for very stimulating discussions during the initial stage of this work.  We thank Joaquín Moraga for his remarks, which led to \cref{corx:Hirsch}.  JC and YD   are partially supported by the ANR grant Karmapolis (ANR-21-CE40-0010).  CH was partially supported by  NSF research grant   DMS-2301374 and by a grant from the Simons Foundation SFI-MPS-MOV-00006719-07. MP gratefully acknowledges support from \emph{Deutsche
			Forschungsgemeinschaft}.

		\providecommand{\bysame}{\leavevmode ---\ }
		\providecommand{\og}{``}
		\providecommand{\fg}{''}
		\providecommand{\smfandname}{\&}
		\providecommand{\smfedsname}{\'eds.}
		\providecommand{\smfedname}{\'ed.}
		\providecommand{\smfmastersthesisname}{M\'emoire}
		\providecommand{\smfphdthesisname}{Th\`ese}

	\end{document}